\documentclass[11pt,letterpaper]{article}
\usepackage[margin=1in]{geometry}
\usepackage[T1]{fontenc}
\usepackage{lmodern,microtype}
\usepackage{amsmath,amssymb,amsthm,mathtools}
\usepackage{enumitem,xspace}
\usepackage{xcolor}
\usepackage[authoryear,round]{natbib}
\usepackage{xurl}
\usepackage[colorlinks=true,linkcolor=blue!60!black,citecolor=blue!60!black,urlcolor=blue!60!black]{hyperref}
\usepackage[capitalise,noabbrev]{cleveref}

\theoremstyle{plain}
\newtheorem{theorem}{Theorem}
\newtheorem{proposition}{Proposition}
\newtheorem{lemma}{Lemma}
\newtheorem{corollary}{Corollary}
\theoremstyle{definition}
\newtheorem{definition}{Definition}
\newtheorem{example}{Example}
\newtheorem{assumption}{Assumption}
\theoremstyle{remark}
\newtheorem{remark}{Remark}
\crefname{theorem}{Theorem}{Theorems}
\crefname{proposition}{Proposition}{Propositions}
\crefname{lemma}{Lemma}{Lemmas}
\crefname{corollary}{Corollary}{Corollaries}
\crefname{definition}{Definition}{Definitions}
\crefname{assumption}{Assumption}{Assumptions}
\crefname{example}{Example}{Examples}
\crefname{remark}{Remark}{Remarks}
\crefname{section}{Section}{Sections}
\crefname{equation}{}{}

\newcommand{\T}{\mathcal T}
\newcommand{\Hk}{\mathcal H_\kappa}
\newcommand{\Lfam}{\mathcal L}
\newcommand{\Rfam}{\mathcal R}
\newcommand{\Afam}{\mathcal A}
\newcommand{\Nodd}{N}
\newcommand{\R}{\mathbb R}
\newcommand{\C}{\mathbb C}
\newcommand{\E}{\mathbb E}
\newcommand{\one}{\mathbf{1}}
\newcommand{\ind}{\mathbf{1}}
\newcommand{\ip}[2]{\langle #1,#2\rangle}
\newcommand{\norm}[1]{\lVert #1\rVert}
\newcommand{\abs}[1]{\lvert #1\rvert}
\newcommand{\wto}{\rightsquigarrow}
\newcommand{\indep}{\perp\!\!\!\perp}
\DeclareMathOperator{\Var}{Var}
\DeclareMathOperator{\Cov}{Cov}
\DeclareMathOperator{\TV}{TV}
\DeclareMathOperator{\aff}{aff}
\DeclareMathOperator{\conv}{conv}
\DeclareMathOperator{\spn}{span}
\DeclareMathOperator{\dist}{dist}
\DeclareMathOperator{\Law}{Law}

\DeclareMathOperator{\range}{range}
\DeclareMathOperator{\codim}{codim}
\DeclareMathOperator{\Lip}{Lip}
\DeclareMathOperator{\med}{med}
\DeclareMathOperator{\rank}{rank}
\DeclareMathOperator{\wid}{wid}
\DeclareMathOperator{\Log}{Log}
\newcommand{\Po}{\mathcal P}
\newcommand{\psik}{\tilde\psi_\kappa}
\newcommand{\Vk}{V_\kappa}
\newcommand{\Vstar}{V^\star}
\newcommand{\Pn}[1]{P^{n}_{n,#1}}
\newcommand{\clspan}{\overline{\spn}}
\newcommand{\claff}{\overline{\aff}}
\newcommand{\LamP}{\Lambda_P}
\newcommand{\FP}{F_P}
\newcommand{\assT}{\Cref{ass:paths}\xspace}
\newcommand{\assS}{\Cref{ass:additivity}\xspace}

\title{On the convolution theorem for directionally pathwise differentiable functionals}
\author{Shuoxun Xu\\[4pt]
  \normalsize Division of Biostatistics\\
  \normalsize School of Public Health, University of California, Berkeley\\[2pt]
  \normalsize \texttt{shuoxunxu\_ucb@berkeley.edu}}
\date{}
\hypersetup{pdftitle={On the convolution theorem for directionally pathwise differentiable functionals},
  pdfauthor={Shuoxun Xu},pdfsubject={Semiparametric efficiency for directionally pathwise differentiable functionals}}

\begin{document}
\raggedbottom
\maketitle

\begin{abstract}
The classical semiparametric convolution theorem characterizes the limiting distributions of regular estimators of pathwise differentiable parameters. We extend this analysis to directionally pathwise differentiable functionals. Regularity along a subspace or convex cone forces the directional derivative to agree there with a bounded linear functional, and every such estimator has a Gaussian convolution factor determined by that functional. We characterize the relevant subspaces, compare their efficiency bounds, and identify the influence functions of efficient estimators. These comparisons show when a stronger regularity requirement increases the variance bound. Under an additivity condition, the Gaussian limit experiment separates into a linear estimation problem and a nonlinear one. Their minimax risks add under squared error; for more general losses, the Gaussian component enters through convolution of the loss. Applications to treatment values, instrumental-variable bounds, and calibration give explicit efficient influence functions, compare existing procedures, and establish feasible attainment in finite-stratum models.
\end{abstract}

\section{Introduction}\label{sec:intro}

Semiparametric efficiency theory asks how accurately a parameter can be estimated and which estimators attain the efficiency bound. For a pathwise differentiable parameter, the classical convolution theorem characterizes the limiting distribution of every regular estimator as the convolution of a centered Gaussian law and a residual law \citep{Hajek1970,LeCam1972,BKRW}. The Gaussian variance is the squared norm of the parameter's canonical gradient, which is the influence function of an efficient estimator. The theorem therefore gives both a distributional lower bound and a characterization of efficient estimation.

We study these same questions for directionally pathwise differentiable functionals. Many economic parameters take this form because they combine smooth quantities through optimization, ranking, or absolute values. Examples include the value of an optimal treatment rule, endpoints of identified sets, and measures of calibration or effect heterogeneity. Their directional derivatives can be nonlinear even in models satisfying local asymptotic normality. We establish a convolution theorem for this setting, determine the local directions on which regularity can hold, and derive the corresponding efficiency bounds. We also study local minimax risk when the nonlinear part of the derivative contributes to the estimation error, and derive implications for confidence intervals from the resulting local distributions.

Consider programmes 1 and 2 with local mean values $c+d$ and $c-d$, respectively, where $c,d\in\R$. The common level $c$ is their average, and $d$ is half their difference. Suppose each value is observed with an independent unit-variance Gaussian error. Let $U$ and $V$ denote the average and half-difference of the observations, and let $G$ and $H$ denote their respective estimation errors. Then
\begin{equation}\label{eq:intro-experiment}
U=c+G,\qquad V=d+H,\qquad G\indep H,\qquad G,H\sim N(0,1/2).
\end{equation}
The value of the best programme is $c+\abs d$. Raising the common level $c$ by any $a\in\R$ raises this value by $a$ without changing $d$. Consider a measurable estimation rule $T:\R^2\to\R$ that respects every such shift: $T(u+a,v)=T(u,v)+a$ for all $u,v,a\in\R$. Defining $r(v):=T(0,v)$ gives $T(U,V)=U+r(V)$, and hence
\begin{equation}\label{eq:intro-error}
T(U,V)-(c+\abs d)=G+\{r(d+H)-\abs d\}.
\end{equation}
The two terms are independent. The first is the error in estimating the common level; the second is the error in estimating $\abs d$. For squared error, their risks add, giving $1/2+\E\{r(d+H)-\abs d\}^2$ whenever this expectation is finite.

The pooled estimate takes $r=0$. At the tie $d=0$, it attains the Gaussian variance $1/2$; when the values separate, it has drift $-\abs d$. The plug-in maximum takes $r(v)=\abs v$, retaining the additional term $\abs H$ at the tie while using the difference observation away from it. Thus a residual can describe both excess error at a tie and an estimator's response to the nonlinear part of the target. Understanding its role is essential to interpreting an efficiency bound. The direct representation in \eqref{eq:intro-error} concerns common-shift-equivariant rules in this Gaussian experiment; the general convolution theorem below uses local regularity and does not presume such a representation for an estimator at every local parameter.

Our convolution theorem uses the usual definition of regularity, applied along a subspace or convex cone of scores: the limiting distribution of the estimation error, centered at the parameter under each local alternative, is the same along every corresponding path. We show that this condition forces the directional derivative to agree on that set with a linear functional that extends boundedly to its span. The limiting law of every estimator regular on the set is then the convolution of a centered Gaussian law, whose variance is the squared norm of the representer, and a residual law. No asymptotically linear representation of the estimator is assumed. When the derivative is linear and regularity is imposed on the whole tangent space, the conclusion is the usual semiparametric convolution theorem.

For a nonlinear derivative, the efficiency comparison also depends on the subspace along which regularity is required. We characterize the maximal subspaces on which the derivative is linear and identify their intersection, which we call the core. The derivative may be linear on two subspaces without being linear on their sum, so distinct maximal subspaces can impose incompatible regularity requirements. For a maximum, the active branches are those tied for the largest value at the baseline distribution. There is a unique maximal subspace: its directions move all active branches equally to first order. For the median of three tied programme values, maximal subspaces can be incomparable and have different efficiency bounds. The core provides a variance bound shared by all maximal choices, while the larger subspaces determine which additional regularity requirements can be imposed and at what variance cost.

The two-programme example also shows why the regularity requirement affects the bound. Requiring a common error law whenever programme 1 remains best means allowing $d\ge0$. The estimator $U+V$, the observation from programme 1, satisfies this requirement and has centered error $G+H$, with variance $1$. The convolution theorem shows that this variance is unavoidable under that requirement. Thus the variance bound increases from $1/2$ to $1$ when regularity is required throughout this one-sided set. We characterize when enlarging the regularity set leaves the efficiency bound unchanged and when it increases the bound, including the conditions under which a one-sided requirement entails the full variance of a single active branch.

Efficiency under a given regularity requirement does not by itself describe risk over other local alternatives. To study that risk, we return to the decomposition in \eqref{eq:intro-error}. Under an additivity condition on the core, the directional derivative is the sum of a linear functional of the core parameter and a nonlinear function of the orthogonal parameter. The corresponding Gaussian observations are independent. With the core parameter unrestricted and the orthogonal parameter in a specified set, the squared-error minimax risk equals the core variance bound plus the minimax risk for the nonlinear component. For nonnegative, lower semicontinuous losses that are symmetric and nondecreasing in the absolute error (bowl-shaped losses), the latter problem is instead evaluated under the loss convolved with the core Gaussian law. These equalities in the Gaussian limit experiment yield local asymptotic lower bounds for arbitrary estimator sequences in the original model. We also obtain an explicit small-neighborhood expansion of the additional risk.

The applications connect this structure to influence functions and feasible estimators. For the value of an optimal individualized treatment rule, conditional means of tied treatments can be pooled using weights determined by their conditional variances and treatment probabilities. The resulting influence function attains the core bound, while a one-sided requirement that a designated rule remain optimal can entail additional variance. Conditional instrumental-variable bounds illustrate why correlations between active branches enter the calculation. Calibration error gives a case where an existing procedure already attains the bound, and sequential treatment values show why only ties at histories reached by an optimal policy affect the bound. A common conditional-max calculation organizes these examples. The explicit attaining constructions use finite strata or finite histories; comparisons with general-covariate procedures retain the learning and smoothing conditions needed for their asymptotic expansions.

The analysis builds on the connection between differentiability and regular estimation studied by \citet{vdV1991,HiranoPorter2012,Pfanzagl2000}. Their results explain why a nonlinear directional derivative cannot support a common centered error law over the whole tangent space. Our necessity argument identifies the compatible subspaces and cones without imposing moments on the limiting law. The convolution step uses the established restricted-parameter mechanism of \citet{JanssenOstrovski2013}. \citet{Kaji2021} likewise develops an efficiency analysis from the structure of a local experiment, for a different form of irregularity arising under weak identification. Here the parameter's directional derivative determines the subspaces on which regularity can hold and the corresponding Gaussian convolution factors.

Local minimax estimation offers a complementary assessment of nonlinear targets. \citet{Song2014} obtains bounds for arbitrary estimator sequences under convex loss for structured finite-dimensional composite targets. \citet{Fang2016} studies optimality within specified classes of plug-in procedures, while \citet{Ponomarev2022} studies local minimax estimation of directionally differentiable functionals. Our risk results isolate the Gaussian contribution and the residual problem whose optimum depends on the loss and local parameter set. The related inference problem is studied by \citet{FangSantos2019}, whose directional delta method and alternative resampling approach allow non-Gaussian limits. Their characterization of standard-bootstrap validity by full differentiability additionally imposes Gaussian-limit and support conditions. Our confidence-interval calculations use the local error laws of efficient estimators; constructing intervals with uniform coverage and optimal length is a further problem, related to the bias-aware analysis of \citet{ArmstrongKolesar2021}.

The treatment applications connect our results with two earlier efficiency analyses. \citet{XuGuo2025} develop optimal estimation of treatment values within a robust asymptotically linear class, including heteroscedastic and multistage outcomes. \citet{XuGuoRALU} formulate a variational theory for marginal-integral functionals, minimizing variance subject to robust unbiasedness and constructing estimators that attain the bound. Their conditional projection also yields a classical convolution bound under specified perturbations that preserve ties to first order. The variational theory and the present analysis overlap: in treatment models satisfying their respective assumptions, the optimal influence functions and bounds agree. Here the restriction on estimators is local regularity, and the directional derivative determines where that restriction is possible. This leads to comparisons across subspaces and cones and to an analysis of nonlinear residual risk. \Cref{sec:prior-work} explains the common results and the distinct questions addressed by the three papers; the applications also compare the smoothed procedures of \citet{Whitehouse2025} under the conditions for their asymptotic expansions.

\Cref{sec:setting} specifies the statistical model and states the convolution theorem. \Cref{sec:linearity} characterizes the regularity subspaces and their bounds, and \cref{sec:cost} identifies efficient estimators and compares regularity requirements. \Cref{sec:structure} studies the residual decision problem and the implications for confidence intervals. \Cref{sec:examples} develops the statistical applications, and \cref{sec:companion} discusses their relation to the variational theory and further questions. Proofs and supporting calculations are collected in the appendices.

\section{Statistical model, regularity, and the convolution theorem}\label{sec:setting}

We consider estimation of a scalar parameter in an i.i.d.\ statistical model. We use the standard notions of differentiability in quadratic mean and tangent space, with an explicit path-realization assumption stated below. We allow the parameter's pathwise derivative to be nonlinear and formulate regularity along a subspace or convex cone of scores. The convolution theorem then identifies the Gaussian factor in the limiting law of every estimator satisfying that regularity condition.

\subsection{Statistical model and differentiable paths}
Let $\Po$ be a set of probability measures on a measurable space $(\mathcal X,\mathcal A)$, let $P\in\Po$ be fixed, and let $X_1,\dots,X_n$ be i.i.d.\ $P$. Write $\mathbb P_n f=n^{-1}\sum_{i=1}^n f(X_i)$ for the empirical average, applied componentwise to vector-valued $f$. Write $L_2^0(P)$ for the mean-zero subspace of $L_2(P)$ with inner product $\ip{\cdot}{\cdot}$ and norm $\norm{\cdot}$. We write $\one$ for a vector of ones of the indicated dimension and $\ind\{B\}$ for the indicator of an event $B$. A map $t\mapsto P_t\in\Po$, $t\in[0,\varepsilon)$, $P_0=P$, is a \emph{differentiable-in-quadratic-mean (DQM) path with score $h\in L_2^0(P)$} if $\int[(dP_t^{1/2}-dP^{1/2})/t-\tfrac12h\,dP^{1/2}]^2\to0$ as $t\downarrow0$. We write $P_{t,h}$ for a DQM path with score $h$, $P_{n,h}:=P_{n^{-1/2},h}$, and $\Pn{h}$ for the $n$-fold product. All definitions below quantify over every DQM path in $\Po$ with the stated score \citep[\S 25.3]{vdV1998}. The path is required to be DQM at $t=0$; no smoothness at positive $t$ is imposed. The \emph{tangent space} $\T\subseteq L_2^0(P)$ is a linear subspace of scores.

\begin{assumption}\label{ass:paths}
$\T$ is closed in $L_2(P)$ and every $h\in\T$ is the score of some DQM path in $\Po$.
\end{assumption}
\assT requires every tangent direction to be realized by a differentiable path in the statistical model. It holds in the nonparametric model with $\T=L_2^0(P)$ \citep[Example 25.16]{vdV1998}. The quantification over all DQM paths also requires the parameter's derivative to depend only on the score; the applications verify this requirement in their observed-data models.

For a DQM path with score $h$, local asymptotic normality holds \citep[Theorem 7.2 and Lemma 25.14]{vdV1998}:
\begin{equation}\label{eq:LAN}
\log\prod_{i=1}^n\frac{dP_{n,h}}{dP}(X_i)=\frac1{\sqrt n}\sum_{i=1}^nh(X_i)-\tfrac12\norm h^2+o_{P^n}(1),
\end{equation}
hence $\Pn h$ and $P^n$ are mutually contiguous. Likelihood ratios relative to $P^n$ denote the densities of the absolutely continuous parts; the omitted singular masses vanish under local alternatives by contiguity. Throughout, $\Delta$ denotes the isonormal process on $\overline\T$, i.e.\ the centred Gaussian process with $\Cov(\Delta(g),\Delta(g'))=\ip g{g'}$.

\subsection{Directional pathwise differentiability}

Pathwise differentiability expresses the first-order change of a parameter as a continuous linear functional of the score. We retain the pathwise expansion but allow its derivative to be a positively homogeneous, nonlinear function of the score. This is the extension of differentiability used throughout the paper.

\begin{definition}\label{def:dpd}
$\Psi:\Po\to\R$ is \emph{directionally pathwise differentiable at $P$ relative to $\T$} if there is a positively homogeneous map $\gamma=\Psi'_P:\T\to\R$ ($\gamma(ch)=c\gamma(h)$ for $c\ge0$) such that for every $h\in\T$ and every DQM path with score $h$,
\[
t^{-1}\{\Psi(P_{t,h})-\Psi(P)\}\to\gamma(h)\qquad(t\downarrow0).
\]
\end{definition}

Ordinary pathwise differentiability is the case of linear $\gamma$. Composite targets $\Psi=\phi\circ\theta$ with $\theta$ pathwise differentiable into a Banach space and $\phi$ Hadamard directionally differentiable \citep{Shapiro1990,FangSantos2019} satisfy \cref{def:dpd} with $\gamma=\phi'_{\theta(P)}\circ\dot\theta_P$; the definition does not require an intermediate regular parameter. Positive homogeneity is automatic when the path family is closed under $t\mapsto ct$. Under \assT, the all-path formulation also implies that $\gamma$ is continuous (\cref{lem:autocont}). Thus continuity is supplied by the definition and path realization, rather than imposed as a separate assumption.

In the two-programme example, the derivative is $\gamma(c,d)=c+\abs d$, which is nonlinear because of the absolute value. The existence of directional limits is a condition on the parameter in the specified model; regularity of an estimator alone does not supply it, as discussed in \cref{app:rotating} and \citet{Pfanzagl2000}.

\subsection{Regularity along subspaces and cones}

Regularity requires the centered estimation error to have the same limiting distribution under local alternatives. Here we impose this condition on paths whose scores belong to a specified subspace or convex cone. A subspace includes both signs of each direction; a cone can describe one-sided alternatives, such as changes that keep a designated programme optimal. As in the differentiability definition, the condition applies to every path with the stated score.

\begin{definition}\label{def:regular}
Let $C\subseteq\T$ be a convex cone (in particular, a linear subspace). An estimator sequence $T_n=T_n(X_1,\dots,X_n)$ is \emph{regular along $C$} if there is a Borel law $L$ on $\R$ such that for every $h\in C$ and every DQM path with score $h$,
\[
\sqrt n\{T_n-\Psi(P_{n,h})\}\wto L\qquad\text{under }\Pn h .
\]
``Regular'' means regular along $\T$. \end{definition}

\subsection{The convolution theorem}\label{sec:convolution}

Gaussian tilting relates an estimator's limiting distributions at different local parameters. When the centered law is unchanged, this relation forces the directional derivative to agree with a linear functional on the regularity cone. The following theorem establishes its bounded extension and identifies the associated Gaussian component. The moment-free argument uses characteristic functions, so it does not assume that the estimator's limiting law has a variance.

\begin{theorem}[convolution]\label{thm:main}
Suppose \assT holds and $\Psi$ is directionally pathwise differentiable at $P$. Let $T_n$ be an estimator sequence with $\sqrt n\{T_n-\Psi(P)\}\wto L$ under $P^n$, and suppose that $T_n$ is regular along a convex cone $C\subseteq\T$.
\begin{enumerate}[label=(\arabic*)]
\item The derivative $\gamma$ agrees on $C$ with a bounded linear functional $\ell_C$ on $\spn C$. In particular, $\gamma$ is Lipschitz on $C$.
\item Let $\tilde\psi_C\in\clspan C$ be the Riesz representer of the continuous extension of $\ell_C$ to $\clspan C$, and put $V_C=\norm{\tilde\psi_C}^2$. There is a probability law $M_C$ such that
\[
L=N(0,V_C)*M_C.
\]
If $\spn C$ is finite-dimensional, let $(S,\Delta|_{\spn C})$ be any subsequential joint limit under $P^n$ of $\sqrt n\{T_n-\Psi(P)\}$ and the empirical score process on $\spn C$. Then $M_C$ is the law of $S-\Delta(\tilde\psi_C)$, which is independent of $\Delta|_{\spn C}$.
\end{enumerate}
\end{theorem}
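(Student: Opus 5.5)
We would work with finite-dimensional subcones first and pass to the general case by continuity. Fix finitely many $h_1,\dots,h_k\in C$ and let $D=\spn\{h_1,\dots,h_k\}$.

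The first step is a tightness and joint-limit argument. The sequence $\sqrt n\{T_n-\Psi(P)\}$ is tight under $P^n$, and the score process $\mathbb G_n h$ on $D$ converges to $\Delta|_D$. We therefore extract a subsequence along which $(\sqrt n\{T_n-\Psi(P)\},\mathbb G_n h_1,\dots,\mathbb G_n h_k)\wto(S,\Delta(h_1),\dots,\Delta(h_k))$. By \eqref{eq:LAN} and Le Cam's third lemma, for each $h\in C\cap D$ the law of $S$ under $P_{n,h}^n$ is the exponentially tilted law $E[\ind\{S\in\cdot\}e^{\Delta(h)-\norm h^2/2}]$. Under a DQM path with score $h$, directional differentiability gives $\sqrt n\{\Psi(P_{n,h})-\Psi(P)\}\to\gamma(h)$. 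Regularity then yields the identity
\[
E\bigl[e^{iu(S-\gamma(h))}e^{\Delta(h)-\norm h^2/2}\bigr]=E\bigl[e^{iuS}\bigr]\qquad\text{for all }u\in\R,\ h\in C\cap D.
\]
Here \assT ensures that every $h\in C$ is the score of some path. We use characteristic functions so that no moments of $S$ are needed.

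The second step, proving linearity, is the main obstacle. The plan is to analytically continue in $h$. Write $h=\sum_j s_jh_j$ and define $\Phi(u,z)=E[e^{iuS}e^{\sum_j z_j\Delta(h_j)}]$. This function is entire in $z\in\C^k$ for each $u$ by Gaussian moment generating function bounds, since $\abs{e^{iuS}}=1$. The identity says that $\Phi(u,s)=e^{iu\gamma(h)+\norm h^2/2}\Phi(u,0)$ on the relative interior of the cone of coefficients $s$ giving $h\in C$, which is a set with nonempty interior in $\R^{k'}$ after reducing to a basis. Fix $u$ small so that $\Phi(u,0)\neq0$. Then $\log\Phi(u,s)-\norm h^2/2$ is real-analytic, and it equals $iu\gamma(h)$, so it is purely imaginary on an open set. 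We would differentiate in $u$ at $u=0$, or equivalently compare two nearby values of $u$. The relation $e^{iu\gamma(h)}=\Phi(u,s)e^{-\norm h^2/2}/\Phi(u,0)$ being the restriction of an entire function of $s$ for every small $u$ forces $\gamma$ to be real-analytic on the open cone. Positive homogeneity, combined with analyticity at interior points, then gives linearity on the interior: a positively homogeneous function of degree one that is analytic near a ray is linear along it. One needs an extra step, for instance that the second derivative is homogeneous of degree $-1$ yet analytic up to the cone's closure in suitable directions.

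An alternative route we would try first is more direct. Take $h,h'\in C$, so that $h+h'\in C$. Apply the tilting identity twice, noting that regularity gives the same centered law under $P_{n,h}$. Then treat $P_{n,h}$ as a new base: the tilt by $h'$ from $P_{n,h}$ is again LAN with the same $\Delta(h')$. The third lemma, applied to the product of tilts, compares centering $\gamma(h)+\gamma(h')$ against $\gamma(h+h')$. Since characteristic functions determine the shift, this gives additivity $\gamma(h+h')=\gamma(h)+\gamma(h')$, provided $\Phi(u,\cdot)$ is nonvanishing for small $u$. Additivity together with positive homogeneity on a convex cone extends uniquely to a linear $\ell_C$ on $\spn C=C-C$. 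Boundedness follows from the continuity of $\gamma$ given by \cref{lem:autocont}, or from a bound of the form $\abs{\gamma(h)}\lesssim\norm h$ derived from the identity with $u$ fixed. Lipschitzness on $C$ is then immediate.

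For part (2), take $\tilde\psi_C$ to be the Riesz representer. In finite dimensions, linearity turns the identity into $E[e^{iuS}e^{\Delta(h)-\norm h^2/2}]=e^{iu\ip{\tilde\psi_C}{h}}E[e^{iuS}]$ for $h$ in an open set. By analyticity this extends to all $h\in D$, and then to imaginary $h=ivg$. From this we conclude, in the classical way (\citet{vdV1998}, Theorem 8.8 proof), that $S-\Delta(\tilde\psi_C)$ is independent of $\Delta|_D$. It follows that $L=N(0,V_C)*M_C$ with $M_C=\Law(S-\Delta(\tilde\psi_C))$. For infinite-dimensional $\spn C$, we would take an increasing sequence of finite-dimensional $D_m$ whose projections satisfy $\tilde\psi_{D_m}\to\tilde\psi_C$. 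Each $L$ is then $N(0,V_{D_m})*M_m$, and the family $M_m$ is tight because the Gaussian factor is bounded. A limit point $M_C$ satisfies $L=N(0,V_C)*M_C$ by continuity of characteristic functions, using the cone-restricted argument of \citet{JanssenOstrovski2013}.
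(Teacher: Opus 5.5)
Your tilting identity in the first step is correct, but the linearity step has a genuine gap, and neither of your two routes closes it. The principle behind the analytic route is false: positive homogeneity of degree one plus real-analyticity on an open cone does not force linearity. For example, $a\mapsto\abs a$ on the open positive quadrant of $\R^2$ has both properties.

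What the argument needs is analyticity \emph{at the vertex}. The identity holds on the closed coefficient orthant, including $s=0$, and its right-hand side is entire in $s$. Write $h_s=\sum_js_jh_j$, choose $u_0$ with $\phi_L(u_0)\neq0$, and use a local bound on $\gamma$ so that $\abs{u_0\gamma(h_s)}<\pi$ for small $s\ge0$. Such a bound comes from \cref{lem:autocont}, or from tightness of the tilted laws as in \cref{lem:tilt}. Then $\tilde g(s):=(iu_0)^{-1}\Log\{e^{-\norm{h_s}^2/2}\Phi(u_0,s)/\Phi(u_0,0)\}$ is analytic on a full ball around $s=0$ and equals $\gamma(h_s)$ on the orthant near $0$. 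Homogeneity gives $\gamma(h_s)=\lim_{r\downarrow0}\tilde g(rs)/r=D\tilde g(0)s$, which is linear; this is how \cref{lem:tilt} proceeds. Your second-derivative remark becomes a proof only after this step, and differentiating in $u$ at $u=0$ would require moments of $S$ that you chose to avoid.

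Your ``direct'' route is circular. Re-basing at $P_{n,h}$ presumes that the joint limit of $(S,\Delta(h'))$ under $h$ is the recentred joint limit under $P$. Regularity controls only the marginal law of $S-\gamma(h)$. Apply regularity at $h+h'$ to the identity you would need, $\E[e^{iu(S-\gamma(h))}e^{\Delta(h')-\ip{h}{h'}-\norm{h'}^2/2}e^{\Delta(h)-\norm h^2/2}]=\phi_L(u)e^{iu\gamma(h')}$. It reduces to $\gamma(h+h')=\gamma(h)+\gamma(h')$, which is what you are trying to prove. You also need the continued identity to get agreement on all of $C\cap\spn B$, for instance $\gamma(-h)=-\gamma(h)$ when $\pm h\in C$. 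The paper obtains this because a probability law has no nonzero translation period.

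The second gap is boundedness, which part (2) needs exactly when $\spn C$ is infinite-dimensional. Continuity of $\gamma$, or a bound $\abs{\gamma(h)}\le K\norm h$ on $C$, controls $\ell_C$ only on $C$. For $x=h-h'$ with $h,h'\in C$ it gives $\abs{\ell_C(x)}\le K(\norm h+\norm{h'})$, not a multiple of $\norm x$. The paper's \cref{ex:cone} makes this concrete: a closed convex cone with a continuous positively homogeneous $\gamma$, additive on $C$, whose linear extension to $\spn C$ is unbounded. So continuity cannot supply the representer $\tilde\psi_C$ that your limiting argument takes for granted.

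The missing idea is to extract boundedness from the estimator itself. For each finite linearly independent $B\subseteq C$ with $W=\spn B$, the finite-dimensional convolution gives $\abs{\phi_L(t)}\le e^{-t^2\norm{\tilde\psi_W}^2/2}$, and $\norm{\tilde\psi_W}$ is the operator norm of $\ell_C|_W$. If $\ell_C$ were unbounded, these norms would be unbounded over $W$. That would force $\phi_L(t)=0$ for every $t\neq0$, contradicting continuity of $\phi_L$ at $0$. With boundedness established, your passage to the limit goes through as in the paper: take $W_k$ with $\norm{\tilde\psi_{W_k}}\uparrow\norm{\tilde\psi_C}$ and apply L\'evy's continuity theorem to $\phi_L(t)e^{\norm{\tilde\psi_{W_k}}^2t^2/2}$.
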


\Cref{thm:main} shows that the Gaussian variance is the squared norm of the representer of the derivative on $C$. Every estimator satisfying the same regularity requirement contains that component in its limiting error; procedures differ through the residual law. With $C=\T$ and a linear derivative, $\tilde\psi_C$ is the ordinary canonical gradient. At a nonlinear point, the theorem additionally identifies the compatibility condition that the derivative must satisfy on $C$.

For a subspace $V$ on which $\gamma$ is linear, write $\tilde\psi_V$ for its representer in $\overline V$ and $V_V=\norm{\tilde\psi_V}^2$. An estimator is $V$-efficient if it is regular along $V$ with limit $N(0,V_V)$. This definition singles out the centered Gaussian law: equality of variances alone would still allow a constant residual shift. In the experiment \eqref{eq:intro-experiment}, the subspace $d=0$ has bound $1/2$. Pooling attains this law, whereas the plug-in retains the independent selection term $\abs H$.

The proof first establishes the Gaussian tilt identity on finite-dimensional subcones and continues it to their spans. The classical restricted-parameter convolution mechanism then identifies an independent residual \citep{JanssenOstrovski2013}. If the linear extension were unbounded, its Gaussian variance along finite-dimensional spans would diverge, contradicting continuity of the limiting characteristic function at zero. This explains why bounded extension is necessary even though $\gamma$ itself is continuous. Complete proofs, including the passage to closures, appear in \cref{app:theory}.

\section{Regularity subspaces and efficiency bounds}\label{sec:linearity}

The convolution theorem requires the directional derivative to be linear on the subspace along which an estimator is regular. We now characterize these subspaces and compare their variance bounds. Several maximal linearity subspaces can coexist, each allowing a different regularity requirement. Their intersection, called the core, gives a bound shared by all estimators regular along a maximal subspace. We also identify the additivity condition under which the derivative separates into linear and nonlinear parts, as needed for the local risk analysis.

Throughout this section $\gamma:\T\to\R$ is continuous (\cref{lem:autocont}) and positively homogeneous, and \assT is in force where closures are taken. Statements about estimator sequences also retain the model and differentiability assumptions of \cref{sec:setting}.

\subsection{Linearity of the directional derivative}

Regularity along a linear subspace requires the parameter's first-order change to be odd in each direction and additive when directions are combined. The definitions below describe these restrictions on the directional derivative and identify the subspaces on which the convolution theorem can apply.

\begin{definition}\label{def:odd}
$\Nodd(\gamma):=\{h\in\T:\gamma(h)+\gamma(-h)=0\}$ is the \emph{odd cone}. A linear subspace $V\subseteq\T$ is a \emph{linearity subspace} if $\gamma|_V$ is linear; a convex cone $C$ is a \emph{linearity cone} if $\gamma$ is additive on $C$ (equivalently, $\gamma$ agrees on $C$ with a linear functional on $C-C$). $\Lfam(\gamma)$ is the family of linearity subspaces and $\Lfam_{\max}(\gamma)$ its set of maximal elements under inclusion.
\end{definition}

The odd cone need not be convex or a linear subspace; oddness alone does not imply additivity. The \emph{regularity family} of $T_n$ is $\Rfam(T_n):=\{V\subseteq\T\text{ linear}:T_n\text{ is regular along }V\}$ and $\Rfam^{\rm cone}(T_n)$ is the family of convex cones along which $T_n$ is regular; both are closed under passing to smaller subspaces or cones, and $\{0\}\in\Rfam(T_n)$ iff $\sqrt n\{T_n-\Psi(P)\}$ converges in law under $P^n$. \Cref{thm:main}(1) implies $\Rfam(T_n)\subseteq\Lfam(\gamma)$ and that $\gamma|_V$ is a bounded linear functional for every $V\in\Rfam(T_n)$. The linearity family is closed under subspaces and closure, and every member lies in a closed maximal member (\cref{lem:odd}). Regularity itself passes to closure by \cref{lem:closure}. Thus maximal linearity subspaces are the largest subspaces compatible with the necessary linearity condition, and several such subspaces may coexist. Attaining estimators are constructed under the conditions in Section~\ref{sec:sufficiency}.

\begin{corollary}[impossibility]\label{cor:impossible}
If $\gamma$ is not linear on $\T$, no estimator sequence is regular. More precisely, no estimator sequence is regular along any convex cone on which $\gamma$ is not additive; in particular along any subspace not contained in the odd cone $\Nodd(\gamma)$.
\end{corollary}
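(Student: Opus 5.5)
The plan is to obtain every assertion as a contrapositive of \cref{thm:main}(1), applied to the appropriate cone. The middle assertion is the basic one, so I would establish it first.

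Suppose $T_n$ is regular along a convex cone $C\subseteq\T$. Regularity along $C$ in particular includes the score $h=0$, for which the constant path $P_t\equiv P$ is DQM. Hence $\sqrt n\{T_n-\Psi(P)\}\wto L$ under $P^n$, and this is exactly the convergence hypothesis of \cref{thm:main}. One could also rely on the remark that $\{0\}\in\Rfam(T_n)$ iff this convergence holds. \Cref{thm:main}(1) then says that $\gamma$ agrees on $C$ with a linear functional $\ell_C$ on $\spn C$. For $h,g\in C$ we have $h+g\in C$, so
\[
\gamma(h+g)=\ell_C(h+g)=\ell_C(h)+\ell_C(g)=\gamma(h)+\gamma(g).
\]
Thus $\gamma$ is additive on $C$. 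Taking the contrapositive: if $\gamma$ is not additive on $C$, no estimator sequence is regular along $C$.

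Next comes the subspace case. Let $V$ be a linear subspace not contained in $\Nodd(\gamma)$, and pick $h\in V$ with $\gamma(h)+\gamma(-h)\neq0$. Both $h$ and $-h$ lie in $V$, and $h+(-h)=0$. Positive homogeneity gives $\gamma(0)=0\cdot\gamma(0)=0$. Therefore
\[
\gamma(h+(-h))=0\neq\gamma(h)+\gamma(-h),
\]
so $\gamma$ is not additive on $V$, and the previous step applies.

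The first assertion is the case $C=\T$. A regular sequence would make $\gamma$ additive on $\T$. Additivity together with positive homogeneity gives linearity: from $0=\gamma(h)+\gamma(-h)$ we get $\gamma(-h)=-\gamma(h)$, which extends homogeneity to negative scalars. So if $\gamma$ is not linear on $\T$, it is not additive on $\T$, and no sequence is regular.

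I do not expect a real obstacle. The only point to check is that the convergence hypothesis of \cref{thm:main} is not an extra assumption but follows from regularity at $h=0$, and this uses the existence of the trivial DQM path $P_t\equiv P$ in $\Po$.
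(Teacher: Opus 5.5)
Your proposal is correct and follows the paper's route. The paper obtains the corollary directly from \cref{thm:main}(1): regularity along $C$ forces $\gamma$ to agree with a linear functional on $\spn C$, hence to be additive on $C$, and the subspace case reduces, as in \cref{lem:odd}(2), to $\gamma(h)+\gamma(-h)\ne\gamma(0)=0$. Your check that the convergence hypothesis of \cref{thm:main} comes from regularity at the score $0$ via the constant path is the point the paper records in its remark that $\{0\}\in\Rfam(T_n)$ iff $\sqrt n\{T_n-\Psi(P)\}$ converges in law under $P^n$.
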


\Cref{cor:impossible} shows that a nonlinear directional derivative precludes regularity over the whole tangent space. The same conclusion holds on any cone where the derivative is not additive. On subspaces and cones where regularity is possible, the convolution theorem supplies the corresponding variance bounds.

\subsection{The core and the additivity space}

The core consists of the directions shared by every maximal linearity subspace. The definition below expresses it directly in terms of $\gamma$; Theorem~\ref{thm:core} proves that this expression equals the intersection. The additivity space imposes a stronger condition: a direction must change the derivative by the same amount at every local parameter, including those outside the odd cone.

\begin{definition}\label{def:core}
The \emph{core} of $\gamma$ is
\[
\Hk(P):=\{w\in\Nodd(\gamma):\ \gamma(v+cw)=\gamma(v)+c\,\gamma(w)\ \ \forall v\in\Nodd(\gamma),\ c\in\R\}.
\]
The \emph{additivity space} is $\Afam(\gamma):=\{w\in\T:\gamma(h+w)=\gamma(h)+\gamma(w)\ \forall h\in\T\}$.
\end{definition}

\begin{theorem}[core]\label{thm:core}
\begin{enumerate}[label=(\arabic*)]
\item For $w\in\Hk(P)$ and $v\in\Nodd(\gamma)$, $\gamma$ is linear on $\spn(v,w)$; in particular $\spn(v,w)\subseteq\Nodd(\gamma)$.
\item $\Hk(P)$ is a closed linear subspace and a linearity subspace.
\item For every $V\in\Lfam(\gamma)$, $V+\Hk(P)\in\Lfam(\gamma)$. Consequently $\Hk(P)=\bigcap_{V\in\Lfam_{\max}(\gamma)}V$.
\item $\Afam(\gamma)$ is a closed linearity subspace with $\Afam(\gamma)\subseteq\Hk(P)$, with equality whenever $\Nodd(\gamma)=\T$ ($\gamma$ odd).
\item $\gamma$ is linear on $\T$ iff $\Hk(P)=\T$.
\end{enumerate}
\end{theorem}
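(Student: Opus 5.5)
The plan is to argue directly from \cref{def:core}, in the order (1)–(5), using only positive homogeneity of $\gamma$, continuity of $\gamma$ (\cref{lem:autocont}) and \cref{lem:odd} (every linearity subspace lies in a maximal one). Two facts are used repeatedly. First, $0\in\Nodd(\gamma)$, so taking $v=0$ in the definition of $\Hk(P)$ gives $\gamma(cw)=c\gamma(w)$ for all $c\in\R$. Second, $\Nodd(\gamma)=-\Nodd(\gamma)$, and $\gamma(-v)=-\gamma(v)$ on it.

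For (1), fix $w\in\Hk(P)$, $v\in\Nodd(\gamma)$ and $a,b\in\R$, and show $\gamma(av+bw)=a\gamma(v)+b\gamma(w)$ by cases on the sign of $a$.
\begin{itemize}
\item If $a>0$, write $\gamma(av+bw)=a\,\gamma(v+(b/a)w)$ and apply the core identity.
\item If $a<0$, write $\gamma(av+bw)=\abs a\,\gamma(-v+(b/\abs a)w)$ and apply the core identity with $-v\in\Nodd(\gamma)$.
\item If $a=0$, this is the $v=0$ fact above.
\end{itemize}
Linearity on $\spn(v,w)$ then gives $\spn(v,w)\subseteq\Nodd(\gamma)$.

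For (2), take $w_1,w_2\in\Hk(P)$ and $v\in\Nodd(\gamma)$. By (1), $v+cw_1\in\Nodd(\gamma)$, so applying the core identity twice gives
\[
\gamma(v+c(w_1+w_2))=\gamma(v)+c\gamma(w_1)+c\gamma(w_2).
\]
Setting $v=0$ with $c=\pm1$ shows that $\gamma$ is additive on $\{w_1,w_2\}$ and that $w_1+w_2\in\Nodd(\gamma)$. Substituting back yields the core identity for $w_1+w_2$. Closure under real scalars follows from $\gamma(cw)=c\gamma(w)$ together with positive homogeneity. Closedness follows from continuity of $\gamma$, since both $\Nodd(\gamma)$ and the defining identities are preserved under $L_2$-limits. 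Additivity with homogeneity shows that $\Hk(P)$ is a linearity subspace.

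For (3), let $V\in\Lfam(\gamma)$, so $V\subseteq\Nodd(\gamma)$. The core identity with $c=1$ gives $\gamma(v+w)=\gamma(v)+\gamma(w)$ for $v\in V$ and $w\in\Hk(P)$. Hence $\gamma$ on $V+\Hk(P)$ is the sum of two linear maps; this is well defined because the two maps agree on $V\cap\Hk(P)$. So $V+\Hk(P)\in\Lfam(\gamma)$, and maximality forces $\Hk(P)\subseteq V$ for every maximal $V$. For the reverse inclusion, let $h$ lie in every maximal subspace, which is a nonempty family by \cref{lem:odd} since $\{0\}\in\Lfam(\gamma)$. Then $h$ lies in a linearity subspace, so $h\in\Nodd(\gamma)$. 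Given $v\in\Nodd(\gamma)$, the line $\spn(v)$ is a linearity subspace, so by \cref{lem:odd} it lies in some maximal $W$, which also contains $h$. Linearity on $W$ then gives $\gamma(v+ch)=\gamma(v)+c\gamma(h)$, so $h\in\Hk(P)$.

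For (4), take $w_1,w_2\in\Afam(\gamma)$. Iterating the defining identity gives $\gamma(h+w_1+w_2)=\gamma(h)+\gamma(w_1)+\gamma(w_2)$, and $h=0$ shows that $w_1+w_2\in\Afam(\gamma)$. The identity $0=\gamma(-w+w)=\gamma(-w)+\gamma(w)$ gives oddness, and then $\gamma(h)=\gamma(h-w+w)=\gamma(h-w)+\gamma(w)$ shows that $-w\in\Afam(\gamma)$. Positive scalars are handled by homogeneity, and closedness again follows from continuity. For $\Afam(\gamma)\subseteq\Hk(P)$: since $cw\in\Afam(\gamma)$ for all $c\in\R$, we get $\gamma(v+cw)=\gamma(v)+c\gamma(w)$ for every $v$. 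If $\Nodd(\gamma)=\T$, the core condition with $c=1$ is exactly membership in $\Afam(\gamma)$, giving equality.

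For (5), if $\gamma$ is linear on $\T$ then $\T$ is the unique maximal linearity subspace, so $\Hk(P)=\T$ by (3). Conversely, if $\Hk(P)=\T$ then $\gamma$ is linear on $\T$ by (2).

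I expect the only delicate point to be the sign bookkeeping in (1) and the reverse inclusion in (3). The latter relies on \cref{lem:odd} to embed an arbitrary odd direction's line in a maximal subspace.
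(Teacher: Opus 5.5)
Your proof is correct and follows the paper's argument essentially step by step. It uses the same core-identity manipulations in (1)--(2), the same enlargement of $V$ by the core and reverse inclusion via embedding $\spn(v)$ in a maximal subspace in (3), and the same $h=-w$ oddness step in (4). The only cosmetic difference is in (1): you split on the sign of $a$ and divide, whereas the paper notes that $av\in\Nodd(\gamma)$ for every real $a$ and applies the core identity at $av$ directly.
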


\Cref{thm:core} shows that every linearity subspace can be enlarged by the core. Consequently, any estimator regular along a maximal subspace is also regular along the core and is subject to its variance bound. The additivity space is contained in the core because its directions preserve linearity on every linearity subspace.

The two spaces can differ. For $\gamma(h)=h_1^2/\norm h$ on $\R^2$ (with $\gamma(0)=0$), the core is $\{h_1=0\}$ but the additivity space is $\{0\}$. Thus a shared regularity subspace need not give a translation symmetry of the whole target. Their equality is the condition used for the risk factorization in \cref{sec:structure}; it holds for the maxima and order statistics treated below.

\subsection{Variance bounds on regularity subspaces}

On a specified linearity subspace, the Riesz representer of the derivative is the efficient influence function, and its squared norm gives the variance bound. The next proposition relates these influence functions by orthogonal projection, allowing the bounds to be compared within the tangent space.

\begin{proposition}\label{prop:bounds}
For $V\in\Lfam(\gamma)$ let $\tilde\psi_V\in\overline V$ be the Riesz representer of $\gamma|_V$ (extended to $\overline V$) and $V_V:=\norm{\tilde\psi_V}^2=\sup_{h\in V\setminus0}\gamma(h)^2/\norm h^2$. Then:
\begin{enumerate}[label=(\arabic*)]
\item $V\subseteq V'$ in $\Lfam(\gamma)$ implies $\tilde\psi_V=\Pi_{\overline V}\tilde\psi_{V'}$ and $V_V\le V_{V'}$;
\item $\Vstar(P):=\sup_{V\in\Lfam(\gamma)}V_V=\sup_{h\in\Nodd(\gamma)\setminus0}\gamma(h)^2/\norm h^2$;
\item $\Vk(P):=V_{\Hk(P)}$ satisfies $\Vk(P)\le V_V\le\Vstar(P)$ for every $V\in\Lfam_{\max}(\gamma)$.
\end{enumerate}
For a linearity cone $C$, $\tilde\psi_C$ denotes the representer of the linear functional on $\clspan C$ that agrees with $\gamma$ on $C$ (when bounded), and $V_C:=\norm{\tilde\psi_C}^2$ depends only on $\spn C$ and that functional.
\end{proposition}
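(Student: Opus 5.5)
The argument is Hilbert-space bookkeeping, taking as given that $\gamma|_V$ is bounded on each $V\in\Lfam(\gamma)$. For general linearity subspaces (not only regularity subspaces) this boundedness needs an argument, and I would supply it first. Continuity of $\gamma$ on $\T$ (\cref{lem:autocont}) gives continuity at $0$. So there is $\delta>0$ with $\abs{\gamma(h)}\le1$ whenever $\norm h\le\delta$. Positive homogeneity then yields $\abs{\gamma(h)}\le\delta^{-1}\norm h$ on all of $\T$, so $\gamma|_V$ is a bounded linear functional for every $V\in\Lfam(\gamma)$. It extends uniquely and continuously to $\overline V$, and by \cref{lem:odd} the extension agrees with $\gamma$ on $\overline V\in\Lfam(\gamma)$. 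Riesz then gives $\tilde\psi_V\in\overline V$. The variational formula $V_V=\sup_{h\in V\setminus0}\gamma(h)^2/\norm h^2$ is the norm of the functional: Cauchy--Schwarz gives $\le$, and taking $h\in V$ approximating $\tilde\psi_V$ gives equality by density of $V$ in $\overline V$.

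For (1), take $V\subseteq V'$ in $\Lfam(\gamma)$. For every $h\in V$,
\[
\ip{\tilde\psi_{V'}}{h}=\gamma(h)=\ip{\tilde\psi_V}{h},
\]
and by continuity this holds for $h\in\overline V$. Hence $\tilde\psi_{V'}-\tilde\psi_V\perp\overline V$ while $\tilde\psi_V\in\overline V$, so $\tilde\psi_V=\Pi_{\overline V}\tilde\psi_{V'}$. The inequality $V_V\le V_{V'}$ follows because projections are contractions (or directly from the sup formula).

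For (2), use the sup formula. Each $V\in\Lfam(\gamma)$ lies in $\Nodd(\gamma)$, since linearity gives $\gamma(-h)=-\gamma(h)$. So $V_V\le\sup_{h\in\Nodd\setminus0}\gamma(h)^2/\norm h^2$. Conversely, for $h\in\Nodd(\gamma)\setminus0$ the line $\spn(h)$ is a linearity subspace: $\gamma(ch)=c\gamma(h)$ holds for $c\ge0$ by homogeneity and for $c<0$ by oddness. Its bound is exactly $\gamma(h)^2/\norm h^2$, giving the reverse inequality.

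For (3), \cref{thm:core}(2) says $\Hk(P)$ is a closed linearity subspace, and by \cref{thm:core}(3) it is contained in every maximal $V$. Part (1) then gives $\Vk(P)\le V_V$, and $V_V\le\Vstar(P)$ is immediate from the definition of the supremum.

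For the final remark on cones $C$, a linearity cone has $\gamma$ agreeing with a linear functional $\ell$ on $C-C=\spn C$. This $\ell$ is determined by its values on $C$, because any $x\in\spn C$ can be written $x=a-b$ with $a,b\in C$. When $\ell$ is bounded it extends uniquely to $\clspan C$, so $\tilde\psi_C$ and $V_C$ depend only on $\spn C$ and $\ell$.

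The main point needing care is the boundedness step. It relies on positive homogeneity together with continuity at the origin, and this is what lets the Riesz representer be defined for every member of $\Lfam(\gamma)$ rather than only for regularity subspaces, where \cref{thm:main} would supply boundedness. Everything else is routine projection algebra.
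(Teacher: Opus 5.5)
Your proof is correct and follows the paper's argument essentially line for line: (1) comes from the representer identity on $V$, (2) from comparing lines in $\Nodd(\gamma)$ with the inclusion $V\subseteq\Nodd(\gamma)$, and (3) from \cref{thm:core}(3) together with (1). The only addition is your explicit boundedness step, via continuity at $0$ and positive homogeneity, which the paper leaves implicit in its standing assumption that $\gamma$ is continuous.
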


\begin{corollary}[the Gaussian factor guaranteed by a regularity family]\label{cor:supremum}
Under the model and differentiability assumptions of \cref{thm:main}, let $T_n$ satisfy $\sqrt n\{T_n-\Psi(P)\}\wto L$ under $P^n$. Then
\[
v(T_n):=\sup_{C\in\Rfam^{\rm cone}(T_n)}V_C<\infty,\qquad
L=N(0,v(T_n))*M
\]
for a probability law $M$, and $\sup_{V\in\Rfam(T_n)}V_V\le\Vstar(P)$. The factor $N(0,v(T_n))$ is guaranteed by the regularity family. The residual $M$ may itself contain Gaussian noise, so $v(T_n)$ need not be the largest Gaussian divisor of $L$.
\end{corollary}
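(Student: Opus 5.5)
We apply \cref{thm:main} to each cone in the regularity family separately, bound the resulting Gaussian variances uniformly through the characteristic function of $L$, and then pass to a limit along a maximizing sequence of cones. Note that \cref{thm:main} applies to every $C\in\Rfam^{\rm cone}(T_n)$, because $\sqrt n\{T_n-\Psi(P)\}\wto L$ under $P^n$ is assumed. It gives $L=N(0,V_C)*M_C$ with $V_C=\norm{\tilde\psi_C}^2<\infty$.

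\textbf{Finiteness of $v(T_n)$.} Writing $\varphi_L$ for the characteristic function of $L$, the decomposition gives $\varphi_L(t)=e^{-V_Ct^2/2}\varphi_{M_C}(t)$. Since $\abs{\varphi_{M_C}}\le1$, this yields $\abs{\varphi_L(t)}\le e^{-V_Ct^2/2}$ for all $t$. The function $\varphi_L$ is continuous with $\varphi_L(0)=1$, so we can fix $t_0>0$ with $\abs{\varphi_L(t_0)}\ge1/2$. Then $V_C\le 2\log2/t_0^2$ for every $C$ in the family, and hence $v(T_n)\le 2\log 2/t_0^2<\infty$. This is the same mechanism the paper uses to rule out unbounded extensions.

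\textbf{The convolution with $v:=v(T_n)$.} The family $\Rfam^{\rm cone}(T_n)$ need not be closed under sums, so the supremum may not be attained. We therefore choose cones $C_k$ with $V_k:=V_{C_k}\uparrow v$ and write $L=N(0,V_k)*M_k$.

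\emph{Tightness of $(M_k)$.} If $X=G+Y$ with $G$ and $Y$ independent, then
\[
P(\abs G\le a)\,P(\abs Y>2a)\le P(\abs X>a).
\]
Here $X\sim L$ is fixed and the $G\sim N(0,V_k)$ have variances bounded by $v$, so $P(\abs G\le a)\ge 1/2$ for large $a$, uniformly in $k$. Hence $(M_k)$ is tight.

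\emph{Passage to the limit.} By Prokhorov, some subsequence $M_{k_j}\wto M$. Then $\varphi_{M_{k_j}}(t)\to\varphi_M(t)$ pointwise, and
\[
\varphi_L(t)=e^{-V_{k_j}t^2/2}\varphi_{M_{k_j}}(t)\to e^{-vt^2/2}\varphi_M(t).
\]
By uniqueness of characteristic functions, $L=N(0,v)*M$. An alternative is to define $\varphi_M:=\varphi_L e^{vt^2/2}$ directly, but that requires showing it is a characteristic function, which is exactly what tightness supplies. This limit step is the only delicate point.

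\textbf{The bound by $\Vstar(P)$.} For subspaces $V\in\Rfam(T_n)$, \cref{thm:main}(1) gives $V\in\Lfam(\gamma)$. Then \cref{prop:bounds}(2) gives $V_V\le\Vstar(P)$, so the supremum over subspaces is at most $\Vstar(P)$.

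\textbf{Closing remark.} The last sentence of the statement is illustrated by any estimator adding independent Gaussian noise. For example, at a nonlinear point with a smaller regularity family, $M$ can contain a Gaussian component not accounted for by $v$. So $v$ is only the divisor guaranteed by regularity, not the maximal Gaussian divisor of $L$.
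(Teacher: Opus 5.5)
Your proposal is correct and follows the paper's proof. The bound $\abs{\phi_L(t_0)}\le e^{-V_Ct_0^2/2}$ gives $v(T_n)<\infty$, a maximizing sequence $C_k$ yields $M$, and \cref{thm:main}(1) with \cref{prop:bounds}(2) gives the $\Vstar(P)$ bound. The one difference is the limit step: the paper applies L\'evy's continuity theorem directly to $\phi_{M_{C_k}}(t)=\phi_L(t)e^{V_{C_k}t^2/2}\to\phi_L(t)e^{v(T_n)t^2/2}$, a limit that is evidently continuous at $0$. Your explicit tightness inequality and Prokhorov subsequence are therefore a valid hands-on substitute for that theorem rather than a required extra step, and the whole sequence $M_{C_k}$ in fact converges.
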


\Cref{cor:supremum} combines the bounds implied by one estimator's regularity family into a single convolution representation. Its Gaussian factor is required by regularity; the residual may contain further Gaussian variation. The following corollary gives the resulting lower bounds for losses, including cases where the residual has no finite variance.
A loss is bowl-shaped here if it is nonnegative, symmetric, and nondecreasing in the magnitude of its argument.

\begin{corollary}[loss functions]\label{cor:loss}
Under the assumptions of \cref{cor:supremum}, for $C\in\Rfam^{\rm cone}(T_n)$ and bowl-shaped $\ell$, $\int\ell\,dL\ge\int\ell\,dN(0,V_C)$ (Anderson's lemma, \citealp[Lemma 8.5]{vdV1998}); if $L$ has finite variance, $\Var L\ge v(T_n)$, with equality iff $M$ in \cref{cor:supremum} is a point mass.
\end{corollary}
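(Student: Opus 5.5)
The statement has two parts. The first is the Anderson bound for a single regular cone. The second is the variance inequality, with its equality case, for the supremum over the regularity family. I would treat them in that order.

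For the first part, take $C\in\Rfam^{\rm cone}(T_n)$. \Cref{thm:main}(2) gives $L=N(0,V_C)*M_C$ for some probability law $M_C$. Let $Z\sim N(0,V_C)$ be independent of $W\sim M_C$. Then
\[
\int\ell\,dL=\int\E\,\ell(Z+w)\,dM_C(w)
\]
by Tonelli, which applies because $\ell\ge0$. Since $N(0,V_C)$ is centered Gaussian and $\ell$ is bowl-shaped, Anderson's lemma \citep[Lemma 8.5]{vdV1998} gives $\E\,\ell(Z+w)\ge\E\,\ell(Z)$ for every $w$. Integrating over $w$ yields $\int\ell\,dL\ge\int\ell\,dN(0,V_C)$. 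Anderson's lemma in one dimension only needs the sublevel sets $\{\ell\le c\}$ to be symmetric intervals, and this holds for a symmetric loss that is nondecreasing in $\abs{x}$; any measurability point is covered by the lower semicontinuity used elsewhere in the paper. When $V_C=0$ the inequality is trivial, since then $\int\ell\,dN(0,0)=\ell(0)$ is the minimum of $\ell$.

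For the variance statement, I would use \cref{cor:supremum}, which gives $L=N(0,v(T_n))*M$ with $v(T_n)<\infty$. Suppose $L$ has finite variance. Write a variable with law $L$ as $Z+W$ with $Z\sim N(0,v(T_n))$ independent of $W\sim M$. The main point is that $W$ inherits a finite second moment from $Z+W$, and this is the only step that needs care. I would argue it as follows. Since $\E(Z+W)^2<\infty$, Fubini gives $\E(z+W)^2<\infty$ for almost every $z$ with respect to $N(0,v(T_n))$; fixing one such $z$ shows $\E W^2<\infty$. If $v(T_n)=0$, then $Z=0$ and $W$ has law $L$ directly. Independence then gives
\[
\Var L=v(T_n)+\Var M\ge v(T_n).
\]
Equality holds iff $\Var M=0$, that is, iff $M$ is a point mass.

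One subtlety is that the equality criterion refers to the particular $M$ of \cref{cor:supremum}. Since $\Var L=v(T_n)+\Var M$ holds for that representation, the iff is immediate and no uniqueness of deconvolution is needed. If one wants $M$ to be canonical, Gaussian characteristic functions never vanish, so $M$ is determined by $L$ and $v(T_n)$; I would mention this only as a remark.
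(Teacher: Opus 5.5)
Your proposal is correct and follows the paper's route, which the paper leaves implicit by citing Anderson's lemma. It combines the convolution representation of \cref{thm:main}(2) with Anderson's lemma for the loss bound, and uses \cref{cor:supremum} with additivity of variance under independent convolution for the variance statement. One small correction: measurability of $\ell$ does not rely on lower semicontinuity, which is not assumed here, since a symmetric function nondecreasing in $\abs{x}$ has symmetric-interval sublevel sets and is therefore automatically Borel.
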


\Cref{cor:loss} gives a common lower bound for symmetric losses that increase with the magnitude of the error. The bound is a consequence of the convolution representation rather than a separate variance calculation. In particular, attaining the centered Gaussian law removes the residual for that regularity requirement.

\Cref{prop:bounds} shows that enlarging a linearity subspace can only increase its variance bound. The core gives a lower bound shared by all maximal choices, whereas $\Vstar$ records the largest bound over linearity subspaces. A cone can span more directions than any such subspace, so its bound need not be bounded by $\Vstar$.

\subsection{Maxima of smooth parameters}

For the maximum of smooth parameters, the directional derivative is linear precisely along subspaces where all active branches change equally to first order. The largest such subspace is also the derivative's additivity space. Every active gradient has the same projection onto it, giving the efficient influence function. The next theorem establishes this geometry for sublinear derivatives; changing sign gives the superlinear case.

\begin{definition}\label{def:kink}
For a sublinear derivative $\gamma$, define $\LamP:=\{\lambda\in\overline\T:\ip\lambda h\le\gamma(h)\ \forall h\}$ and $\FP:=\clspan(\LamP-\LamP)$. Thus $\FP$ is generated by differences between the supporting gradients. For a superlinear derivative, apply the statements below to $-\gamma$, with $\LamP:=-\LamP(-\gamma)$ (the superdifferential) and $\FP$ unchanged.
\end{definition}

\begin{theorem}[sublinear derivatives]\label{thm:kink}
Let $\gamma$ be sublinear and continuous.
\begin{enumerate}[label=(\arabic*)]
\item $\LamP$ is nonempty, convex, closed and bounded, and $\gamma(h)=\max_{\lambda\in\LamP}\ip\lambda h$.
\item $\Nodd(\gamma)=\Hk(P)=\Afam(\gamma)=\FP^\perp\cap\T$ (the lineality space of $\gamma$), and $\Lfam_{\max}(\gamma)=\{\Hk(P)\}$.
\item $\gamma(h)=\ip\lambda h$ for all $h\in\Hk(P)$, $\lambda\in\LamP$; $\psik:=\tilde\psi_{\Hk}=\Pi_{\Hk}\lambda$ for every $\lambda\in\LamP$; under \assT, $\T=\Hk\oplus\FP$, $\psik$ is the minimum-norm element of $\claff\LamP=\lambda+\FP$, characterized as the unique element of the affine hull orthogonal to all tie-breaking directions, and $\Vk(P)=\Vstar(P)=\dist(0,\claff\LamP)^2$.
\item $\Vk(P)\le\norm\lambda^2$ for all $\lambda\in\LamP$, with equality iff $\lambda\perp\FP$.
\item (Polyhedral closed form, singular case included.) If $\LamP=\conv\{\lambda_1,\dots,\lambda_K\}$ with $\lambda_a\in\T$ and Gram matrix $\Sigma\succeq0$, then $\Vk(P)=\min\{w^\top\Sigma w:\one^\top w=1\}$, equal to $1/(\one^\top\Sigma^+\one)$ if $\one\in\range(\Sigma)$ and to $0$ otherwise (equivalently $\Vk=0$ iff $0\in\claff\LamP$); in the first case $\psik=\sum_aw^*_a\lambda_a$ with $w^*=\Sigma^+\one/\one^\top\Sigma^+\one$; and $\codim_\T\Hk=\rank(\lambda_a-\lambda_1)_{a\ge2}\le K-1$.
\end{enumerate}
\end{theorem}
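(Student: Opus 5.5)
Part (1) is convex analysis in the Hilbert space $\overline\T$. Since $\gamma$ is sublinear and continuous on $\T$, it extends uniquely by continuity to a sublinear continuous function on $\overline\T$; under \assT this closure is $\T$ itself. Continuity at $0$ together with positive homogeneity gives a bound $\gamma(h)\le K\norm h$. Every $\lambda\in\LamP$ then satisfies $\ip\lambda h\le K\norm h$, so $\norm\lambda\le K$. Closedness and convexity of $\LamP$ are immediate because it is an intersection of closed half-spaces. For each $h$, I will obtain $\lambda$ with $\ip\lambda h=\gamma(h)$ from Hahn--Banach: extend $c h\mapsto c\gamma(h)$, which is dominated by $\gamma$ on $\spn h$ since $-\gamma(h)\le\gamma(-h)$. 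The dominated extension is automatically continuous, and Riesz gives a representer. Hence $\gamma=\max_{\LamP}\ip\lambda\cdot$, with the maximum attained.

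For part (2), write $\gamma(h)+\gamma(-h)=\max_\lambda\ip\lambda h-\min_\lambda\ip\lambda h$. This vanishes iff $\ip{\lambda-\lambda'}h=0$ for all $\lambda,\lambda'\in\LamP$, that is, iff $h\in\FP^\perp\cap\T$. So $\Nodd(\gamma)$ is this closed subspace. On it $\gamma(h)=\ip{\lambda_0}h$ for any fixed $\lambda_0\in\LamP$, so it is a linearity subspace. For $w\in\Nodd$ and any $h$, $\gamma(h+w)=\max_\lambda\{\ip\lambda h+\ip{\lambda_0}w\}=\gamma(h)+\gamma(w)$, hence $\Nodd\subseteq\Afam$. \Cref{thm:core}(4) gives $\Afam\subseteq\Hk$, and $\Hk\subseteq\Nodd$ holds by definition, so all three coincide. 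Every linearity subspace lies in $\Nodd$, since linearity forces oddness; therefore the unique maximal element is $\Hk$. The lineality-space identification is then a restatement.

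Part (3) follows by a projection argument. For $h\in\Hk$ and $\lambda\in\LamP$ we have $\ip\lambda h=\gamma(h)$, so $\Pi_{\Hk}\lambda$ represents $\gamma|_{\Hk}$ and equals $\psik$. Under \assT, $\T=\Hk\oplus\Hk^\perp$ and $\Hk^\perp=\FP$, because $\FP$ is a closed subspace contained in $\T$. The decomposition $\lambda=\psik+(\lambda-\psik)$ therefore has $\lambda-\psik\in\FP$. Hence $\psik\in\lambda+\FP=\claff\LamP$ and $\psik\perp\FP$, which identifies $\psik$ as the minimum-norm point of that affine set, i.e.\ the distance from $0$. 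For $\Vstar$, \cref{prop:bounds}(2) is a supremum over $\Nodd=\Hk$, so $\Vstar=\Vk$. Part (4) follows from Pythagoras: $\norm\lambda^2=\norm{\psik}^2+\norm{\lambda-\psik}^2$, with equality iff $\lambda\in\Hk=\FP^\perp$.

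Part (5) specializes to the polyhedral case. Here $\claff\LamP=\{\sum w_a\lambda_a:\one^\top w=1\}$ is finite-dimensional and already closed, so $\Vk=\min\{w^\top\Sigma w:\one^\top w=1\}$. The Lagrange or KKT conditions are $\Sigma w=\mu\one$. If $\one\notin\range\Sigma$, some $u\in\ker\Sigma$ has $\one^\top u\neq0$; normalizing $u$ gives value $0$. Otherwise $w^*=\Sigma^+\one/\one^\top\Sigma^+\one$ achieves $1/\one^\top\Sigma^+\one$. I expect the main care to be needed in the singular case: I must verify optimality by showing that any feasible $w$ has $w^\top\Sigma w\ge(\one^\top\Sigma^+\Sigma w)^2/\one^\top\Sigma^+\one$ via Cauchy--Schwarz in the $\Sigma$ semi-inner product, using $\Sigma\Sigma^+\one=\one$. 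Finally, $\codim\Hk=\dim\FP=\rank$ of the differences $\lambda_a-\lambda_1$, which is at most $K-1$.
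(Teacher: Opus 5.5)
Your proposal is correct and follows essentially the same route as the paper's proof: Hahn--Banach plus Riesz for the support-function representation, the max-minus-min identity identifying $\Nodd(\gamma)$ with $\FP^\perp\cap\T$, projection onto the core for the representer and the affine-hull characterization, and the affine-weight quadratic program in the polyhedral case. The differences are minor. You close the chain $\Nodd\subseteq\Afam\subseteq\Hk\subseteq\Nodd$ using \cref{thm:core}(4) and the definition of the core, where the paper uses \cref{thm:core}(3). You also prove optimality in the singular case by Cauchy--Schwarz in the $\Sigma$ semi-inner product, where the paper uses KKT; this gives a direct lower bound without appealing to sufficiency of the KKT conditions.
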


\Cref{thm:kink} identifies the efficient influence function along the core as the minimum-norm element of the closed affine hull of the active gradients. Orthogonal projection removes the gradient components that distinguish the tied branches. For finitely many branches, the calculation reduces to variance minimization over weights that sum to one; these weights need not be nonnegative.

For independent active components with variances $\sigma_a^2>0$, the bound is $(\sum_a\sigma_a^{-2})^{-1}$. It is strictly below every component variance when at least two components are tied. In \eqref{eq:intro-experiment}, this gives $1/2$ for estimating the common level. The gain reflects the independent information contributed by both observations.

\begin{corollary}[the common Gaussian bound]\label{cor:core-convolution}
Under the model and differentiability assumptions of \cref{thm:main}, let $T_n$ satisfy $\sqrt n\{T_n-\Psi(P)\}\wto L$ under $P^n$.
\begin{enumerate}[label=(\arabic*)]
\item If $T_n$ is maximally regular, meaning regular along some $V\in\Lfam_{\max}(\gamma)$, then it is regular along $\Hk(P)$ and
\[
L=N(0,\Vk(P))*M',\qquad v(T_n)\ge\Vk(P).
\]
\item If $\gamma$ is sublinear or superlinear, maximal regularity is regularity along $\Hk(P)$, and
\[
\Vk(P)=\Vstar(P)=\dist(0,\claff\LamP)^2.
\]
Every $\Hk$-regular estimator then has $L=N(0,\Vk(P))*M$.
\end{enumerate}
\end{corollary}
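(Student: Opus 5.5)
The corollary is assembled from results already stated; no new analysis is needed.

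\textbf{Part (1).} Suppose $T_n$ is regular along some $V\in\Lfam_{\max}(\gamma)$.
\begin{enumerate}[label=(\roman*)]
\item By \cref{thm:core}(3), $V+\Hk(P)$ is a linearity subspace containing $V$. Maximality forces $V+\Hk(P)=V$, so $\Hk(P)\subseteq V$.
\item The regularity family $\Rfam(T_n)$ is closed under passing to smaller subspaces, as noted after \cref{def:odd}. Hence $T_n$ is regular along $\Hk(P)$.
\item By \cref{thm:core}(2), $\Hk(P)$ is a closed linearity subspace. Apply \cref{thm:main}(2) with $C=\Hk(P)$. The functional $\ell_C$ is $\gamma|_{\Hk}$, which is bounded by \cref{thm:main}(1). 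Its representer is $\psik$, with $V_C=\Vk(P)$. This gives $L=N(0,\Vk(P))*M'$ for some probability law $M'$.
\item Since $\Hk(P)\in\Rfam^{\rm cone}(T_n)$, the definition of $v(T_n)$ in \cref{cor:supremum} as a supremum gives $v(T_n)\ge V_{\Hk}=\Vk(P)$.
\end{enumerate}

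\textbf{Part (2).} For sublinear $\gamma$, which is continuous by \cref{lem:autocont}:
\begin{enumerate}[label=(\roman*)]
\item \Cref{thm:kink}(2) gives $\Lfam_{\max}(\gamma)=\{\Hk(P)\}$. So ``regular along some maximal linearity subspace'' means exactly ``regular along $\Hk(P)$.''
\item \Cref{thm:kink}(3) gives the identity $\Vk(P)=\Vstar(P)=\dist(0,\claff\LamP)^2$.
\item The convolution statement for every $\Hk$-regular estimator is then part (1), or equivalently a direct application of \cref{thm:main} with $C=\Hk(P)$.
\end{enumerate}

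For superlinear $\gamma$, apply the same reasoning to $-\gamma$. Negation preserves the odd cone, linearity subspaces, core and representer up to sign, and leaves $\Vk$ unchanged. With the convention $\LamP:=-\LamP(-\gamma)$ of \cref{def:kink}, the distance $\dist(0,\claff\LamP)$ is unchanged by the sign flip.

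\textbf{Main technical point.} The only step needing care is applying \cref{thm:main} to $\Hk(P)$ when $\Hk(P)$ may be infinite-dimensional. This is covered because \cref{thm:main}(2) is stated for an arbitrary convex cone and handles the closure via \cref{lem:closure}. Boundedness of $\gamma|_{\Hk}$ also needs no separate check, since \cref{thm:main}(1) supplies it. If a fully explicit argument were preferred, one could instead verify boundedness directly from continuity of $\gamma$ and linearity on $\Hk$.
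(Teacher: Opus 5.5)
Your proposal is correct and follows the paper's own proof essentially step for step. The chain is the same: $\Hk(P)\subseteq V$ from \cref{thm:core}(3), then regularity passing to subspaces, then \cref{thm:main}(2) applied on $\Hk(P)$, then \cref{cor:supremum} for $v(T_n)\ge\Vk(P)$; part (2) likewise uses \cref{thm:kink}(2)--(3), with $-\gamma$ handling the superlinear case.

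One small correction to your closing remark: the infinite-dimensional case of \cref{thm:main}(2) is handled by the L\'evy-continuity passage over finite-dimensional subcones, not by \cref{lem:closure}. This does not affect your argument, since \cref{thm:main} is stated for arbitrary convex cones and $\Hk(P)$ is already closed.
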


\Cref{cor:core-convolution} gives the convolution conclusion associated with the core. For a maximum, the affine-hull bound applies to every core-regular estimator, whether or not it is asymptotically linear. For a general derivative, the core bound applies to every estimator regular along a maximal subspace, while different maximal subspaces can have larger bounds.

\subsection{Reduction to finitely many smooth parameters}

When the derivative has several maximal linearity subspaces, their variance bounds need not equal the core bound. For parameters that are functions of finitely many smooth parameters, the following lemma reduces the comparison to a finite-dimensional projection using the covariance matrix of their gradients. In this setting $\Hk(\gamma)$ denotes the same core as $\Hk(P)$, and $\Hk(g)$ denotes the core of the reduced derivative $g$.

\begin{lemma}[reduction]\label{lem:reduction}
Let $\lambda_1,\dots,\lambda_K\in\T$ be linearly independent with Gram matrix $\Sigma$, and write
\[
\ell(h):=(\ip{\lambda_a}h)_{a\le K},\qquad
\ell^*c:=\sum_ac_a\lambda_a.
\]
Suppose $\gamma=g\circ\ell$ with $g:\R^K\to\R$ continuous and positively homogeneous. Then $\ker\ell\subseteq\Afam(\gamma)$ and
\[
\begin{split}
\Nodd(\gamma)&=\ell^{-1}(\Nodd(g)),\qquad
V\in\Lfam(\gamma)\ \Longleftrightarrow\ \ell(V)\in\Lfam(g),\\
\Lfam_{\max}(\gamma)&=\{\ell^{-1}(U):U\in\Lfam_{\max}(g)\},\\
\Hk(\gamma)&=\ell^{-1}(\Hk(g)),\qquad
\Afam(\gamma)=\ell^{-1}(\Afam(g)).
\end{split}
\]
If $g|_U=b^\top(\cdot)$ on $U\in\Lfam(g)$ and $V=\ell^{-1}(U)$, then
\[
V^\perp\cap\T=\ell^*(U^\perp),\qquad
\tilde\psi_V=\Pi_V\ell^*b,\qquad
V_V=\min_{c\in U^\perp}(b-c)^\top\Sigma(b-c).
\]
\end{lemma}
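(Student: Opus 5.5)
The whole lemma rests on one observation: because $\lambda_1,\dots,\lambda_K$ are linearly independent, the map $\ell:\T\to\R^K$ is surjective. Given $x\in\R^K$, the element $h=\ell^*\Sigma^{-1}x$ satisfies $\ell(h)=x$, and $\Sigma$ is invertible. Every claim then transfers from $\gamma=g\circ\ell$ to $g$ through $\ell$.

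\textbf{Kernel and odd cone.} If $w\in\ker\ell$, then $\gamma(h+w)=g(\ell h)=\gamma(h)$, and $\gamma(w)=g(0)=0$ by positive homogeneity. Hence $\ker\ell\subseteq\Afam(\gamma)$. The identity $\Nodd(\gamma)=\ell^{-1}(\Nodd(g))$ is immediate from $\gamma(\pm h)=g(\pm\ell h)$.

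\textbf{Linearity subspaces.} For a subspace $V$, suppose $\gamma|_V$ is linear. Surjectivity of $\ell|_V$ onto $\ell(V)$ lets us choose preimages of any $x,y\in\ell(V)$. Then $g(x+y)=g(x)+g(y)$ and $g(cx)=cg(x)$ follow. Linearity is well defined because $g$ is evaluated on images, so it does not depend on the preimage chosen. Conversely, if $g|_{\ell(V)}$ is linear, then $\gamma|_V=g\circ\ell|_V$ is a composition of linear maps.

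The map $U\mapsto\ell^{-1}(U)$ is an inclusion-preserving bijection between subspaces of $\R^K$ and subspaces of $\T$ containing $\ker\ell$. Its inverse is $V\mapsto\ell(V)$, again by surjectivity. Every $V\in\Lfam(\gamma)$ satisfies $V\subseteq V+\ker\ell=\ell^{-1}(\ell V)\in\Lfam(\gamma)$. So maximal elements contain $\ker\ell$, and these maximal elements correspond to the maximal elements of $\Lfam(g)$.

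\textbf{Core and additivity space.} The identity for $\Hk$ follows from the definition of the core. A direction $w$ lies in $\Hk(\gamma)$ iff $\ell w\in\Nodd(g)$ and $g(\ell v+c\ell w)=g(\ell v)+cg(\ell w)$ for all $\ell v\in\Nodd(g)$. Because $\ell$ maps $\Nodd(\gamma)$ onto $\Nodd(g)$, this is exactly $\ell w\in\Hk(g)$. The additivity space is handled the same way, using $\ell(\T)=\R^K$. Alternatively, the core identity follows from \cref{thm:core}(3) and the correspondence of maximal elements.

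\textbf{Representer formula.} Let $V=\ell^{-1}(U)$. This set is closed, being the preimage of a subspace under a continuous map.

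First compute the orthogonal complement. An element $f\in\T$ is orthogonal to $\ker\ell$ iff $f\in\spn\{\lambda_a\}=\ell^*(\R^K)$, since $\ker\ell=\spn\{\lambda_a\}^\perp\cap\T$ and the span is finite-dimensional, hence closed. For $f=\ell^*c$ we have $\ip{\ell^*c}{h}=c^\top\ell h$. Therefore $f\perp V$ iff $c^\top u=0$ for all $u\in U=\ell(V)$, that is, iff $c\in U^\perp$. This gives $V^\perp\cap\T=\ell^*(U^\perp)$.

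Next identify the representer. For $h\in V$, $\gamma(h)=b^\top\ell h=\ip{\ell^*b}{h}$. So $\ell^*b$ represents $\gamma|_V$ on $V$, and the representer lying in $V$ is $\tilde\psi_V=\Pi_V\ell^*b$.

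Finally compute the bound. The projection satisfies $\Pi_V\ell^*b=\ell^*b-\ell^*c^*$, where $\ell^*c^*$ is the projection of $\ell^*b$ onto $V^\perp\cap\T=\ell^*(U^\perp)$. Since $\norm{\ell^*(b-c)}^2=(b-c)^\top\Sigma(b-c)$,
\[
V_V=\min_{c\in U^\perp}(b-c)^\top\Sigma(b-c).
\]
The minimum is attained because $U^\perp$ is finite-dimensional and $\Sigma\succ0$.

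The only delicate point is the orthogonal decomposition $\T=V\oplus(V^\perp\cap\T)$ within $\T$. This needs $\T$ closed, which is \assT, and $\ell^*b\in\T$, which holds because each $\lambda_a\in\T$.
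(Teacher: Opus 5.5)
Your proof is correct and follows the same route as the paper: surjectivity of $\ell$ (from $\Sigma\succ0$) transfers each identity, maximal linearity subspaces contain $\ker\ell$ and so have the form $\ell^{-1}(U)$, and $V^\perp\cap\T=\ell^*(U^\perp)$ turns the representer norm into a $\Sigma$-weighted projection. The differences are minor---you enlarge $V$ by $\ker\ell$ through $V+\ker\ell=\ell^{-1}(\ell V)$ rather than through \cref{thm:core}(3), you check the $\Nodd$, $\Hk$ and $\Afam$ identities directly from the definitions where the paper leaves them implicit, and you do not actually need closedness of $\T$ at the end, since projecting onto the finite-dimensional space $\ell^*(U^\perp)$ already gives $\T=V\oplus\ell^*(U^\perp)$.
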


\Cref{lem:reduction} shows that directions invisible to the finite parameter vector belong to the additivity space. The remaining problem is a projection under the covariance metric $\Sigma$, not an unweighted Euclidean calculation. This gives explicit representers for each compatible subspace and makes the comparison below possible.

\begin{proposition}[$\Vstar$ for odd polyhedral derivatives]\label{prop:vstar}
In the setting of \cref{lem:reduction} with $\Sigma$ nonsingular, let $g$ be odd, continuous, positively homogeneous and piecewise linear with pieces $b_i^\top(\cdot)$ on the closed convex cones $C_i$ of a fan. Then
\[
\Vstar(P)=\max_i\norm{\Pi_{\mathcal C_i}(\Sigma^{1/2}b_i)}^2,\qquad\mathcal C_i:=\Sigma^{-1/2}C_i,
\]
with $\Pi_{\mathcal C}$ the Euclidean projection onto a closed convex cone. The maximum is attained; if $z^\star$ maximizes $g(z)^2/z^\top\Sigma^{-1}z$ and $h^\star:=\ell^*\Sigma^{-1}z^\star$, every maximal linearity subspace containing $h^\star$ has bound $\Vstar$, and a $V$-efficient estimator for such $V$ has Gaussian component $\Vstar$. Moreover $\Vstar\le\max_ib_i^\top\Sigma b_i$, with equality iff $\Sigma b_i\in C_i$ for some maximizing $i$.
\end{proposition}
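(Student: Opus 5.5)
We work entirely in the finite-dimensional picture supplied by \cref{lem:reduction}. By \cref{prop:bounds}(2), $\Vstar(P)=\sup_{h\in\Nodd(\gamma)\setminus0}\gamma(h)^2/\norm h^2$. Since $g$ is odd, $\Nodd(g)=\R^K$ and hence $\Nodd(\gamma)=\T$.

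Decompose $h=\ell^*\Sigma^{-1}z+k$ with $k\in\ker\ell$ and $z=\ell(h)$. The first term is the minimum-norm element of $\ell^{-1}(z)$, with squared norm $z^\top\Sigma^{-1}z$. Therefore
\[
\Vstar(P)=\sup_{z\neq0}\frac{g(z)^2}{z^\top\Sigma^{-1}z}.
\]
Substituting $z=\Sigma^{1/2}u$ and splitting over the fan gives
\[
\Vstar(P)=\max_i\sup_{u\in\mathcal C_i,\ \norm u=1}\{(\Sigma^{1/2}b_i)^\top u\}^2.
\]
Oddness lets us drop the square: the fan is symmetric ($-C_i$ is some $C_j$ with $b_j=b_i$), so the supremum of $\abs{a^\top u}$ over the cone equals the supremum of $a^\top u$ over the pair of cones $\pm\mathcal C_i$. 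The standard identity for a closed convex cone, $\sup_{u\in\mathcal C,\norm u\le1}a^\top u=\norm{\Pi_{\mathcal C}a}$, follows from Moreau's decomposition $a=\Pi_{\mathcal C}a+\Pi_{\mathcal C^\circ}a$ with orthogonal components. This yields the stated formula.

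Attainment holds because there are finitely many pieces and the supremum over the compact set $\{\norm u=1\}\cap\mathcal C_i$ of a continuous function is attained. Let $z^\star$ be a maximizer and set $h^\star=\ell^*\Sigma^{-1}z^\star$. Take $V\in\Lfam_{\max}(\gamma)$ with $h^\star\in V$; one exists because $\spn h^\star$ is a linearity subspace (since $\gamma$ is odd) and \cref{lem:odd} applies. Then
\[
V_V=\sup_{h\in V\setminus0}\frac{\gamma(h)^2}{\norm h^2}\ge\frac{\gamma(h^\star)^2}{\norm{h^\star}^2}=\frac{g(z^\star)^2}{z^{\star\top}\Sigma^{-1}z^\star}=\Vstar,
\]
and $V_V\le\Vstar$ by \cref{prop:bounds}. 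A $V$-efficient estimator has limit $N(0,V_V)=N(0,\Vstar)$ by definition, which is its Gaussian component.

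For the final inequality, $\norm{\Pi_{\mathcal C_i}a}\le\norm a$ with $a=\Sigma^{1/2}b_i$, and $\norm a^2=b_i^\top\Sigma b_i$. Equality holds iff $a\in\mathcal C_i$, that is, iff $\Sigma^{1/2}b_i\in\Sigma^{-1/2}C_i$, that is, iff $\Sigma b_i\in C_i$. The ``equality iff'' then needs care: if the overall max equals $\max_ib_i^\top\Sigma b_i$, then for a maximizing $i$ of the projection norm we have $\norm{\Pi a_i}^2=\max_j\norm{a_j}^2\ge\norm{a_i}^2\ge\norm{\Pi a_i}^2$. So $i$ also maximizes $b_i^\top\Sigma b_i$ and $\Sigma b_i\in C_i$. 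Conversely, if some $i$ maximizing $b_i^\top\Sigma b_i$ has $\Sigma b_i\in C_i$, equality follows.

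The main obstacle is the sign/oddness bookkeeping in passing from squared values to the cone projection. A nonnegative maximum may come from $a^\top u<0$, which must be handled by the symmetric cone $-\mathcal C_i$. We would therefore interpret ``maximizing $i$'' as ranging over pieces including negatives, noting that $\Pi_{-\mathcal C}(-a)=-\Pi_{\mathcal C}a$ has the same norm.
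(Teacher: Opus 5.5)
Your proof is correct and follows essentially the same route as the paper's: the minimum-norm preimage $\ell^*\Sigma^{-1}z$, the substitution $z=\Sigma^{1/2}u$, Moreau's decomposition for the supremum over each cone, and \cref{prop:bounds} together with $\norm{\Pi_{\mathcal C_i}u_i}\le\norm{u_i}$ for the remaining claims; your argument for the ``equality iff'' clause is more careful than the paper's one-line version. The one slip is that you assert the fan is centrally symmetric, which the hypotheses do not give. The paper handles this by passing to the common refinement of the fan and its negative, which is the fix your last paragraph gestures at. Alternatively, because the cones cover $\R^K$ you get $\max_i\norm{\Pi_{\mathcal C_i}(\Sigma^{1/2}b_i)}=\sup_{\abs u\le1}g(\Sigma^{1/2}u)$ directly, and oddness makes the square of this equal to $\Vstar$.
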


\Cref{prop:vstar} computes the largest subspace bound by projecting onto the cones where an odd polyhedral derivative selects a linear branch. This formula makes the variance comparison explicit. The median provides a concrete case in which different maximal subspaces admit regular estimators but cannot be combined into a larger regularity subspace.

\subsection{The median programme at a three-way tie: competing maximal spaces}\label{ex:median}
The median of three tied programme values has the same common-level bound as the maximum, but admits additional, mutually incompatible regularity requirements. Let $\mu(P)=\theta\one$, where $\one=(1,1,1)^\top$, and let $\Psi(Q)=\med_{j\le3}\mu_j(Q)$. Suppose the programme values are pathwise differentiable with gradients $\lambda_j\in\T$. Write $\ell(h)=(\ip{\lambda_j}h)_{j=1}^3$ and $\Sigma=(\ip{\lambda_i}{\lambda_j})_{i,j=1}^3$, and assume $\Sigma$ is nonsingular. The directional derivative is then $\gamma=\med\circ\ell$.

The median is odd, so $\Nodd(\gamma)=\T$ and its core equals its additivity space. More specifically, with $F=\spn\{\lambda_i-\lambda_j:i,j\le3\}$,
\[
\Hk=\Afam(\gamma)=\ell^{-1}(\R\one)=F^\perp\cap\T,
\qquad
\Vk=\frac{1}{\one^\top\Sigma^{-1}\one}.
\]
These are the same core and bound as for the maximum: the core consists of changes that move all three programme values equally. The median can also vary linearly when one contrast changes. Its maximal linearity spaces in the reduced parameter space are all the planes
\[
U_a=\{z\in\R^3:a^\top z=0\},\qquad
0\ne a\perp\one,
\]
with proportional normals defining the same plane. Their score-space counterparts are $V_a=\ell^{-1}(U_a)$. This continuum is the three-programme case of \cref{thm:order,prop:m1}; each plane excludes one direction that changes relative programme values.

The variance comparison is particularly transparent when $\Sigma=I_3$. The core bound is $1/3$, while the bounds on the maximal score spaces $V_a$ range over $[1/3,1/2]$. For distinct $i,j,k$, the three midpoint planes $\{2z_j=z_i+z_k\}$ have bound $1/3$. The three pair-tie planes $\{z_i=z_k\}$ have bound $1/2$. Thus the same core can be enlarged in ways that require either no additional Gaussian variation or a strictly larger component.

These bounds have explicit attaining estimators. Suppose $\hat\mu$ is jointly asymptotically linear at $P$ with component influence functions $\lambda_j$. When $\Sigma=I_3$, the average $(\hat\mu_1+\hat\mu_2+\hat\mu_3)/3$ is efficient along the core and along the score-space preimage of every midpoint plane. On a midpoint plane the median is the midpoint coordinate, which also equals the three-programme average. On the preimage of $\{z_i=z_k\}$, the pair average $(\hat\mu_i+\hat\mu_k)/2$ is efficient with variance $1/2$, since the common value of the tied pair is the median. The core-efficient average cannot be regular along this latter space, as its response does not reproduce that common pair value.

For general $\Sigma$, the covariance matrix determines both the bound on each plane and the weights of its efficient estimator. \Cref{app:median-details} gives these formulas, the precise conditions under which the core-efficient pool accommodates a maximal space, and the corresponding single-programme and plug-in comparisons. The core therefore supplies a variance bound shared by all maximal regularity subspaces, while the chosen additional contrast determines whether a different influence function and a larger variance are necessary.

\section{Efficient estimation under subspace and cone regularity}\label{sec:cost}

An estimator is efficient along a specified subspace when it attains the centered Gaussian law in the convolution theorem. We characterize this attainment through its influence function and give explicit constructions. Comparing the resulting bounds then determines whether regularity along additional directions requires a larger asymptotic variance.

\subsection{Influence functions and attainable bounds}\label{sec:sufficiency}

For an asymptotically linear estimator, regularity along a subspace requires its influence function to represent the parameter's derivative there. The Riesz representer has the smallest norm among such influence functions. We first establish attainment in the Gaussian limit experiment and then give sufficient conditions for attainment by estimator sequences.

\begin{proposition}[limit experiment]\label{prop:limitexp}
For every $V\in\Lfam(\gamma)$ the statistic $T:=\Delta(\tilde\psi_V)$ in the Gaussian-shift experiment $\{\Delta+\ip h\cdot:h\in\T\}$ satisfies: the law of $T-\gamma(h)$ under $h$ equals $N(0,V_V)$ for every $h\in V$. Hence in the limit experiment the linearity subspaces are exactly the subspaces along which some statistic is equivariant in law. For a linearity cone $C$ the same holds iff the linear functional agreeing with $\gamma$ on $C$ extends boundedly to $\spn C$ (necessity: \cref{thm:main}(1) read in the limit experiment).
\end{proposition}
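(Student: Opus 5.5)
The plan is a direct computation in the Gaussian-shift experiment, followed by an appeal to \cref{thm:main} for the converse.

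\emph{The law of $T$.} In the experiment $\{\Delta+\ip h\cdot:h\in\T\}$, under $h$ the process is $\Delta(g)+\ip hg$ for $g\in\overline\T$. Hence $T=\Delta(\tilde\psi_V)$ is distributed under $h$ as
\[
N\bigl(\ip h{\tilde\psi_V},\,\norm{\tilde\psi_V}^2\bigr)=N\bigl(\ip h{\tilde\psi_V},\,V_V\bigr).
\]
Since $V$ is a linearity subspace, \cref{prop:bounds} gives $\tilde\psi_V\in\overline V$ as the Riesz representer of $\gamma|_V$. This representer exists because $\gamma|_V$ is continuous (\cref{lem:autocont}) and linear, hence bounded; it extends to $\overline V$ by continuity. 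So $\ip h{\tilde\psi_V}=\gamma(h)$ for $h\in V$, and $T-\gamma(h)\sim N(0,V_V)$ for every $h\in V$. This yields the first claim. It also yields the ``if'' direction of the characterization: every linearity subspace admits a statistic that is equivariant in law.

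\emph{Cones.} For a linearity cone $C$ whose agreeing functional $\ell_C$ extends boundedly to $\clspan C$, I repeat the computation with $\tilde\psi_C$ in place of $\tilde\psi_V$. For $h\in C$ this gives $\ip h{\tilde\psi_C}=\ell_C(h)=\gamma(h)$.

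\emph{Necessity.} Suppose some statistic $S$ (a measurable function of $\Delta$, possibly randomized) satisfies $\Law_h(S-\gamma(h))=L$ for all $h$ in a subspace or cone $C$. I want to rerun the proof of \cref{thm:main}(1) inside the limit experiment. The key step there is the Gaussian tilt identity on finite-dimensional subcones:
\[
\E_h e^{itS}=\E_0\bigl[e^{itS}e^{\Delta(h)-\norm h^2/2}\bigr].
\]
Combined with equivariance, this gives
\[
e^{it\gamma(h)}\phi_L(t)=\E_0\bigl[e^{itS}e^{\Delta(h)-\norm h^2/2}\bigr].
\]
The right-hand side is entire in $h$ along finite-dimensional spans. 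Analytic continuation then forces $\gamma$ to be additive on $C$, hence linear on a subspace $C$, so $C$ is a linearity subspace (cone). The bounded-extension part comes from the variance-divergence argument against continuity of $\phi_L$ at zero.

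Because this is exactly the argument of \cref{thm:main}(1) with the empirical score process replaced by its limit $\Delta$, I will simply cite it. The only care needed is that the experiment is exactly Gaussian, so no contiguity or subsequence step is required. This is the step I expect to be the main obstacle, mostly in checking that the cited proof does not use the estimator-sequence structure beyond the likelihood-ratio identity. If it does, I will realize the limit experiment by a sequence, taking $T_n$ to be $S$ evaluated at the empirical score process via a finite-dimensional approximation, and apply \cref{thm:main} directly.
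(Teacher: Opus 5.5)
Your proposal is correct and follows essentially the paper's proof. The representer $\tilde\psi_V$ exists because $\gamma|_V$ is continuous and linear, and $\Delta(\tilde\psi_V)\sim N(\ip{\tilde\psi_V}{h},V_V)$ under $h$ with $\ip{\tilde\psi_V}{h}=\gamma(h)$ on $V$; the same computation with $\tilde\psi_C$ handles cones, and necessity is \cref{thm:main}(1) read in the limit experiment. The obstacle you anticipate does not arise: \cref{lem:tilt,lem:noise} use only the identity $dQ_h/dQ_0=\exp\{\Delta(h)-\norm{h}^2/2\}$, which holds exactly here (also for randomized statistics), so no realization by an estimator sequence is needed.
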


Proposition~\ref{prop:limitexp} shows that the bound on each linearity subspace is attained in the limit experiment. For a cone, the linear extension must also be bounded in the score norm, as \cref{ex:cone} illustrates. Attainment in the original statistical model additionally requires an estimator sequence with the corresponding asymptotic expansion.

For an estimator sequence in the original model, asymptotic linearity makes the same relation explicit. The next theorem identifies both the directions on which the estimator has a stable centered limit and its response to every other local alternative.

\begin{theorem}[asymptotically linear estimators]\label{thm:al}
Let $T_n$ be asymptotically linear at $P$ with influence function $\phi\in L_2^0(P)$:
$\sqrt n\{T_n-\Psi(P)\}=\sqrt n\,\mathbb P_n\phi+o_{P^n}(1)$.
Define the \emph{agreement set} $\mathcal E(\phi):=\{h\in\T:\gamma(h)=\ip\phi h\}$. Then
\[
\Rfam^{\rm cone}(T_n)=\{C\text{ convex cone}:C\subseteq\mathcal E(\phi)\},
\qquad
\Rfam(T_n)=\{V\text{ linear}:V\subseteq\mathcal E(\phi)\}.
\]
For $V\in\Rfam(T_n)$, the representer and convolution laws are
\[
\tilde\psi_V=\Pi_{\overline V}\phi,\qquad
L=N(0,\norm\phi^2),\qquad
M_V=N(0,\norm{\phi-\Pi_{\overline V}\phi}^2),
\]
and $v(T_n)=\sup_{C\subseteq\mathcal E(\phi)}\norm{\Pi_{\clspan C}\phi}^2$, with the supremum over convex cones.
Under every local alternative with score $h\in\T$,
\[
\sqrt n\{T_n-\Psi(P_{n,h})\}\wto
N(\ip\phi h-\gamma(h),\norm\phi^2).
\]
\end{theorem}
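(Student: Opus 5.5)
The plan is to prove the final display first: the local limit under every $h\in\T$. Everything else then follows by reading off which $h$ give a common centered law, and by invoking \cref{thm:main} and \cref{prop:bounds}.

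Fix $h\in\T$ and a DQM path with score $h$. Under $P^n$, asymptotic linearity, the central limit theorem and the LAN expansion \eqref{eq:LAN} give joint convergence
\[
\bigl(\sqrt n\{T_n-\Psi(P)\},\ \log d\Pn h/dP^n\bigr)\wto \bigl(Z,\ \Delta(h)-\tfrac12\norm h^2\bigr),
\]
where the limit is bivariate Gaussian, $Z\sim N(0,\norm\phi^2)$, and $\Cov(Z,\Delta(h))=\ip\phi h$. Le Cam's third lemma then gives $\sqrt n\{T_n-\Psi(P)\}\wto N(\ip\phi h,\norm\phi^2)$ under $\Pn h$. By \cref{def:dpd}, $\sqrt n\{\Psi(P_{n,h})-\Psi(P)\}\to\gamma(h)$, and Slutsky's lemma yields the stated limit $N(\ip\phi h-\gamma(h),\norm\phi^2)$.

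For the regularity families, take any convex cone $C$. The centered limits along $C$ all equal one law $L$ if and only if the drifts $\ip\phi h-\gamma(h)$ coincide for all $h\in C$. Since $0\in C$ has drift $0$, this means $C\subseteq\mathcal E(\phi)$. The linear case is the special case of a subspace. In that case $L=N(0,\norm\phi^2)$.

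For $V\in\Rfam(T_n)$, the function $\gamma$ equals $\ip\phi\cdot$ on $V$, hence also on $\overline V$ by continuity. The Riesz representer is therefore $\Pi_{\overline V}\phi$, because $\ip\phi h=\ip{\Pi_{\overline V}\phi}h$ for $h\in\overline V$. This gives $V_V=\norm{\Pi_{\overline V}\phi}^2$.

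The residual requires more care. \cref{thm:main}(2) only asserts existence of some $M_V$ with $N(0,\norm\phi^2)=N(0,V_V)*M_V$. By characteristic functions, $M_V$ has characteristic function $\exp\{-\tfrac12(\norm\phi^2-V_V)t^2\}$, so $M_V$ is uniquely determined, because Gaussian characteristic functions never vanish. Hence $M_V=N(0,\norm{\phi-\Pi_{\overline V}\phi}^2)$ by Pythagoras. Alternatively one can see this directly: $\phi=\Pi_{\overline V}\phi+(\phi-\Pi_{\overline V}\phi)$ splits into orthogonal, hence independent, Gaussian limits.

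The same computation for cones $C\subseteq\mathcal E(\phi)$ gives $\tilde\psi_C=\Pi_{\clspan C}\phi$. Here the linear functional agreeing with $\gamma$ on $C$ is $\ip\phi\cdot$ on $\spn C$, which is automatically bounded. Taking the supremum over such cones yields the formula for $v(T_n)$.

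The only delicate point is justifying the joint convergence uniformly over all DQM paths with a given score. The limit depends on the path only through $h$, because \eqref{eq:LAN} holds for every DQM path with that score and \cref{def:dpd} is assumed for every such path. So no further issue arises. I expect the main (minor) obstacle to be the uniqueness of the deconvolution factor. This is handled by the nonvanishing Gaussian characteristic function noted above.
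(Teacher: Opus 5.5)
Your proposal is correct and follows the paper's route. The paper also combines the joint CLT for $(\sqrt n\,\mathbb P_n\phi,\sqrt n\,\mathbb P_nh)$ with Le Cam's third lemma to get the local limit, reads regularity off the drift $\ip\phi h-\gamma(h)$, and obtains the representer, residual and $v(T_n)$ from \cref{prop:bounds} with $\gamma|_V=\ip\phi\cdot$. Your characteristic-function argument for the uniqueness of $M_V$ spells out a step the paper leaves implicit.
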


Theorem~\ref{thm:al} connects asymptotic variance and local bias through the influence function. Its projection onto a regularity subspace gives the efficient influence function on that subspace; the orthogonal component gives the Gaussian convolution residual. The agreement condition determines where the limit remains centered. Thus asymptotic normality at $P$ alone does not determine the estimator's regularity or its local bias.

\begin{corollary}[sublinear derivatives]\label{cor:alkink}
If $\gamma$ is sublinear and \assT holds, an asymptotically linear $T_n$ is regular along $\Hk(P)$ iff $\Pi_{\Hk}\phi=\psik$ iff $\phi\in\claff\LamP\oplus\T^\perp$; in the nonparametric model iff $\phi\in\claff\LamP$. Then $M=N(0,\norm{\phi-\psik}^2)$.
\end{corollary}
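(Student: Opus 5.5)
By \cref{thm:al} with $V=\Hk(P)$, an asymptotically linear $T_n$ is regular along $\Hk(P)$ iff $\Hk(P)\subseteq\mathcal E(\phi)$, that is, iff $\gamma(h)=\ip\phi h$ for all $h\in\Hk(P)$. The plan is to turn this agreement condition into each of the stated equivalences, and then read off the residual law from the same theorem.

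For the first equivalence I use \cref{thm:kink}(3). On the core, $\gamma(h)=\ip{\psik}h$ with $\psik\in\Hk$. Since $\Hk$ is a closed subspace by \cref{thm:core}(2), the agreement condition says $\ip{\phi-\psik}h=0$ for every $h\in\Hk$. Because $\psik\in\Hk$, this is the same as $\Pi_{\Hk}\phi=\psik$.

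For the second equivalence I first split $\phi$ orthogonally. Under \assT, $\T$ is closed, so $L_2^0(P)=\T\oplus\T^\perp$. By \cref{thm:kink}(3), $\T=\Hk\oplus\FP$ with $\FP\perp\Hk$. Write $\phi=\phi_{\Hk}+\phi_F+\phi_\perp$ accordingly.
\begin{itemize}
\item Suppose $\Pi_{\Hk}\phi=\psik$. Fix any $\lambda\in\LamP$. Then $\psik\in\lambda+\FP=\claff\LamP$, since \cref{thm:kink}(3) gives $\claff\LamP=\lambda+\FP$. Adding $\phi_F\in\FP$ keeps us in $\claff\LamP$, so $\phi=\psik+\phi_F+\phi_\perp\in\claff\LamP\oplus\T^\perp$.
\item Conversely, take $\phi=a+t$ with $a\in\claff\LamP$ and $t\in\T^\perp$. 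Since $\LamP\subseteq\overline\T=\T$, we have $a\in\T$. Then $\Pi_{\Hk}\phi=\Pi_{\Hk}a$. Writing $a=\lambda+f$ with $f\in\FP\perp\Hk$ gives $\Pi_{\Hk}a=\Pi_{\Hk}\lambda=\psik$, again by \cref{thm:kink}(3).
\end{itemize}

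The nonparametric case is immediate: there $\T=L_2^0(P)$, so $\T^\perp=\{0\}$ and the condition reduces to $\phi\in\claff\LamP$.

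For the residual, \cref{thm:al} gives $M_V=N(0,\norm{\phi-\Pi_{\overline V}\phi}^2)$. Taking $V=\Hk$, which is closed, and using $\Pi_{\Hk}\phi=\psik$ yields $M=N(0,\norm{\phi-\psik}^2)$.

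No step is really an obstacle. The only care needed is to confirm that $\T^\perp$ is taken inside $L_2^0(P)$, and that $\claff\LamP$ lies in $\T$, so that the sum $\claff\LamP\oplus\T^\perp$ is direct and orthogonal.
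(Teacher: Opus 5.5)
Your proposal is correct and follows essentially the same route as the paper: you use \cref{thm:al} to reduce regularity to agreement of $\gamma$ with $\ip\phi\cdot$ on $\Hk$, then apply \cref{thm:kink}(3), namely $\gamma=\ip{\psik}\cdot$ on $\Hk$, $\T=\Hk\oplus\FP$, and $\claff\LamP=\psik+\FP$. Your three-way orthogonal decomposition $L_2^0(P)=\Hk\oplus\FP\oplus\T^\perp$ is just the paper's identity $\Hk^\perp=\FP\oplus\T^\perp$ written out.
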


Corollary~\ref{cor:alkink} makes this characterization especially concrete for a sublinear derivative. Core regularity requires the influence function to lie in the displayed affine class, and efficiency requires its orthogonal remainder to vanish. Estimator construction can therefore use the projection defining the bound as a choice of weights.

For a piecewise-linear target, the agreement condition also has a direct interpretation. An estimator that follows one branch remains regular on directions for which that branch continues to determine the target. In the maximum example, these are directions along which the chosen programme remains best.

For a specified linearity subspace at a full tie, this prescription gives an estimator from jointly asymptotically linear branch estimates. The common-shift restriction ensures that the weights reproduce the target value, while variance minimization determines how the branch estimates should be combined.

\begin{proposition}[sequence-level sufficiency]\label{prop:suff}
In the setting of \cref{lem:reduction} with $\Sigma$ nonsingular, suppose $\Psi(Q)=g(\mu(Q))$ near $P$ with $g$ piecewise linear and $\mu(P)=\theta\one$ (a full tie), and $\hat\mu$ jointly asymptotically linear with influence functions $(\lambda_a)$. Let $U\in\Lfam(g)$ with $\one\in U$, $g=b^\top(\cdot)$ on $U$, $V=\ell^{-1}(U)$, and $c^\star:=\arg\min_{c\in U^\perp}(b-c)^\top\Sigma(b-c)$. Then $\hat T_V:=(b-c^\star)^\top\hat\mu$ is asymptotically linear with influence function $\tilde\psi_V=\ell^*(b-c^\star)$, consistent for $\Psi(P)$, and $V$-efficient; the same holds with $\Sigma$ replaced by a consistent estimate. Hence, at a full tie, every such linearity subspace admits an explicit estimator that is regular and efficient along it.
\end{proposition}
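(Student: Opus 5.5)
The plan is to reduce everything to \cref{thm:al} and the representer formula of \cref{lem:reduction}.

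\textbf{Step 1: asymptotic linearity and consistency.} Write $w:=b-c^\star$. Since $\hat\mu$ is jointly asymptotically linear with influence functions $\lambda_a$, the linear combination $\hat T_V=w^\top\hat\mu$ satisfies
\[
\sqrt n\{\hat T_V-w^\top\mu(P)\}=\sqrt n\,\mathbb P_n\ell^*w+o_{P^n}(1).
\]
Consistency requires $w^\top\mu(P)=\Psi(P)$. Because $\mu(P)=\theta\one$, we have $w^\top\mu(P)=\theta\, w^\top\one$. Since $\one\in U$ and $c^\star\in U^\perp$, $c^{\star\top}\one=0$, so $w^\top\one=b^\top\one$. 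Because $g$ is linear and equal to $b^\top(\cdot)$ on $U\ni\one$, we get $b^\top\one=g(\one)$. Finally, $g$ is piecewise linear, hence positively homogeneous, and near $\mu(P)$ it gives $\Psi(P)=g(\theta\one)=\theta g(\one)$. I take $g$ positively homogeneous, as in the reduction setting, so that $g(\theta\one)=\theta g(\one)$ holds even for $\theta<0$; here the fact that $g$ is linear on $U\ni\one$ also handles negative multiples. This gives $w^\top\mu(P)=\Psi(P)$.

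\textbf{Step 2: identify $\phi:=\ell^*w$ with $\tilde\psi_V$.} By \cref{lem:reduction}, $\tilde\psi_V=\Pi_V\ell^*b$ and $V^\perp\cap\T=\ell^*(U^\perp)$. I will show that $\ell^*w$ is exactly this projection.
\begin{itemize}[label={}]
\item \emph{Orthogonality of the removed part.} $\ell^*b-\ell^*w=\ell^*c^\star$ lies in $\ell^*(U^\perp)=V^\perp\cap\T$.
\item \emph{Membership in $V$.} We need $\ell(\ell^*w)=\Sigma w\in U$. The first-order condition of the quadratic minimization over the subspace $U^\perp$ is $(b-c^\star)^\top\Sigma c=0$ for all $c\in U^\perp$. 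Hence $\Sigma w\in(U^\perp)^\perp=U$ in $\R^K$.
\end{itemize}
Here $V=\ell^{-1}(U)$ is closed because $\ell$ is continuous, and $\lambda_a\in\T$. Therefore $\ell^*w=\Pi_V\ell^*b=\tilde\psi_V$.

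\textbf{Step 3: regularity and efficiency via \cref{thm:al}.} For $h\in V$ we have $\ell(h)\in U$, so
\[
\gamma(h)=g(\ell h)=b^\top\ell h=\ip{\ell^*b}h=\ip{\tilde\psi_V}h,
\]
using $\ell^*c^\star\perp V$. Thus $V\subseteq\mathcal E(\phi)$ with $\phi=\tilde\psi_V$. By \cref{thm:al}, $\hat T_V$ is regular along $V$ with limit $N(0,\norm{\tilde\psi_V}^2)=N(0,V_V)$, so it is $V$-efficient. The residual $M_V$ is degenerate because $\phi\in\overline V$.

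The identity $\gamma=g\circ\ell$ is what \cref{lem:reduction} presupposes. I expect this to follow from the chain rule for $\Psi=g\circ\mu$ with $g$ piecewise linear (hence Hadamard directionally differentiable, with derivative $g$ itself at a full tie by homogeneity), so I take it as given.

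\textbf{Step 4: estimated $\Sigma$.} With $\hat\Sigma\to_p\Sigma$, the minimizer $\hat c$ is a continuous function of the covariance matrix: it is the solution of a linear system that is nonsingular on $U^\perp$ when $\Sigma\succ0$. Hence $\hat w\to_p w$. Then
\[
\sqrt n\{(\hat w-w)^\top\hat\mu\}=(\hat w-w)^\top\sqrt n(\hat\mu-\theta\one)+\sqrt n\,\theta(\hat w-w)^\top\one.
\]
The first term is $o_p(1)O_p(1)$. The second term vanishes identically, because $\hat c\in U^\perp\perp\one$ gives $\hat w^\top\one=b^\top\one=w^\top\one$.

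I expect this last step to be the main obstacle. It is exactly where the full-tie hypothesis and $\one\in U$ are needed: without them, an $O_p(\sqrt n)$ bias from the random weights could survive. Since the second term is zero exactly, asymptotic linearity with the same influence function is retained, and Step 3 applies unchanged.
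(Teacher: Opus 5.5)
Your proof is correct and follows the paper's argument: a fixed linear combination of jointly asymptotically linear estimators, centering via $c^\star\perp\one$ with $\mu(P)\in U$, exact cancellation of the common-value term when the weights are estimated, and then \cref{thm:al}. The only differences are that you verify $\ell^*(b-c^\star)=\Pi_V\ell^*b$ directly from the first-order condition rather than citing \cref{lem:reduction}, and one side remark of yours is wrong: for $\theta\ne0$, homogeneity alone does not make $g$ the derivative at $\theta\one$ (translation equivariance is also needed), so $\gamma=g\circ\ell$ must be taken as part of the hypothesis, as you in fact do.
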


Proposition~\ref{prop:suff} shows that, under its finite-dimensional assumptions, the Gaussian bound is attained by an explicit estimator sequence. Estimating the covariance matrix preserves its first-order behavior because the weight constraints hold exactly. This construction fixes the linearity subspace and starts from given asymptotically linear branch estimates. The applications below establish feasible selection of tied branches under their model-specific conditions.

\subsection{Characterizing efficiency and comparing requirements}\label{sec:efficiency}

The constructions above attain the bound with an asymptotically linear estimator. The convolution theorem gives a converse: any estimator that is regular along the same subspace and attains its centered Gaussian bound must have that influence function. Efficiency therefore determines first-order behavior even when the estimator was constructed by a nonlinear procedure.

\begin{theorem}[characterization and uniqueness]\label{thm:efficiency}
For $V\in\Lfam(\gamma)$: $T_n$ is $V$-efficient iff $\sqrt n\{T_n-\Psi(P)\}=\sqrt n\,\mathbb P_n\tilde\psi_V+o_{P^n}(1)$. Any two $V$-efficient sequences satisfy $\sqrt n(T_n-T_n')\to0$ in $P^n$-probability.
\end{theorem}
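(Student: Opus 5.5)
The plan is to prove the two directions of the equivalence separately; uniqueness then follows at once, since two $V$-efficient sequences share the expansion with the same $\tilde\psi_V$, and subtracting gives $\sqrt n(T_n-T_n')=o_{P^n}(1)$.

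\emph{Sufficiency.} If $\sqrt n\{T_n-\Psi(P)\}=\sqrt n\,\mathbb P_n\tilde\psi_V+o_{P^n}(1)$, then $T_n$ is asymptotically linear with influence function $\phi=\tilde\psi_V$. I would apply \cref{thm:al}. Since $\tilde\psi_V$ is the Riesz representer of $\gamma|_V$, we have $\ip{\tilde\psi_V}h=\gamma(h)$ for every $h\in V$. Hence $V\subseteq\mathcal E(\tilde\psi_V)$, so $V\in\Rfam(T_n)$. Moreover $L=N(0,\norm{\tilde\psi_V}^2)=N(0,V_V)$, and the centered local limit along $V$ is this same law. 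This is exactly $V$-efficiency.

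\emph{Necessity.} Suppose $T_n$ is regular along $V$ with limit $N(0,V_V)$. I would use the identity $L=N(0,V_V)*M_V$ from \cref{thm:main}(2), applied with $C=V$. Comparing characteristic functions shows that $M_V$ is the point mass at $0$: $e^{-V_Vt^2/2}\hat M_V(t)=e^{-V_Vt^2/2}$ forces $\hat M_V\equiv1$. This alone gives only the marginal law. To obtain the expansion, I need the joint limit of $S_n:=\sqrt n\{T_n-\Psi(P)\}$ and $Z_n:=\sqrt n\,\mathbb P_n\tilde\psi_V$. Since $\tilde\psi_V\in\overline V$, I first treat the case where $V$ is finite-dimensional, or where $\tilde\psi_V\in V$.

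In that case $S_n$ and $Z_n$ are tight jointly, so along any subsequence there is a joint limit $(S,\Delta|_{V})$. By the final statement of \cref{thm:main}(2), $S-\Delta(\tilde\psi_V)$ has law $M_V=\delta_0$, so $S=\Delta(\tilde\psi_V)$ almost surely. Thus every subsequential limit of $S_n-Z_n$ is $0$, and $S_n-Z_n\to0$ in probability.

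For general $V$, I would reduce to a finite-dimensional subspace. Choose a finite-dimensional $W\subseteq V$ with $\norm{\tilde\psi_V-\tilde\psi_W}$ small, using $\tilde\psi_W=\Pi_{\overline W}\tilde\psi_V$ from \cref{prop:bounds}(1). Since $T_n$ is regular along $W$, applying \cref{thm:main}(2) on $W$ shows that $S-\Delta(\tilde\psi_W)$ has law $M_W$ and is independent of $\Delta|_W$. Then $L=N(0,V_V)$ equals the law of $\Delta(\tilde\psi_W)+R_W$, where $R_W:=S-\Delta(\tilde\psi_W)$ is independent of $\Delta(\tilde\psi_W)$. By Cramér's theorem (or directly via characteristic functions), $R_W$ is Gaussian with variance $V_V-V_W$, which tends to $0$. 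Hence any joint limit of $(S_n,Z_n)$ satisfies
\[
\E\{S-\Delta(\tilde\psi_V)\}^2\le2(V_V-V_W)+2\norm{\tilde\psi_V-\tilde\psi_W}^2,
\]
which can be made arbitrarily small, so $S=\Delta(\tilde\psi_V)$ almost surely. Here $\Delta(\tilde\psi_V)$ is the limit of $Z_n$, which is jointly tight with $\Delta|_W$ by the multivariate CLT.

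\textbf{Main obstacle.} The main obstacle is this passage from finite-dimensional pieces to the closure. It requires identifying the independent residual on $W$ without moment assumptions. The needed moments come for free, because the total law is Gaussian and hence so is the independent summand. I expect this step to need the most care, whereas the sufficiency direction is a direct reading of \cref{thm:al}.
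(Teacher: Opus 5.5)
Your proof is correct and follows essentially the paper's route: sufficiency is a direct reading of \cref{thm:al} (the paper instead cites \citet[Lemma 25.23]{vdV1998}), and necessity shows that the independent residual $S-\Delta(\tilde\psi_V)$ in the convolution representation (\cref{lem:noise}) must be degenerate when $L=N(0,V_V)$. Your approximation step makes explicit the passage to infinite-dimensional $V$, which the paper's one-line appeal to the finite-dimensional \cref{lem:noise} leaves to the cited lemma: dividing characteristic functions forces the residual on a finite-dimensional $W\subseteq V$ to be $N(0,V_V-V_W)$, so Cram\'er's theorem is unnecessary, and for each $W$ you take a further subsequence while the law of $(S,Z)$ stays fixed.
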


Theorem~\ref{thm:efficiency} shows that attaining the Gaussian bound leaves no first-order freedom beyond the efficient influence function. In particular, two efficient constructions are asymptotically equivalent. The next result gives a complementary interpretation of the same bound through risk over local alternatives, without requiring the competing estimators to be regular.

\begin{theorem}[local asymptotic minimaxity along $V$]\label{thm:lam}
For $V\in\Lfam(\gamma)$, every estimator sequence and every bowl-shaped $\ell$: $\sup_I\liminf_n\sup_{h\in I}\E_{\Pn h}\ell(\sqrt n\{T_n-\Psi(P_{n,h})\})\ge\int\ell\,dN(0,V_V)$, the supremum over finite $I\subseteq V$ \citep[Theorem 25.21]{vdV1998}. In particular the core bound $\Vk(P)$ is a local minimax bound over local alternatives in the core for all estimators, regular or not.
\end{theorem}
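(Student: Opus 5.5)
The plan is to reduce the statement to the classical local asymptotic minimax theorem, applied to a pathwise differentiable parameter on a submodel whose tangent set is $V$. Fix $V\in\Lfam(\gamma)$. By \cref{thm:main}(1) read through \cref{prop:limitexp}, or directly by \cref{prop:bounds}, $\gamma|_V$ is linear. I need it to be a bounded linear functional with representer $\tilde\psi_V\in\overline V$. Continuity of $\gamma$ (\cref{lem:autocont}) together with linearity on $V$ gives boundedness: a linear functional that is continuous at $0$ is bounded. Hence $\gamma(h)=\ip{\tilde\psi_V}h$ for $h\in V$, and $V_V=\norm{\tilde\psi_V}^2$.

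Next, restrict attention to the local parameters indexed by $h\in V$. For each $h\in V$, \assT provides a DQM path $P_{t,h}$, and \cref{def:dpd} gives
\[
\sqrt n\{\Psi(P_{n,h})-\Psi(P)\}\to\gamma(h)=\ip{\tilde\psi_V}h .
\]
So along the chosen paths the parameter sequence $\kappa_n(h)=\Psi(P_{n,h})$ is differentiable in the sense of \citet[\S25.3]{vdV1998}, with linear derivative on the linear tangent set $V$ and efficient influence function $\tilde\psi_V$. The LAN expansion \eqref{eq:LAN} holds for each such path.

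The key step is that the proof of \citet[Theorem 25.21]{vdV1998}, equivalently Theorem 8.11 via the limit experiment, uses only three facts. First, for every finite-dimensional subspace of the tangent set, the local experiments $(\Pn h)_{h}$ with $h$ ranging over a finite set converge to the Gaussian shift. Second, the rescaled parameter has a linear limit. Third, the asymptotic representation theorem applies. The first fact needs joint LAN over finitely many paths. Here I plan to choose, for a finite $I\subseteq V$, a basis $h_1,\dots,h_k$ of $\spn I$ and to use the paths supplied for each $h\in I$ individually. The minimax statement only quantifies over the finitely many $h\in I$, so marginal LAN for each $h$, joined with the common $\Delta$, gives convergence of the finite experiment $\{\Pn h:h\in I\}$ to $\{N(\ip h\cdot):h\in I\}$. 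The likelihood ratio limits are jointly Gaussian because they are linear in the score empirical process.

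Then I apply \citet[Theorem 8.11]{vdV1998} with $\sup_I$ over finite subsets. In the limit, any randomized estimator $T$ in the Gaussian shift restricted to the $k$-dimensional span satisfies
\[
\sup_{h}\E_h\ell(T-\ip{\tilde\psi}h)\ge\int\ell\,dN(0,\norm{\Pi\tilde\psi_V}^2),
\]
where $\Pi$ is the projection onto that span. I then let the spans increase to $\overline V$. Since $\norm{\Pi\tilde\psi_V}^2\uparrow V_V$ and $\ell$ is bowl-shaped, lower semicontinuity or monotone convergence of $\int\ell\,dN(0,\sigma^2)$ in $\sigma^2$ gives the bound. If $\ell$ is not lower semicontinuous, I would follow vdV and assume it, or note that $\sigma\mapsto\int\ell\,dN(0,\sigma^2)$ is nondecreasing and use an exhausting sequence of spans attaining $\sup_I$.

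The core statement is the special case $V=\Hk(P)$, which is a linearity subspace by \cref{thm:core}(2). The main obstacle I expect is the bookkeeping that the theorem as cited requires differentiability relative to a tangent \emph{set} with paths fixed per score. I would address this by noting that only finitely many paths enter each $\liminf$, so no uniformity over paths is needed.
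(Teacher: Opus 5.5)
Your reduction is the same as the paper's. The paper applies \citet[Theorem~25.21]{vdV1998} with tangent set $V$, on which $\gamma$ is linear and, by \cref{lem:autocont}, bounded with representer $\tilde\psi_V$. Your passage through finite-dimensional spans $W\subseteq V$ with $\norm{\Pi_W\tilde\psi_V}^2\uparrow V_V$ is just the proof of that theorem. You can cite Theorem~25.21 directly, since it is formulated with one path per score; Theorem~8.11 presumes a DQM parametric family, which \assT does not supply.

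The gap is the class of losses. The statement covers every bowl-shaped $\ell$ in the paper's sense (nonnegative, symmetric, nondecreasing in $\abs x$), with no semicontinuity. Theorem~25.21 is stated for subconvex losses, whose sublevel sets are closed, and the limit-experiment argument you sketch also uses lower semicontinuity. Your first fallback, assuming lower semicontinuity, proves a weaker result. Your second fallback does not help either: monotonicity of $\sigma\mapsto\int\ell\,dN(0,\sigma^2)$ only controls the final passage to $V_V$. Lower semicontinuity is needed earlier, in the Portmanteau step that bounds $\liminf_n\max_{h\in I}$ of the finite-sample risks below by the risks of a randomized rule in the limit experiment, for a fixed finite $I$.

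That step genuinely fails without lower semicontinuity, even inside a linearity subspace. In \cref{app:examples}(d), $I=\{\pm e_1\}$ lies in the linearity subspace $\{h_2=0\}$ and the finite-sample risks of $T_n\equiv0$ are zero, yet the limit-experiment minimax risk over $I$ is $\Phi(-1)>0$.

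The missing idea is the envelope reduction the paper uses. Let $\ell_*$ be the lower semicontinuous envelope of $\ell$: $\ell_*(0)=\ell(0)$ and $\ell_*(x)=\lim_{r\uparrow\abs x}\ell(r)$ for $x\ne0$.
\begin{itemize}
\item $\ell_*$ is bowl-shaped and lower semicontinuous with $\ell_*\le\ell$, so each finite-sample risk under $\ell$ dominates the corresponding risk under $\ell_*$.
\item Your argument applied to $\ell_*$ therefore gives the lower bound $\int\ell_*\,dN(0,V_V)$ for $\ell$ as well.
\item A nondecreasing function of $\abs x$ differs from its left-continuous version at only countably many radii. Hence $\int\ell_*\,dN(0,V_V)=\int\ell\,dN(0,V_V)$ when $V_V>0$.
\item When $V_V=0$ the claimed bound is $\ell(0)$, which holds trivially because $\ell\ge\ell(0)$ everywhere.
\end{itemize}
With this addition your proof covers the statement as written.
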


Theorem~\ref{thm:lam} shows that the Gaussian benchmark also limits the local minimax performance of all estimators when alternatives range over the specified subspace. Taking that subspace to be the core gives a benchmark on core alternatives. To assess precision when other directions are included, we must account for the corresponding change in the estimation problem.

\begin{proposition}[efficiency--regularity trade-off]\label{prop:tradeoff}
Let $V\subsetneq V'$ be linearity subspaces with $V_V<V_{V'}$. Then no $V$-efficient estimator is regular along $V'$, and every estimator regular along $V'$ has $v(T_n)\ge V_{V'}>V_V$. If $\gamma$ is sublinear or superlinear, every linearity subspace is contained in $\Hk$, so every $\Hk$-efficient estimator is regular along all linearity subspaces.
\end{proposition}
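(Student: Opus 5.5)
The plan is to derive the three assertions of \cref{prop:tradeoff} from \cref{thm:efficiency}, \cref{thm:al}, \cref{cor:supremum} and the structure results of \cref{thm:kink,thm:core}.

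\emph{A $V$-efficient estimator is not regular along $V'$.} By \cref{thm:efficiency}, any $V$-efficient $T_n$ is asymptotically linear with influence function $\tilde\psi_V$. Suppose it were regular along $V'$. Then \cref{thm:al} says its representer on $V'$ is $\tilde\psi_{V'}=\Pi_{\overline{V'}}\tilde\psi_V$. Since $\tilde\psi_V\in\overline V\subseteq\overline{V'}$, this projection is $\tilde\psi_V$ itself, and so $V_{V'}=\norm{\tilde\psi_V}^2=V_V$. This contradicts $V_V<V_{V'}$.

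An equivalent route avoids \cref{thm:al}. \Cref{thm:main}(2) applied along $V'$ gives $N(0,V_V)=N(0,V_{V'})*M$. Comparing characteristic functions yields $e^{-(V_V-V_{V'})t^2/2}$ as the characteristic function of $M$. Its modulus exceeds $1$ for $t\ne0$, which is impossible.

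\emph{Every $V'$-regular estimator has $v(T_n)\ge V_{V'}$.} Here I assume, as in \cref{cor:supremum}, that $\sqrt n\{T_n-\Psi(P)\}$ converges in law under $P^n$; this also follows from regularity along $V'$ by taking $h=0$. Then $V'\in\Rfam(T_n)\subseteq\Rfam^{\rm cone}(T_n)$. Since $v(T_n)$ is defined as a supremum over this family, $v(T_n)\ge V_{V'}$, and $V_{V'}>V_V$ by hypothesis.

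\emph{The sublinear/superlinear case.} By \cref{thm:kink}(2), $\Lfam_{\max}(\gamma)=\{\Hk\}$. By \cref{lem:odd}, every linearity subspace lies in a maximal one, so every linearity subspace is contained in $\Hk$; the superlinear case follows by applying this to $-\gamma$. Regularity is inherited by smaller subspaces, as noted after \cref{def:odd}. Hence an $\Hk$-efficient estimator, which is in particular $\Hk$-regular, is regular along every linearity subspace.

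For a direct check, \cref{thm:efficiency} shows its influence function is $\psik$. By \cref{thm:kink}(3), $\ip{\psik}h=\gamma(h)$ for $h\in\Hk$, so $\Hk\subseteq\mathcal E(\psik)$, and \cref{thm:al} applies.

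The only delicate point is making sure the inclusion of every linearity subspace in $\Hk$ really comes from maximality plus uniqueness of the maximal element. This relies on \cref{lem:odd} guaranteeing that maximal members exist above every member, which is stated earlier in the paper. Everything else is bookkeeping.
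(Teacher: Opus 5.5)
Your proof is correct, and for the second and third assertions it matches the paper's argument: $v(T_n)\ge V_{V'}$ directly from the definition in \cref{cor:supremum}, and $\Lfam(\gamma)\subseteq\Hk$ via \cref{thm:kink}(2) together with inheritance of regularity by subspaces.

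For the first assertion, your ``equivalent route'' is the paper's proof: apply \cref{thm:main}(2) along $V'$ to the limit $N(0,V_V)$ and derive a contradiction. The paper does this by comparing variances. Your characteristic-function comparison, $\abs{\phi_M(t)}=e^{(V_{V'}-V_V)t^2/2}>1$, reaches the same contradiction without any discussion of moments of the residual.

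Your primary route instead goes through \cref{thm:efficiency} and \cref{thm:al}. It is valid, but it uses heavier machinery, since the efficiency characterization itself rests on \cref{lem:noise} or \citep[Lemma 25.23]{vdV1998}. In exchange it shows more: $V'$-regularity of a $V$-efficient estimator would force $V'\subseteq\mathcal E(\tilde\psi_V)$, and hence $\tilde\psi_{V'}=\tilde\psi_V$.

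In the last part you can avoid the Zorn-type existence of maximal elements. Every linearity subspace lies in $\Nodd(\gamma)$ by \cref{lem:odd}(2), and $\Nodd(\gamma)=\Hk$ by \cref{thm:kink}(2).
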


Proposition~\ref{prop:tradeoff} shows that regularity along a larger subspace can require a larger asymptotic variance. If the bound increases, an estimator attaining the smaller bound cannot be regular along the larger subspace. The comparison thus concerns efficiency under different regularity requirements; a comparison under a common risk criterion must also consider alternatives outside the smaller subspace.

When a derivative has competing maximal linearity subspaces, efficiency can select different influence functions for different requirements. The core gives their common variance bound, but the maximal regularity subspaces need not be comparable. The median example in \cref{ex:median} illustrates both an expansion with no increase in the bound and one with a strictly positive cost.

For a maximum, variance minimization may assign signed weights to the active branches. In the nonsingular case the efficient weights form a probability vector exactly when $\Sigma^{-1}\one\ge0$ componentwise. This distinction will determine whether the core-efficient estimator's local drift can be positive; signed weights are therefore relevant to local response as well as variance.

\subsection{Regularity along one-sided cones}

Pooling estimates of tied branches can attain the core variance bound. Regularity along the cone where a designated branch remains optimal additionally restricts the estimator's response when the tied values separate. For an exposed vertex of a finite active-gradient hull, this cone spans the tangent space, and its efficient influence function is the gradient of the designated branch.

\begin{corollary}[variance bound under one-sided regularity]\label{cor:onesided}
Let $\gamma=\max_{a\in\mathcal A^*}\ell_a$ be the derivative at a tie among finitely many branches, with $\ell_a=\ip{\lambda_a}\cdot$, and let $\lambda_a$ be a vertex of $\LamP=\conv\{\lambda_b\}$. Then the strict cone $\{h:\ell_a(h)>\ell_b(h)\ \forall b\text{ with }\lambda_b\ne\lambda_a\}$ is open and nonempty, so $C_a:=\{h:\ell_a(h)\ge\ell_b(h)\ \forall b\}$ (``component $a$ stays maximal'') spans $\T$; $\gamma=\ell_a$ on $C_a$; and every estimator regular along $C_a$ has $L=N(0,\norm{\lambda_a}^2)*M$, attained by any asymptotically linear estimator with influence function $\lambda_a$. The core-efficient estimator is regular along $C_a$ iff $\psik=\lambda_a$, equivalently $\norm{\lambda_a}^2=\Vk$ (no variance reduction relative to component $a$, \cref{thm:kink}(4)); a plug-in estimator with centered local limit $\max_b\{Z_b+\ell_b(h)\}-\gamma(h)$, where $Z_b=\Delta(\lambda_b)$, is never regular along $C_a$ when $\gamma$ is nonlinear. Without the vertex hypothesis $C_a$ may reduce to $\{0\}$, and in general the bound along $C_a$ is $\norm{\Pi_{\clspan C_a}\lambda_a}^2$ (\cref{thm:main}(2)).
\end{corollary}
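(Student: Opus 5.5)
The plan is to prove the claims of \cref{cor:onesided} in their stated order, relying on \cref{thm:main}, \cref{thm:al} and \cref{thm:kink}.

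\textbf{Strict cone and span.} Since $\lambda_a$ is a vertex of the polytope $\LamP=\conv\{\lambda_b\}$, it is an exposed point. Hence there is $u\in\overline\T$, which we may take in $\T$ because $\T$ is closed under \assT, with $\ip{\lambda_a}u>\ip{\lambda_b}u$ for every $b$ with $\lambda_b\ne\lambda_a$. The strict cone is a finite intersection of open half-spaces $\{h:\ip{\lambda_a-\lambda_b}h>0\}$, so it is open, and it contains $u$. A nonempty open subset of $\T$ spans $\T$. The strict cone lies in $C_a$, so $\spn C_a=\T$.

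\textbf{Identification of $\gamma$ on $C_a$.} By definition of $C_a$, the maximum is attained at $a$, so $\gamma=\ell_a$ on $C_a$. This is linear and bounded, and its extension to $\clspan C_a=\T$ has representer $\lambda_a$. Indeed, $\ell_a$ is the unique linear functional agreeing with $\gamma$ on a spanning cone. \Cref{thm:main}(2) then gives $L=N(0,\norm{\lambda_a}^2)*M$ for every estimator regular along $C_a$.

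\textbf{Attainment.} Take an asymptotically linear estimator with influence function $\lambda_a$. Its agreement set $\mathcal E(\lambda_a)$ contains $C_a$, so \cref{thm:al} shows it is regular along $C_a$ with $L=N(0,\norm{\lambda_a}^2)$.

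\textbf{Core-efficient estimator.} It has influence function $\psik$ by \cref{thm:efficiency}. By \cref{thm:al} it is regular along $C_a$ iff $\ip{\psik}h=\ip{\lambda_a}h$ for all $h\in C_a$. Since $C_a$ spans $\T$, this holds iff $\Pi_\T(\psik-\lambda_a)=0$, i.e.\ $\psik=\lambda_a$, because both elements lie in $\T$. By \cref{thm:kink}(3)--(4), $\psik=\Pi_{\Hk}\lambda_a$ is the minimum-norm element of $\lambda_a+\FP$. Therefore $\psik=\lambda_a$ iff $\lambda_a\perp\FP$ iff $\norm{\lambda_a}^2=\Vk$.

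\textbf{Plug-in estimator.} Suppose the plug-in were regular along $C_a$. Then for $h\in C_a$ the law of $\max_b\{Z_b+\ell_b(h)\}-\ell_a(h)$ would not depend on $h$. First take $h=0$, giving the law of $\max_b Z_b$. Next let $h=tu$ with $t\to\infty$, where $u$ is in the strict cone. The max then selects every $b$ with $\lambda_b=\lambda_a$, and the centered limit tends to $Z_a$ in probability. So the two laws would have to coincide: $\max_b Z_b\overset{d}{=}Z_a$. Because $\max_b Z_b\ge Z_a$ pointwise, equality in law forces $\max_b Z_b=Z_a$ almost surely. For each $b$ this means $Z_b-Z_a=\Delta(\lambda_b-\lambda_a)\le0$ a.s. A centered Gaussian that is a.s.\ nonpositive vanishes, so $\lambda_b=\lambda_a$ for all $b$, and then $\gamma$ would be linear. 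This contradicts the nonlinearity of $\gamma$.

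\textbf{Final sentence and main obstacle.} The last claim of the statement follows from \cref{thm:main}(2) directly: the bound along $C_a$ is $\norm{\Pi_{\clspan C_a}\lambda_a}^2$ whenever $\gamma=\ell_a$ on $C_a$. For a non-vertex $\lambda_a$ one exhibits an example where $C_a=\{0\}$, such as $\lambda_a$ the midpoint of $\lambda_1$ and $\lambda_2=-\lambda_1$. I expect the plug-in step to be the main obstacle. The care needed there is in passing to the $t\to\infty$ limit of laws; I would handle it by comparing the laws at a fixed large $t$ via a tightness bound, or by computing the distribution functions directly.
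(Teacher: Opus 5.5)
Your proposal is correct and follows the paper's proof step for step: the exposed vertex gives a spanning cone, \cref{thm:main}(2) applies with representer $\lambda_a$, \cref{thm:al} gives attainment and the core-efficient drift $\ip{\psik-\lambda_a}h$, and the plug-in is handled by comparing $h=0$ with $t\to\infty$ along an exposing direction, where you conclude with pointwise dominance $\max_bZ_b\ge Z_a$ plus equality in law instead of the paper's mean comparison $\E\max_bZ_b-\E Z_a=\E(\max_b(Z_b-Z_a))_+>0$ (the limit you flag as the main obstacle is immediate, since the variables converge almost surely on a fixed probability space and a constant law passes to the limit). One small correction: your non-vertex example ($\lambda_a=0$, $\lambda_2=-\lambda_1$) gives $C_a=\lambda_1^\perp\cap\T$, which is $\{0\}$ only when $\T$ is one-dimensional, so impose that, as the paper does with $\gamma(h)=\max(h,2h,3h)$ on a one-dimensional tangent space.
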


\Cref{cor:onesided} shows why a one-sided requirement can entail the full component variance: although the target agrees with that component only on a cone, the linear response selected by the cone extends to its entire span. For a vertex this span is $\T$. In \eqref{eq:intro-experiment}, requiring a common law on $d\ge0$ therefore gives variance $1$, attained by $U+V$, compared with the core bound $1/2$. The vertex condition is essential; a nonvertex branch can have a smaller cone span and hence a smaller bound.

\subsection{Convolution residuals of standard estimators}\label{sec:estimators}

For a translation-equivariant reduced derivative, a weighted average of the branch estimates and their contrasts have independent Gaussian limits. This decomposition gives explicit convolution residuals for plug-in estimation, fixed weighting, independent selection and smoothing. It also describes their local bias when the branch values separate. The two propositions below use this translation property directly and do not require \assS of \cref{sec:structure}.

\begin{proposition}[plug-in decomposition for translation-equivariant reduced derivatives]\label{prop:plugin}
Let $\gamma=g\circ\ell$ as in \cref{lem:reduction} with $\Sigma$ nonsingular, and suppose
\[
g(z+c\one)=g(z)+c\theta,\qquad z\in\R^K,\ c\in\R,
\]
where $\theta:=g(\one)$ ($\theta=1$ for max, min, median and all order statistics; $\theta=0$ for the range). Set
\[
w^*=\frac{\Sigma^{-1}\one}{\one^\top\Sigma^{-1}\one},
\qquad Z=(\Delta(\lambda_a))_{a\le K}\sim N(0,\Sigma),
\qquad W=(I-\one w^{*\top})Z.
\]
Then $W\indep w^{*\top}Z$, $w^{*\top}Z\sim N(0,1/(\one^\top\Sigma^{-1}\one))$, and $w^{*\top}W=0$. Moreover,
\[
F^\perp:=\ell^{-1}(\R\one)\subseteq\Afam(\gamma),
\qquad \tilde\psi_{F^\perp}=\theta\sum_aw_a^*\lambda_a,
\qquad V_{F^\perp}=\frac{\theta^2}{\one^\top\Sigma^{-1}\one}.
\]
Suppose additionally that $\Psi(Q)=q(\mu(Q))$ near $P$, where $q:\R^K\to\R$ is measurable, $\mu$ is pathwise differentiable with influence functions $(\lambda_a)$, $q$ is Hadamard directionally differentiable at $\mu(P)$ with derivative $g$, and $\hat\mu$ is jointly asymptotically linear with those influence functions. Then $T_n=q(\hat\mu)$ is regular along $F^\perp$, with
\[
L=\Law(g(Z))=N(0,V_{F^\perp})*M,
\qquad M=\Law(g(W)).
\]
Under a local alternative with score $h$, write $\ell(h)=c\one+z_\perp$, where $w^{*\top}z_\perp=0$. The centered limit is
\[
\theta w^{*\top}Z+[g(W+z_\perp)-g(z_\perp)],
\]
and the two summands are independent. The mean of $M$ is positive for $g=\max$ when $\gamma$ is nonlinear, negative for $g=\min$, and zero for odd $g$, including the median.
\end{proposition}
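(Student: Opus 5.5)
The plan is to separate the statement into a purely Gaussian part, a derivative-geometry part, and a plug-in part, and to let translation equivariance of $g$ do most of the work in each.

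\emph{Gaussian facts.} Put $s:=\one^\top\Sigma^{-1}\one$, so that $\Sigma w^*=\one/s$ and $w^{*\top}\Sigma w^*=1/s$.
\begin{itemize}
\item The vector $(W,w^{*\top}Z)$ is a linear image of $Z$, hence jointly Gaussian.
\item Its cross-covariance is $(I-\one w^{*\top})\Sigma w^*=\one/s-\one/s=0$, which gives $W\indep w^{*\top}Z$.
\item The variance of $w^{*\top}Z$ is $w^{*\top}\Sigma w^*=1/s$.
\item Since $w^{*\top}\one=1$, we get $w^{*\top}W=w^{*\top}Z-w^{*\top}Z=0$.
\end{itemize}

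\emph{Derivative geometry.} First, $g(0)=0$ by positive homogeneity. The translation identity then gives $g(c\one)=c\theta$ for every $c\in\R$, including $c<0$.
\begin{itemize}
\item Additivity: if $\ell(w)=c\one$, then $\gamma(h+w)=g(\ell(h)+c\one)=\gamma(h)+c\theta=\gamma(h)+\gamma(w)$ for all $h$. Hence $F^\perp\subseteq\Afam(\gamma)$.
\item Candidate representer: $\psi:=\theta\ell^*w^*$. It lies in $\T$ because each $\lambda_a\in\T$.
\item $\psi\in F^\perp$: we have $\ell(\psi)=\theta\Sigma w^*=(\theta/s)\one\in\R\one$.
\item Representation: for $h\in F^\perp$ with $\ell(h)=c\one$, $\ip\psi h=\theta w^{*\top}\ell(h)=\theta c=\gamma(h)$. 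Alternatively this follows from \cref{lem:reduction} with $U=\R\one$ and $b=\theta w^*$.
\item Bound: $V_{F^\perp}=\theta^2w^{*\top}\Sigma w^*=\theta^2/s$.
\end{itemize}

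\emph{Plug-in estimator.} Fix a DQM path with score $h$.
\begin{itemize}
\item Joint asymptotic linearity, \eqref{eq:LAN} and Le Cam's third lemma give $\sqrt n\{\hat\mu-\mu(P)\}\wto Z+\ell(h)$ under $\Pn h$.
\item Pathwise differentiability of $\mu$ gives $\sqrt n\{\mu(P_{n,h})-\mu(P)\}\to\ell(h)$.
\item Hadamard directional differentiability of $q$ applies along these sequences, via the extended continuous mapping or Skorokhod argument of the directional delta method \citep{Shapiro1990,FangSantos2019}. It yields $\sqrt n\{q(\hat\mu)-q(\mu(P))\}\wto g(Z+\ell(h))$ and $\sqrt n\{\Psi(P_{n,h})-\Psi(P)\}\to g(\ell(h))=\gamma(h)$.
\item The two statements combine jointly, since the second is deterministic. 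The centered limit is therefore $g(Z+\ell(h))-g(\ell(h))$.
\end{itemize}
I expect this step to be the main technical obstacle. The issue is ensuring the directional delta method runs under the moving local laws, with the random and deterministic perturbations handled together, and that $\Psi=q\circ\mu$ holds near $P$ along the path. I would reduce everything to $P^n$ and use contiguity for the transfer.

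\emph{Decomposition of the local limit.}
\begin{itemize}
\item Write $Z=\one(w^{*\top}Z)+W$ and $\ell(h)=c\one+z_\perp$.
\item Translation equivariance gives $g(Z+\ell(h))=\theta w^{*\top}Z+\theta c+g(W+z_\perp)$ and $g(\ell(h))=\theta c+g(z_\perp)$.
\item The centered limit is therefore $\theta w^{*\top}Z+[g(W+z_\perp)-g(z_\perp)]$. The two summands are independent by the Gaussian facts.
\item For $h\in F^\perp$ we have $z_\perp=0$, so the limit is $\theta w^{*\top}Z+g(W)$ regardless of $h$. This gives regularity along $F^\perp$ and $L=\Law(g(Z))=N(0,V_{F^\perp})*\Law(g(W))$.
\end{itemize}

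\emph{Sign of the mean of $M$.} Integrability holds because $\abs{g(z)}\le C\norm z$, by continuity and positive homogeneity.
\begin{itemize}
\item Max: $\E\max_aW_a\ge\max_a\E W_a=0$. Equality would force $W_a=W_b$ a.s. for the relevant pairs, that is, $Z_a-Z_b$ degenerate. This is impossible when $\Sigma$ is nonsingular and $K\ge2$, which is implied by nonlinearity of $\gamma$. So the mean is strictly positive.
\item Min: the same argument with signs reversed gives a strictly negative mean.
\item Odd $g$: $W$ is symmetric and $g(-W)=-g(W)$, so $\E g(W)=0$. This covers the median.
\end{itemize}
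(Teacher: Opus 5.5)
Your proposal is correct and follows the paper's own proof essentially step for step. Both use the identity $\Sigma w^*=\one/(\one^\top\Sigma^{-1}\one)$ for the independence and the representer, Le Cam's third lemma with the directional delta method for the local limit $g(Z+\ell(h))-g(\ell(h))$, translation equivariance for the decomposition, and linear growth plus Gaussian symmetry for the mean of $M$; the only cosmetic difference is that you obtain strict positivity for the maximum from equality in $\E\max_aW_a\ge\max_a\E W_a$ forcing $Z_a-Z_b$ to be degenerate, where the paper uses $\E\max(W_a,W_b)=\tfrac12\E\abs{W_a-W_b}>0$.
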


Proposition~\ref{prop:plugin} identifies the plug-in residual as $g(W)$. For a maximum at a tie, this term has positive mean; for an odd reduced derivative, Gaussian symmetry gives a zero mean residual at the baseline. Under other local alternatives, the shifted contrast term also estimates changes in the relative branch values. Removing the residual therefore changes both the baseline limit distribution and the estimator's local bias.

The directional delta-method condition requires
\[
\frac{q(\mu(P)+t_jz_j)-q(\mu(P))}{t_j}\longrightarrow g(z)
\quad\text{whenever }t_j\downarrow0,\ z_j\to z\in\R^K.
\]
It holds automatically when $q=g$ and $\mu(P)=a\one$ is a full tie: translation equivariance and positive homogeneity give
\[
\frac{g(a\one+tz)-g(a\one)}t=g(z),\qquad t>0,
\]
and $g$ is continuous. At a general baseline the condition concerns the level map $q$ at $\mu(P)$; for a maximum its derivative retains only the active coordinates.

\begin{proposition}[fixed weights; independent selection; softmax at a tie]\label{prop:fixed}
Retain the Gaussian decomposition and parameter assumptions of \cref{prop:plugin}, and suppose $\theta=1$:
\begin{enumerate}[label=(\alph*)]
\item an estimator $T_n$ asymptotically linear for $\Psi(P)$ with influence function $\psi_w:=\sum_aw_a\lambda_a$, $\one^\top w=1$, is regular along $F^\perp$ with $M=N(0,w^\top\Sigma w-1/\one^\top\Sigma^{-1}\one)$ and, under a local alternative $h$, drift $\ip{\psi_w}h-\gamma(h)$ (\cref{thm:al}); when $\Hk=F^\perp$ (max, min, median and order statistics at a full tie) this equals $\ip{\psi_w-\tilde\psi_{F^\perp}}h-\gamma(h_\perp)$ with $h_\perp$ the $\Hk^\perp$-component, but not in general (\cref{app:examples});
\item in the limit experiment, $T=Z_{\hat a}$ with an external random index $\hat a\indep Z$ has $M=\Law(W_{\hat a})$: mean zero, and non-degenerate iff $\sum_aP(\hat a=a)\Var(W_a)>0$;
\item at a full tie $\mu(P)=a\one$ with $q=g$, for a symmetric smoother whose first-order limit uses uniform weights (a sufficient regime is given in \cref{ex:policy}), the uniform-weight linear limit has $M=N(0,\sigma^2_{\rm unif})$ with $\sigma^2_{\rm unif}=\one^\top\Sigma\one/K^2-1/\one^\top\Sigma^{-1}\one\ge0$, with equality iff $\Sigma\one\propto\one$ (Cauchy--Schwarz: $(\one^\top\Sigma\one)(\one^\top\Sigma^{-1}\one)\ge K^2$).
\end{enumerate}
\end{proposition}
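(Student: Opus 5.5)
The plan is to treat the three parts separately, each as a short consequence of \cref{thm:al}, \cref{prop:plugin} and elementary Gaussian algebra. Throughout I work with $Z\sim N(0,\Sigma)$, the weights $w^*$, and the decomposition $Z=\one\,(w^{*\top}Z)+W$ with $W\indep w^{*\top}Z$ from \cref{prop:plugin}.

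\textbf{Part (a).} I would first apply \cref{thm:al} with $\phi=\psi_w$. On $F^\perp=\ell^{-1}(\R\one)$ we have $\ell(h)=c\one$. Hence $\ip{\psi_w}h=w^\top\ell(h)=c\,\one^\top w=c$, while translation equivariance gives $\gamma(h)=g(c\one)=c\theta=c$. So $F^\perp\subseteq\mathcal E(\psi_w)$, and $T_n$ is regular along $F^\perp$.

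\Cref{thm:al} then gives $M=N(0,\norm{\psi_w}^2-\norm{\Pi\psi_w}^2)$, with $\Pi\psi_w=\tilde\psi_{F^\perp}$. This follows from $\tilde\psi_V=\Pi_{\overline V}\phi$ together with \cref{prop:plugin}'s $V_{F^\perp}=1/\one^\top\Sigma^{-1}\one$. Since $\norm{\psi_w}^2=w^\top\Sigma w$, the residual variance is as claimed. The drift formula is the last display of \cref{thm:al}.

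For the rewritten form, write $h=h_\kappa+h_\perp$ with $h_\kappa\in\Hk$. Additivity gives $\gamma(h)=\gamma(h_\kappa)+\gamma(h_\perp)$. I will invoke $\Afam=\Hk$ for max, min, median and order statistics, which is \cref{thm:kink}(2) and the order-statistic results; this is the only place that assumption is used. Next, $\gamma(h_\kappa)=\ip{\tilde\psi_{F^\perp}}{h_\kappa}=\ip{\tilde\psi_{F^\perp}}h$, because $\tilde\psi_{F^\perp}\in\overline{\Hk}$. Similarly $\ip{\psi_w}{h_\kappa}=\gamma(h_\kappa)$, because $F^\perp\subseteq\mathcal E(\psi_w)$. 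Subtracting these identities gives the stated expression.

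\textbf{Part (b).} Here $T-\gamma(0)=Z_{\hat a}=w^{*\top}Z+W_{\hat a}$. The variable $W_{\hat a}$ is a function of $(W,\hat a)$, which is independent of $w^{*\top}Z$. This is where independence of $\hat a$ is used, so that mixing does not destroy independence. Thus $M=\Law(W_{\hat a})$.

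Its mean is $\sum_aP(\hat a=a)\E W_a=0$. Its variance is $\E\,\Var(W_{\hat a}\mid\hat a)=\sum_aP(\hat a=a)\Var(W_a)$, because the conditional means vanish. A mean-zero law is degenerate iff this variance is zero.

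\textbf{Part (c).} The uniform limit is the case $w=\one/K$ of part (a), so $\sigma^2_{\rm unif}=\one^\top\Sigma\one/K^2-1/\one^\top\Sigma^{-1}\one$. For nonnegativity and the equality case, I would apply Cauchy--Schwarz to $\Sigma^{1/2}\one$ and $\Sigma^{-1/2}\one$. This gives $K^2=(\one^\top\one)^2\le(\one^\top\Sigma\one)(\one^\top\Sigma^{-1}\one)$, with equality iff $\Sigma^{1/2}\one\propto\Sigma^{-1/2}\one$, i.e.\ $\Sigma\one\propto\one$.

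The main obstacle is the claim that the symmetric smoother is first-order equivalent to uniform weighting at a full tie. I would not prove this here but defer it to the regime stated in \cref{ex:policy}, as the statement does.
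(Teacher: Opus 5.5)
Your proposal is correct and follows the paper's own (very terse) proof. There, (a) comes from \cref{thm:al} together with additivity along the core, (b) from $Z_{\hat a}=w^{*\top}Z+W_{\hat a}$ with $(W,\hat a)\indep w^{*\top}Z$, and (c) as the case $w=\one/K$ of (a); your variance, non-degeneracy and Cauchy--Schwarz computations fill in details the paper leaves implicit.

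One small simplification: under the hypothesis $\Hk=F^\perp$ you need not import $\Afam=\Hk$ from \cref{thm:kink} or the order-statistic results. The chain $F^\perp\subseteq\Afam(\gamma)\subseteq\Hk$ (\cref{prop:plugin} and \cref{thm:core}(4)) already gives additivity along $\Hk$ for any translation-equivariant $g$ with $\Hk=F^\perp$.
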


Proposition~\ref{prop:fixed} identifies the corresponding residuals for weighted estimation, external selection and a uniform-weight smoothing limit. Under its stated first-order regime, uniform weighting attains the common-level bound precisely when the covariance matrix has constant row sums. External selection removes the selection-induced mean at the tie but generally leaves residual variation. Part (b) concerns selection independent of a fixed Gaussian evaluation experiment; an actual sample split must additionally account for the information available in the evaluation sample.

These residual calculations distinguish an estimator's variance from the Gaussian variance bound. When the bound is attained and is positive, consistent variance estimation permits Wald inference along the corresponding regularity directions. The next section evaluates risk and coverage when other local changes in the parameter are included; \cref{sec:ci} gives the resulting confidence-interval statements.

\section{Local risk beyond the core variance bound}\label{sec:structure}

The core variance bound characterizes efficiency along the core. To evaluate local risk over a larger set of alternatives, we must also account for changes in the parameter along orthogonal directions. Under an additivity condition, its directional derivative separates into linear and nonlinear parts. This decomposition allows the minimax risk in the Gaussian limit experiment to be expressed through the core bound and the risk of estimating the nonlinear part.

\begin{assumption}[additivity along the core]\label{ass:additivity}
$\Hk(P)=\Afam(\gamma)$.
\end{assumption}

When $\gamma$ is Lipschitz, \assS is equivalent (\cref{thm:clarke}) to $\Hk(P)\subseteq(\partial^C\gamma(0)-\partial^C\gamma(0))^\perp$, where $\partial^C\gamma(0)$ is the Clarke generalized gradient, defined in \cref{app:geometry}. \assS holds for sublinear and superlinear $\gamma$ (\cref{thm:kink}(2)), for odd $\gamma$ (\cref{thm:core}(4)), for order statistics (\cref{thm:order}, \cref{app:order}), and for $\gamma=g\circ\ell$ whenever it holds for $g$ (\cref{lem:reduction}); it can fail for polyhedral $\gamma$ (\cref{prop:addpl}(4)). Thus the core need not be the space of additive translations for a general directional derivative. Under \Cref{ass:paths,ass:additivity} write $h=h_\kappa+h_\perp$ with $h_\kappa\in\Hk(P)$ and $h_\perp\in\Hk(P)^\perp\cap\T$, and let $\psik:=\tilde\psi_{\Hk}$, $s:=\norm\psik$, $G\sim N(0,\Vk)$.

\begin{proposition}[splitting of the limit experiment]\label{prop:split}
Under \Cref{ass:paths,ass:additivity}: $\gamma(h)=\ip\psik{h_\kappa}+\gamma(h_\perp)$, and the Gaussian-shift limit experiment indexed by $h\in\T$ is the independent product of the experiments indexed by $h_\kappa$ and by $h_\perp$. The target is a linear functional of the first coordinate (bound $\Vk$) plus $\gamma|_{\Hk^\perp}$, whose core is $\{0\}$.
\end{proposition}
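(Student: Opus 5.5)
The proof has three parts: the additive decomposition of $\gamma$, the factorization of the Gaussian experiment, and the claim that the core of $\gamma|_{\Hk^\perp}$ is trivial.

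\emph{Decomposition of $\gamma$.} Under \assT, $\Hk(P)$ is a closed subspace of the closed space $\T$ by \cref{thm:core}(2). Hence $\T=\Hk(P)\oplus(\Hk(P)^\perp\cap\T)$, and every $h\in\T$ splits uniquely as $h=h_\kappa+h_\perp$. By \assS, $h_\kappa\in\Afam(\gamma)$. The definition of the additivity space, applied with base point $h_\perp$, gives $\gamma(h_\perp+h_\kappa)=\gamma(h_\perp)+\gamma(h_\kappa)$. Since $\Hk(P)$ is a linearity subspace, \cref{prop:bounds} gives $\gamma(h_\kappa)=\ip\psik{h_\kappa}$. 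This is the first claim.

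\emph{Factorization of the experiment.} The Gaussian-shift experiment observes $\Delta+\ip h\cdot$ on $\T$ under $h$. I would identify it with the pair $(\Delta|_{\Hk},\Delta|_{\Hk^\perp\cap\T})$, shifted by $\ip{h_\kappa}\cdot$ and $\ip{h_\perp}\cdot$ respectively. This identification is lossless because any $g\in\T$ decomposes as $g_\kappa+g_\perp$ and $\Delta$ is linear, so $\Delta(g)=\Delta(g_\kappa)+\Delta(g_\perp)$.

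The two restricted processes are jointly Gaussian with $\Cov(\Delta(g_\kappa),\Delta(g'_\perp))=\ip{g_\kappa}{g'_\perp}=0$. They are therefore independent. The shift on the first coordinate depends only on $h_\kappa$, because $\ip h{g_\kappa}=\ip{h_\kappa}{g_\kappa}$, and symmetrically the shift on the second depends only on $h_\perp$. So the experiment is the product of the two, with independently varying parameters.

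On the first factor the target $\ip\psik{h_\kappa}$ is linear with representer $\psik$ and variance bound $\Vk$. By \cref{prop:limitexp}, $\Delta(\psik)$ attains this bound there.

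\emph{Triviality of the core of $\gamma|_{\Hk^\perp}$.} This is the step that needs care, since it requires comparing oddness and additivity for the restricted map with those for $\gamma$. Write $\gamma_\perp:=\gamma|_{\Hk^\perp\cap\T}$ and let $w$ lie in the core of $\gamma_\perp$, computed with $\T$ replaced by $\Hk^\perp\cap\T$.

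First, oddness transfers from $\gamma_\perp$ to $\gamma$. Take $v\in\Nodd(\gamma)$ and split it as $v=v_\kappa+v_\perp$. By the decomposition, $\gamma(v)+\gamma(-v)=\gamma(v_\perp)+\gamma(-v_\perp)$, because the linear parts cancel. Hence $v_\perp\in\Nodd(\gamma_\perp)$, and conversely.

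Next, additivity transfers as well. For $c\in\R$,
\[
\gamma(v+cw)=\ip\psik{v_\kappa}+\gamma(v_\perp+cw)=\ip\psik{v_\kappa}+\gamma(v_\perp)+c\gamma(w)=\gamma(v)+c\gamma(w),
\]
where the middle step uses that $w$ is in the core of $\gamma_\perp$.

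Thus $w\in\Hk(P)$. Since also $w\in\Hk^\perp$, we get $w=0$. This completes the proof.
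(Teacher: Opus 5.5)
Your proposal is correct and follows the paper's argument: additivity along $\Hk=\Afam(\gamma)$ gives the decomposition of $\gamma$, independence of the isonormal process on the orthogonal summands $\Hk$ and $\Hk^\perp\cap\T$ gives the product structure, and the core of $\gamma|_{\Hk^\perp}$ reduces to $\Hk\cap\Hk^\perp=\{0\}$. The paper states that last reduction in one line, and your explicit transfer of oddness and additivity from $\gamma|_{\Hk^\perp}$ back to $\gamma$ supplies the step it leaves implicit.
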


Proposition~\ref{prop:split} decomposes the Gaussian limit experiment into two independent components and the directional derivative into the sum of two functionals. The first component concerns a linear functional; the second concerns the remaining nonlinear functional. In the two-programme example, these are estimation of the common level $c$ and of $\abs d$.

\begin{proposition}[local bias of core-efficient estimators]\label{prop:blind}
Under \Cref{ass:paths,ass:additivity}, let $T_n$ be $\Hk$-efficient. For every $h\in\T$, $\sqrt n\{T_n-\Psi(P_{n,h})\}\wto N(-\gamma(h_\perp),\Vk(P))$ under $\Pn h$. The drift is $\le0$ for all $h$ iff $\gamma\ge\ip\psik\cdot$ on $\T$; for sublinear $\gamma$ iff $\psik\in\LamP$; for odd nonlinear $\gamma$ the drift takes both signs.
\end{proposition}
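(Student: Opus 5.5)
By \cref{thm:efficiency}, an $\Hk$-efficient $T_n$ satisfies $\sqrt n\{T_n-\Psi(P)\}=\sqrt n\,\mathbb P_n\psik+o_{P^n}(1)$. So $T_n$ is asymptotically linear with influence function $\psik$, and I will read off its local behaviour from \cref{thm:al}. That theorem gives, under $\Pn h$,
\[
\sqrt n\{T_n-\Psi(P_{n,h})\}\wto N(\ip\psik h-\gamma(h),\norm\psik^2).
\]
Here $\norm\psik^2=\Vk(P)$. The drift is identified using \cref{prop:split}. Decompose $h=h_\kappa+h_\perp$. Since $\psik\in\Hk(P)$, we have $\ip\psik h=\ip\psik{h_\kappa}$. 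By \assS and \cref{prop:split}, $\gamma(h)=\ip\psik{h_\kappa}+\gamma(h_\perp)$. Subtracting gives drift $-\gamma(h_\perp)$, which proves the displayed limit.

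For the sign statements, the drift equals $\ip\psik h-\gamma(h)$. It is therefore $\le0$ for all $h$ iff $\gamma\ge\ip\psik\cdot$ on $\T$. For sublinear $\gamma$, \cref{def:kink} says this inequality (with $\psik\in\overline\T$) is exactly $\psik\in\LamP$. The reverse implication is the same definition read backwards.

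For odd nonlinear $\gamma$, the drift $-\gamma(h_\perp)$ is odd in $h$. It thus takes both signs unless $\gamma(h_\perp)=0$ for every $h$. In that degenerate case $\gamma(h)=\ip\psik{h_\kappa}=\ip\psik h$, so $\gamma$ would be linear on $\T$, contradicting nonlinearity. Hence some $h_\perp$ has $\gamma(h_\perp)\neq0$, and $\pm h_\perp$ give drifts of opposite signs.

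The only delicate point is justifying the use of \cref{thm:al}. That theorem needs the local limit along every DQM path with score $h$, and it needs $\psik\in\T$ rather than merely $\psik\in\overline\T$. Under \assT, $\T$ is closed, so $\psik\in\Hk(P)\subseteq\T$, and the LAN/Le Cam third-lemma argument of \cref{thm:al} applies directly. I expect no further obstacle.
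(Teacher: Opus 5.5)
Your proposal is correct and follows the paper's proof essentially step for step. \cref{thm:efficiency} gives the influence function $\psik$, and \cref{thm:al} with \cref{prop:split} gives the drift $-\gamma(h_\perp)$. The sublinear case is the defining inequality of $\LamP$, and in the odd case a one-signed drift would force $\gamma\equiv0$ on $\Hk^\perp\cap\T$ and hence linearity. Your concern that \cref{thm:al} needs $\psik\in\T$ is unnecessary, since that theorem allows any $\phi\in L_2^0(P)$, but it does no harm.
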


Proposition~\ref{prop:blind} determines the local bias of a core-efficient estimator. Its influence function $\psik$ is orthogonal to the score component perpendicular to the core, leaving the term $-\gamma(h_\perp)$ in its centered limit. This conclusion follows from core regularity and attainment of the centered Gaussian bound; asymptotic normality at $P$ alone is insufficient. An estimator that also responds to the nonlinear part of the derivative can have a different local bias and a nondegenerate convolution residual.

\subsection{Optimal risk in the product experiment}

We now minimize risk over all randomized decision rules in the Gaussian limit experiment. Throughout this subsection the experiment is that of \cref{prop:split}: $Z=(Z_\kappa,Z_\perp)$, independent Gaussian shifts indexed by $h_\kappa\in\Hk$ and $h_\perp\in\Hk^\perp\cap\T$, target $\tau(h)=\ip\psik{h_\kappa}+\gamma(h_\perp)$. We allow the core parameter to range over all of $\Hk$ and restrict only the orthogonal parameter to a set $B$. This product parameter space is essential to the risk equalities below. Throughout, $B$ is nonempty and minimax means the infimum over rules of the supremum of risk over the stated parameter set. These values also equal the suprema of minimax values over finite nonempty parameter subsets, by \cref{lem:global-minimax}. The lemma applies because the Gaussian-shift laws are dominated by the law at zero; the appendix verifies this without a separability assumption.

\begin{theorem}[additivity of squared-error minimax risk]\label{thm:lam-sq}
Under \Cref{ass:paths,ass:additivity}, for nonempty $B\subseteq\Hk^\perp\cap\T$ define
\[
\begin{split}
\mathcal M(B)&:=\inf_T\sup_{h\in\Hk\times B}\E_h(T-\tau(h))^2,\\
\mathcal M_\perp(B)&:=\inf_{T_\perp}\sup_{h_\perp\in B}\E_{h_\perp}(T_\perp-\gamma(h_\perp))^2,
\end{split}
\]
where the infima are over randomized statistics of $Z$ and of $Z_\perp$, respectively. Then
\[
\mathcal M(B)=\Vk(P)+\mathcal M_\perp(B).
\]
In particular, $\mathcal M(\{0\})=\Vk(P)$.
\end{theorem}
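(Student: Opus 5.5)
The plan is to prove the two inequalities separately, working throughout in the product experiment of \cref{prop:split}. There $Z_\kappa$ is the Gaussian shift indexed by $h_\kappa\in\Hk$, $Z_\perp$ is an independent Gaussian shift indexed by $h_\perp\in B$, and the target is $\tau(h)=\ip\psik{h_\kappa}+\gamma(h_\perp)$.

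\emph{Upper bound.} Fix $\varepsilon>0$ and choose a randomized rule $T_\perp$ of $Z_\perp$ with $\sup_{h_\perp\in B}\E(T_\perp-\gamma(h_\perp))^2\le\mathcal M_\perp(B)+\varepsilon$; if $\mathcal M_\perp(B)=\infty$ there is nothing to prove. Set $T:=Z_\kappa(\psik)+T_\perp$, where $Z_\kappa(\psik)=\Delta(\psik)+\ip\psik{h_\kappa}$ under $h$. The error is then $\Delta(\psik)+\{T_\perp-\gamma(h_\perp)\}$. The two summands are independent by the product structure, and the first has mean zero and variance $\Vk$, so the cross term vanishes. The risk is therefore $\Vk+\E(T_\perp-\gamma(h_\perp))^2$, uniformly in $h_\kappa$, which gives $\mathcal M(B)\le\Vk+\mathcal M_\perp(B)$.

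\emph{Lower bound.} This is the main step. Using \cref{lem:global-minimax}, I reduce to finite parameter subsets and then pass to a Bayes lower bound. Fix a finite set $B_0\subseteq B$ and a prior $\pi_\perp$ on $B_0$. Take a unit vector $e\in\Hk$ along $\psik$ (if $\psik=0$ the claim reduces to the $\perp$ part), and put the prior $h_\kappa=t\,e/\norm\psik\cdot\sigma$-scaled, that is, a Gaussian prior $N(0,\sigma^2)$ on the one-dimensional core parameter $u=\ip\psik{h_\kappa}$ along that direction. These priors are independent across the two components. The posterior then factorizes, so the Bayes estimator of $\tau$ is $\E[u\mid Z_\kappa]+\E[\gamma(h_\perp)\mid Z_\perp]$. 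Its Bayes risk is the sum of the two Bayes risks, because independence kills the cross term in the posterior variance decomposition. The first Bayes risk is the Gaussian–Gaussian value $\Vk\sigma^2/(\Vk+\sigma^2)$, since observing $Z_\kappa$ along $\psik$ is equivalent to observing $u+N(0,\Vk)$. Letting $\sigma\to\infty$ gives
\[
\mathcal M(B)\ge\Vk+\sup_{\pi_\perp}\text{Bayes}_\perp(\pi_\perp).
\]
The Gaussian prior has unbounded support in $\Hk$, which is permitted because the core parameter ranges over all of $\Hk$. Strictly, I would truncate the prior to obtain a finite-support approximation to which \cref{lem:global-minimax} applies, or else take the sup-over-finite-sets in the form $\sup_I$ of that lemma.

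\emph{Closing the lower bound.} The remaining task is to identify the supremum of the $\perp$ Bayes risks over priors on $B_0$ with the minimax value on $B_0$. This is a minimax theorem for a finite parameter set under squared error with randomized rules. It holds because the risk set is convex and the loss is convex in the action, so a standard finite-$\Theta$ minimax theorem applies. Taking $\sup_{B_0}$ and invoking \cref{lem:global-minimax} then recovers $\mathcal M_\perp(B)$.

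The hard step is technical: justifying the interchange with an unbounded squared-error loss. I would handle it by truncating the loss at level $m$, applying the finite minimax theorem to the truncated loss, and letting $m\to\infty$ by monotone convergence, with \cref{lem:global-minimax} handling the passage from finite sets. The special case $B=\{0\}$ gives $\mathcal M_\perp=0$, since $\gamma(0)=0$, and so $\mathcal M(\{0\})=\Vk$.
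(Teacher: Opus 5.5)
Your argument is correct. The upper bound and the product-prior step (posterior factorization, additivity of the two Bayes risks) are the same as in the paper.

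You differ in how you show that the core Bayes risks approach $\Vk$. You place a conjugate $N(0,\sigma^2)$ prior on $u=\ip\psik{h_\kappa}$ along $\R\psik$ and read off the exact value $\Vk\sigma^2/(\Vk+\sigma^2)$. The paper instead keeps every prior finitely supported on a finite $I$. It bounds the normal-mean Bayes risk under the compactly supported density $M^{-1}\cos^2(\pi\vartheta/2M)$ from below by the van Trees inequality. It then quantizes that density, clipping actions to $[-M,M]$ and invoking \cref{lem:finiteprior} to control the discretization.

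Your route is shorter and needs neither van Trees nor \cref{lem:finiteprior}. It uses exactly the feature you point out: $h_\kappa$ ranges over all of $\Hk$, so an unbounded-support prior is legitimate. Your hedge about truncating the Gaussian prior to finite support is unnecessary. The inequality $\sup_h R(T,h)\ge\int R(T,h)\,d\pi(h)$ holds for every prior. So \cref{lem:global-minimax} is needed only on the $\perp$ side, to identify $\sup_{B_0}\inf_{T_\perp}\max_{h_\perp\in B_0}$ with $\mathcal M_\perp(B)$.

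The paper's finite-support bookkeeping keeps each lower bound in the finite-subexperiment form consumed by \cref{lem:transfer}. Your bound delivers that form too, because \cref{lem:global-minimax} gives $\mathcal M(B)=\sup_I\inf_T\max_{h\in I}\E_h(T-\tau(h))^2$ anyway.

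One step needs a different justification. In the finite minimax step, monotone convergence in the truncation level $m$ gives $\max_{h_\perp\in B_0}\E_{h_\perp}\min\{(T_\perp-\gamma(h_\perp))^2,m\}\uparrow\max_{h_\perp\in B_0}\E_{h_\perp}(T_\perp-\gamma(h_\perp))^2$ for each fixed rule. That only shows the limit of the truncated minimax values is at most the untruncated value. This is the wrong direction for exchanging $\inf_{T_\perp}$ with $\lim_m$.

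Use clipping instead, as the paper does (\cref{lem:clip}). Projecting actions onto $\conv\gamma(B_0)$ increases no risk. On clipped rules the truncated and squared losses coincide once $m$ exceeds the squared diameter of $\gamma(B_0)$. Then \cref{lem:minimax} applies directly to bounded risk vectors, with no limit needed.

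Also, the risk set is convex because rules may be randomized, not because the loss is convex in the action.
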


Theorem~\ref{thm:lam-sq} shows that under squared error the two estimation tasks contribute additively to the optimal risk. The Gaussian bound is the full answer when the orthogonal parameter is fixed at zero. For a larger $B$, the additional term is the optimal risk of estimating the residual target from its own experiment. The theorem identifies that decision problem without prescribing a universally optimal residual rule.

In the two-programme experiment \eqref{eq:intro-experiment}, pooling has risk $1/2+d^2$, whereas the plug-in has risk $1/2+\E(\abs{d+H}-\abs d)^2\le1$. In fact,
\begin{equation}\label{eq:two-programme-minimax}
\inf_T\sup_{c,d\in\R}\E_{c,d}\{T(U,V)-(c+\abs d)\}^2=1,
\end{equation}
and the plug-in attains this value. Its error at the tie is the non-Gaussian variable $G+\abs H$, so a nondegenerate convolution residual is compatible with minimax optimality over a larger parameter set. Pooling instead attains the core bound at the tie. These are different precision requirements in the same experiment; neither comparison determines which confidence interval has the shortest length. A direct verification of \eqref{eq:two-programme-minimax} is in \cref{app:ci-proofs}.

\begin{theorem}[bowl-shaped losses: the regular factor enters by convolution]\label{thm:lam-conv}
Under \Cref{ass:paths,ass:additivity}, let $\ell:\R\to[0,+\infty]$ be bowl-shaped and lower semicontinuous. Put $\ell_\kappa(c):=\E\ell(c+G)$ for $G\sim N(0,\Vk)$; this loss is again bowl-shaped and lower semicontinuous, and $\ell_\kappa(c)=c^2+\Vk$ for squared error. For nonempty $B\subseteq\Hk^\perp\cap\T$, define
\[
\begin{split}
\mathcal M_\ell(B)&:=\inf_T\sup_{h\in\Hk\times B}\E_h\ell(T-\tau(h)),\\
\mathcal N_\ell(B)&:=\inf_U\sup_{h_\perp\in B}\E_{h_\perp}\ell_\kappa(U(Z_\perp)-\gamma(h_\perp)),
\end{split}
\]
where the infima are over randomized rules in the respective experiments. Then
$\mathcal M_\ell(B)=\mathcal N_\ell(B)$.
\end{theorem}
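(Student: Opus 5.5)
The proof has two inequalities, upper and lower. Throughout we use \cref{prop:split}: $Z_\kappa$ and $Z_\perp$ are independent, the law of $Z_\kappa$ depends only on $h_\kappa$, and the law of $Z_\perp$ depends only on $h_\perp$. The target is $\tau(h)=\ip\psik{h_\kappa}+\gamma(h_\perp)$.

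First, the preliminary claims about $\ell_\kappa$. Lower semicontinuity follows from Fatou's lemma. Symmetry holds because $G$ is symmetric. Monotonicity in $\abs c$ holds by Anderson's lemma, since $\{\ell\le t\}$ is a symmetric interval and the Gaussian density is symmetric unimodal; in one dimension one can check directly that $c\mapsto P(\abs{c+G}\le t)$ is nonincreasing in $\abs c$. The squared-error formula is immediate.

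\emph{Upper bound $\mathcal M_\ell(B)\le\mathcal N_\ell(B)$.} Take any randomized rule $U$ in the $Z_\perp$ experiment. Set $T=\Delta(\psik)$ computed from the core coordinate plus $U(Z_\perp)$, where $\Delta(\psik)$ is the core component of $Z_\kappa$. Under $h$, $\Delta(\psik)-\ip\psik{h_\kappa}\sim N(0,\Vk)$ exactly, and it is independent of $U(Z_\perp)$. Conditioning on $Z_\perp$ therefore gives
\[
\E_h\ell(T-\tau(h))=\E_{h_\perp}\ell_\kappa(U-\gamma(h_\perp))
\]
for every $h_\kappa$. Take the supremum over $h\in\Hk\times B$ and then the infimum over $U$. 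If $\Vk=0$ the argument is the same with $G=0$.

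\emph{Lower bound $\mathcal M_\ell(B)\ge\mathcal N_\ell(B)$.} This is the main step. Fix $T$. If $\psik=0$ the core coordinate carries no information about the target, and we conclude by averaging the rule over $Z_\kappa$ after conditioning; this is easy. Otherwise restrict $h_\kappa$ to the line $h_\kappa=t\psik/s^2$, so that the core target is $t$ and $X:=\Delta(\psik)/s^2\sim N(t,\Vk^{-1}\cdot\Vk^2)$; the variance is $\Vk$ because $s^2=\Vk$. The remaining orthogonal parts of $Z_\kappa$ are ancillary and act as randomization. So it suffices to treat a rule $T(X,Z_\perp,\text{noise})$ with $X\sim N(t,\Vk)$ independent of $Z_\perp$. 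For fixed $h_\perp$ and a Bayes prior $t\sim N(0,\sigma^2)$ on the location, apply the standard Gaussian-location argument: conditionally on $Z_\perp$, the posterior of $t$ given $X$ is $N(\rho X,\rho\Vk)$ with $\rho=\sigma^2/(\sigma^2+\Vk)$. By Anderson's lemma the posterior risk of $T$ is at least $\inf_u\E\ell(u+\sqrt{\rho\Vk}\,G_1)$ shifted by $\gamma(h_\perp)$, that is, at least $\ell_{\kappa,\rho}(U_\sigma(Z_\perp)-\gamma(h_\perp))$ for an induced randomized rule $U_\sigma$. Here $U_\sigma$ is defined by the value $T-\rho X$, which is a randomized statistic of $Z_\perp$ once $X$ is integrated against its marginal law.

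Two difficulties remain. First, $B$ may be infinite while the Bayes step fixes a single prior. We handle this by using \cref{lem:global-minimax} to pass to finite $B_0\subseteq B$, and by taking the prior as a product of the $N(0,\sigma^2)$ prior on $t$ with an arbitrary prior $\pi$ on $B_0$. The Bayes risk is then at least the $\pi$-Bayes risk of the reduced problem with loss $\ell_{\kappa,\rho}(c)=\E\ell(c+\sqrt{\rho\Vk}G_1)$. Second, we must let $\sigma\to\infty$, so $\rho\uparrow1$. For each $c$, lower semicontinuity and Fatou give $\liminf_{\rho\to1}\ell_{\kappa,\rho}(c)\ge\ell_\kappa(c)$. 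To pass the limit inside the infimum over rules, we use compactness of randomized rules on the finite experiment: a weak* compactness argument in the sense of Le Cam, valid because the laws are dominated. Lower semicontinuity of the risk then gives $\liminf$ of the Bayes values $\ge$ the $\pi$-Bayes value under $\ell_\kappa$.

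Finally, take the supremum over $\pi$ and apply the minimax theorem for the finite problem, which \cref{lem:global-minimax} supplies, to obtain $\mathcal N_\ell(B_0)$. Taking the supremum over finite $B_0$ gives $\mathcal N_\ell(B)$. The expected main obstacle is this $\rho\to1$ limit interchange for possibly unbounded and infinite-valued $\ell$. We would truncate first, replacing $\ell$ by $\ell\wedge M$, and use monotone convergence in $M$ at the end.
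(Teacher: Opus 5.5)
Your architecture is the paper's, and everything up to the final limit is sound. You use the same upper-bound rule $\Delta_\kappa(\psik)+U(Z_\perp)$. You restrict $h_\kappa$ to the line through $\psik$ and treat the rest of $Z_\kappa$ as randomization. You put a $N(0,\sigma^2)$ prior on the core coordinate, whose posterior $N(\rho X,\rho\Vk)$ makes $T-\rho X$ a randomized rule of $Z_\perp$ with loss $\ell_{\kappa,\rho}$. You then pass to a finite $B_0$, apply the finite minimax theorem, let $\rho\uparrow1$, and approximate $\ell$ by bounded losses. There are three small slips:
\begin{itemize}
\item On the line $h_\kappa=t\psik/s^2$, the statistic with law $N(t,\Vk)$ is $Z_\kappa(\psik)$ itself, not $Z_\kappa(\psik)/s^2$.
\item The posterior-risk step is an identity, $\E[\ell(T-t-\gamma(h_\perp))\mid\text{obs}]=\ell_{\kappa,\rho}(T-\rho X-\gamma(h_\perp))$, not Anderson's lemma. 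Anderson would discard the dependence on the rule and on $h_\perp$.
\item The finite minimax theorem is \cref{lem:minimax}, not \cref{lem:global-minimax}.
\end{itemize}

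The genuine gap is the last step. After $\mathcal M_\ell(B)\ge\mathcal M_{\ell\wedge M}(B)\ge\mathcal N_{\ell\wedge M}(B)$, you still need $\lim_M\mathcal N_{\ell\wedge M}(B)\ge\mathcal N_\ell(B)$. That is, an increasing limit of losses must pass through $\inf_U\sup_{h_\perp}$. Monotone convergence gives $R_M(U,h_\perp)\uparrow R(U,h_\perp)$ only for each fixed rule, hence only $\lim_M\mathcal N_{\ell\wedge M}(B)\le\mathcal N_\ell(B)$, which is the wrong direction. The near-optimal rule moves with $M$, so this interchange needs compactness.

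This is where the paper does its work. For each finite $I_\perp$ it clips actions to $J=\conv\gamma(I_\perp)$. It shows that the minimum over $J$ of the posterior objective for bounded continuous $\ell_j\uparrow\ell$ converges to the minimum for $\ell$, integrates by monotone convergence, and commutes \cref{lem:minimax} and the outer supremum with the monotone limit.

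Your own compactness tool also closes the gap if you move it to this step:
\begin{itemize}
\item Clip actions to $\conv\gamma(B_0)$ and take near-minimax rules $U_M$ for $(\ell\wedge M)_\kappa$ on $B_0$.
\item Extract a weak limit point $U^*$ and use $R_M\ge R_{M_0}$ for $M\ge M_0$, together with lower semicontinuity of $R_{M_0}$ in the rule.
\item Let $M_0\to\infty$ at the fixed rule $U^*$.
\end{itemize}
Working directly with $\inf_U\max_{B_0}$ this way also avoids the minimax theorem with possibly infinite risks. The supremum over finite $B_0$ then commutes with the increasing limit.

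Conversely, once you have truncated, $\rho\uparrow1$ needs no compactness. The bound $\sup_c\abs{(\ell\wedge M)_{\kappa,\rho}(c)-(\ell\wedge M)_\kappa(c)}\le M\norm{N(0,\rho\Vk)-N(0,\Vk)}_{L_1}\to0$ holds uniformly over rules, which is the paper's argument. If you keep compactness there instead, the pointwise $\liminf$ you state must be upgraded to the joint bound $\liminf_{\rho\uparrow1,\,c'\to c}\ell_{\kappa,\rho}(c')\ge\ell_\kappa(c)$. The same Fatou argument gives it.
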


Theorem~\ref{thm:lam-conv} extends the risk decomposition to other bowl-shaped losses. Estimation of the nonlinear component is evaluated under the loss averaged over the independent Gaussian error. Squared error gives the additive formula because this averaging adds $\Vk$; for a tail loss it gives $\ell_\kappa(c)=P(\abs{c+G}>t)$. The same Gaussian variance bound therefore enters each risk calculation through convolution with the chosen loss.

For small orthogonal neighborhoods, the residual risk has an explicit leading term. The relevant quantity is the range of the residual target over the neighborhood. Gaussian experiments indexed by nearby points are difficult to distinguish, so even the best procedure cannot substantially improve on centering that range to first order in squared risk.

\begin{proposition}[residual risk on small orthogonal neighborhoods]\label{prop:hedging}
Under \Cref{ass:paths,ass:additivity}, let $B_r:=\{h_\perp\in\Hk^\perp\cap\T:\norm{h_\perp}\le r\}$, take squared error, and let $D:=\sup_{h\in B_1}\gamma(h)-\inf_{h\in B_1}\gamma(h)$ (finite: $\gamma$ is continuous at $0$ and positively homogeneous). Then
\[
\frac{D^2r^2}{4}\Big(1-\frac{2r}{\sqrt{2\pi}}\Big)_+\ \le\ \mathcal M_\perp(B_r)\ \le\ \frac{D^2r^2}{4},\qquad\text{hence}\qquad\mathcal M_\perp(B_r)=\frac{D^2}{4}r^2+o(r^2).
\]
The additional minimax risk over orthogonal alternatives of radius $r$ is of second order in $r$, with coefficient $D^2/4$.
\end{proposition}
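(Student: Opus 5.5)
The upper bound comes from the constant rule $T_\perp\equiv m:=\tfrac12(\sup_{B_r}\gamma+\inf_{B_r}\gamma)$. By positive homogeneity, $\gamma(B_r)$ has range $rD$; note $\gamma(0)=0$ and $B_1$ is symmetric. Hence $\abs{m-\gamma(h_\perp)}\le rD/2$ on $B_r$, and the worst-case risk is at most $D^2r^2/4$. Finiteness of $D$ follows from continuity of $\gamma$ at $0$ (\cref{lem:autocont}): there is $\delta>0$ with $\abs{\gamma}\le1$ on $B_\delta$, so $\abs\gamma\le1/\delta$ on $B_1$.

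For the lower bound I would use a two-point argument. Fix $\varepsilon>0$ and choose $h_0,h_1\in B_1$ with $\gamma(h_1)-\gamma(h_0)\ge D-\varepsilon$. Restrict the supremum to the pair $\{rh_0,rh_1\}\subseteq B_r$ and bound it below by the Bayes risk under the uniform prior on the pair. The targets then differ by $\delta_r:=r(\gamma(h_1)-\gamma(h_0))$.

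The Gaussian-shift experiment on $\Hk^\perp\cap\T$ restricted to these two points has total variation distance $\TV=2\Phi(\norm{r(h_1-h_0)}/2)-1$. Since $\norm{h_1-h_0}\le2$, this is at most $2\Phi(r)-1\le 2r/\sqrt{2\pi}$, using $\phi(0)=1/\sqrt{2\pi}$. For any rule $T$ I use the standard Le Cam bound
\[
\tfrac12\{\E_0(T-a_0)^2+\E_1(T-a_1)^2\}\ge \tfrac12\int\min(p_0,p_1)\,\big[(t-a_0)^2+(t-a_1)^2\big]\ge \frac{(a_1-a_0)^2}{4}(1-\TV),
\]
where the pointwise minimum over $t$ of $(t-a_0)^2+(t-a_1)^2$ is $(a_1-a_0)^2/2$ and $\int\min(p_0,p_1)=1-\TV$. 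Randomized rules are handled by conditioning on the external randomization, or directly by convexity of squared loss. This gives $\mathcal M_\perp(B_r)\ge\tfrac14 r^2(D-\varepsilon)^2(1-2r/\sqrt{2\pi})_+$. Letting $\varepsilon\downarrow0$ yields the stated lower bound, and the $o(r^2)$ expansion follows immediately.

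The main care point is the total-variation step. The experiment is infinite-dimensional, but the two laws differ only through the one-dimensional statistic $\Delta(h_1-h_0)$, by sufficiency within the pair. Equivalently, I would invoke \cref{lem:global-minimax} and dominate by the law at zero, reducing to $N(0,1)$ versus $N(r\norm{h_1-h_0},1)$. The factor-of-2 conventions for TV must be checked so that the constant matches $2r/\sqrt{2\pi}$. If $D=0$, both bounds are trivial.
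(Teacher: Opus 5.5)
Your proposal is correct and follows the paper's proof: the midpoint constant rule gives the upper bound, and an equal-weight two-point prior near the extremes of $\gamma$ on $B_r$, combined with $\TV=2\Phi(\norm{h_1-h_0}/2)-1\le 2r/\sqrt{2\pi}$ for Gaussian shifts, gives the lower bound. The one minor difference is that the paper writes the two-point Bayes risk exactly as $\frac{\Delta^2}{2}\int\frac{p_1p_2}{p_1+p_2}$ and then bounds it below by $\frac{\Delta^2}{4}(1-\TV)$, whereas you minimize $(t-a_0)^2+(t-a_1)^2$ pointwise against $\min(p_0,p_1)$; both routes yield the same constant.
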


Proposition~\ref{prop:hedging} shows that the additional squared-error risk on an orthogonal ball of radius $r$ is $D^2r^2/4+o(r^2)$. Combining it with Theorem~\ref{thm:lam-sq} quantifies the increase above $\Vk$ when nearby changes in the nonlinear target must also be estimated. The expansion concerns small radii in the Gaussian limit experiment; its role for estimator sequences is through the local risk lower bound described below.

The risk equalities in this section are statements about the Gaussian product experiment with an unrestricted core parameter. Lemma~\ref{lem:transfer} carries their lower bounds to estimator sequences in the original model through finite collections of local alternatives. This transfer supplies a lower bound for every sequence; constructing a sequence that attains the residual optimum requires additional model-specific approximation and estimation arguments. The applications distinguish these requirements from attainment of the core Gaussian bound.

\subsection{Confidence intervals and local coverage}\label{sec:ci}

The local error distribution determines how an efficiency result can be used for inference. A positive Gaussian bound gives the usual Wald interval when it is attained. The same estimator can have a shifted limit on other local paths, and that shift determines its coverage. Let $\Phi$ be the standard normal distribution function and $z_p=\Phi^{-1}(p)$.

\begin{corollary}[confidence intervals for efficient estimators]\label{cor:ci-local}
Suppose \assT holds, $\Psi$ is directionally pathwise differentiable at $P$, and $T_n$ is $V$-efficient for $V\in\Lfam(\gamma)$. Put $s^2=V_V>0$, let $\hat s_n\ge0$ satisfy $\hat s_n\to s$ in $P^n$-probability, and define
\[
\delta(h)=\ip{\tilde\psi_V}h-\gamma(h),\qquad
I_n=\left[T_n-z_{1-\alpha/2}\frac{\hat s_n}{\sqrt n},\,
T_n+z_{1-\alpha/2}\frac{\hat s_n}{\sqrt n}\right],\quad 0<\alpha<1.
\]
For every fixed $h\in\T$ and every DQM path with score $h$,
\begin{equation}\label{eq:ci-local-coverage}
\Pn h\{\Psi(P_{n,h})\in I_n\}\longrightarrow
\Phi\!\left(z_{1-\alpha/2}-\frac{\delta(h)}s\right)
-\Phi\!\left(-z_{1-\alpha/2}-\frac{\delta(h)}s\right).
\end{equation}
This limit is $1-\alpha$ if $\delta(h)=0$, in particular on $V$, and is smaller otherwise. If $\delta(h)\le0$, the lower confidence bound
\begin{equation}\label{eq:ci-lower}
\underline\Psi_n=T_n-z_{1-\alpha}\frac{\hat s_n}{\sqrt n}
\end{equation}
satisfies
$\Pn h\{\Psi(P_{n,h})\ge\underline\Psi_n\}\to
\Phi(z_{1-\alpha}-\delta(h)/s)\ge1-\alpha$.
\end{corollary}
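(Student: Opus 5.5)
The plan is to reduce everything to the local limit law of $\sqrt n\{T_n-\Psi(P_{n,h})\}$ under $\Pn h$ and then apply Slutsky. First, by \cref{thm:efficiency}, $V$-efficiency gives the expansion $\sqrt n\{T_n-\Psi(P)\}=\sqrt n\,\mathbb P_n\tilde\psi_V+o_{P^n}(1)$, so $T_n$ is asymptotically linear with influence function $\tilde\psi_V$. Applying the local-alternative statement of \cref{thm:al} with $\phi=\tilde\psi_V$, for every $h\in\T$ and every DQM path with score $h$ we get
\[
\sqrt n\{T_n-\Psi(P_{n,h})\}\wto N(\ip{\tilde\psi_V}h-\gamma(h),\norm{\tilde\psi_V}^2)=N(\delta(h),s^2)
\]
under $\Pn h$. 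If I want to avoid leaning on \cref{thm:al}, the same conclusion follows directly: Le Cam's third lemma applied to the joint limit of $(\sqrt n\,\mathbb P_n\tilde\psi_V,\log$ likelihood ratio$)$ from \eqref{eq:LAN} shifts the mean by $\ip{\tilde\psi_V}h$, and directional differentiability gives $\sqrt n\{\Psi(P_{n,h})-\Psi(P)\}\to\gamma(h)$.

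Next, contiguity of $\Pn h$ and $P^n$ transfers $\hat s_n\to s$ in probability under $\Pn h$. Since $s>0$, Slutsky yields $R_n:=\sqrt n\{T_n-\Psi(P_{n,h})\}/\hat s_n\wto N(\delta(h)/s,1)$. The coverage event is $\{-z_{1-\alpha/2}\le R_n\le z_{1-\alpha/2}\}$. This is up to the null event $\hat s_n=0$, whose probability tends to zero. The limit law is continuous, so the portmanteau theorem gives \eqref{eq:ci-local-coverage}. The one-sided statement is identical: $\{\Psi(P_{n,h})\ge\underline\Psi_n\}=\{R_n\le z_{1-\alpha}\}$, with limit $\Phi(z_{1-\alpha}-\delta(h)/s)$. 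This is at least $1-\alpha$ when $\delta(h)\le0$ because $\Phi$ is increasing.

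It remains to check the qualitative claims about the two-sided limit. On $V$, $\gamma(h)=\ip{\tilde\psi_V}h$ because $\tilde\psi_V$ represents $\gamma|_V$, so $\delta(h)=0$ there and the limit equals $1-\alpha$. For the strict decrease, consider $f(m)=\Phi(z-m)-\Phi(-z-m)$ with $z>0$. It is the $N(m,1)$-probability of $[-z,z]$, which is even in $m$. Its derivative is $f'(m)=\varphi(z+m)-\varphi(z-m)$, which is negative for $m>0$. Hence $f$ is uniquely maximized at $m=0$, and $f(m)<1-\alpha$ whenever $\delta(h)\ne0$.

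The only delicate step is the first one: identifying the local limit for arbitrary $h\in\T$, not just for $h\in V$. Regularity alone gives no information off $V$, so it is essential to pass through the asymptotic-linearity characterization of \cref{thm:efficiency} before invoking \cref{thm:al}. Everything after that is routine.
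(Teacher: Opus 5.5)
Your proposal is correct and follows the paper's proof essentially step for step: asymptotic linearity from \cref{thm:efficiency}, then the local limit $N(\delta(h),s^2)$ from \cref{thm:al}, then contiguity plus Slutsky for the studentized statistic, then continuity of the limit law, and finally evenness and strict monotonicity of $a\mapsto\Phi(z-a)-\Phi(-z-a)$ together with the representer identity on $V$. One wording point: the event $\{\hat s_n=0\}$ is asymptotically negligible rather than null, but its probability tends to zero, so the argument is unaffected.
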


Corollary~\ref{cor:ci-local} translates the efficient influence function and directional derivative into coverage statements. For $V=\Hk$ under \assS, \cref{prop:blind} gives $\delta(h)=-\gamma(h_\perp)$. For conditional maxima with nonnegative optimal weights, this drift is nonpositive and the lower bound remains valid along every fixed local path, including paths that break a tie. Signed optimal weights need not have this property. These are assertions for each fixed path; uniform coverage over a set of paths additionally requires uniform approximation. The condition $V_V>0$ also matters: convergence to a degenerate limit does not justify studentization when the variance bound is zero. The proof is in \cref{app:ci-proofs}.

For pooling in \eqref{eq:intro-experiment}, the error is $G-\abs d$. The Gaussian interval $[U-z_{1-\alpha/2}/\sqrt2,U+z_{1-\alpha/2}/\sqrt2]$ has coverage $\Phi(z_{1-\alpha/2}+\sqrt2\abs d)-\Phi(-z_{1-\alpha/2}+\sqrt2\abs d)$, which decreases from $1-\alpha$ at $d=0$ to zero as $\abs d\to\infty$. Its lower endpoint with $z_{1-\alpha}$ instead has coverage at least $1-\alpha$ for every $c,d$. Thus the direction of inference matters even for the estimator attaining the Gaussian bound.

Intervals for an estimator retaining a convolution residual depend on the entire error law. In the product experiment of \cref{prop:split}, consider a rule formed by the efficient linear statistic plus $r(Z_\perp)$. Write $R_h=r(Z_\perp)-\gamma(h_\perp)$, independent of $G\sim N(0,\Vk)$. For a specified set $B$ of orthogonal parameters, a common half-length can be calibrated by
\begin{equation}\label{eq:ci-general-critical}
c_\alpha(B;r)=\inf\{c\ge0:\sup_{h_\perp\in B}P_{h_\perp}(\abs{G+R_h}>c)\le\alpha\}.
\end{equation}
This expression allows non-Gaussian residuals and uses the tail loss of \cref{thm:lam-conv}. It is a calibration problem in the stated limit experiment; the convolution theorem alone neither identifies all local residual laws nor provides consistent estimators of their quantiles. For plug-in estimators, the derivative-based resampling method of \citet{FangSantos2019} supplies quantile estimates for the baseline limiting error law under its conditions. Its pointwise consistency and additional local size results must be distinguished from uniform coverage over a statistical model.

For $N(b,s^2)$ errors with $s>0$ and $\abs b\le b_0$, the corresponding common half-length is $s\,\operatorname{cv}_\alpha(b_0/s)$, where $\operatorname{cv}_\alpha(t)$ is the $1-\alpha$ quantile of $\abs{N(t,1)}$ \citep{ArmstrongKolesar2021}. Pooling with $\abs d\le\rho$ gives $s=1/\sqrt2$ and $b_0=\rho$. The neighborhood is part of the coverage requirement; its local parameter cannot generally be consistently estimated. This calculation illustrates how the derivative determines a bias allowance. Uniformly feasible calibration and optimization over interval procedures are further questions, distinct from minimizing variance or squared error.

\section{Applications}\label{sec:examples}

The examples below derive efficient influence functions and construct estimators that attain the corresponding convolution bounds. They also calculate local bias and residual risk outside the regularity subspaces. A general conditional-maximum formula covers optimal treatment values, instrumental-variable bounds and calibration error: these examples respectively illustrate unequal conditional variances, correlated branches and efficiency of an existing estimator. Sequential treatment requires comparing complete optimal policies. Marginal programme values then give explicit residual distributions for pooling, plug-in estimation, smoothing and sample splitting.

\subsection{Conditional maxima and their efficiency bounds}\label{ex:conditional-max}
A conditional maximum depends on both its branch values and the covariate distribution. Where branches tie, their gradients may have different conditional variances and may be correlated. The efficient influence function combines these gradients using their conditional covariance matrix. Many individualized decisions and partially identified parameters have the form
\[
\Psi(Q)=\E_Q q_Q(X),\qquad q_Q(x)=\max_{j\le J}b_j(Q,x),
\]
where $X$ may be continuously distributed. Write $b_j(x)=b_j(P,x)$, $q=q_P$, and $\mathcal J^*(x)=\arg\max_j b_j(x)$. The conditions below concern the observed-data model: causal or instrumental-variable assumptions are used to identify its branches.

Suppose the branches admit uniformly bounded versions over the model and that $r_j\in L_2(P)$ satisfy $\E[r_j\mid X]=0$. Assume, for every DQM path $P_t$ with score $h\in\T$,
\begin{equation}\label{eq:cm-path}
\left\|\frac{b_j(P_t,\cdot)-b_j}{t}-\dot b_{j,h}\right\|_{L_1(P_X)}\longrightarrow0,
\qquad \dot b_{j,h}(X):=\E[r_jh\mid X],\quad j\le J.
\end{equation}
This is an expansion along statistical paths, including paths that change the marginal law of $X$. A sufficient observed-data model takes each branch to be a known bounded affine combination of conditional moment ratios, with bounded observed variables and denominators bounded away from zero throughout the model; \cref{lem:cm-paths} verifies \eqref{eq:cm-path} along every DQM path. No separation between the best and second-best branches is needed for this derivative calculation.

Choose the smallest active index $j_0(x)$ and let $\lambda_0=q(X)-\Psi(P)+r_{j_0(X)}$. For a measurable weight vector $w(X)$, ``active affine weights'' means $w_j(X)=0$ off $\mathcal J^*(X)$ and $\sum_jw_j(X)=1$; the weights may be negative. Write
\[
G_{jk}(X)=\E[r_jr_k\mid X],\qquad
\phi_w=q(X)-\Psi(P)+\sum_jw_j(X)r_j.
\]
All statements about $\phi_w$ below require $\phi_w\in L_2(P)$. The proposition identifies the conditional maximum's derivative, the directions along which regularity is possible, and the resulting Gaussian bound and residual law. For the unrestricted tangent space, its conditional covariance projection agrees with \citet[Theorem~6.1]{XuGuoRALU}; the path expansion here identifies the associated subspace as the target's core.

\begin{proposition}[conditional maxima]\label{prop:cm}
Suppose \assT and \eqref{eq:cm-path} hold. Then
\begin{equation}\label{eq:cm-derivative}
\gamma(h)=\E[(q(X)-\Psi(P))h]
+\E\!\left[\max_{j\in\mathcal J^*(X)}\dot b_{j,h}(X)\right].
\end{equation}
Its unique maximal linearity subspace and efficient gradient are
\begin{equation}\label{eq:cm-core}
\begin{split}
\Hk&=\{h\in\T:\dot b_{j,h}(X)=\dot b_{k,h}(X)
\text{ for all }j,k\in\mathcal J^*(X),\ P\text{-a.s.}\},\\
\psik&=\Pi_{\Hk}\lambda_0,\qquad \Vk=\norm{\psik}^2.
\end{split}
\end{equation}
Every estimator regular along this space has limit law $N(0,\Vk)*M$.

If $\T=L_2^0(P)$, let $w^*(X)$ be a measurable solution of
\begin{equation}\label{eq:cm-qp}
v_*(X)=\min_{\substack{w_j=0\ (j\notin\mathcal J^*(X))\\ \sum_jw_j=1}}w^\top G(X)w.
\end{equation}
Such a solution exists even when $G(X)$ is singular, and its combined residual is unique. In this case
\begin{equation}\label{eq:cm-efficient}
\psik=\phi_{w^*},\qquad
\Vk=\Var(q(X))+\E[v_*(X)].
\end{equation}
For any active affine weights $w$ and an estimator asymptotically linear with influence function $\phi_w$, the estimator is core-regular, and its convolution residual is
\begin{equation}\label{eq:cm-gap}
M=N\!\left(0,\E[(w-w^*)^\top G(X)(w-w^*)]\right).
\end{equation}
It attains the core bound exactly when $\phi_w=\phi_{w^*}$ in $L_2(P)$.
\end{proposition}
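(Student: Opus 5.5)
The plan is to establish the derivative formula first, then show it is sublinear so that \cref{thm:kink} and \cref{cor:core-convolution} deliver the remaining structural claims, and finally reduce the nonparametric case to a pointwise quadratic program in $X$.

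\textbf{Step 1: the derivative.} Write $\Psi(P_t)-\Psi(P)=\int q_{P_t}\,d(P_t-P)_X+\E_P[q_{P_t}(X)-q(X)]$.

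For the first term, use DQM together with the uniform boundedness of the branches. This gives $t^{-1}\int q_{P_t}\,d(P_t-P)\to\E[(q(X)-\Psi(P))h]$. Here one uses $q_{P_t}\to q$ in $L_1(P_X)$, which follows from \eqref{eq:cm-path}, together with the standard bound $\int|dP_t-dP-t\,h\,dP|=o(t)$ on bounded functions.

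For the second term, take pointwise difference quotients of $\max_j b_j(P_t,x)$. Fix $x$. On the event where \eqref{eq:cm-path} holds along a subsequence a.s., inactive branches are eventually irrelevant, because their gap is strictly positive at $x$. Hence the quotient converges to $\max_{j\in\mathcal J^*(x)}\dot b_{j,h}(x)$.

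To pass to the expectation, note that the map $z\mapsto\max_j z_j$ is $1$-Lipschitz. This gives the domination
\[
\Bigl|t^{-1}\{q_{P_t}-q\}-\max_{\mathcal J^*}\dot b_{j,h}\Bigr|\le \sum_j\bigl|t^{-1}\{b_j(P_t)-b_j\}-\dot b_{j,h}\bigr|+|\text{pointwise error of the max}|.
\]
The pointwise error is bounded by $2\sum_j|\dot b_{j,h}|$, because the limiting max over all $j$ versus over active $j$ differs only through terms that vanish pointwise. I expect this $L_1$ limit to be the main obstacle, since no margin condition is available. I would handle it by combining the $L_1$ convergence from \eqref{eq:cm-path} with dominated convergence of the deterministic part
\[
t^{-1}\{\max_j(b_j+t\dot b_{j,h})-q\}\to\max_{\mathcal J^*}\dot b_{j,h},
\]
which is pointwise and dominated by $\sum_j|\dot b_{j,h}|\in L_1$.

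\textbf{Step 2: structure of $\gamma$.} The map $h\mapsto\dot b_{j,h}$ is linear, so $\gamma$ is sublinear. Continuity comes from \cref{lem:autocont}, or directly from Cauchy--Schwarz.

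By \cref{thm:kink}(2), $\Lfam_{\max}=\{\Hk\}$ and $\Hk=\Nodd(\gamma)$. Now $\gamma(h)+\gamma(-h)=\E[\max_{\mathcal J^*}\dot b_{j,h}-\min_{\mathcal J^*}\dot b_{j,h}]$. This vanishes iff all active $\dot b_{j,h}$ agree a.s., which gives the displayed $\Hk$.

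Next, $\lambda_0$ is a supporting gradient, because $\ip{\lambda_0}h=\E[(q-\Psi)h]+\E[\dot b_{j_0,h}]\le\gamma(h)$, using $\E[r_jh\mid X]=\dot b_{j,h}$. So \cref{thm:kink}(3) gives $\psik=\Pi_{\Hk}\lambda_0$. The convolution statement is then \cref{cor:core-convolution}(2).

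\textbf{Step 3: nonparametric closed form.} With $\T=L_2^0(P)$, the orthogonal complement of $\Hk$ is the closed span of the functions $c(X)^\top(r_j-r_k)$ over active pairs. Projecting $\lambda_0$ therefore means subtracting the element of $\{\sum_j c_j(X)r_j:\sum_jc_j=0,\ c \text{ supported on }\mathcal J^*\}$ that is closest in $L_2$. Because the $r_j$ have conditional mean zero, the residual $\sum_jw_j r_j$ is orthogonal to functions of $X$. The $L_2$ minimization therefore decouples into the pointwise problem \eqref{eq:cm-qp}.

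A measurable minimizer exists by a measurable selection theorem, for example taking $w^*=G^+\one/\one^\top G^+\one$ on the active block when $\one$ lies in the range of $G$, and any null vector otherwise. The combined residual $\sum_jw^*_jr_j$ is unique as the projection. This gives $\psik=\phi_{w^*}$ and $\Vk=\Var q+\E v_*$, provided $\phi_{w^*}$ is checked to lie in $L_2$ and to be orthogonal to the complement.

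\textbf{Step 4: arbitrary active affine weights.} For active affine weights $w$, we have $\phi_w-\psik=\sum_j(w-w^*)_jr_j\perp\Hk$, so $\Pi_{\Hk}\phi_w=\psik$. Then \cref{cor:alkink} gives core-regularity with residual law $M=N(0,\norm{\phi_w-\psik}^2)$. Computing the norm conditionally on $X$ gives $\E[(w-w^*)^\top G(w-w^*)]$, which is \eqref{eq:cm-gap}. Equality with the core bound holds iff this quantity vanishes, that is, iff $\phi_w=\phi_{w^*}$ in $L_2(P)$.
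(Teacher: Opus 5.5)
Your proposal is correct and follows essentially the paper's route: the same Lipschitz-plus-dominated-convergence argument for the branch term (you control the moving covariate marginal by the $o(t)$ total-variation expansion of $P_t$ instead of the paper's Hellinger--Cauchy--Schwarz bound), the same max-minus-min identification of $\Hk$ through \cref{thm:kink}, and a conditional quadratic program for the projection. The differences are small: for a general $\T$ you should read $\psik=\Pi_{\Hk}\lambda_0$ directly off the identity $\gamma=\ip{\lambda_0}{\cdot}$ on $\Hk$, since $\lambda_0$ need not lie in $\T$ and so need not belong to $\LamP$; and you build $w^*$ pointwise from \cref{thm:kink}(5) rather than from the paper's conditional regression $\beta=B^\dagger c$ of $r_{j_0}$ on the active contrasts, where in the null-space case you need $(I-G_*^+G_*)\one$ rescaled to sum one, not an arbitrary null vector. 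The checks you defer go through as in the paper: $\E[(\sum_jw_j^*r_j)^2\mid X]=v_*(X)\le G_{j_0j_0}(X)$ gives $\phi_{w^*}\in L_2(P)$, the first-order condition $G_*w_*^*=v_*\one$ on the active block gives $\phi_{w^*}\in\Hk$, and localizing the possibly unbounded weights on $\{\norm{w^*}\le m\}$ gives $\lambda_0-\phi_{w^*}\perp\Hk$ (and likewise $\phi_w-\psik\perp\Hk$ in your Step~4), so $\phi_{w^*}=\Pi_{\Hk}\lambda_0$.
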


\Cref{prop:cm} separates the variation in the conditional target from the noise in estimating its active branches. The latter is minimized using their full conditional covariance, and the convolution conclusion turns this calculation into a lower bound for all core-regular estimators. Consequently, a feasible estimator with a proved asymptotically linear expansion with $\phi_{w^*}$ attains that bound. The remaining question is how it behaves when a local alternative changes the relative values of the tied branches. Its local drift follows from \cref{thm:al}:
\begin{equation}\label{eq:cm-drift}
\E\!\left[\sum_jw_j^*(X)\dot b_{j,h}(X)
-\max_{j\in\mathcal J^*(X)}\dot b_{j,h}(X)\right].
\end{equation}
This drift compares the first-order change estimated by the weighted average with the change in the best branch. It need not have a fixed sign when optimal affine weights are negative. The same influence function therefore describes both attainable precision along the core and the response to alternatives outside it. To use this conclusion for a particular estimator, its learning conditions must establish the asymptotically linear expansion.

The conditional affine scores of \citet{Whitehouse2025} take $b_j(P,X)=s_j(X;g_j)$, where $g_j(W)=\E[U_j\mid W]$ and $X$ is part of $W$. With their conditional Riesz representer $\zeta_j(W)$, the corresponding residual is $r_j=\zeta_j(W)^\top\{U_j-g_j(W)\}$, which has conditional mean zero given $X$. Under the conditions of their cross-fitted Corollary~C.2 (the extension of Theorem~4.2 described in Remark~4.4), the estimator has a full-sample asymptotically linear expansion on its $n$ observations with the active uniform-weight influence function $\phi_u$, where $u_j(X)=\ind\{j\in\mathcal J^*(X)\}/|\mathcal J^*(X)|$.

In a model satisfying the path conditions above, \eqref{eq:cm-gap} gives this estimator's exact excess variance on the same sample-size scale. Equality holds precisely when the active block of $G(X)$ has equal row sums almost surely. Thus the covariance calculation determines whether an existing estimator attains the bound and quantifies its excess variance otherwise. The following examples illustrate both possibilities. Conditional minima are obtained by applying the proposition to the negatives of all branches; proofs of the general calculation are in \cref{app:cm}.

Structural restrictions on the statistical model can change its efficiency bound. The projection in \cref{prop:cm} then uses the tangent space of the restricted model. For example, suppose the treatment contrasts satisfy
\[
\mu_a(x)-\mu_{a_0}(x)=\theta(P)^\top f_a(x),\qquad f_{a_0}(x)=0,
\]
with known bounded features and a pathwise differentiable finite-dimensional parameter $\theta$. This is the linear-blip restriction used by \citet[Section 3.1]{Whitehouse2025}. If $\dot\theta_h$ is its path derivative, the core in the restricted model is
\begin{equation}\label{eq:cm-blip-core}
\Hk=\{h\in\T:(f_a(X)-f_b(X))^\top\dot\theta_h=0
\text{ for all active }a,b,\ \text{a.s.}\}.
\end{equation}
For any fixed active selector $d$, let $\lambda_d\in\T$ be the gradient of its rule value in that model. Then $\psik=\Pi_{\Hk}\lambda_d$. Indeed, differentiating the displayed contrast identity yields exactly the conditional tie restrictions, while the rule-value derivative agrees with the target derivative on them. The conditional quadratic formula for the unrestricted model need not give this projection. Likewise, an asymptotically linear structural estimator determines a target influence function whose variance can be compared with $\norm{\Pi_{\Hk}\lambda_d}^2$ once its expansion has been verified.

\subsection{The value of an optimal individualized treatment rule}\label{ex:otr}
The precision of an estimated treatment mean depends on the treatment's assignment probability and conditional outcome variance. When several treatments have the same optimal conditional mean, efficient value estimation can pool their observations. An individualized rule can assign different treatments to different patients, and its optimal value is
\[
\Psi(P)=\E\{q(X)\},\qquad q(x)=\max_{a\le K}\mu_a(x),
\]
where $\mu_a(x)=\E[Y\mid X=x,A=a]$. Under consistency, unconfoundedness, and overlap, this is the mean outcome under an optimal individualized rule. The covariate $X$ may take values in a general standard Borel space.

To verify directional pathwise differentiability along every statistical path, consider the model with a common outcome bound $\abs Y\le B_0$ and $\pi_{a,Q}(X)\ge\epsilon$ at every law $Q$, evaluated at $P$ with $\pi_a(X)\ge\epsilon+\eta$ for some $\eta>0$. The argument in \cref{app:policy-model} gives \assT with $\T=L_2^0(P)$, and \cref{lem:cm-paths} supplies the all-path conditional expansions.

Let $\mathcal A^*(x)=\arg\max_a\mu_a(x)$, and write
\[
R_a(O)=\frac{\ind\{A=a\}}{\pi_a(X)}\{Y-\mu_a(X)\},\qquad
c_a(x)=\frac{\sigma_a^2(x)}{\pi_a(x)},\qquad
\dot\mu_{a,h}(x)=\E\{R_a h\mid X=x\},
\]
where $\sigma_a^2(x)=\Var(Y\mid X=x,A=a)>0$ almost surely. The conditional residual covariance is diagonal, with entries $c_a(x)$. Hence the conditional affine minimum of \cref{prop:cm} has the weights
\begin{equation}\label{eq:otr-weights}
w_a^*(x)=\frac{c_a(x)^{-1}}{\sum_{b\in\mathcal A^*(x)}c_b(x)^{-1}}
\quad(a\in\mathcal A^*(x)),\qquad w_a^*(x)=0\quad(a\notin\mathcal A^*(x)).
\end{equation}
For general $X$, the core consists of scores for which the $\dot\mu_{a,h}(X)$ are equal across $a\in\mathcal A^*(X)$ almost surely, and the optimal influence function and bound are
\begin{equation}\label{eq:otr-general}
\begin{split}
\psik&=q(X)-\Psi(P)+\sum_a w_a^*(X)R_a,\\
\Vk&=\Var\{q(X)\}+\E\left[\left\{\sum_{a\in\mathcal A^*(X)}c_a(X)^{-1}\right\}^{-1}\right].
\end{split}
\end{equation}
The influence function uses the conditional maximum for the covariate contribution and pools outcome residuals according to their precision. Both the assignment probability and the conditional outcome variance enter that precision. No finite-support condition is needed for this calculation, and the weights are positive and sum to one even if the conditional variances approach zero.

The softmax estimator of \citet[Theorem 3.3 and Corollary C.2]{Whitehouse2025}, under its smoothing and nuisance-rate conditions, has an asymptotically linear representation with weights $1/m(X)$ on $\mathcal A^*(X)$, where $m(X)=\abs{\mathcal A^*(X)}$. Here the cross-fitted representation uses all $n$ observations. The two influence functions have the exact variance difference
\begin{equation}\label{eq:otr-uniform-gap}
V_{\rm unif}-\Vk
=\E\left[\sum_{a\in\mathcal A^*(X)}c_a(X)
\{m(X)^{-1}-w_a^*(X)\}^{2}\right].
\end{equation}
Equation~\eqref{eq:otr-uniform-gap} shows that the softmax influence function attains the core bound exactly when $c_a(X)$ is constant across the active treatments almost surely. This follows by conditional orthogonality: $c_a(X)w_a^*(X)$ is constant over the active set, so the efficient residual is orthogonal to every change of normalized weights. The comparison concerns two procedures in the same core-regular class; their behavior when treatment values separate requires the local drift calculation below.

For binary treatment, \citet[Supplement A.12]{XuGuo2025} derive the heteroscedastic optimal weights and establish optimality within their robust asymptotically linear class, with feasible attainment by adaptive smoothing. The calculation here identifies the directions along which that influence function is efficient, compares its variance with the bound when regularity is required throughout a specified treatment's optimality cone, and determines its local bias when the tied treatment values separate.

We next give a feasible construction and an explicit variance comparison under one-sided regularity for finitely many prespecified strata, $X\in\{1,\ldots,J\}$. Retain the model above and suppose $p_x:=P(X=x)>0$ for every stratum. The finite support makes all empirical conditional means root-$n$ consistent and allows consistent recovery of the base law's active sets. In this setting the directional derivative is
\begin{equation}\label{eq:otr-derivative}
\gamma(h)=\E\{(q(X)-\Psi(P))h\}
+\sum_xp_x\max_{a\in\mathcal A^*(x)}\dot\mu_{a,h}(x).
\end{equation}
Indeed, let $\mathcal D$ be the finite set of rules $d:\{1,\ldots,J\}\to\{1,\ldots,K\}$. Their values satisfy
\[
\Psi(Q)=\max_{d\in\mathcal D}\E_Q\{\mu_{d(X),Q}(X)\}.
\]
The active rules choose an element of $\mathcal A^*(x)$ in every stratum. Thus this individualized target is covered by the finite maximum calculation, although the gradients of its many active rules need not have a nonsingular Gram matrix.

Fix an optimal rule $d$, so $d(x)\in\mathcal A^*(x)$ for every $x$. The cone
\[
C_d=\{h\in\T:\dot\mu_{d(x),h}(x)\ge\dot\mu_{a,h}(x)
\text{ for every }x\text{ and }a\in\mathcal A^*(x)\}
\]
consists of directions under which that rule remains optimal to first order.

\begin{corollary}[geometry and bounds for the individualized value]\label{cor:otr-geometry}
Under the model conditions above, the unique maximal linearity subspace is
\begin{equation}\label{eq:otr-core}
\Hk=\{h\in\T:\dot\mu_{a,h}(x)=\dot\mu_{b,h}(x)
\text{ for every }x\text{ and }a,b\in\mathcal A^*(x)\}.
\end{equation}
The efficient influence function relative to $\Hk$ and its variance bound are
\begin{equation}\label{eq:otr-if-bound}
\begin{split}
\psik(O)&=q(X)-\Psi(P)+\sum_aw_a^*(X)R_a(O),\\
\Vk&=\Var\{q(X)\}
+\sum_xp_x\left\{\sum_{a\in\mathcal A^*(x)}\frac{\pi_a(x)}{\sigma_a^2(x)}\right\}^{-1}.
\end{split}
\end{equation}
Every estimator regular along $\Hk$ has a limit law $N(0,\Vk)*M$. The cone $C_d$ spans $\T$, and every estimator regular along $C_d$ has a limit law $N(0,V_d)*M_d$, where
\begin{equation}\label{eq:otr-cone-price}
\begin{split}
V_d&=\Var\{q(X)\}+\sum_xp_xc_{d(x)}(x),\\
V_d-\Vk&=\sum_xp_x\left[c_{d(x)}(x)
-\left\{\sum_{a\in\mathcal A^*(x)}c_a(x)^{-1}\right\}^{-1}\right].
\end{split}
\end{equation}
The variance increase $V_d-\Vk$ is strictly positive if at least one stratum has more than one best treatment, and is zero otherwise.
\end{corollary}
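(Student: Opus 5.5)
The plan is to reduce the individualized value to the finite maximum over rules and then read off each claim from \cref{thm:kink}, \cref{prop:cm}, \cref{cor:onesided} and \cref{thm:main}. With finitely many strata, $\Psi(Q)=\max_{d\in\mathcal D}\E_Q\mu_{d(X),Q}(X)$. Each rule value is pathwise differentiable with gradient
\[
\lambda_d=\mu_{d(X)}(X)-\E\mu_{d(X)}(X)+R_{d(X)},
\]
by the all-path expansions of \cref{lem:cm-paths}. Hence $\gamma$ is sublinear, equal to $\max_{d\text{ active}}\ip{\lambda_d}{\cdot}$, and agrees with \eqref{eq:otr-derivative}, since the maximum over active rules decouples across strata. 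By \cref{thm:kink}(2) there is a unique maximal linearity subspace, $\Hk=\FP^\perp\cap\T$. The differences $\lambda_d-\lambda_{d'}$ vanish in their covariate part, because $\mu_a(x)=q(x)$ on active sets. Varying one stratum at a time, these differences therefore span, in inner product with $h$, exactly the functionals $p_x\{\dot\mu_{a,h}(x)-\dot\mu_{b,h}(x)\}$ for $a,b\in\mathcal A^*(x)$. This gives \eqref{eq:otr-core}, equivalently \eqref{eq:cm-core}.

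For the influence function, $\T=L_2^0(P)$ lets me invoke \eqref{eq:cm-efficient} directly. The conditional covariance $G(x)=\E[R_aR_b\mid X=x]$ is diagonal with entries $c_a(x)=\sigma_a^2(x)/\pi_a(x)$, because $\ind\{A=a\}\ind\{A=b\}=0$ for $a\ne b$. Minimizing $w^\top G w$ over active affine weights therefore gives the inverse-variance weights \eqref{eq:otr-weights}, with $v_*(x)=\{\sum_{a\in\mathcal A^*(x)}c_a(x)^{-1}\}^{-1}$. Averaging over strata yields \eqref{eq:otr-if-bound}. The convolution statement for $\Hk$-regular estimators is then \cref{cor:core-convolution}(2).

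For the cone I would not rely on the vertex hypothesis of \cref{cor:onesided} directly, since many rules share gradients only partially. Instead I would show that $C_d$ has nonempty interior in the relevant sense, and so spans $\T$. The plan is to exhibit $h$ with $\dot\mu_{d(x),h}(x)>\dot\mu_{a,h}(x)$ for all active $a\ne d(x)$. A choice is $h=\sum_x\ind\{X=x,A=d(x)\}(Y-\mu_{d(x)}(x))$, centered, for which $\dot\mu_{a,h}(x)=0$ for $a\ne d(x)$ and $\dot\mu_{d(x),h}(x)=\sigma^2_{d(x)}(x)>0$. Then for any $g\in\T$, $g+th\in C_d$ and $th\in C_d$ for large $t$, because the conditions are finitely many linear inequalities. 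Hence $g\in\spn C_d$.

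On $C_d$, the stratumwise maximum is attained at $d(x)$, so $\gamma=\ip{\lambda_d}{\cdot}$ there. \cref{thm:main} then gives the representer $\Pi_{\clspan C_d}\lambda_d=\lambda_d$. Its squared norm is $\Var q(X)+\sum_xp_xc_{d(x)}(x)$, using $\mu_{d(x)}=q$ and the orthogonality of the covariate and residual parts. This gives \eqref{eq:otr-cone-price}.

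Finally, the difference $V_d-\Vk$ is a $p_x$-weighted sum of the terms $c_{d(x)}(x)-\{\sum_ac_a^{-1}\}^{-1}$. Each term is nonnegative, and it is strictly positive iff $|\mathcal A^*(x)|>1$, since all $c_a(x)$ are positive and finite by overlap and $\sigma_a^2>0$. Because $p_x>0$ for every stratum, the sum is positive iff some stratum has a tie. The main care point is the spanning argument together with the claim that the representer on the span equals $\lambda_d$ itself. The remaining steps are bookkeeping from \cref{prop:cm}.
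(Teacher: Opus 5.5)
Your proposal is correct and follows essentially the same route as the paper: you reduce to the finite maximum over rules, identify $\spn(\LamP-\LamP)$ with the stratumwise contrasts $\ind\{X=x\}(R_a-R_b)$ and apply \cref{thm:kink}, minimize the diagonal conditional quadratic form, and exhibit a score at which every inequality defining $C_d$ is strict, so that $\gamma=\ip{\lambda_d}{\cdot}$ on a cone spanning $\T$. The differences are cosmetic: you take the projection from \cref{prop:cm} instead of recomputing it on the affine hull by Cauchy--Schwarz, and your score $\ind\{A=d(X)\}\{Y-\mu_{d(X)}(X)\}$ is the paper's $h_d=R_d/c_{d(X)}(X)$ multiplied by $\sigma^2_{d(X)}(X)$ in each stratum.
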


\Cref{cor:otr-geometry} compares efficiency under two regularity requirements in the same observed-data model. Along the core, all optimal treatments move together within each stratum, and their observations can be pooled using \eqref{eq:otr-weights}. Along $C_d$, the designated rule can separate from the other optimal rules, so its own gradient determines the Gaussian component. The difference in \eqref{eq:otr-cone-price} is the resulting variance cost, positive whenever a stratum contains a treatment tie. Both conclusions apply to every estimator satisfying the respective regularity requirement.

An attaining estimator first identifies the tied treatments and then pools their means using estimated precision. The tolerance must retain differences at the estimation-noise scale while excluding every fixed positive gap. Let $\hat p_x$, $\hat\pi_a(x)$, $\hat\mu_a(x)$, and $\hat\sigma_a^2(x)$ be the empirical stratum frequencies, treatment proportions, cell means, and cell variances (using the cell size as divisor). Choose a deterministic tolerance $\tau_n\downarrow0$ with $\sqrt n\tau_n\to\infty$, and set
\begin{equation}\label{eq:otr-estimator}
\begin{split}
\widehat{\mathcal A}(x)&=\{a:\max_b\hat\mu_b(x)-\hat\mu_a(x)\le\tau_n\},\\
\hat w_a(x)&=
\frac{\ind\{a\in\widehat{\mathcal A}(x)\}\hat\pi_a(x)/\hat\sigma_a^2(x)}
{\sum_{b\in\widehat{\mathcal A}(x)}\hat\pi_b(x)/\hat\sigma_b^2(x)},\\
\widehat\Psi_{\mathrm{eff}}&=\sum_x\hat p_x\sum_a\hat w_a(x)\hat\mu_a(x).
\end{split}
\end{equation}
On the event that a treatment cell is empty or an empirical cell variance is zero, define $\widehat\Psi_{\mathrm{eff}}$ to be the overall sample mean instead; this event has probability tending to zero. This convention makes the estimator defined for every sample.

\begin{corollary}[feasible attainment and local drift]\label{cor:otr-attain}
Under the same model conditions and the stated tolerance condition, the estimator in \eqref{eq:otr-estimator} satisfies
\begin{equation}\label{eq:otr-al}
\sqrt n\{\widehat\Psi_{\mathrm{eff}}-\Psi(P)\}
=n^{-1/2}\sum_{i=1}^n\psik(O_i)+o_{P^n}(1),
\end{equation}
and is regular and efficient along $\Hk$.
The fixed-rule estimate $\sum_x\hat p_x\hat\mu_{d(x)}(x)$, defined by the same fallback convention on exceptional samples, is regular and attains $V_d$ along $C_d$. For every $h\in\T$ and every DQM path with score $h$,
\begin{equation}\label{eq:otr-drift}
\begin{split}
\sqrt n\{\widehat\Psi_{\mathrm{eff}}-\Psi(P_{n,h})\}
&\wto N(\delta(h),\Vk)\quad\text{under }\Pn h,\\
\delta(h)&=\sum_xp_x\left\{\sum_{a\in\mathcal A^*(x)}w_a^*(x)\dot\mu_{a,h}(x)
-\max_{a\in\mathcal A^*(x)}\dot\mu_{a,h}(x)\right\}\le0.
\end{split}
\end{equation}
Equality $\delta(h)=0$ holds exactly when $h\in\Hk$.
\end{corollary}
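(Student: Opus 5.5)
The plan is to prove the asymptotically linear expansion \eqref{eq:otr-al} directly in the finite-stratum model. Regularity, efficiency and the local drift then follow from \cref{thm:al} and \cref{thm:efficiency}, using the geometry in \cref{cor:otr-geometry}.

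\textbf{Step 1: expansions on a good event.} Let $E_n$ be the event that every treatment cell is nonempty and has positive empirical variance. Since $p_x>0$, $\pi_a(x)\ge\epsilon+\eta$ and $\sigma_a^2(x)>0$, we have $P^n(E_n)\to1$, so the fallback convention is irrelevant at first order. The empirical quantities $\hat p_x,\hat\pi_a(x),\hat\mu_a(x),\hat\sigma_a^2(x)$ are smooth functions of finitely many sample means. Hence they are jointly asymptotically linear. In particular
\[
\sqrt n\{\hat\mu_a(x)-\mu_a(x)\}=n^{-1/2}\sum_i \ind\{X_i=x\}R_a(O_i)/p_x+o_{P^n}(1),
\]
and $\hat\pi_a/\hat\sigma_a^2$ is consistent.

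\textbf{Step 2: consistent recovery of active sets.} Since $\hat\mu_b(x)-\hat\mu_a(x)=O_P(n^{-1/2})$ for tied $a,b$ and $\sqrt n\tau_n\to\infty$, every truly active $a$ lies in $\widehat{\mathcal A}(x)$ with probability tending to one. If $a\notin\mathcal A^*(x)$, its gap is a fixed positive number exceeding $\tau_n\downarrow0$ eventually, so $a$ is excluded with probability tending to one. Thus $\widehat{\mathcal A}=\mathcal A^*$ with probability tending to one, and hence $\hat w\to w^*$ in probability.

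\textbf{Step 3: the key cancellation.} On the event $\widehat{\mathcal A}=\mathcal A^*$, write $\hat\mu_a(x)=q(x)+\{\hat\mu_a(x)-\mu_a(x)\}$ for active $a$. Because $\sum_a\hat w_a(x)=1$, we get
\[
\widehat\Psi_{\rm eff}=\sum_x\hat p_xq(x)+\sum_x\hat p_x\sum_a\hat w_a(x)\{\hat\mu_a(x)-\mu_a(x)\}.
\]
The second term is $O_P(n^{-1/2})$. Replacing $\hat p_x\hat w_a$ by $p_xw^*_a$ therefore costs only $o_P(n^{-1/2})$. The exact weight constraint is what makes estimation of the weights asymptotically negligible. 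The first term expands with influence function $q(X)-\Psi(P)$. Summing the two gives $\psik$ of \eqref{eq:otr-if-bound}, which proves \eqref{eq:otr-al}. By \cref{thm:efficiency}, the estimator is $\Hk$-efficient.

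For the fixed-rule estimate the same argument, with weights $\ind\{a=d(x)\}$, gives influence function $\lambda_d=q(X)-\Psi(P)+R_{d(X)}$. Its squared norm is $V_d$. On $C_d$ we have $\gamma(h)=\ip{\lambda_d}h$ by \eqref{eq:otr-derivative}, so $C_d\subseteq\mathcal E(\lambda_d)$. \Cref{thm:al} then gives regularity along $C_d$ with limit $N(0,V_d)$, which attains the bound of \cref{cor:otr-geometry}.

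\textbf{Step 4: local drift.} \Cref{thm:al} gives the limit $N(\ip\psik h-\gamma(h),\Vk)$ under every local alternative. Conditioning on $X$ gives $\ip{R_a\ind\{X=x\}}h=p_x\dot\mu_{a,h}(x)$. Subtracting \eqref{eq:otr-derivative} yields $\delta(h)$ as displayed. Each stratum term is a convex combination minus a maximum, so it is nonpositive. Since $w^*_a(x)>0$ for every active $a$, a stratum term vanishes iff all $\dot\mu_{a,h}(x)$, $a\in\mathcal A^*(x)$, are equal. Hence $\delta(h)=0$ iff $h\in\Hk$ by \eqref{eq:otr-core}.

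The main obstacle is the interplay in Steps 2–3 between data-driven selection and the weights. Each weight error multiplies an $O_P(n^{-1/2})$ quantity only after the exact constraint $\sum_a\hat w_a=1$ removes the common level. Step 2 must therefore hold jointly over strata, and it does because there are finitely many strata and treatments.
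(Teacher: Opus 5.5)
Your proposal is correct and follows essentially the same route as the paper. Both arguments recover the active sets with probability tending to one, use the exact normalization $\sum_a\hat w_a(x)=1$ and the equality of active means to make weight estimation second order, and read off regularity, efficiency and the drift from the resulting expansion. The drift is nonpositive and vanishes exactly on $\Hk$ because the optimal weights are strictly positive and $p_x>0$. The only difference is that you cite \cref{thm:al} and \cref{thm:efficiency}, whereas the paper repeats the contiguity and Le Cam third-lemma argument inline.
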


\Cref{cor:otr-attain} shows that learning the weights does not add a convolution residual: the empirical procedure has the efficient influence function. The tolerance identifies the fixed law's active sets with probability tending to one, and estimating normalized weights creates no first-order error when the corresponding means are equal. For computation of standard errors, put $\hat q_x=\sum_a\hat w_a(x)\hat\mu_a(x)$. A consistent estimate of $\Vk$ is
\begin{equation}\label{eq:otr-variance}
\widehat V_\kappa
=\sum_x\hat p_x(\hat q_x-\widehat\Psi_{\mathrm{eff}})^2
+\sum_x\hat p_x\left\{\sum_{a\in\widehat{\mathcal A}(x)}
\frac{\hat\pi_a(x)}{\hat\sigma_a^2(x)}\right\}^{-1},
\end{equation}
with the overall empirical outcome variance used on the exceptional event specified above. These assertions are pointwise at each law in the stated model; under local paths, the limit is centered exactly on $\Hk$. With $\hat s_n^2=\widehat V_\kappa$, \cref{cor:ci-local} supplies Wald intervals along the core and a lower confidence bound along every fixed local path, since \eqref{eq:otr-drift} is nonpositive. The ordinary two-sided interval has the local coverage in \eqref{eq:ci-local-coverage}; learning the active treatments does not remove that drift.

The two bounds now have a direct estimation interpretation. Consider one stratum and two tied treatments with conditional variances one and assignment probabilities $0.2$ and $0.8$. The efficient weights are $(0.2,0.8)$ and $\Vk=1$. Equal weighting has variance $25/16$, whereas requiring regularity whenever treatment $2$ remains optimal has bound $V_d=5/4$, attained by its sample mean. The reduction from $25/16$ to $1$ compares two estimators regular along the same core; the increase from $1$ to $5/4$ is the cost of the stronger, one-sided requirement. Along this cone, the specified treatment determines the target even as its value separates from the other treatment, so pooling both no longer estimates a common local mean. Under a local alternative that breaks the tie, \eqref{eq:otr-drift} gives the resulting negative centered shift of efficient pooling. Proofs of both corollaries are given in \cref{app:otr}.

\subsection{Conditional instrumental-variable bounds}\label{ex:bp}
An instrument can bound a treatment effect when unmeasured confounding prevents point identification. The efficiency issue differs from treatment pooling because several expressions for a bound use the same observed cell probabilities. Their estimation errors are correlated, so efficient estimation requires their joint covariance matrix. Let $Z=(X,V,D,Y)$, where the instrument $V$, treatment $D$ and outcome $Y$ are binary, and put $e_v(x)=P(V=v\mid X=x)$ and $p_{yd\mid v}(x)=P(Y=y,D=d\mid X=x,V=v)$. For readability suppress $x$ below. The conditional Balke--Pearl lower bound is the maximum of eight affine branches \citep{Levis2025}:
\begin{equation}\label{eq:bp-branches}
\begin{aligned}
b_1&=p_{11\mid1}+p_{00\mid0}-1,&
b_2&=p_{11\mid0}+p_{00\mid1}-1,\\
b_3&=-p_{01\mid1}-p_{10\mid1},&
b_4&=-p_{01\mid0}-p_{10\mid0},\\
b_5&=p_{11\mid0}-p_{11\mid1}-p_{10\mid1}-p_{01\mid0}-p_{10\mid0},\\
b_6&=p_{11\mid1}-p_{11\mid0}-p_{10\mid0}-p_{01\mid1}-p_{10\mid1},\\
b_7&=p_{00\mid1}-p_{01\mid1}-p_{10\mid1}-p_{01\mid0}-p_{00\mid0},\\
b_8&=p_{00\mid0}-p_{01\mid0}-p_{10\mid0}-p_{01\mid1}-p_{00\mid1}.
\end{aligned}
\end{equation}
Write $q(x)=\max_jb_j(x)$, $\Psi_L=\E q(X)$, and $\mathcal J^*(x)=\arg\max_jb_j(x)$. Under consistency, exclusion and conditional independence of the instrument from the potential treatment and outcome variables, $\Psi_L$ is a lower bound on the average treatment effect. We estimate this observed-law functional in the nonparametric model with a common instrument-overlap bound $e_{v,Q}\ge\epsilon$, at a law with $e_{v,P}\ge\epsilon+\eta$ for some $\eta>0$. Its tangent space is $L_2^0(P)$. Thus the efficiency comparison uses observed-law perturbations; if the statistical model also imposes instrumental inequalities, the projection must instead use its own feasible tangent space.

Let $a_{j,ydv}$ denote the coefficient of $p_{yd\mid v}$ in \eqref{eq:bp-branches}. Define
\begin{equation}\label{eq:bp-residual}
r_j(Z)=\sum_{y,d,v}a_{j,ydv}\frac{\ind\{V=v\}}{e_v(X)}
 \{\ind\{Y=y,D=d\}-p_{yd\mid v}(X)\}.
\end{equation}
For $a_{j,v}=(a_{j,00v},a_{j,01v},a_{j,10v},a_{j,11v})^\top$ and $p_v$ in the same order, the conditional Gram matrix is
\begin{equation}\label{eq:bp-gram}
G_{jk}(x)=\sum_{v=0}^1\frac{a_{j,v}^{\top}
 \{\operatorname{diag}(p_v(x))-p_v(x)p_v(x)^\top\}a_{k,v}}{e_v(x)}.
\end{equation}
Unlike treatment-specific outcome residuals, these branch residuals share multinomial cells and are correlated. Their full covariance enters the efficiency calculation.

\begin{corollary}[efficiency for the bound]\label{cor:bp}
In the stated observed-law model, the unique maximal linearity subspace is
\[
\Hk=\{h:\E[(r_j-r_k)h\mid X]=0
 \text{ for all }j,k\in\mathcal J^*(X),\ P\text{-a.s.}\}.
\]
Let $G_*(x)$ be the active submatrix. Choose
\[
w^*(x)\in\arg\min_{\one^\top w=1}w^\top G_*(x)w
\]
and extend it by zero to inactive branches. Then
\begin{equation}\label{eq:bp-bound}
\psik=q(X)-\Psi_L+\sum_jw_j^*(X)r_j(Z),\qquad
\Vk=\Var(q(X))+\E[w^{*\top}Gw^*].
\end{equation}
Every core-regular estimator has limit $N(0,\Vk)*M$. The minimization permits signed weights and singular $G_*$; the minimizing residual is unique even when its weights are not.

If $X$ has a finite prespecified support and every $(X,V,D,Y)$ cell has positive probability, the empirical construction below is asymptotically linear with influence function \eqref{eq:bp-bound}, is core-efficient, and consistently estimates $\Vk$.
\end{corollary}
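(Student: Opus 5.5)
The plan is to obtain \cref{cor:bp} as a special case of \cref{prop:cm}, applied with $J=8$, the branches $b_j$ of \eqref{eq:bp-branches} and the residuals $r_j$ of \eqref{eq:bp-residual}. This takes three verifications.

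First, I will check the path expansion \eqref{eq:cm-path}. Each branch is a known affine combination, with bounded coefficients, of the conditional cell probabilities
\[
p_{yd\mid v}(x)=\frac{P(Y=y,D=d,V=v\mid X=x)}{e_v(x)}.
\]
These are conditional moment ratios of bounded indicators, and their denominators $e_{v,Q}\ge\epsilon$ hold uniformly over the model. \Cref{lem:cm-paths} therefore applies directly. The usual ratio derivative gives, for each cell,
\[
\dot p_{yd\mid v,h}(X)=\E\!\left[\frac{\ind\{V=v\}}{e_v(X)}\{\ind\{Y=y,D=d\}-p_{yd\mid v}(X)\}h \,\Big|\, X\right].
\]
Linearity in the coefficients $a_{j,ydv}$ then yields $\dot b_{j,h}=\E[r_jh\mid X]$ with $\E[r_j\mid X]=0$. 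The overlap margin $\eta$ and \cref{app:policy-model}-type arguments give \assT with $\T=L_2^0(P)$. In particular, all bounded mean-zero scores are realized as paths staying within $e_{v,Q}\ge\epsilon$.

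Second, I will read off the conclusions of \cref{prop:cm}.
\begin{itemize}[label=\textbullet]
\item The displayed $\Hk$ is \eqref{eq:cm-core} rewritten as $\E[(r_j-r_k)h\mid X]=0$.
\item The efficient gradient and bound in \eqref{eq:bp-bound} are \eqref{eq:cm-efficient}, with $v_*(X)=w^{*\top}G(X)w^*$.
\item The convolution statement for core-regular estimators is the corresponding line of \cref{prop:cm}.
\item Existence of a measurable minimizer, and uniqueness of the combined residual $\sum_j w_j^*r_j$ even when $G_*$ is singular, are also supplied by \cref{prop:cm}.
\end{itemize}
The Gram formula \eqref{eq:bp-gram} remains to be computed. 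Given $X$, the events $V=0$ and $V=1$ are disjoint, so cross terms between $v=0$ and $v=1$ vanish. Conditionally on $V=v$ and $X$, the cell-indicator vector is multinomial with covariance $\operatorname{diag}(p_v)-p_vp_v^\top$. The weight $\ind\{V=v\}/e_v^2$, integrated against $P(V=v\mid X)=e_v$, leaves the factor $1/e_v$.

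Third, I will handle the finite-support feasibility claim, modeled on \cref{cor:otr-attain}. The plug-in estimates are the empirical $\hat e_v(x)$, $\hat p_{yd\mid v}(x)$ and $\hat b_j(x)$. Active sets are estimated as $\widehat{\mathcal J}(x)=\{j:\max_k\hat b_k(x)-\hat b_j(x)\le\tau_n\}$, with $\tau_n\downarrow0$ and $\sqrt n\tau_n\to\infty$. The weights $\hat w(x)$ minimize $w^\top\hat G_*(x)w$ subject to $\one^\top w=1$. The estimator is $\sum_x\hat p_x\sum_j\hat w_j(x)\hat b_j(x)$, with a fallback on the vanishing-probability event of empty cells.

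The expansion argument runs as follows.
\begin{itemize}[label=\textbullet]
\item With probability tending to one, $\widehat{\mathcal J}(x)=\mathcal J^*(x)$ for every $x$.
\item On active branches, $\hat b_j(x)-q(x)=O_P(n^{-1/2})$. Because $\sum_j\hat w_j=1$, we get $\sum_j\hat w_j\hat b_j-q=\sum_j\hat w_j(\hat b_j-b_j)$.
\item That sum equals $\sum_j w_j^*(\hat b_j-b_j)+o_P(n^{-1/2})$, provided $\hat w$ is bounded and its combination is close to the efficient one.
\end{itemize}
The delta method for the cell frequencies then delivers the influence function \eqref{eq:bp-bound}. Core efficiency follows from \cref{thm:al} or \cref{thm:efficiency}. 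A plug-in version of $\Var(q)+\E[w^{*\top}Gw^*]$ is consistent.

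I expect the main obstacle to be the singular $G_*$ case, where the minimizing weights are neither unique nor continuous in $\hat G_*$. Here $\hat w$ may fail to converge and could even be unbounded. I will first try to sidestep this by choosing the minimum-norm minimizer $\hat w=\hat G_*^{+}\one/\one^\top\hat G_*^+\one$ when $\one\in\range\hat G_*$. I will then argue via the combined residual instead of the weights. The quantity $\sum_j\hat w_j(\hat b_j-b_j)$ is $n^{-1/2}$ times a linear functional of the multinomial score vector whose variance is $\hat w^\top G_*\hat w$. This variance converges to $v_*$, because the quadratic-program value is continuous in $G_*$. Any direction in $\ker G_*$ contributes zero residual almost surely, since $G$ is the exact conditional covariance. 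If that continuity argument fails, I will fall back to regularized weights $(\hat G_*+\rho_nI)^{-1}\one$ normalized to sum to one, with $\rho_n\to0$ slowly.
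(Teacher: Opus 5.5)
Your population-level steps (the path expansion via \cref{lem:cm-paths}, \assT by truncated tilts, reading off \cref{prop:cm}, and the multinomial computation of \eqref{eq:bp-gram}) are correct and follow the paper. The gap is in the feasibility claim at singular $G_*$, which is where you expected trouble.

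Your key step is that $\hat w^\top G_*\hat w\to v_*$ ``because the quadratic-program value is continuous in $G_*$''. This is false for general perturbations. The value $v(G):=\min_{\one^\top w=1}w^\top Gw$ equals $1/\one^\top G^+\one$ if $\one\in\range G$ and $0$ otherwise, so it is only upper semicontinuous. On $\R^2$, $G_0=\one\one^\top$ has $v(G_0)=1$. By contrast, $G_\delta=bb^\top$ with $b=(1,1+\delta)^\top$ has $v(G_\delta)=0$, attained only at $w=((1+\delta)/\delta,-1/\delta)^\top$, which diverges as $\delta\to0$. Nothing in your argument rules out $\hat G_*$ behaving like $G_\delta$. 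So neither the variance convergence nor the boundedness of $\hat w$ follows. The same defect undermines consistency of the variance estimate. Also, your formula $\hat G_*^+\one/\one^\top\hat G_*^+\one$ leaves $\hat w$ undefined when $\one\notin\range\hat G_*$.

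The missing idea is structural. Write $A_{I,v}$ for the matrix with rows $a_{j,v}^\top$, $j\in I$. Then $z^\top G_Iz=\sum_ve_v^{-1}\Var_{p_v}\{(A_{I,v}^\top z)_{Y,D}\}$. With all cell probabilities positive, $K_I:=\ker G_I=\{z:A_{I,v}^\top z\in\spn\{\one_4\},\ v=0,1\}$ therefore depends only on the known coefficients and on $I$. Hence $\hat G_I$ has exactly the same kernel whenever every empirical cell is nonempty. Two cases follow:
\begin{itemize}
\item If $\Pi_{K_I}\one\ne0$, both problems have value zero. The minimum-norm minimizer is then the same deterministic vector $\Pi_{K_I}\one/\one^\top\Pi_{K_I}\one$ for both.
\item If $\Pi_{K_I}\one=0$, the nonzero eigenvalues of $\hat G_I$ stay bounded away from zero. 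So $\hat G_I^+\to G_I^+$, and $\hat w$ converges to the population minimum-norm minimizer.
\end{itemize}
This gives bounded, convergent weights, the first-order cancellation you wrote down, and $\hat w^\top\hat G\hat w\to_P v_*$.

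Your remark that kernel directions contribute nothing is correct, even exactly in finite samples. However, it controls only the $K_I$-component of $\hat w$, not the $K_I^\perp$-component.

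Your regularized fallback does work when $\rho_n^{-1}\norm{\hat G_*-G_*}\to_P0$, for example when $\sqrt n\rho_n\to\infty$; the argument is the one the paper gives for \eqref{eq:dynamic-pooling}. But it establishes attainment for a different estimator, not for the minimum-norm construction the corollary refers to.
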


\Cref{cor:bp} shows that the variance bound for the endpoint depends on the covariance of its active branch gradients. Signed weights can exploit their covariance, and a singular Gram matrix creates ambiguity in the weights without creating ambiguity in the efficient residual. The resulting Gaussian variance is a bound for every core-regular estimator.

A feasible construction estimates the common active value after learning which branches attain it. Use empirical conditional cell probabilities and the estimated branches $\hat b_j(x)$. Select
$\widehat{\mathcal J}(x)=\{j:\max_k\hat b_k(x)-\hat b_j(x)\le\tau_n\}$, where $\tau_n\downarrow0$ and $\sqrt n\tau_n\to\infty$, and form $\hat G$ from \eqref{eq:bp-gram}. Among its affine minimizers on $\widehat{\mathcal J}(x)$, take the one with smallest Euclidean norm, $\hat w(x)$. With $\hat p_x$ the empirical stratum proportion, set
\begin{equation}\label{eq:bp-estimator}
\begin{split}
\hat q_w(x)&=\sum_j\hat w_j(x)\hat b_j(x),\qquad
\hat\Psi_L=\sum_x\hat p_x\hat q_w(x),\\
\hat V_L&=\sum_x\hat p_x\bigl[\{\hat q_w(x)-\hat\Psi_L\}^2+
 \hat w(x)^\top\hat G(x)\hat w(x)\bigr].
\end{split}
\end{equation}
Set both estimates to zero if any categorical cell is empty, an event whose probability tends to zero. Full cell support makes the null space of each active Gram matrix depend only on the known branch coefficients. Consequently the selected optimizer is continuous in the cell probabilities, including when the Gram matrix is singular; \cref{app:bp-cal} proves this and the resulting attainment.

Under their smoothing and nuisance-estimation conditions, the cross-fitted extension of \citet[Corollaries 4.5 and C.2]{Whitehouse2025} has an asymptotically linear expansion using all observations, with uniform active weights $u_j(x)=\ind\{j\in\mathcal J^*(x)\}/|\mathcal J^*(x)|$. Its excess variance over \eqref{eq:bp-bound} is exactly
\begin{equation}\label{eq:bp-uniform-gap}
V_{\mathrm{unif}}-\Vk
=\E[(u-w^*)^\top G(u-w^*)]\ge0.
\end{equation}
Equation~\eqref{eq:bp-uniform-gap} expresses the entire loss through the covariance of the active branches. Equal weighting attains the bound when its combined residual is the efficient one; otherwise the displayed difference is the variance of an independent Gaussian residual. When $\Vk>0$, the attaining estimator and $\hat V_L$ give pointwise Wald inference and the local coverage formula in \cref{cor:ci-local}. Since the optimal weights can be signed, a conservative lower bound outside the core requires checking the drift sign; it does not follow from the maximum representation alone. The same argument treats the upper endpoint by replacing $Y$ with $1-Y$ and changing the sign of the resulting lower endpoint.

\subsection{Calibration error of a fixed predictor}\label{ex:calibration}
The covariance calculation can also explain why an existing estimator is already optimal. Calibration error is such an example: at a calibrated prediction, the two active branch residuals cancel exactly under equal weighting. Let $S=\theta(O)$ be the output of a fixed predictor, let $m(s)=\E[Y\mid S=s]$, and consider
$\Psi_{\mathrm{cal}}=\E|m(S)-S|$. Work in the nonparametric model for $(O,Y)$ with common bounds on $|Y|$ and $|\theta(O)|$; the distribution of $S$ can be continuous. The two branches are $b_+(s)=m(s)-s$ and $b_-(s)=s-m(s)$, with residuals $r_\pm=\pm\{Y-m(S)\}$. At a calibrated value $m(s)=s$, the affine average of the two residuals is zero. Away from calibration there is only one active branch.

\begin{corollary}[efficient calibration error]\label{cor:calibration}
In this model, the unique maximal linearity subspace and its efficient influence function are
\begin{equation}\label{eq:cal-core-if}
\begin{split}
\Hk&=\{h:\ind\{m(S)=S\}\E[(Y-m(S))h\mid S]=0\},\\
\psik&=|m(S)-S|-\Psi_{\mathrm{cal}}
 +\operatorname{sgn}\{m(S)-S\}\{Y-m(S)\},\\
\Vk&=\Var(|m(S)-S|)
 +\E[\ind\{m(S)\ne S\}\Var(Y\mid S)],
\end{split}
\end{equation}
where $\operatorname{sgn}(0)=0$. Every core-regular estimator has limit $N(0,\Vk)*M$.
If $S$ has finite support with positive stratum probabilities, let $\hat m(s)$ and $\hat p_s$ be the empirical stratum mean and proportion, setting $\hat m(s)=s$ in any unobserved stratum. For $\tau_n\downarrow0$ and $\sqrt n\tau_n\to\infty$, the estimator
\begin{equation}\label{eq:cal-estimator}
\hat\Psi_{\mathrm{cal}}
=\sum_s\hat p_s|\hat m(s)-s|\ind\{|\hat m(s)-s|>\tau_n\}
\end{equation}
is asymptotically linear with influence function \eqref{eq:cal-core-if} and attains $\Vk$ along the core. The sample variance of
$\operatorname{sgn}\{\hat m(S_i)-S_i\}\ind\{|\hat m(S_i)-S_i|>\tau_n\}(Y_i-S_i)$ consistently estimates $\Vk$.
\end{corollary}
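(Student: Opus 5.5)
The plan is to treat calibration error as a conditional maximum with $J=2$ and apply \cref{prop:cm} with covariate $S$. The branches are $b_\pm(Q,s)=\pm\{m_Q(s)-s\}$, so $q(S)=|m(S)-S|$. Their residuals are $r_\pm=\pm\{Y-m(S)\}$, which satisfy $\E[r_\pm\mid S]=0$. First I will verify the path expansion \eqref{eq:cm-path} with $\dot b_{\pm,h}(S)=\pm\E[(Y-m(S))h\mid S]$. The branches are bounded, and $m_Q$ is a conditional mean of bounded $Y$ given the fixed function $S=\theta(O)$. This is the conditional-moment-ratio case of \cref{lem:cm-paths}, with numerator $\E[Y\mid S]$ and trivial denominator. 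Since $S$ may be continuous, I will apply that lemma in its general-covariate form, with conditioning on $S$ rather than on strata. \assT holds with $\T=L_2^0$ because the bounds are common and the model is otherwise nonparametric.

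Next I read off the core and the bound from \eqref{eq:cm-core} and \eqref{eq:cm-qp}. The active set is $\{+,-\}$ where $m(S)=S$, and a singleton elsewhere. The core therefore requires $\dot b_{+,h}=\dot b_{-,h}$ on $\{m(S)=S\}$, that is, $\E[(Y-m(S))h\mid S]=0$ there. This gives the displayed $\Hk$. The conditional Gram matrix on the tie set is $\Var(Y\mid S)\begin{pmatrix}1&-1\\-1&1\end{pmatrix}$. The weights $w=(1/2,1/2)$ give $w^\top Gw=0$, so $v_*=0$ and the combined residual $\sum_j w_j^*r_j$ vanishes. Off the tie set the weight is $1$ on the active branch, with residual $\operatorname{sgn}\{m(S)-S\}\{Y-m(S)\}$ and $v_*=\Var(Y\mid S)$. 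Then \eqref{eq:cm-efficient} gives $\psik$ and $\Vk$ exactly as displayed, and the convolution conclusion follows from \cref{thm:main}.

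For finite support, I will show that $\hat\Psi_{\mathrm{cal}}$ is asymptotically linear with influence function $\psik$, and then invoke \cref{thm:efficiency}. Write $\hat\Psi_{\mathrm{cal}}=\sum_s\hat p_s f_n(\hat m(s)-s)$ with $f_n(u)=|u|\ind\{|u|>\tau_n\}$. The rate $\hat m(s)-m(s)=O_P(n^{-1/2})$ together with $\sqrt n\tau_n\to\infty$ means that, with probability tending to one, the following holds for every stratum. If $m(s)=s$, then $|\hat m(s)-s|\le\tau_n$ and the term is exactly zero. If $m(s)\ne s$, then the indicator equals one and the sign is stable, so $f_n(\hat m(s)-s)=\operatorname{sgn}\{m(s)-s\}(\hat m(s)-s)$. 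The estimator is then a smooth function of finitely many empirical means. The usual delta method gives the influence function $|m(S)-S|-\Psi+\operatorname{sgn}\{m(S)-S\}(Y-m(S))$. Empty strata occur with vanishing probability.

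For the variance estimate, the product of the sign and the indicator equals $\operatorname{sgn}\{m(S_i)-S_i\}$ with probability tending to one. Within a stratum, $Y_i-S_i$ equals $Y_i-m(S_i)$ plus the stratum constant $m(S_i)-S_i$. On non-tied strata that constant times the sign is $|m-s|$, so the summands become $|m(S_i)-S_i|+\operatorname{sgn}(\cdot)(Y_i-m(S_i))$. These are $\psik(O_i)$ plus a constant, and their sample variance therefore converges to $\Vk$ by the law of large numbers. The step I expect to be most delicate is the first: checking that the conditional expansion holds in $L_1(P_S)$ along every DQM path when $S$ is continuous, including paths that change the law of $S$. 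I would handle it by directly reusing the ratio argument of \cref{lem:cm-paths} with the fixed map $\theta$.
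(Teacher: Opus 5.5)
Your proposal is correct and follows the paper's proof essentially step for step: \cref{prop:cm} with covariate $S$ via the bounded conditional-mean case of \cref{lem:cm-paths}, the vanishing affine residual on the tie set, sign recovery from the threshold, and identification of the variance-estimator summands with $\psik+\Psi_{\mathrm{cal}}$. The only cosmetic difference is that the paper observes that on the sign-recovery event the estimator equals $n^{-1}\sum_i\operatorname{sgn}\{m(S_i)-S_i\}(Y_i-S_i)$ exactly, so no delta method is needed, and it obtains core regularity from contiguity and the path derivative rather than by citing \cref{thm:efficiency}.
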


\Cref{cor:calibration} establishes the optimality of equal weighting at calibration. The influence function in \eqref{eq:cal-core-if} is exactly the limit influence function of \citet[Corollary 4.8]{Whitehouse2025}. Under their smoothing, gap and nuisance-estimation conditions, their cross-fitted extension in Corollary C.2 therefore furnishes a core-efficient procedure using all observations also with general covariates. The negative correlation between the two branches eliminates their conditional residual instead of producing a gain from unequal weights.

At perfect calibration, $\Vk=0$: the Gaussian component vanishes, but a perturbation with $\dot m_h(s)=\E[(Y-m(S))h\mid S=s]$ has derivative $\E|\dot m_h(S)|$, which can be positive outside the core. Thus the problem of detecting a local departure from calibration remains even when the core bound is zero. When $\Vk>0$, \cref{cor:ci-local} applies with the stated variance estimator. At $\Vk=0$, a confidence procedure must instead address the remaining nonlinear estimation problem; the zero core bound does not give a zero-length interval with uniform coverage. Proofs of both applications are given in \cref{app:bp-cal}.

\subsection{Sequential treatment and histories reached by optimal policies}\label{ex:dynamic}
Sequential treatment introduces a further issue: changing a later decision matters only on histories reached by an optimal earlier decision. Let $H_t$ be the complete history before treatment $A_t\in\{1,\ldots,K_t\}$, $t=1,\ldots,T$, and let $Y$ be the terminal outcome. Histories take values in standard Borel spaces. Under consistency and sequential unconfoundedness, the value of a deterministic policy $d=(d_1,\ldots,d_T)$ is identified by replacing each treatment assignment with $A_t=d_t(H_t)$ in the observed transition laws. Write this intervention law as $P^d$ and its value as $\Psi_d(P)=\E_{P^d}Y$. The target is $\Psi(P)=\sup_d\Psi_d(P)$.

Consider the unrestricted observed-data model with $\abs Y\le B_0$ and treatment probabilities $\pi_{t,a,Q}(H_t)\ge\epsilon$ at every law $Q$. At $P$, require the uniform slack $\pi_{t,a}(H_t)\ge\epsilon+\eta$ for some $\eta>0$. The tangent space is $L_2^0(P)$ and \assT holds. General histories are permitted; finite histories will be needed only for the feasible construction below.

Put $V_{T+1}=Y$ and define the optimal continuation values backwards:
\[
Q_t(H_t,a)=\E[V_{t+1}\mid H_t,A_t=a],\qquad
V_t(H_t)=\max_a Q_t(H_t,a),\qquad
\mathcal A_t^*(H_t)=\arg\max_a Q_t(H_t,a).
\]
Thus $\Psi(P)=\E V_1(H_1)$. The derivative also has a backward recursion. For a score $h\in L_2^0(P)$, start with $u_{T+1,h}=0$ and set
\begin{equation}\label{eq:dynamic-recursion}
\begin{split}
k_{t,h}(H_t,a)&=\E[(V_{t+1}-Q_t(H_t,a))h\mid H_t,A_t=a],\\
u_{t,h}(H_t)&=\max_{a\in\mathcal A_t^*(H_t)}
\{k_{t,h}(H_t,a)+\E[u_{t+1,h}\mid H_t,A_t=a]\},\\
\gamma(h)&=\E[(V_1-\Psi(P))h]+\E u_{1,h}.
\end{split}
\end{equation}
The first term inside the maximum differentiates the transition law; the second propagates the change in the continuation value. Keeping both terms accounts for changes in the covariate distribution between decisions.

Let $\mathcal D^*(P)=\{d:\Psi_d(P)=\Psi(P)\}$ be the set of complete optimal policies. For such a policy define
\begin{equation}\label{eq:dynamic-gradient}
W_t^d=\prod_{s=1}^t\frac{\ind\{A_s=d_s(H_s)\}}{\pi_{s,A_s}(H_s)},\qquad
\lambda_d=V_1-\Psi(P)+\sum_{t=1}^TW_t^d\{V_{t+1}-Q_t(H_t,A_t)\}.
\end{equation}
All these gradients are bounded, including when conditional outcome variances differ across histories and treatments.

\begin{corollary}[recursive derivative and dynamic efficiency]\label{cor:dynamic}
In the stated model, \eqref{eq:dynamic-recursion} is the directional derivative along every DQM path, and
\[
\gamma(h)=\max_{d\in\mathcal D^*(P)}\ip{\lambda_d}h.
\]
The unique maximal linearity subspace and its efficient influence function satisfy
\begin{equation}\label{eq:dynamic-core}
\begin{split}
\Hk&=\{h:\ip{\lambda_d-\lambda_e}h=0\text{ for all }d,e\in\mathcal D^*(P)\},\\
\psik&=\underset{\psi\in\claff\{\lambda_d:d\in\mathcal D^*(P)\}}{\arg\min}\norm\psi,
\qquad \Vk=\norm\psik^2.
\end{split}
\end{equation}
Every core-regular estimator has a limit law $N(0,\Vk)*M$. For a specified optimal policy $d$, let $C_d=\{h:\ip{\lambda_d}h=\gamma(h)\}$. The convolution bound along this cone is $\norm{\Pi_{\clspan C_d}\lambda_d}^2$; in the finite-history model it equals $\norm{\lambda_d}^2$ whenever $\lambda_d$ is a vertex of the hull of distinct active gradients.
\end{corollary}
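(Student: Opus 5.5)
We prove the statement in three stages: derive the derivative recursion along a fixed DQM path, rewrite it as a maximum of inner products over optimal policies, and then read off the core, the bound and the cone bound from \cref{thm:kink}, \cref{thm:main} and \cref{cor:onesided}.

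\emph{Step 1: the recursion.} Fix a DQM path $P_t$ with score $h$. Factor $h$ into mean-zero conditional scores for the initial history law, for each treatment given $H_t$, for each later history transition given $(H_t,A_t)$, and for $Y$. Bounded outcomes and the overlap slack $\epsilon+\eta$ let \cref{lem:cm-paths} give an $L_1$ expansion of $Q_{t,P_t}(H_t,a)$, by backward induction from $t=T$. Suppose $V_{t+1,P_t}=V_{t+1}+t\,u_{t+1,h}+o(t)$ in $L_1$. Then
\[
Q_{t,P_t}(H_t,a)=Q_t(H_t,a)+t\bigl\{k_{t,h}(H_t,a)+\E[u_{t+1,h}\mid H_t,A_t=a]\bigr\}+o(t).
\]
The first term differentiates the transition law at fixed integrand, and the second differentiates the integrand. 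Applying the pointwise directional derivative of a finite maximum, and dominated convergence with bounded difference quotients, gives $V_{t,P_t}=V_t+t\,u_{t,h}+o(t)$ in $L_1$. As in the proof of \cref{prop:cm}, this needs no gap condition, because the max is $1$-Lipschitz. At $t=1$, integrate against the perturbed marginal of $H_1$. This yields $\gamma(h)$ in \eqref{eq:dynamic-recursion}.

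\emph{Step 2: the max representation.} For a deterministic $d$ that is active along its own trajectory, $\Psi_d$ is pathwise differentiable with gradient $\lambda_d$. This is the standard sequential IPW/AIPW representation, and its telescoping sum matches the $k$ and $u$ terms. The inequality $\gamma(h)\le\max_{d\in\mathcal D^*}\ip{\lambda_d}h$ comes from unrolling the recursion. At each stage choose a measurable maximizer $a_t(H_t)\in\mathcal A_t^*(H_t)$ of the bracket; measurable selection is available over finitely many actions. Combine it with an arbitrary optimal choice on histories not reached. The resulting policy is optimal, and unrolling the recursion gives $\gamma(h)=\ip{\lambda_d}h$. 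For the reverse inequality, $\Psi\ge\Psi_d$ with equality at $P$ gives $\gamma(h)\ge\ip{\lambda_d}h$ for all $d\in\mathcal D^*$. Hence the maximum is attained, and $\gamma$ is sublinear with $\LamP=\overline{\conv}\{\lambda_d:d\in\mathcal D^*\}$. The closure is needed because $\mathcal D^*$ may be infinite. The gradients are uniformly bounded, since $W_t^d\le\epsilon^{-t}$, so the set is bounded.

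\emph{Step 3: geometry and bounds.} By \cref{thm:kink}(2),(3),
\[
\Hk=\FP^\perp\cap\T=\{h:\ip{\lambda_d-\lambda_e}h=0\ \text{for all } d,e\in\mathcal D^*\},
\]
because the closed affine hull of $\LamP$ equals that of the $\lambda_d$. The same result gives $\psik$ as the minimum-norm element of that hull, with $\Vk=\norm{\psik}^2$. The convolution statement is \cref{cor:core-convolution}(2). For $C_d$, $\gamma=\ip{\lambda_d}\cdot$ on $C_d$, which is a convex cone because $\gamma$ is sublinear. \Cref{thm:main}(2) then gives the bound $\norm{\Pi_{\clspan C_d}\lambda_d}^2$.

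In the finite-history model only finitely many distinct policies exist, so $\LamP$ is a polytope. If $\lambda_d$ is a vertex, \cref{cor:onesided} shows that $C_d$ spans $\T$, and the bound becomes $\norm{\lambda_d}^2$.

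The main obstacle is the inequality $\gamma\le\max_d\ip{\lambda_d}h$ in Step 2. There, the measurable selection of stage-wise maximizers must reproduce the recursion exactly, including on histories reached only through tied actions. I would handle this by proving, by backward induction, that $u_{t,h}(H_t)$ equals the corresponding continuation functional of the selected policy from stage $t$ onward.
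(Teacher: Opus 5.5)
\textbf{Steps 2 and 3.} These match the paper's argument. A measurable stagewise maximizer of the recursion defines a policy that selects baseline-optimal treatments everywhere, so it lies in $\mathcal D^*(P)$ and satisfies $\ip{\lambda_{d_h}}h=\gamma(h)$. The inequality $\Psi\ge\Psi_d$ gives the reverse bound. The core, $\psik$, the convolution factor and the cone bounds then follow from \cref{thm:kink}, \cref{thm:main} and \cref{cor:onesided}.

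You use one step silently. Every $d\in\mathcal D^*(P)$ must select active treatments almost surely on the histories it reaches. Only then does its fixed-policy gradient, written with $V_t^d,Q_t^d$, coincide with $\lambda_d$, which is defined with the optimal $V_t,Q_t$. The paper obtains this from the value-gap identity $\Psi(P)-\Psi_d(P)=\sum_t\E_{P^d}[V_t(H_t)-Q_t(H_t,d_t(H_t))]$.

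\textbf{The gap in Step 1.} The gap is at the sentence beginning ``Then $Q_{t,P_t}(H_t,a)=\dots$''. \Cref{lem:cm-paths} treats conditional moments of a \emph{fixed} bounded variable. It therefore delivers $\E_{P_t}[V_{t+1}\mid H_t,A_t=a]$ and hence $k_{t,h}$. It does not cover the moving-integrand term $\E_{P_t}[f_t\mid H_t,A_t=a]$, where $f_t:=V_{t+1,P_t}-V_{t+1}$.

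Your induction hypothesis controls $f_t/t$ only in $L_1$ under the base law, whereas this term integrates $f_t$ against the perturbed kernel. Along a general DQM path the kernels need not be uniformly close to the baseline. For instance, a path may change a transition kernel by $O(1)$ on histories of probability $o(t^2)$ and still be DQM. Then $\norm{f_t}_\infty$ is not $O(t)$, and the ``bounded difference quotients'' you invoke are unavailable. The crude total-variation bound on the kernel change then gives an error of only $O(t)$ in $L_1(P_{H_t})$, not $o(t)$.

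The same step is hidden in your appeal to ``standard'' pathwise differentiability of $\Psi_d$ along every DQM path, which the reverse inequality needs.

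\textbf{How to repair it.} One direct repair is a conditional version of the Hellinger--Cauchy--Schwarz estimate in the proof of \cref{prop:cm}:
\begin{itemize}
\item Bound $\abs{\E_{P_t}[f_t\mid x,a]-\E_P[f_t\mid x,a]}$ by the conditional Hellinger distance times $\{2\E_{P_t}[f_t^2\mid x,a]+2\E_P[f_t^2\mid x,a]\}^{1/2}$.
\item The first factor is $O(t)$ in $L_2(P_{H_t})$, using overlap.
\item The second factor tends to zero in $L_2(P_{H_t})$, because $f_t$ is bounded and $f_t\to0$ in $L_1(P)$.
\end{itemize}

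The paper avoids the issue in a different way. It proves the recursion and the fixed-policy expansions only for bounded tilts $(1+sb)\,dP$. For these, every transition density ratio is $1+O(s)$ uniformly, so the difference quotients really are bounded. It then extends the result to every DQM path using three ingredients:
\begin{itemize}
\item the uniform estimate $\sup_d\abs{\Psi_d(P)-\Psi_d(Q)}\le2B_0C\TV(P,Q)$, which passes to $\Psi$;
\item the Hellinger closeness of $P_s$ to $(1+sb_m)P$;
\item the Lipschitz bound $\abs{\gamma(h)-\gamma(b_m)}\le C''\norm{h-b_m}$.
\end{itemize}
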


\Cref{cor:dynamic} expresses the core through the gradients of complete optimal policy values. The equality conditions in \eqref{eq:dynamic-core} require their derivatives to agree. If a history is reached by none of them, changing a treatment only there changes none of the active gradients and imposes no core restriction. For example, with two decisions, suppose the first treatment has a unique optimum and two second-stage treatments tie only on the branch following the other first treatment. Those second-stage ties alone do not make the optimal value directionally nonlinear. Requiring equality of their local derivatives would impose unnecessary invariance restrictions.

When the history tree is finite, an attaining estimator first estimates all policy values, retains policies whose estimated values are within a tolerance of the maximum, and pools their estimates using their joint covariance. Suppose the observational history and treatment cells have positive probability. Estimate the value and influence function of every fixed policy by the empirical transition probabilities and terminal cell means. Retain policies whose estimated value is within $\tau_n$ of the largest, where $\tau_n\downarrow0$ and $\sqrt n\tau_n\to\infty$. On these policies let $\widehat\Sigma$ be the empirical Gram matrix of the estimated influence functions and use the normalized affine weights
\begin{equation}\label{eq:dynamic-pooling}
\hat w=\underset{\one^\top w=1}{\arg\min}
\{w^\top\widehat\Sigma w+\rho_n\norm w^2\},\qquad
\widehat\Psi_{\mathrm{dyn}}=\sum_d\hat w_d\widehat\Psi_d,
\quad \rho_n\downarrow0,\quad\sqrt n\rho_n\to\infty.
\end{equation}
The regularization selects stable weights even when different policies have linearly dependent gradients; it disappears from the limiting efficiency bound. Then
\[
\sqrt n\{\widehat\Psi_{\mathrm{dyn}}-\Psi(P)\}
=n^{-1/2}\sum_{i=1}^n\psik(O_i)+o_{P^n}(1),\qquad
\hat w^\top\widehat\Sigma\hat w\ \longrightarrow_P\ \Vk.
\]
The estimator is regular and efficient along $\Hk$; along any DQM local alternative its centered limit is $N(\ip\psik h-\gamma(h),\Vk)$. Thus pooling removes the Gaussian residual while retaining the explicit local drift when competing policy values separate. The construction establishes feasible attainment on a finite history tree; enumerating its policies may be expensive when the tree is large. When $\Vk>0$, the displayed variance estimate and \cref{cor:ci-local} also give Wald inference along the core. A lower confidence bound outside the core requires checking that the local drift is nonpositive; affine pooling alone does not ensure this. Its full specification and proof, together with the recursive result, are in \cref{app:dynamic}.

\citet[Supplement D]{XuGuo2025} establish multistage heteroscedastic optimality within their robust asymptotically linear class and give feasible estimators. \citet[Appendix D]{Whitehouse2025} develop softmax inference for two-stage treatment, including a model with restricted blip functions. The present calculation identifies which histories and policy contrasts determine the Gaussian component and the remaining nonlinear error. Under additional structural restrictions, the policy gradients must first be projected onto that model's tangent space before applying \eqref{eq:dynamic-core}.

\subsection{The value of the best programme under unconfoundedness}\label{ex:policy}
Selecting one programme for the whole population poses a different estimation problem from assigning treatment individually. Here the target takes the maximum after averaging over covariates. Its finite vector of programme values makes it possible to exhibit the convolution residuals of pooling, plug-in estimation, smoothing and sample splitting, and then compare their local risks. Let $O=(X,A,Y)$, where $A\in\{1,\dots,K\}$ is a programme indicator, $X$ denotes pre-programme covariates in a standard Borel space, and $Y$ is the outcome. Write $Y(a)$ for the potential outcome under programme $a$. Under consistency $Y=Y(A)$, unconfoundedness $Y(a)\indep A\mid X$, and overlap $\pi_a(X):=P(A=a\mid X)\ge\epsilon>0$, the programme value is $\mu_a(P)=\E[Y(a)]=\E[\mu_a(X)]$, where $\mu_a(X):=\E[Y\mid X,A=a]$. The target $\Psi(P)=\max_a\mu_a(P)$ is the value of the best of $K$ prespecified programmes. Assume $\E Y^2<\infty$. For this application take $\T=L_2^0(P)$ and a model satisfying \assT in which, along every DQM path with score $h$,
\begin{equation}\label{eq:policy-pd}
\begin{split}
\mu_a(P_{t,h})&=\mu_a(P)+t\ip{\lambda_a}h+o(t),\\
\lambda_a(O)&=\frac{\ind\{A=a\}}{\pi_a(X)}\{Y-\mu_a(X)\}
                 +\mu_a(X)-\mu_a(P).
\end{split}
\end{equation}
The component efficient influence function has the familiar augmented inverse-probability form; see \citet{Hahn1998} for the underlying treatment-effect efficiency calculation.

These model conditions hold, for example, in the nonparametric model with a common outcome bound $\abs Y\le B_0$ and overlap $\pi_{a,Q}(X)\ge\epsilon$ for every law $Q$, evaluated at a law $P$ satisfying $\pi_a(X)\ge\epsilon+\eta$ for some $\eta>0$. Bounded outcomes make the remainder in the component expansion quadratic in Hellinger distance, and the overlap margin permits likelihood tilts realizing every $L_2^0(P)$ score. \Cref{app:policy-model} verifies both assertions. More generally, the calculations below require \eqref{eq:policy-pd} and \assT, without a common outcome bound.

The inverse-probability-weighted outcome residuals have conditional mean zero given $X$, and their cross-products vanish for distinct programmes. Consequently the Gram matrix is
\[
\begin{split}
\Sigma_{aa}&=\E\Big[\frac{\sigma_a^2(X)}{\pi_a(X)}\Big]+\Var\big(\mu_a(X)\big),\\
\Sigma_{ab}&=\Cov\big(\mu_a(X),\mu_b(X)\big)\qquad(a\ne b),
\end{split}
\]
where $\sigma_a^2(X):=\Var(Y\mid X,A=a)$. The cross-covariance therefore comes entirely from the conditional mean functions. Let $\mathcal A^*$ be the programmes attaining the maximum at $P$, write $m=\abs{\mathcal A^*}\ge2$, and assume their gradients are not all identical. Then $\gamma(h)=\max_{a\in\mathcal A^*}\ip{\lambda_a}h$ is genuinely nonlinear and sublinear, with $\LamP=\conv\{\lambda_a:a\in\mathcal A^*\}$. We allow the active Gram matrix $\Sigma_*:=\Sigma_{\mathcal A^*}$ to be singular.

For the estimator comparisons, suppose the component estimates are jointly asymptotically linear at $P$:
\begin{equation}\label{eq:policy-al}
\sqrt n\{\hat\mu-\mu(P)\}
=n^{-1/2}\sum_{i=1}^n\lambda(O_i)+o_{P^n}(1).
\end{equation}
This is an additional condition on the component estimators, such as suitably fitted augmented inverse-probability estimates. These comparisons use the active set and pooling weights at $P$. Under $\Pn h$, contiguity and \eqref{eq:policy-pd} give the active-vector limit $Z+\ell(h)$, with $Z\sim N(0,\Sigma_*)$ and $\ell(h)=(\ip{\lambda_a}h)_{a\in\mathcal A^*}$.

To identify the Gaussian component common to these estimators, first consider local changes that move the best programme values together. Their core is
\[
\Hk=\{h\in\T:\ip{\lambda_a-\lambda_b}h=0
                 \text{ for all }a,b\in\mathcal A^*\}.
\]
Thus a local alternative belongs to the core exactly when it moves the tied programme values equally to first order. Regularity along $\Hk$ expresses invariance to perturbations preserving their relative value.

\Cref{thm:kink} now identifies how precisely this common value can be estimated: every estimator regular along $\Hk$ has limit $N(0,\Vk)*M$, where
\[
\Vk=\min_{\one^\top w=1}w^\top\Sigma_*w,
\qquad \psik=\sum_{a\in\mathcal A^*}w_a^*\lambda_a
\]
for any minimizing weights $w^*$. The gradient $\psik$ is unique even when its weight representation is not. Under \eqref{eq:policy-al}, the pool $\sum_{a\in\mathcal A^*}w_a^*\hat\mu_a$ attains this bound. If $\Sigma_*$ is nonsingular, then
\[
w^*=\frac{\Sigma_*^{-1}\one}{\one^\top\Sigma_*^{-1}\one},
\qquad \Vk=\frac1{\one^\top\Sigma_*^{-1}\one};
\]
\cref{thm:kink}(5) gives both singular cases, including $\Vk=0$ when $\one\notin\range(\Sigma_*)$. In general $\Vk\le\Sigma_{aa}$, with equality exactly when $\lambda_a=\psik$. Under nonsingularity this is equivalent to $w^*=e_a$. For independent active components with positive variances, the inequality is strict for every $a$ (\cref{rem:dividend}).

A different Gaussian bound results if the estimator must also accommodate changes under which a particular programme separates from the others and remains best. For $a\in\mathcal A^*$, define
\[
C_a=\{h\in\T:\ip{\lambda_a-\lambda_b}h\ge0
                       \text{ for all }b\in\mathcal A^*\}.
\]
These are the directions in which programme $a$ remains best to first order. The Gaussian factor for regular estimation along $C_a$ has variance
\[
V_{C_a}=\norm{\Pi_{\clspan C_a}\lambda_a}^2.
\]
If $\lambda_a$ is a vertex of $\LamP$, with duplicate gradients identified, \cref{cor:onesided} gives $\clspan C_a=\T$ and $V_{C_a}=\Sigma_{aa}$. Then $\hat\mu_a$ attains the bound, the pool is regular along $C_a$ exactly when $\psik=\lambda_a$, and the additional variance is $\Sigma_{aa}-\Vk$. Positive definiteness of $\Sigma_*$ ensures the vertex condition for every active component. Without it, the projection formula remains valid; a nonvertex cone can even coincide with the core, so the increase in its bound need not equal $\Sigma_{aa}-\Vk$.

The same decomposition identifies what pooling misses when programme values separate. Write $h=h_\kappa+h_\perp$ with $h_\kappa\in\Hk$, $h_\perp\in F_P$, and put $\ell(h)=c\one+\delta$, where $\delta=\ell(h_\perp)$ and $c=\ip{\psik}{h_\kappa}$. The pool has limit
\[
N\!\left(-\max_{a\in\mathcal A^*}\delta_a,\Vk\right).
\]
When $\Sigma_*$ is nonsingular, $\delta\in\ell(F_P)$ is equivalent to $\one^\top\Sigma_*^{-1}\delta=0$. The drift is nonpositive for every tie-breaking direction exactly when $\psik\in\LamP$; under nonsingularity this is the simplex condition $\Sigma_*^{-1}\one\ge0$ (\cref{rem:simplex}). Under that condition the pool underestimates the local best-programme value by $\max_a\delta_a$. Outside it the signed drift can be positive.

Other estimators respond to the differences between programme values and therefore retain an additional error component. To isolate it, write
$Z_a=\Delta(\lambda_a)$, $G=\Delta(\psik)$ and
$W_a=\Delta(\lambda_a-\psik)$ for $a\in\mathcal A^*$.
Then $Z=G\one+W$, $G\indep W$, and
$\Cov(W)=\Sigma_*-\Vk\one\one^\top$,
also when the active Gram matrix is singular. The plug-in
$\max_a\hat\mu_a$ is regular along $\Hk$ and has limit
$N(0,\Vk)*\Law(\max_{a\in\mathcal A^*}W_a)$, by the directional delta method for the active maximum and the displayed Gaussian decomposition (cf.\ \cref{prop:plugin}). The residual has positive mean whenever two
active gradients differ, which is precisely the condition for a
nonlinear derivative in this model.

Smoothing uses the same component estimates but averages near-tied values instead of selecting their maximum. The limiting weights depend on how rapidly the smoothing parameter increases relative to the estimation scale. For a concrete marginal-value smoother, set
\[
 S_\beta(x)=\frac{\sum_{a=1}^Kx_a e^{\beta x_a}}
                         {\sum_{a=1}^Ke^{\beta x_a}},
 \qquad T_n^{\rm sm}=S_{\beta_n}(\hat\mu).
\]
Choose $\beta_n/\log n\to\infty$ and $\beta_n/\sqrt n\to0$.
The first condition makes the contribution of programmes with a
fixed positive value gap negligible at the root-$n$ scale;
the second makes the weights on the tied programmes uniform
to first order. Indeed, their weighted value differs from their
uniform average by $O_P(\beta_n/n)$; if inactive programmes are present, with smallest gap
$g_0>0$, their contribution is $O_P(e^{-\beta_n g_0/2})$.
Both terms are $o_P(n^{-1/2})$. Thus $T_n^{\rm sm}$ is asymptotically linear at $P$
with influence function $m^{-1}\sum_{a\in\mathcal A^*}\lambda_a$.
It is regular along $\Hk$ with residual
\[
 N\!\left(0,
 \frac{\one^\top\Sigma_*\one}{m^2}-\Vk\right).
\]
This variance is nonnegative and vanishes exactly when the
average active gradient equals $\psik$. Under a local shift
$c\one+\delta$, its drift is $m^{-1}\sum_{a\in\mathcal A^*}\delta_a-\max_a\delta_a$.
The formula concerns smoothing the vector of marginal programme
values. The softmax approach of \citet{Whitehouse2025} instead
targets $\E\max_a\mu_a(X)$, the value of individualized treatment;
a tie among the marginal values need not be a conditional tie.

Sample splitting changes the asymptotic variance as well as the selection step. Evaluating a selected programme on independent data removes the selection-induced mean shift at a tie, but the evaluation sample contains fewer observations. For an actual one-way sample split, let a fraction
$\tau\in(0,1)$ of the $n$ observations select
$\hat a=\arg\max_a\hat\mu_a^{\rm tr}$ and let the remaining fraction
$q=1-\tau$ estimate its value, $T_n^{\rm sp}=\hat\mu_{\hat a}^{\rm ev}$.
Assume that the two component vectors are asymptotically linear
on their respective subsamples with the displayed influence
functions. Under local mean shifts $\ell(h)=c\one+\delta$, let
$Z^{\rm tr},Z^{\rm ev}$ be independent $N(0,\Sigma_*)$
vectors. With any fixed tie-breaking rule, the centred
root-$n$ limit is
\[
 \begin{split}
 A_\delta&=\arg\max_{a\in\mathcal A^*}
       \{\delta_a+\tau^{-1/2}Z_a^{\rm tr}\},\\
 \sqrt n\{T_n^{\rm sp}-\Psi(P_{n,h})\}
 &\wto q^{-1/2}Z^{\rm ev}_{A_\delta}
             +\delta_{A_\delta}-\max_{b\in\mathcal A^*}\delta_b.
 \end{split}
\]
Identical active gradients can be identified in this limit:
their local shifts and evaluation coordinates coincide.
Along the core all the local shifts are equal, so the limit
law is invariant. At $P$, its residual relative to the
full-sample core Gaussian is
\[
 N\!\left(0,(q^{-1}-1)\Vk\right)
 *\Law\!\left(q^{-1/2}W^{\rm ev}_{A_0}\right).
\]
The residual has mean zero and variance
$(q^{-1}-1)\Vk+q^{-1}\sum_{a\in\mathcal A^*}
P(A_0=a)(\Sigma_{aa}-\Vk)$; it is nondegenerate precisely
when this expression is positive. The first term records the
information lost by evaluating on a fraction of the sample.
For equal halves and $\Sigma_*=I_2$, the limit
variance is $2$, including residual variance $3/2$ above the
core bound $1/2$.

These decompositions give both the convolution residuals and the local biases of the four estimators. Pooling attains the core bound, while plug-in estimation, smoothing and sample splitting have the displayed residual distributions. To compare their risk when local alternatives separate the best programmes, we must also account for their estimation of the resulting nonlinear change in the parameter.

Here the full risk decomposition is available because \assS holds: $\gamma$ is sublinear. With
\[
B_r=\{h_\perp\in F_P:\norm{h_\perp}\le r\},
\]
\cref{thm:lam-sq} gives $\mathcal M(B_r)=\Vk+\mathcal M_\perp(B_r)$ over the local parameter set $\Hk\times B_r$. The leading residual cost has an explicit form.

\begin{proposition}[small-neighborhood residual risk for programme values]\label{prop:hedging-policy}
In the setting above, with $\psik=\sum_{a\in\mathcal A^*}w_a^*\lambda_a$ and $\LamP=\conv\{\lambda_a:a\in\mathcal A^*\}$, the constant $D$ of \cref{prop:hedging} is
\[
D=\max_{a\in\mathcal A^*}\sqrt{\Sigma_{aa}-\Vk}
  +\dist\big(0,\LamP-\psik\big),
\]
so that $\mathcal M_\perp(B_r)=\tfrac{D^2}{4}r^2+o(r^2)$. If $\psik\in\LamP$, then $D^2=\max_{a\in\mathcal A^*}(\Sigma_{aa}-\Vk)$: the leading residual risk is one quarter of $r^2$ times the largest variance reduction from pooling. When $\Sigma_*$ is nonsingular, the condition $\psik\in\LamP$ is equivalent to $\Sigma_*^{-1}\one\ge0$.
\end{proposition}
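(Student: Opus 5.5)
The plan is to evaluate the constant $D$ of \cref{prop:hedging} directly. The expansion $\mathcal M_\perp(B_r)=\tfrac{D^2}{4}r^2+o(r^2)$ then follows from that proposition, because \assS holds here: $\gamma$ is sublinear, by \cref{thm:kink}(2). Here $B_1$ is the unit ball of $F_P=\Hk^\perp\cap\T$, and the decomposition $\T=\Hk\oplus F_P$ of \cref{thm:kink}(3) applies. The first step is to rewrite $\gamma$ on $F_P$. Since $\psik\in\Hk$, we have $\ip{\psik}h=0$ for $h\in F_P$, and hence
\[
\gamma(h)=\max_{a\in\mathcal A^*}\ip{\lambda_a-\psik}h,\qquad h\in F_P.
\]
Moreover $\psik\in\claff\LamP$ by \cref{thm:kink}(3), so each $\lambda_a-\psik$ lies in $\FP$. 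Thus $\LamP-\psik$ is a compact convex subset of $F_P$.

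For the supremum, Cauchy--Schwarz gives
\[
\sup_{h\in B_1}\gamma(h)=\max_a\norm{\lambda_a-\psik}.
\]
The maximum is attained at $h=(\lambda_a-\psik)/\norm{\lambda_a-\psik}$, or the value is trivially zero if that difference vanishes. Because $\lambda_a-\psik\in F_P\perp\psik$, Pythagoras gives $\norm{\lambda_a}^2=\Vk+\norm{\lambda_a-\psik}^2$. With $\norm{\lambda_a}^2=\Sigma_{aa}$, the supremum equals $\max_a\sqrt{\Sigma_{aa}-\Vk}$.

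For the infimum, write
\[
\inf_{h\in B_1}\gamma(h)=\inf_{h\in B_1}\max_{\mu\in\LamP-\psik}\ip\mu h.
\]
The function $(\mu,h)\mapsto\ip\mu h$ is bilinear, $\LamP-\psik$ is the convex hull of finitely many points (hence compact and convex), and $B_1$ is convex. A minimax theorem (Sion) therefore lets us exchange the order. For each fixed $\mu\in F_P$, $\inf_{h\in B_1}\ip\mu h=-\norm\mu$. The infimum is thus $-\min_{\mu\in\LamP-\psik}\norm\mu=-\dist(0,\LamP-\psik)$. Subtracting the infimum from the supremum yields the stated formula for $D$. If $\psik\in\LamP$, the distance term is zero and $D^2=\max_a(\Sigma_{aa}-\Vk)$.

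The last claim uses nonsingularity of $\Sigma_*$. In that case the $\lambda_a$ are linearly independent, so barycentric coordinates in $\claff\LamP$ are unique. Then $\psik=\sum_aw_a^*\lambda_a$ with $w^*=\Sigma_*^{-1}\one/\one^\top\Sigma_*^{-1}\one$ (\cref{thm:kink}(5)) belongs to $\conv\{\lambda_a\}$ iff $w^*\ge0$, which holds iff $\Sigma_*^{-1}\one\ge0$ since $\one^\top\Sigma_*^{-1}\one>0$.

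The step needing the most care is the minimax exchange in the infimum. Compactness of the finite hull makes Sion's theorem applicable, but one must check that the relevant sets lie in $F_P$, so that the inner infimum over the ball in $F_P$ really equals $-\norm\mu$ rather than $-\norm{\Pi_{F_P}\mu}$. That membership is exactly what the affine-hull property of $\psik$ from \cref{thm:kink}(3) supplies.
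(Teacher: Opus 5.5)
Your proof is correct, and its skeleton is the paper's. You use the same vectors $u_a=\lambda_a-\psik=\Pi_{\FP}\lambda_a$, the same Cauchy--Schwarz and Pythagoras computation giving $\sup_{B_1}\gamma=\max_a\sqrt{\Sigma_{aa}-\Vk}$, and the same reduction of the infimum to $-\dist(0,\LamP-\psik)$. Your check that $\LamP-\psik\subseteq\FP$, which makes $\inf_{h\in B_1}\ip\mu h=-\norm\mu$ valid, is exactly the point the paper also relies on.

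The one genuine difference is how the infimum is obtained. You exchange $\inf_h$ and $\max_\mu$ by Sion's theorem, which applies because $\LamP-\psik$ is a compact convex finite hull and the pairing is bilinear and continuous. The paper uses no minimax theorem. It takes $u_0$, the minimum-norm point of $U=\LamP-\psik$, and bounds $\gamma(h)\ge\ip{u_0}h\ge-\norm{u_0}$ on $B_1$ directly. It then shows this bound is attained at $h=-u_0/\norm{u_0}$, using the projection inequality $\ip{u-u_0}{u_0}\ge0$ for $u\in U$, or at $h=0$ when $u_0=0$. Your route is quicker to state. The paper's is self-contained and exhibits the extremal tie-breaking direction explicitly.

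For the final equivalence, you prove the simplex criterion directly from uniqueness of affine coordinates, where the paper cites \cref{rem:simplex}; the content is the same.
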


\Cref{prop:hedging-policy} quantifies the residual task using the same covariance matrix that determines the Gaussian bound. Under the convex-hull condition, the largest gain from pooling tied programmes also determines the leading risk from allowing their values to separate. For two tied programmes with independent unit-variance components, $\Vk=1/2$ and $D^2=1/2$, so tie-breaking of radius $r$ costs $r^2/8$ to leading order on top of $\Vk$. In the parametrization $(c+d,c-d)$ of the introduction, $\abs d\le\rho$ corresponds to $r=\rho\sqrt2$, giving cost $\rho^2/4$. The Gaussian bound and this residual cost together describe optimal precision in the local Gaussian experiment.

\subsection{Other targets}
The same calculations apply to other directionally pathwise differentiable parameters. A finite maximum or minimum of smooth moments gives the same active-gradient projection for other endpoints of identified sets, including Manski bounds, and for the maximum predictive advantage in forecast comparisons. Where a scalar support-function target is represented by a finite minimum, the superlinear version applies \citep[\S 2.2]{HiranoPorter2012}. Each case requires the path expansion and estimator conditions of its statistical model.

Absolute values and rankings lead to different geometries. The derivative of $\E\abs{\tau(X)}$ at a law with $P(\tau(X)=0)>0$ has the conditional absolute-value structure illustrated by calibration error. The $k$-th best programme value and ranked effects instead lead to the order-statistic spaces of \cref{app:order}. For the value of personalization, $\E\max_a\mu_a(X)-\max_a\E\mu_a(X)$, the difference of two maxima gives the subspace $(F_1+F_2)^\perp$ contained in the core, by \cref{prop:dc}(3), without establishing the additivity condition needed for a full risk factorization. Process-valued versions require the further arguments discussed in \cref{sec:companion}.

\section{Discussion}\label{sec:companion}

This paper develops a convolution theorem for directionally pathwise differentiable parameters. The theorem characterizes the limiting laws of estimators that are regular along a subspace or convex cone, while the geometry of the derivative identifies the relevant subspaces and their efficiency bounds. An estimator that is regular along a given subspace and attains its centered Gaussian bound is asymptotically linear with the corresponding efficient influence function. Under the additivity condition, the local Gaussian minimax problem also separates into linear and nonlinear components. Together, these results extend the usual semiparametric analysis of regularity, efficiency, and local risk to parameters with nonlinear directional derivatives.

\subsection{Relation to the earlier treatment-value and variational analyses}\label{sec:prior-work}
The three papers approach efficiency through related questions. \citet{XuGuo2025} study estimation and inference for optimal treatment values, derive a variance bound within a robust asymptotically linear class, and construct adaptive smoothing estimators that attain it. Their Supplement A.12 allows heteroscedastic outcomes, and Supplement D gives the multistage extension. For binary treatment, the optimal weights used here are shared with those results. Our treatment applications connect them to regularity along the core and examine the changes in variance and local bias under other local alternatives.

\citet{XuGuoRALU} develop the variance minimization problem for marginal-integral functionals. Their constraints require the estimating function to remain unbiased when one specified nuisance component is misspecified and the others are correct, with the target's structural component held at its true value. The choice of nuisance components and admissible distributions therefore enters the efficiency comparison. Their results characterize the estimating functions compatible with these identities, determine their minimum variance, and give conditions for feasible attainment. This formulation does not require directional pathwise differentiability.

The theories have a concrete intersection in the conditional projection. Theorem~6.1 of \citet{XuGuoRALU} projects a branch gradient onto the directions in which tied branches have the same first-order change. Under its path conditions, it identifies the canonical gradient in the specified feasible local experiment and hence the convolution bound for every estimator regular in that experiment. Corollary~6.2 links this bound to the variational optimum for affine functions of treatment-specific conditional means, when the model supports its prescribed perturbations and an admissible estimating function realizes the minimizing influence function. In the treatment models considered here, when those conditions and our path assumptions hold, this influence function and variance agree with the core projection and bound. The equality concerns the optimizer and its variance; robust unbiasedness and local regularity specify different requirements on estimators.

Our analysis starts from the directional derivative and determines the subspaces and cones on which regular estimation is possible. It compares their convolution bounds without requiring an asymptotically linear representation, characterizes efficient estimators, and, under the additivity condition, separates the Gaussian contribution from the nonlinear minimax problem. These questions apply beyond conditional maxima: the median example has several incomparable maximal regularity subspaces. The overlap in efficient influence functions thus connects the three papers, while the variational constraints, feasible constructions, and local regularity and risk questions give each analysis its own scope.

\subsection{Further estimation questions}
The distinction between Gaussian efficiency and residual risk suggests several questions. The core equals the target's additivity space for the principal examples, but these spaces can differ for a general directional derivative. In that case the common Gaussian bound and a decomposition of the full local decision problem concern different structures. A more complete account of their relationship would clarify when the common-core convolution bound also corresponds to a decomposition of the local minimax problem. For higher order statistics, the enumeration of competing maximal spaces also remains open beyond the cases treated in \cref{app:order}.

The present results concern scalar targets in models satisfying \assT. Conditional covariates and histories may be general in the derivative calculations, whereas the empirical attaining constructions here use finite strata or a finite history tree. A vector-valued target requires a common regularity set for all coordinates and a joint Gaussian convolution statement; process-valued targets and dependent sampling require further arguments. Sequential observations within independent trajectories, as in \cref{ex:dynamic}, retain i.i.d.\ sampling at the trajectory level.

The Gaussian-limit minimax equalities and the sequence-level lower bounds serve different roles. The former identify the residual decision problem under \assS; the latter follow from the finite-experiment transfer and do not by themselves construct an attaining estimator for every model. The small-radius result quantifies squared-error risk over tie-breaking alternatives, not confidence-interval length. The feasible constructions and existing procedures cited in the applications establish estimation and inference guarantees under their respective conditions. Extensions to LAMN experiments, to upper semicontinuous losses through an enlarged loss, and to targets whose first derivative is linear but whose higher-order behavior is irregular remain separate questions. The example in \cref{app:rotating}, read alongside \citet{Pfanzagl2000}, also illustrates why the existence of directional limits must be stated explicitly.

\subsection{Uniform confidence intervals}
For a directionally pathwise differentiable target, what is the shortest attainable confidence interval when coverage must hold uniformly over distributions with unknown ties and near ties, and how can that interval be constructed? Corollary~\ref{cor:ci-local} answers a narrower question by deriving coverage along each fixed local path. Uniform inference requires controlling paths and nuisance estimation together, including sequences whose active sets change with the sample size. The local error need not be Gaussian, and an estimator minimizing squared risk need not yield an interval minimizing length.

The Gaussian product experiment provides a starting point. Equation~\eqref{eq:ci-general-critical} calibrates fixed half-lengths for specified centers, while allowing both endpoints to depend on the orthogonal observation permits asymmetric and variable-length intervals. A general theory would identify a length lower bound under a stated uniform coverage requirement and construct a feasible procedure attaining it. Such a result must distinguish fixed length from worst-case expected length, and unrestricted intervals from intervals satisfying an equivariance restriction. Derivative-based resampling \citep{FangSantos2019} and bias-aware interval optimization \citep{ArmstrongKolesar2021} supply relevant constructions and comparison principles. Determining their sharp implications for nonlinear directional derivatives, estimating the needed geometry, and transferring coverage and length guarantees uniformly to the original model form a separate inference problem. The confidence-interval results here make this question precise without presuming its solution.

\appendix
\section{Proofs of the convolution and efficiency results}\label{app:theory}
This appendix contains the proofs for \cref{sec:setting,sec:linearity,sec:cost,sec:structure}. The first subsection establishes the likelihood and continuity lemmas; subsequent subsections give proofs in the order of the main results. All path and target assumptions are those stated in the main text.

\subsection{Likelihood and continuity lemmas}
\begin{lemma}[automatic continuity]\label{lem:autocont}
Under \assT, \cref{def:dpd} implies that $\gamma$ is continuous on $\T$.
\end{lemma}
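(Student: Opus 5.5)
We argue by contradiction with a diagonal path construction. Suppose $h_k\to h$ in $\T$ but $\gamma(h_k)\not\to\gamma(h)$. Passing to a subsequence, we may assume $\abs{\gamma(h_k)-\gamma(h)}\ge\varepsilon>0$ for all $k$, and also $\norm{h_k-h}\le 4^{-k}$, or any summable rate we choose.

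By \assT, each $h_k$ is the score of some DQM path $P^{(k)}_t$ in $\Po$. By \cref{def:dpd}, $t^{-1}\{\Psi(P^{(k)}_t)-\Psi(P)\}\to\gamma(h_k)$. For each $k$ we therefore choose $t_k\downarrow0$ small enough that two conditions hold:
\[
\abs{t_k^{-1}\{\Psi(P^{(k)}_{t_k})-\Psi(P)\}-\gamma(h_k)}<\varepsilon/4,
\qquad
\int\bigl[(dP^{(k)1/2}_{t_k}-dP^{1/2})/t_k-\tfrac12h_k\,dP^{1/2}\bigr]^2\le 4^{-k}.
\]
We also make the $t_k$ strictly decreasing.

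Next we splice these into one path. Set $Q_t:=P^{(k)}_{t_k}$ when $t=t_k$. For $t$ not in $\{t_k\}$, take $Q_t$ from a fixed DQM path with score $h$, which exists by \assT, and set $Q_0=P$.

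The main step is to check that $Q$ is DQM with score $h$ at $t=0$. Along $t=t_k$, the quadratic-mean remainder is bounded by
\[
2\cdot4^{-k}+\tfrac12\norm{h_k-h}^2\to0.
\]
Along the other values of $t$, the remainder tends to zero because the fixed path is DQM. Since the definition imposes no smoothness at positive $t$, this gluing is legitimate. This is exactly why the paper stresses that the definition quantifies over every DQM path and requires differentiability only at $0$.

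Applying \cref{def:dpd} to $Q$, the difference quotient at $t=t_k$ must converge to $\gamma(h)$. But it lies within $\varepsilon/4$ of $\gamma(h_k)$, which stays at distance at least $\varepsilon$ from $\gamma(h)$. This is a contradiction, so $\gamma$ is continuous.

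The only delicate point is the choice of $t_k$ and the accounting of the DQM remainder along the subsequence. It needs the triangle inequality in $L_2$ for the square-root densities, i.e. $\norm{a+b}^2\le2\norm a^2+2\norm b^2$, together with the decay of $\norm{h_k-h}$. Closedness of $\T$ is not needed, since $h\in\T$ is given. Positive homogeneity plays no role.
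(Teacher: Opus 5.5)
Your proof is correct and is essentially the paper's argument. You splice the paths $P^{(k)}$ at strictly decreasing times $t_k\downarrow0$ into a fixed DQM path with score $h$, bound the defect at $t_k$ by twice the defect of $P^{(k)}$ plus $\tfrac12\norm{h_k-h}^2$, and apply \cref{def:dpd} to the spliced path. The paper does the same directly rather than by contradiction along a subsequence: it chooses $t_j$ with quotient error at most $1/j$ and concludes $\gamma(h_j)\to\gamma(h)$.
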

\begin{proof}
Let $h_j\to h$ in $\T$, let $P^{(j)}_t$ be DQM paths with scores $h_j$ and $P_t$ a DQM path with score $h$. Choose $t_j\downarrow0$ so small that the DQM defect $\int[(dP^{(j)1/2}_{t_j}-dP^{1/2})/t_j-\tfrac12h_j\,dP^{1/2}]^2$ and the quotient error $\abs{t_j^{-1}\{\Psi(P^{(j)}_{t_j})-\Psi(P)\}-\gamma(h_j)}$ are both at most $1/j$. Define $\tilde P_t:=P_t$ for $t\notin\{t_j\}$ and $\tilde P_{t_j}:=P^{(j)}_{t_j}$. Then $\tilde P$ is a DQM path with score $h$: at $t=t_j$ its defect is at most $2/j+\tfrac12\norm{h_j-h}^2\to0$, and elsewhere it is that of $P_t$. Applying \cref{def:dpd} to $\tilde P$ gives $t_j^{-1}\{\Psi(P^{(j)}_{t_j})-\Psi(P)\}\to\gamma(h)$, hence $\gamma(h_j)\to\gamma(h)$.
\end{proof}

\begin{lemma}[tilt identity and analytic continuation]\label{lem:tilt}
Let $C\subseteq\T$ be a convex cone, $T_n$ regular along $C$ with limit law $L$, and $h_1,\dots,h_d\in C$ linearly independent with Gram matrix $G$, $W:=\spn(h_i)$, $C':=\{h_a:=\sum_ia_ih_i:a\in\R^d_{\ge0}\}\subseteq C$. Let $S_n:=\sqrt n\{T_n-\Psi(P)\}$, $\Delta_n:=\sqrt n\,\mathbb P_n(h_1,\dots,h_d)^\top$, and let $(S,\Delta)$ be a subsequential joint limit of $(S_n,\Delta_n)$ under $P^n$ (it exists by tightness and Prohorov's theorem; $S\sim L$, $\Delta\sim N(0,G)$). Then $g(a):=\gamma(h_a)$ satisfies $\abs{g(a)}\le K\abs a$ on $\R^d_{\ge0}$ for some $K<\infty$, there is a linear $\ell$ on $\R^d$ with $g=\ell$ on $\R^d_{\ge0}$, and, along the subsequence,
\begin{equation}\label{eq:tilt}
\phi_L(t)\,e^{it\ell(a)}=e^{-a^\top Ga/2}\,\E\big[e^{itS+a^\top\Delta}\big]\qquad\text{for all }a\in\R^d,\ t\in\R .
\end{equation}
Consequently $S_{n_k}-\ell(a)\wto L$ under $P^{n_k}_{n_k,a}$ for every $a\in\R^d$: the limiting statistic is equivariant in distribution along the whole span $W$ for the linear target $\ell$, whether or not $\gamma=\ell$ off the cone. Moreover $\ell$ agrees with $\gamma$ on all of $C\cap W$, not only on $C'$.
\end{lemma}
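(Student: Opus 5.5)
The plan is to establish the tilt identity on the closed orthant via Le Cam's third lemma, and then extend it to all of $\R^d$ using analyticity of the right-hand side in $a$.

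\textbf{Step 1: Le Cam's third lemma on the orthant.} Fix $a\in\R^d_{\ge0}$, so $h_a\in C'\subseteq C$ and \assT supplies a DQM path with score $h_a$. By \eqref{eq:LAN}, the log-likelihood ratio of $P^{n}_{n,a}$ (shorthand for $\Pn{h_a}$) against $P^n$ equals $a^\top\Delta_n-\tfrac12a^\top Ga+o_{P^n}(1)$. Along the subsequence, $(S_n,\Delta_n)\wto(S,\Delta)$, so Le Cam's third lemma gives the law of $S$ under $P^{n_k}_{n_k,a}$ as the exponential tilt $e^{a^\top\Delta-a^\top Ga/2}\,d\Law(S,\Delta)$.

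By directional differentiability, $\sqrt n\{\Psi(P_{n,h_a})-\Psi(P)\}\to g(a)$. Regularity then gives $S_n-g(a)\wto L$ under the alternative. Equating characteristic functions yields
\[
\phi_L(t)e^{itg(a)}=e^{-a^\top Ga/2}\,\E\big[e^{itS+a^\top\Delta}\big],\qquad a\in\R^d_{\ge0}.
\]

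\textbf{Step 2: linearity of $g$ and the growth bound.} Write $F(t,a):=e^{-a^\top Ga/2}\E[e^{itS+a^\top\Delta}]$. Since $\Delta$ is Gaussian, $\E e^{c\abs\Delta}<\infty$ for every $c$, so $F(t,\cdot)$ extends to an entire function of $a\in\C^d$. For $t$ near $0$ we have $\phi_L(t)\ne0$.

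Fix such a $t$. The map $a\mapsto e^{itg(a)}=F(t,a)/\phi_L(t)$ is then the restriction to the orthant of an entire function $H_t$. Moreover $H_t(a+b)\,H_t(0)=H_t(a)\,H_t(b)$ wherever the functional equation holds on the orthant; I would obtain this multiplicativity from the fact that $\abs{H_t}=1$ on the orthant.

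A cleaner route is to take a logarithm. Since $\abs{e^{itg}}=1$ and $\Re\log H_t$ is harmonic, pluriharmonic and vanishing on the real orthant (which has nonempty interior in $\R^d$), we get that $\log H_t$ is purely imaginary-linear in the real directions. Indeed, a holomorphic function whose real part vanishes on an open real set has an imaginary part there that is the restriction of a real-analytic function.

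I expect this to be the main obstacle: extracting linearity of $g$ from the identity. The route I would try first is to take second derivatives in $a$. Write $\log F(t,a)=itg(a)$ for real $a$ in the interior of the orthant, with $t$ small. Its real part must be constant, namely $\log\abs{\phi_L(t)}$. Expanding $\log\E e^{itS+a^\top\Delta}-\tfrac12a^\top Ga$ in $t$ gives $it\,\E[S e^{a^\top\Delta}]/\E e^{a^\top\Delta}+O(t^2)$, where the leading term is real-analytic in $a$. Justifying this needs truncation, since $S$ may lack moments; I would use $t^{-1}\arg$ instead.

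The $t^2$-coefficient of the real part involves a conditional variance and must be $a$-free. Analyticity of $g$ on the interior of the orthant, together with positive homogeneity, forces $g$ to be homogeneous of degree one and real-analytic away from $0$. I would then compare $t$ and $2t$ to exclude non-linear behaviour. The alternative I would use if this stalls is the $\R$-valued approach: analytic functions $e^{itg(a)}$ for all small $t$ determine the law-shift of $S$ under an exponential family in $\Delta$, and the mean of $S$ under the tilt is affine in $a$ only if $g$ is linear. Having $g=\ell$ on the orthant, I get the bound $\abs{g(a)}\le K\abs a$ with $K=\norm\ell$.

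\textbf{Step 3: continuation and the remaining claims.} Both sides of \eqref{eq:tilt} with $\ell$ in place of $g$ are entire in $a$ and agree on the orthant, which has nonempty interior. Hence they agree on $\R^d$ for small $t$, and then for all $t$, by repeating the argument for each fixed $t$ directly: the left side $\phi_L(t)e^{it\ell(a)}$ is entire.

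For $a\in\R^d$, Le Cam's third lemma applied to any path with score $h_a$ shows that the law of $S$ under the alternative has characteristic function $F(t,a)=\phi_L(t)e^{it\ell(a)}$. Hence $S_{n_k}-\ell(a)\wto L$.

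Finally, take $h\in C\cap W$, say $h=h_a$. Enlarge the generating set to include $h$ inside $C$: $C'':=\mathrm{cone}(h_1,\dots,h_d,h)$ lies in $C$ and spans $W$. Regularity along $h$ gives $S_n-\gamma(h)\wto L$, while the span result gives $S_n-\ell(a)\wto L$. The shifts must coincide because $L$ is a proper law, so translation identifies the shift. Therefore $\gamma(h)=\ell(a)$.
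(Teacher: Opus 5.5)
Your Steps 1 and 3 match the paper: Le Cam's third lemma on the orthant, continuation of the identity once $g=\ell$ is known, and the no-translation-period argument on $C\cap W$. The gap is Step 2, which is the core of the lemma, and you never actually prove it.

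The routes you sketch do not work as stated:
\begin{itemize}
\item Unimodularity $\abs{H_t}=1$ on the orthant gives neither the multiplicativity you want (that is essentially the additivity to be proved) nor linearity. For example, $a\mapsto e^{ia^\top a}$ is entire and unimodular on $\R^d$, but its logarithm is not linear.
\item The second-order expansion in $t$ and the ``mean under the tilt'' route both need moments of $S$, which are not available.
\item ``Real-analytic away from $0$ plus positive homogeneity'' does not force linearity: the Euclidean norm on the orthant is a counterexample.
\end{itemize}

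The missing idea is analyticity \emph{at the origin}. Fix $t_0\ne0$ with $\phi_L(t_0)\ne0$ and put $G_0(a):=F(t_0,a)/\phi_L(t_0)$, which is entire with $G_0(0)=1$. On a small ball $U\ni0$ the principal logarithm $\tilde g:=(it_0)^{-1}\Log G_0$ is analytic. It equals $g$ on $U\cap\R^d_{\ge0}$ provided $\abs{t_0g(a)}<\pi$ there. Positive homogeneity then gives, for $a\ge0$,
\[
g(a)=\frac{g(sa)}{s}=\frac{\tilde g(sa)}{s}\longrightarrow D\tilde g(0)\,a\qquad(s\downarrow0),
\]
so $g$ is linear on the orthant.

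This also shows that your ordering of the growth bound is backwards. You derive $\abs{g(a)}\le K\abs a$ from linearity, but the bound, or at least continuity of $g$ at $0$, is needed \emph{before} linearity, to fix the branch of the logarithm. The identity $e^{it_0g}=G_0$ alone determines $g$ only modulo $2\pi/t_0$, with a possibly non-constant integer offset.

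The paper obtains $K$ directly from tightness. For $\abs a\le1$ and $a\ge0$, Cauchy--Schwarz with $\E e^{2a^\top\Delta}=e^{2a^\top Ga}$ shows that the tilted laws of $S$ are uniformly tight. This is impossible if $g(a_k)\to\pm\infty$, since those laws are $L*\delta_{g(a)}$. You could alternatively invoke \cref{lem:autocont} for continuity of $\gamma$. Once the bound and the homogeneity argument are in place, the rest of your proposal goes through.
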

\begin{proof}
\emph{LAN.} For $a\ge0$ fix a DQM path with score $h_a$ and write $P_{n,a}$. By \eqref{eq:LAN},
\[
\Lambda_n(a):=\sum_i\log\frac{dP_{n,a}}{dP}(X_i)
=a^\top\Delta_n-\tfrac12a^\top Ga+o_{P^n}(1),
\qquad \Delta_n\wto N(0,G).
\]

\emph{Le Cam's third lemma} \citep[Theorem 6.6]{vdV1998}. $S_{n_k}\wto S^{(a)}$ under $P^{n_k}_{n_k,a}$, where $P(S^{(a)}\in B)=\E[\ind_B(S)e^{a^\top\Delta-a^\top Ga/2}]$, so $\phi_{S^{(a)}}(t)=e^{-a^\top Ga/2}\Phi_t(a)$ with $\Phi_t(a):=\E[e^{itS+a^\top\Delta}]$.
\emph{Regularity.} $c_n(a):=\sqrt n\{\Psi(P_{n,a})-\Psi(P)\}\to g(a)$ and $S_n-c_n(a)\wto L$ under $\Pn a$; hence $S^{(a)}\overset d=Y+g(a)$ with $Y\sim L$, and $\phi_L(t)e^{itg(a)}=e^{-a^\top Ga/2}\Phi_t(a)$ for $a\ge0$.
\emph{Boundedness.} For $\abs a\le1$, Cauchy--Schwarz and $\E e^{2a^\top\Delta}=e^{2a^\top Ga}$ give $P(\abs{S^{(a)}}>M)\le P(\abs S>M)^{1/2}e^{\norm G/2}\to0$ uniformly, so $\{S^{(a)}\}_{\abs a\le1,a\ge0}$ is tight and $K:=\sup\{\abs{g(a)}:\abs a\le1,a\ge0\}<\infty$ (if $g(a_k)\to\pm\infty$ then $Y+g(a_k)$ is not tight); homogeneity gives $\abs{g(a)}\le K\abs a$.
\emph{Analyticity.} $\Phi_t$ is entire on $\C^d$: expand $e^{a^\top\Delta}=\sum_m(a^\top\Delta)^m/m!$, dominate by $e^{\abs a\abs\Delta}$, and use Fubini. Fix $t_0\ne0$ with $\phi_L(t_0)\ne0$; $G_0(a):=e^{-a^\top Ga/2}\Phi_{t_0}(a)/\phi_L(t_0)$ is real-analytic on $\R^d$ with $G_0(0)=1$. On a ball $U\ni0$ with $\abs{G_0-1}<1$ and $K\abs{t_0}\abs a<\pi$, $\tilde g:=(it_0)^{-1}\Log G_0$ is analytic and equals $g$ on $U\cap\R^d_{\ge0}$ (there $G_0=e^{it_0g}$ with $\abs{t_0g}<\pi$). For $a\ge0$ and $s\downarrow0$, $g(a)=g(sa)/s=\tilde g(sa)/s\to D\tilde g(0)a=:\ell(a)$.
\emph{Continuation.} With $g=\ell$ on $\R^d_{\ge0}$, both sides of the tilt identity are real-analytic in $a$ and agree on a set with nonempty interior, hence on $\R^d$, for each $t$; and $\phi_{S^{(a)}}(t)=\phi_L(t)e^{it\ell(a)}$ for all $a$ means $S^{(a)}\overset d=Y+\ell(a)$.
\emph{Consistency on $C\cap W$.} For $h\in C\cap W$ with coordinates $a(h)$ (not necessarily $\ge0$), regularity along $C$ gives $S_{n_k}-c_{n_k}(h)\wto L$ with $c_n(h)\to\gamma(h)$, while the continued identity gives $S_{n_k}\wto L*\delta_{\ell(a(h))}$ under the same laws; a probability law has no nonzero translation period, so $\ell(a(h))=\gamma(h)$.
\end{proof}

\begin{lemma}[convolution with an explicit noise factor]\label{lem:noise}
In the setting of \cref{lem:tilt}, after orthonormalizing the basis of $W$ (so $G=I_d$ and $\ell(a)=\psi^\top a$ with $\psi\in\R^d$ the coordinates of the representer $\tilde\psi_W$ of $\ell$), the variable $R:=S-\psi^\top\Delta$ is independent of $\Delta$, and
\[
L=N(0,\abs\psi^2)*\Law(R),\qquad\phi_R(t)=\phi_L(t)e^{\abs\psi^2t^2/2}.
\]
The noise factor is the law of the residual of $S$ after removal of the efficient Gaussian component $\Delta(\tilde\psi_W)$, and it is determined by $L$ and $\abs\psi$ alone.
\end{lemma}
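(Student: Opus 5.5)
The plan is to read off the joint characteristic function of $(S,\Delta)$ from the continued tilt identity \eqref{eq:tilt} of \cref{lem:tilt}, and to show that it factorizes. After orthonormalizing, $G=I_d$ and $\ell(a)=\psi^\top a$, so \eqref{eq:tilt} reads
\[
\E\big[e^{itS+a^\top\Delta}\big]=\phi_L(t)\,e^{it\psi^\top a+\abs a^2/2},\qquad a\in\R^d,\ t\in\R .
\]
For fixed $t$, both sides are entire functions of $a\in\C^d$. The left side is entire by the domination argument already used in the proof of \cref{lem:tilt}, and the right side is obviously entire. They agree on $\R^d$, so they agree on $\C^d$. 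Substituting $a=iu$ with $u\in\R^d$ gives the joint characteristic function
\[
\E\big[e^{itS+iu^\top\Delta}\big]=\phi_L(t)\,e^{-t\psi^\top u-\abs u^2/2}.
\]

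Next I would compute the joint characteristic function of $(R,\Delta)$ with $R=S-\psi^\top\Delta$. Evaluating the previous display at $(t,u-t\psi)$ gives
\[
\E\big[e^{itR+iu^\top\Delta}\big]=\phi_L(t)\,e^{-t\psi^\top(u-t\psi)-\abs{u-t\psi}^2/2}.
\]
Expanding the exponent, it becomes $t^2\abs\psi^2/2-\abs u^2/2$, because the cross terms $-t\psi^\top u$ and $+t\psi^\top u$ cancel. Hence
\[
\E\big[e^{itR+iu^\top\Delta}\big]=\big\{\phi_L(t)e^{\abs\psi^2t^2/2}\big\}\,e^{-\abs u^2/2},
\]
a product of a function of $t$ alone and the characteristic function of $N(0,I_d)$.

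Putting $u=0$ identifies $\phi_R(t)=\phi_L(t)e^{\abs\psi^2t^2/2}$. This is automatically a characteristic function, since $R$ is a genuine random variable. The factorization then gives $R\indep\Delta$ by uniqueness of characteristic functions. Since $\psi^\top\Delta\sim N(0,\abs\psi^2)$ is a function of $\Delta$, the decomposition $S=R+\psi^\top\Delta$ yields $L=\Law(S)=\Law(R)*N(0,\abs\psi^2)$.

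Finally, $\phi_R$ is determined by $\phi_L$ and $\abs\psi$, so $\Law(R)$ does not depend on the subsequence or on the chosen joint limit. Also, $\psi^\top\Delta$ is the limit of $\sqrt n\,\mathbb P_n\tilde\psi_W$, that is $\Delta(\tilde\psi_W)$, which gives the stated interpretation of $R$ as the residual after removing the efficient Gaussian component.

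The only delicate step is the passage to imaginary $a$. It requires the left side to be entire in complex $a$ (for each fixed $t$), which \cref{lem:tilt} already establishes. Once that is in hand, the rest is an algebraic identity.
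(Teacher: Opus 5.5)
Your proof is correct and is essentially the paper's argument: both extend the tilt identity \eqref{eq:tilt} to complex $a$ as an identity between entire functions, and both read off a factorized characteristic function for $(R,\Delta)$. The only difference is bookkeeping: the paper substitutes $a=b-it\psi$ and then continues a second time in $b$ to $b=is$, whereas you substitute $a=iu$ once to get the joint characteristic function of $(S,\Delta)$ and then make the real change of variables $u\mapsto u-t\psi$, which saves the second continuation.
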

\begin{proof}
Identity \eqref{eq:tilt} holds for complex $a$ (both sides are entire; read $\abs a^2$ as $a^\top a$). At $a=b-it\psi$, $b\in\R^d$, the left side is $\phi_L(t)e^{it\psi^\top b+t^2\abs\psi^2}$ and the right side is $e^{-\abs b^2/2+itb^\top\psi+t^2\abs\psi^2/2}\E[e^{itR+b^\top\Delta}]$; hence $\E[e^{itR}e^{b^\top\Delta}]=\phi_L(t)e^{t^2\abs\psi^2/2}e^{\abs b^2/2}$ for all real $t,b$. At $b=0$: $\phi_R=\phi_Le^{\abs\psi^2t^2/2}$. Both sides are entire in $b$, so at $b=is$: $\E[e^{itR+is^\top\Delta}]=\phi_R(t)\phi_\Delta(s)$, i.e.\ $R\indep\Delta$.
\end{proof}

\begin{lemma}[regularity passes to closures]\label{lem:closure}
If $T_n$ is regular along a linear subspace $V$ with limit $L$, it is regular along $\overline V\cap\T$ with the same limit. Consequently $\Rfam(T_n)$ is closed under closure and maximal regularity subspaces are closed.
\end{lemma}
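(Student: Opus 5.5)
Fix $h\in\overline V\cap\T$ and an arbitrary DQM path $P_{t,h}$ with score $h$. The goal is $\sqrt n\{T_n-\Psi(P_{n,h})\}\wto L$ under $\Pn h$. The plan has two steps. First, extend equivariance to the closure in the limit experiment along finite-dimensional spans. Second, transfer this back to the given path through contiguity and a diagonal approximation.

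\textbf{Step 1: equivariance along a finite span.} Choose $h_j\in V$ with $h_j\to h$. Apply \cref{lem:tilt} to the subspace $V$, regarded as a cone; this is legitimate because on $V$ the cone and its span coincide. By \cref{thm:main}(1), $\gamma|_V$ is a bounded linear functional $\ell_V$. It extends continuously to $\overline V$, and by \cref{lem:autocont} the extension equals $\gamma$ on $\overline V\cap\T$.

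Now consider the finite-dimensional space $W=\spn(h_1,\dots,h_d,h)$, for $h_1,\dots,h_d\in V$ chosen below. Take a subsequential joint limit $(S,\Delta)$ of $S_n$ and the score process on $W$. The tilt identity \eqref{eq:tilt} holds on the part $W\cap V$. It says that $\E[e^{itS+\Delta(g)}]e^{-\norm g^2/2}=\phi_L(t)e^{it\gamma(g)}$ for all $g\in V\cap W$. Both sides are continuous in $g$ in $L_2$ norm on bounded sets. On the left this follows from Gaussian moment bounds and Cauchy--Schwarz, exactly as in the boundedness step of \cref{lem:tilt}. So let $g\to h$ through elements of $V$.

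To avoid depending on one fixed $W$, work instead with $W_k=\spn(h_k,h)$ and the Gaussian vector $(\Delta(h_k),\Delta(h))$. The left side at $g=h_k$ converges to the left side at $g=h$, because $\E\abs{e^{\Delta(h_k)}-e^{\Delta(h)}}^2\to0$. This is the point where one must be careful that each identity is taken along a common subsequence. Use a diagonal subsequence, which exists because the countable family $\{\Delta(h_k)\}\cup\{\Delta(h)\}$ has finite-dimensional marginals that are jointly tight. The outcome is $\E[e^{itS+\Delta(h)}]e^{-\norm h^2/2}=\phi_L(t)e^{it\gamma(h)}$.

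\textbf{Step 2: transfer to the path.} By LAN \eqref{eq:LAN} and Le Cam's third lemma, along the subsequence $S_n$ under $\Pn h$ converges to the $e^{\Delta(h)-\norm h^2/2}$-tilt of $S$. By the identity from Step 1, this tilt has characteristic function $\phi_L(t)e^{it\gamma(h)}$. Directional differentiability gives $\sqrt n\{\Psi(P_{n,h})-\Psi(P)\}\to\gamma(h)$, so $\sqrt n\{T_n-\Psi(P_{n,h})\}\wto L$ along the subsequence. Every subsequence has a further subsequence with this same limit, so the convergence holds along the full sequence. Hence $T_n$ is regular along $\overline V\cap\T$ with the same $L$.

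\textbf{Consequences.} It follows immediately that $\Rfam(T_n)$ is closed under closure. For the statement about maximal members, note that if $V$ is maximal in $\Rfam(T_n)$, then $\overline V\cap\T\in\Rfam(T_n)$ contains $V$, so it must equal $V$. The main obstacle is the limit interchange in Step 1: justifying $L_2$-continuity of $g\mapsto\E[e^{itS+\Delta(g)}]$, which requires uniform integrability of $e^{\Delta(g)}$ jointly with the fixed bounded factor $e^{itS}$, together with handling subsequences consistently.
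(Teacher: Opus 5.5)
Your proof is correct, but it takes a different route from the paper's. The paper's argument stays at the sequence level. DQM of the paths with scores $h$ and $h_j$ gives the one-observation Hellinger affinity $1-\norm{h-h_j}^2/(8n)+o(1/n)$, hence $\limsup_n\TV(\Pn h,\Pn{h_j})\le\tfrac12\norm{h-h_j}$. For bounded Lipschitz $f$, the expectations $\E_{n,h}f(S_n-c_n(h))$ and $\E_{n,h_j}f(S_n-c_n(h_j))$ therefore differ by at most $2\norm f_\infty\TV(\Pn h,\Pn{h_j})+\Lip(f)\abs{c_n(h)-c_n(h_j)}$. Letting $n\to\infty$ and then $j\to\infty$, using regularity at $h_j$ and continuity of $\gamma$, finishes the proof. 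You instead pass through the limit experiment. The identity at each $h_k\in V$ is just regularity at $h_k$ plus Le Cam's third lemma. You carry it to $h$ using the $L_2$ bound on $e^{\Delta(h_k)}-e^{\Delta(h)}$ inside a common joint limit, and then return to the sequence through the third lemma and a subsequence argument. The paper's route is shorter and gives an explicit modulus in $\norm{h-h_j}$. It also avoids joint convergence with the score process and the diagonal-subsequence bookkeeping. Your route shows directly that the characteristic-function identity \eqref{eq:tilt}, i.e.\ equivariance in the limit, extends from $V$ to its closure, which fits the machinery of \cref{lem:tilt,lem:noise}. In your Step~1, the appeal to \cref{thm:main}(1) and to the bounded linear extension is not needed. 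Only $\gamma(h_k)\to\gamma(h)$, from \cref{lem:autocont}, enters the argument.
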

\begin{proof}
Let $h\in\overline V\cap\T$ and $h_j\to h$ in $V$. DQM at $P$ gives, for one observation, $\int(\sqrt{dP_{n,h}}-\sqrt{dP_{n,h_j}})^2=\norm{h-h_j}^2/(4n)+o(1/n)$, i.e.\ Hellinger affinity $\rho_{n,j}=1-\norm{h-h_j}^2/(8n)+o(1/n)$; affinities multiply over products, so $\rho_{n,j}^n\to e^{-\norm{h-h_j}^2/8}$, and the inequality $\TV\le\sqrt{1-\rho^2}$ (with $\TV(P,Q)=\sup_A\abs{P(A)-Q(A)}$) gives
\[
\limsup_n\TV(\Pn h,\Pn{h_j})\le\sqrt{1-e^{-\norm{h-h_j}^2/4}}\le\tfrac12\norm{h-h_j}.
\]
No finite-sample absolute continuity is used; DQM paths may carry asymptotically negligible singular parts. For bounded Lipschitz $f$ and $S_n=\sqrt n\{T_n-\Psi(P)\}$,
\[
\abs{\E_{n,h}f(S_n-c_n(h))-\E_{n,h_j}f(S_n-c_n(h_j))}\le2\norm f_\infty\TV(\Pn h,\Pn{h_j})+\Lip(f)\,\abs{c_n(h)-c_n(h_j)} .
\]
Regularity along $V$ gives $\E_{n,h_j}f(S_n-c_n(h_j))\to\int f\,dL$, and $c_n(h)\to\gamma(h)$, $c_n(h_j)\to\gamma(h_j)$, $\gamma(h_j)\to\gamma(h)$ (\cref{lem:autocont}). Letting $n\to\infty$ and then $j\to\infty$, $\E_{n,h}f(S_n-c_n(h))\to\int f\,dL$ for every bounded Lipschitz $f$, i.e.\ $S_n-c_n(h)\wto L$ under $\Pn h$.
\end{proof}

\subsection{Supporting geometry}
\begin{lemma}\label{lem:odd}
\begin{enumerate}[label=(\arabic*)]
\item $\Nodd(\gamma)$ is a closed cone with $\Nodd=-\Nodd$, and $\gamma(ch)=c\gamma(h)$ for all $c\in\R$, $h\in\Nodd(\gamma)$.
\item A linear subspace $V$ is a linearity subspace iff $V\subseteq\Nodd(\gamma)$ and $\gamma$ is additive on $V$. Every one-dimensional subspace of $\Nodd(\gamma)$ is a linearity subspace.
\item $\Lfam(\gamma)$ is closed under passing to subspaces; the closure in $\T$ of a linearity subspace is a linearity subspace; every linearity subspace is contained in a maximal one; maximal linearity subspaces are closed.
\item $\gamma$ is linear on $\T$ iff $\Lfam_{\max}(\gamma)=\{\T\}$.
\end{enumerate}
\end{lemma}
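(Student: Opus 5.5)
We prove the four parts in order, using only positive homogeneity and continuity of $\gamma$ (\cref{lem:autocont}). For (1), closedness of $\Nodd(\gamma)$ follows because $h\mapsto\gamma(h)+\gamma(-h)$ is continuous and $\Nodd$ is its zero set. It is a cone since $\gamma(ch)+\gamma(-ch)=c\{\gamma(h)+\gamma(-h)\}$ for $c\ge0$, and symmetry $\Nodd=-\Nodd$ is immediate from the definition. For $h\in\Nodd$ and $c<0$, write $\gamma(ch)=\abs c\gamma(-h)=-\abs c\gamma(h)=c\gamma(h)$; this gives homogeneity for all real $c$.

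For (2), a linear $\gamma|_V$ is odd and additive, so $V\subseteq\Nodd$. Conversely, additivity together with the real homogeneity supplied by (1) on $V\subseteq\Nodd$ gives linearity. For a line $\R h\subseteq\Nodd$, additivity $\gamma(ah+bh)=\gamma(ah)+\gamma(bh)$ reduces to $(a+b)\gamma(h)=a\gamma(h)+b\gamma(h)$ via (1).

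For (3), closure under subspaces is trivial. For the closure $\overline V\cap\T$ of a linearity subspace $V$, take $u_j\to u$ and $v_j\to v$ in $V$. Continuity of $\gamma$ passes the identities $\gamma(u_j+v_j)=\gamma(u_j)+\gamma(v_j)$ and $\gamma(cu_j)=c\gamma(u_j)$ to the limit, because $u_j+v_j\to u+v$ and $cu_j\to cu$. The existence of maximal elements uses Zorn's lemma on $\Lfam(\gamma)$ ordered by inclusion. This is the step needing care: given a chain $\{V_i\}$, the union $\bigcup_iV_i$ is a linear subspace, since any two elements lie in a common $V_i$. $\gamma$ is linear on it for the same reason: any pair $u,v$ and scalar $c$ lie in one $V_i$, where linearity holds. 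So every chain has an upper bound and every linearity subspace lies in a maximal one. A maximal $V$ is closed because $\overline V\cap\T$ is a linearity subspace containing it, so maximality forces equality.

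For (4), if $\gamma$ is linear on $\T$ then $\T\in\Lfam$ contains every member, so it is the unique maximal element. Conversely, $\Lfam_{\max}=\{\T\}$ means $\T$ itself is a linearity subspace, i.e.\ $\gamma$ is linear. The only genuine obstacle is the Zorn argument in (3), and it is resolved by the fact that linearity is a finitary condition checked on finitely many vectors at a time.
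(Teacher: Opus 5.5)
Your proposal is correct and follows the paper's proof essentially step for step. Both arguments use continuity of $\gamma$ for closedness of the odd cone, the same sign computation to get homogeneity for all real scalars, and continuity again to pass linearity to $\overline V\cap\T$; both then apply Zorn's lemma through unions of chains and conclude that maximal linearity subspaces are closed by maximality.
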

\begin{proof}
(1) Closedness and symmetry are clear; for $h\in\Nodd$ and $c<0$, $\gamma(ch)=\abs c\gamma(-h)=-\abs c\gamma(h)=c\gamma(h)$. (2) Linear on $V$ implies odd on $V$, hence $V\subseteq\Nodd$; conversely additivity on $V\subseteq\Nodd$ together with (1) gives homogeneity for all real scalars. A line $\R h$ with $h\in\Nodd$ is additive by (1). (3) If $V\in\Lfam$ and $h_k\to h$, $h'_k\to h'$ in $V$, continuity gives $\gamma(h+h')=\gamma(h)+\gamma(h')$ and $\gamma(-h)=-\gamma(h)$, so $\overline V\cap\T\in\Lfam$. The union of a chain in $\Lfam$ is a linear subspace on which $\gamma$ is linear, so Zorn's lemma yields maximal elements above any member; a maximal element equals its closure. (4) is immediate.
\end{proof}

\subsection{Proofs of the main statements}
\begin{proof}[Proof of \cref{thm:main}]
(1) For each finite linearly independent $B\subseteq C$, \cref{lem:tilt} gives a linear $\ell_B$ on $\spn B$ agreeing with $\gamma$ on $C\cap\spn B$. Given two such sets $B_1,B_2$, choose a basis $B_0\subseteq B_1\cup B_2$ of $W_0:=\spn(B_1\cup B_2)$ (two independent sets need not lie in a common independent set: $\{h\}$ and $\{2h\}$). Then $\ell_{B_0}$ agrees with $\gamma$ on $C\cap W_0\supseteq B_i$, hence with $\ell_{B_i}$ on $B_i$ and therefore on $\spn B_i$. The spans $\{\spn B\}$ form a directed family with union $\spn C$ on which the functionals are consistent, so $\ell_C$ is well defined on $\spn C$. \cref{lem:noise} gives $\abs{\phi_L(t)}\le e^{-t^2\norm{\tilde\psi_W}^2/2}$ for every $W=\spn B$; if $\ell_C$ were unbounded then $\sup_W\norm{\tilde\psi_W}=\infty$ and $\phi_L(t)=0$ for $t\ne0$, contradicting continuity of $\phi_L$ at $0$. Thus $\ell_C$ is bounded, $\gamma=\ell_C$ is Lipschitz on $C$, and for $C=V$ a subspace $\gamma|_V$ is bounded and linear.
(2) For finite-dimensional spans this is \cref{lem:noise}. In general $\norm{\tilde\psi_C}^2=\sup_W\norm{\tilde\psi_W}^2$ over finitely generated subcones; for each $W$, $\phi_L(t)e^{\norm{\tilde\psi_W}^2t^2/2}$ is a characteristic function; taking $W_k$ with $\norm{\tilde\psi_{W_k}}\uparrow\norm{\tilde\psi_C}$, the pointwise limit $\phi_L(t)e^{\norm{\tilde\psi_C}^2t^2/2}$ is continuous at $0$, hence by L\'evy's continuity theorem the characteristic function of a law $M_C$.
\end{proof}

\begin{proof}[Proof of \cref{thm:core}]
(1) For $a,c\in\R$: $av\in\Nodd$, so $\gamma(av+cw)=\gamma(av)+c\gamma(w)=a\gamma(v)+c\gamma(w)$ by \cref{def:core} and \cref{lem:odd}(1); linearity on $\spn(v,w)$ forces $\spn(v,w)\subseteq\Nodd$ (\cref{lem:odd}(2)).
(2) Let $w_1,w_2\in\Hk$ and $v\in\Nodd$. By (1), $v+cw_1\in\Nodd$, so $\gamma(v+c(w_1+w_2))=\gamma(v+cw_1)+c\gamma(w_2)=\gamma(v)+c\gamma(w_1)+c\gamma(w_2)$; with $v=w_1$, $c=1$ this gives $\gamma(w_1+w_2)=\gamma(w_1)+\gamma(w_2)$, hence $\gamma(v+c(w_1+w_2))=\gamma(v)+c\gamma(w_1+w_2)$, and $w_1+w_2\in\Nodd$ by (1). Scalars: $\gamma(v+c'(cw))=\gamma(v)+c'c\gamma(w)=\gamma(v)+c'\gamma(cw)$ by \cref{lem:odd}(1). Closedness follows from continuity of $\gamma$ and closedness of $\Nodd$.
(3) For $V\in\Lfam$, $v_1,v_2\in V\subseteq\Nodd$, $w_1,w_2\in\Hk$: $\gamma(v_1+w_1)=\gamma(v_1)+\gamma(w_1)$, and $\gamma((v_1+w_1)+(v_2+w_2))=\gamma(v_1+v_2)+\gamma(w_1+w_2)=\gamma(v_1+w_1)+\gamma(v_2+w_2)$; real homogeneity likewise. If $V$ is maximal this gives $\Hk\subseteq V$. Conversely, if $w$ lies in every maximal linearity subspace then for any $v\in\Nodd$ the line $\R v$ lies in some maximal $V\ni w$, so $\gamma$ is linear on $\spn(v,w)$ and $w\in\Nodd$; thus $w\in\Hk$.
(4) Closure of $\Afam$ under sums and scalars is checked as in (2) with all $h\in\T$; $h=-w$ gives $\gamma(-w)=-\gamma(w)$, so $\Afam\subseteq\Nodd$, and then $\Afam\subseteq\Hk$; if $\Nodd=\T$ the two definitions coincide.
(5) If $\Hk=\T$ then $\gamma$ is linear by (2); the converse is trivial.
\end{proof}

\begin{proof}[Proof of \cref{prop:bounds}]
(1) For $h\in V$, $\ip{\tilde\psi_{V'}}h=\gamma(h)=\ip{\tilde\psi_V}h$. (2) Lines in $\Nodd$ belong to $\Lfam$, giving $\ge$; every $V\in\Lfam$ lies in $\Nodd$, giving $\le$. (3) \cref{thm:core}(3) and (1).
\end{proof}

\begin{proof}[Proof of \cref{cor:supremum}]
For every $C\in\Rfam^{\rm cone}(T_n)$, \cref{thm:main}(2) gives $\abs{\phi_L(t)}\le e^{-V_Ct^2/2}$; with $t_0\ne0$ such that $\phi_L(t_0)\ne0$, $V_C\le-2\log\abs{\phi_L(t_0)}/t_0^2$, so $v(T_n)<\infty$. The L\'evy argument of \cref{thm:main}(2) along $C_k$ with $V_{C_k}\uparrow v(T_n)$ gives $M$; $\sup_{V\in\Rfam}V_V\le\Vstar$ by \cref{prop:bounds}(2) and \cref{thm:main}(1).
\end{proof}

\begin{proof}[Proof of \cref{thm:kink}]
(1) For $h_0\in\overline\T$, $sh_0\mapsto s\gamma(h_0)$ is dominated by $\gamma$ on $\R h_0$ (for $s<0$ use $\gamma(h_0)+\gamma(-h_0)\ge0$), extends by Hahn--Banach to a linear $\ell\le\gamma$, bounded by $C=\sup_{\norm u\le1}\abs{\gamma(u)}$, hence $\ell=\ip\lambda\cdot$ with $\lambda\in\LamP$ and $\ip\lambda{h_0}=\gamma(h_0)$. (2) By (1), $\gamma(-h)=-\min_{\LamP}\ip\lambda h$, so $h\in\Nodd$ iff $\ip{\lambda-\lambda'}h=0$ for all $\lambda,\lambda'\in\LamP$, i.e.\ $\Nodd=\FP^\perp\cap\T$, a linear subspace on which $\gamma$ is linear; every linearity subspace lies in $\Nodd$, so $\Nodd$ is the unique maximal one and $\Hk=\Nodd$ by \cref{thm:core}(3). For $w\in\Nodd$: $\gamma(h+w)\le\gamma(h)+\gamma(w)$ and $\gamma(h)\le\gamma(h+w)+\gamma(-w)$ give $w\in\Afam$. (3) On $\Nodd$, $\lambda\mapsto\ip\lambda h$ is constant on $\LamP$ and equals $\gamma(h)$, so $\Pi_{\Hk}\lambda=\psik$; under \assT, $\FP\subseteq\T$ is closed, $\T=(\FP^\perp\cap\T)\oplus\FP$, and $\Pi_{\Hk}\lambda=\lambda-\Pi_{\FP}\lambda$ is the minimum-norm point of $\lambda+\FP=\claff\LamP$. $\Vstar=\Vk$ since $\Lfam_{\max}=\{\Hk\}$. (4) $\norm{\Pi\lambda}\le\norm\lambda$ with equality iff $\lambda\in\Hk$. (5) $\norm{\sum_aw_a\lambda_a}^2=w^\top\Sigma w$ and $\claff\LamP=\{\sum_aw_a\lambda_a:\one^\top w=1\}$. If $\one\notin\range(\Sigma)=\ker(\Sigma)^\perp$, some $v\in\ker\Sigma$ has $\one^\top v\ne0$ and $w=v/\one^\top v$ gives value $0$. If $\one\in\range(\Sigma)$, the KKT conditions $\Sigma w=\nu\one$, $\one^\top w=1$ give $w=\nu\Sigma^+\one+k$, $k\in\ker\Sigma$, $\nu=1/\one^\top\Sigma^+\one>0$, and value $\nu$.
\end{proof}

\begin{proof}[Proof of \cref{cor:core-convolution}]
(1) $\Hk\subseteq V$ for maximal $V$ by \cref{thm:core}(3). Regularity passes to subspaces, so \cref{thm:main}(2) applied to $\Hk$ gives the convolution, and \cref{cor:supremum} gives the inequality.
(2) \Cref{thm:kink}(2)--(3) gives the unique maximal subspace and its bound. Apply \cref{thm:main}(2) on that subspace. For a superlinear derivative, apply the geometry to $-\gamma$ with the sign convention in \cref{def:kink}.
\end{proof}

\begin{proof}[Proof of \cref{lem:reduction}]
$\gamma(h+w)=g(\ell h)=\gamma(h)+\gamma(w)$ for $w\in\ker\ell$, so $\ker\ell\subseteq\Afam(\gamma)\subseteq\Hk(\gamma)$; hence every linearity subspace can be enlarged by $\ker\ell$ (\cref{thm:core}(3)), maximal ones contain $\ker\ell$ and are of the form $\ell^{-1}(U)$. $\gamma$ is linear on $V$ iff $g$ is linear on $\ell(V)$. Since $\ell$ is onto and $\ker\ell=(\range\ell^*)^\perp$, $\ell^{-1}(U)=(\ell^*U^\perp)^\perp$, so $V^\perp\cap\T=\ell^*U^\perp$. On $V$, $\gamma(h)=\ip{\ell^*b}h$, so $\tilde\psi_V=\Pi_V\ell^*b$ and $V_V=\min_{u\in\ell^*U^\perp}\norm{\ell^*b-u}^2$.
\end{proof}

\begin{proof}[Proof of \cref{prop:vstar}]
$\Nodd(\gamma)=\T$ by oddness, so $\Vstar=\sup_{h\ne0}\gamma(h)^2/\norm h^2$. For $z=\ell(h)$,
\[
\min\{\norm h^2:\ell(h)=z\}=z^\top\Sigma^{-1}z
\]
(attained at $h=\ell^*\Sigma^{-1}z$), so $\Vstar=\max_i\sup_{z\in C_i}(b_i^\top z)^2/z^\top\Sigma^{-1}z$. With $z=\Sigma^{1/2}y$ and $u_i:=\Sigma^{1/2}b_i$ this is $\max_i\sup_{y\in\mathcal C_i}(u_i^\top y)^2/\abs y^2$. Moreau's decomposition $u=\Pi_{\mathcal C}u+\Pi_{\mathcal C^\circ}u$ with orthogonal summands gives $u^\top y\le(\Pi_{\mathcal C}u)^\top y\le\norm{\Pi_{\mathcal C}u}\abs y$ for $y\in\mathcal C$, with equality at $y=\Pi_{\mathcal C}u$; so
\[
\sup_{y\in\mathcal C}(u^\top y)^2/\abs y^2=\max\{\norm{\Pi_{\mathcal C}u}^2,\norm{\Pi_{\mathcal C}(-u)}^2\}.
\] If the fan is not centrally symmetric, pass to the common refinement of the fan and its negative; on it oddness carries the piece $b_i$ on $C_i$ to the piece $b_i$ on $-C_i$, so the maximum over $i$ accounts for both signs. The remaining statements follow from \cref{prop:bounds} and $\norm{\Pi_{\mathcal C_i}u_i}\le\norm{u_i}$ with equality iff $u_i\in\mathcal C_i$.
\end{proof}

\begin{proof}[Proof of \cref{prop:limitexp}]
On a subspace $V$ the continuous linear functional $\gamma|_V$ is bounded, so $\tilde\psi_V$ exists; under $h$, $\Delta(\tilde\psi_V)\sim N(\ip{\tilde\psi_V}h,V_V)$ and $\ip{\tilde\psi_V}h=\gamma(h)$ for $h\in V$. The cone statement is the same construction with $\tilde\psi_C$; its converse is \cref{thm:main}(1).
\end{proof}

\begin{proof}[Proof of \cref{thm:al}]
Under $P^n$, $(\sqrt n\mathbb P_n\phi,\sqrt n\mathbb P_nh)$ is asymptotically bivariate normal with covariance $\ip\phi h$, so by Le Cam's third lemma and contiguity $\sqrt n\{T_n-\Psi(P)\}\wto N(\ip\phi h,\norm\phi^2)$ under $\Pn h$; subtracting $c_n(h)\to\gamma(h)$ gives the displayed limit, which is free of $h$ on $C$ iff $\ip\phi h=\gamma(h)$ on $C$. The rest is \cref{prop:bounds} with $\gamma|_V=\ip\phi\cdot$.
\end{proof}

\begin{proof}[Proof of \cref{cor:alkink}]
$\Hk\subseteq \mathcal E(\phi)$ iff $\phi-\psik\perp\Hk$ iff $\phi-\psik\in\Hk^\perp=\FP\oplus\T^\perp$ (orthogonal complement in $L_2^0(P)$ of $\FP^\perp\cap\T$; the sum of the orthogonal closed subspaces $\FP\subseteq\T$ and $\T^\perp$ is closed), and $\psik+\FP=\claff\LamP$.
\end{proof}

\begin{proof}[Proof of \cref{prop:suff}]
$\hat T_V$ is a fixed linear combination of jointly AL estimators, hence AL with influence function $\ell^*(b-c^\star)=\tilde\psi_V$ (\cref{lem:reduction}). $c^\star\perp\one$ (as $U\ni\one$) gives $c^{\star\top}\mu(P)=0$, and $b^\top\mu(P)=g(\mu(P))=\Psi(P)$ because $\mu(P)\in U$. With an estimated $\hat c\in U^\perp$, $\hat c\to c^\star$: since $\hat c\perp\one$ exactly, $\hat c^\top\mu(P)=0$ exactly, and $(b-\hat c)^\top\hat\mu-(b-c^\star)^\top\hat\mu=-(\hat c-c^\star)^\top(\hat\mu-\mu(P))=o_P(1)\cdot O_P(n^{-1/2})$, so the influence function is unchanged. Regularity and efficiency: \cref{thm:al,thm:efficiency}, since $\ip{\tilde\psi_V}h=b^\top\ell(h)=\gamma(h)$ for $\ell(h)\in U$.
\end{proof}

\begin{proof}[Proof of \cref{thm:efficiency}]
This is \citet[Lemma 25.23]{vdV1998} applied with the linear tangent set $V$ and efficient influence function $\tilde\psi_V$; alternatively, by \cref{lem:noise} the residual $S-\Delta(\tilde\psi_V)$ is zero almost surely exactly when $L=N(0,V_V)$, which is the asymptotic linearity statement.
\end{proof}

\begin{proof}[Proof of \cref{prop:tradeoff}]
A $V$-efficient $T_n$ has $L=N(0,V_V)$; regularity along $V'$ would give $L=N(0,V_{V'})*M'$, of variance $\ge V_{V'}>V_V$. The second claim is \cref{thm:main}(2) and \cref{cor:supremum}; for sublinear or superlinear $\gamma$, every $V'\in\Lfam$ lies in $\Hk$ and $V_{V'}\le\Vk$.
\end{proof}

\begin{proof}[Proof of \cref{cor:onesided}]
A vertex of a polytope is exposed, so some $h$ has $\ell_a(h)>\ell_b(h)$ for all $b$ with $\lambda_b\ne\lambda_a$; the strict cone is a finite intersection of open half-spaces, hence open, and a nonempty open cone spans $\T$. \cref{thm:main}(1)--(2) give the bound with $\tilde\psi_{C_a}=\lambda_a$, the unique bounded linear functional agreeing with $\gamma=\ell_a$ on a spanning set. Regularity of an AL estimator with influence function $\lambda_a$ along $C_a$: \cref{thm:al} with $\mathcal E(\lambda_a)\supseteq C_a$. The core-efficient estimator has drift $\ip{\psik-\lambda_a}h$ on $C_a$ (\cref{thm:al}), a linear functional that vanishes on the spanning set $C_a$ iff $\psik=\lambda_a$. The plug-in's limit $\max_b(Z_b+\ell_b(h))-\ell_a(h)$ depends on $h\in C_a$ unless $\FP=\{0\}$: along an exposing direction $th_0$ it converges to $Z_a$ as $t\to\infty$, while at $h=0$ its mean exceeds $\E Z_a$ by $\E(\max_b(Z_b-Z_a))_+>0$ whenever some $\lambda_b\ne\lambda_a$.
\end{proof}

\begin{proof}[Proof of \cref{prop:plugin}]
The identity $\Sigma w^*=\one/(\one^\top\Sigma^{-1}\one)$ gives
\[
\Cov\{W,w^{*\top}Z\}
=(I-\one w^{*\top})\Sigma w^*=0.
\]
Joint Gaussianity therefore gives independence. Since $Z=W+(w^{*\top}Z)\one$, translation equivariance yields $g(Z)=g(W)+\theta w^{*\top}Z$.

For $u\in F^\perp$, write $\ell(u)=c_u\one$. Then
$\gamma(h+u)=\gamma(h)+\theta c_u$, so $F^\perp\subseteq\Afam(\gamma)$. With $\psi_0:=\sum_aw_a^*\lambda_a$, the identity for $\Sigma w^*$ shows that $\psi_0\in F^\perp$ and $\ip{\psi_0}u=c_u$ for every $u\in F^\perp$. Thus $\theta\psi_0$ is its representer and has squared norm $\theta^2/(\one^\top\Sigma^{-1}\one)$.

Under any DQM local alternative with score $h$, joint asymptotic linearity, contiguity and Le Cam's third lemma give
\[
\sqrt n\{\hat\mu-\mu(P)\}\wto Z+\ell(h).
\]
The Hadamard directional delta method at $\mu(P)$ and the target expansion $\sqrt n\{\Psi(P_{n,h})-\Psi(P)\}\to\gamma(h)=g(\ell(h))$ then give
\[
\sqrt n\{T_n-\Psi(P_{n,h})\}
\wto g(Z+\ell(h))-g(\ell(h)).
\]
Substituting $Z=W+(w^{*\top}Z)\one$ and $\ell(h)=c\one+z_\perp$ yields the stated local law. For $h\in F^\perp$, $z_\perp=0$, so this law is independent of the path and score.

Continuity and positive homogeneity imply that $g$ has at most linear growth, so the residual has a finite mean. If $g=\max$ and two gradients differ, choose $a,b$ with $\Var(W_a-W_b)>0$. Then
\[
\E\max_jW_j\ge\E\max(W_a,W_b)
=\tfrac12\E\abs{W_a-W_b}>0.
\]
The minimum follows by symmetry. For odd $g$, $W\overset d=-W$ implies $\E g(W)=0$.
\end{proof}

\begin{proof}[Proof of \cref{prop:fixed}]
(a) \cref{thm:al,prop:split}. (b) $Z_{\hat a}=w^{*\top}Z+W_{\hat a}$, $\E W_{\hat a}=\sum_aP(\hat a=a)\E W_a=0$. (c) is (a) with $w=\one/K$.
\end{proof}

\begin{proof}[Proof of \cref{prop:split}]
Additivity along $\Hk=\Afam$; the isonormal process on an orthogonal direct sum is an independent product; the core of $\gamma|_{\Hk^\perp}$ is $\Hk\cap\Hk^\perp=\{0\}$.
\end{proof}

\begin{proof}[Proof of \cref{prop:blind}]
\cref{thm:efficiency} gives asymptotic linearity with influence function $\psik$; \cref{thm:al} gives drift $\ip\psik h-\gamma(h)=-\gamma(h_\perp)$ by \cref{prop:split} and $\psik\perp h_\perp$. Nonpositivity for all $h_\perp$ is $\gamma\ge0$ on $\Hk^\perp$, i.e.\ $\gamma\ge\ip\psik\cdot$ on $\T$; for sublinear $\gamma$ this is the support-function characterization of $\psik\in\LamP$; for odd $\gamma$, $\gamma\ge0$ on a subspace forces $\gamma\equiv0$ there.
\end{proof}

\begin{proof}[Proof of \cref{thm:lam-sq}]
By \cref{lem:global-minimax}, each minimax value equals the supremum of its values on finite nonempty parameter subsets. Upper bound: $T=\Delta_\kappa(\psik)+T_\perp(Z_\perp)$ has risk $\Vk+\E_{h_\perp}(T_\perp-\gamma(h_\perp))^2$. Lower bound: a finite $I$ lies in the product of its projections; for priors $\pi_\kappa\otimes\pi_\perp$ the posterior factorizes, the Bayes estimator is the posterior mean of $\tau$, and its risk is $\E\Var(\ip\psik{h_\kappa}\mid Z_\kappa)+\E\Var(\gamma(h_\perp)\mid Z_\perp)=:B_\kappa(\pi_\kappa)+B_\perp(\pi_\perp)$. (i) $\sup_{I_\kappa,\pi_\kappa}B_\kappa=\Vk$ (if $s=0$ there is nothing to prove): on the line $\R e$, $e=\psik/s$, $\Delta_\kappa(e)\sim N(\vartheta,1)$ is sufficient and $B_\kappa=s^2b(\pi)$ with $b(\pi)$ the Bayes risk of the normal mean. For the prior with density $p_M(\vartheta)=M^{-1}\cos^2(\pi\vartheta/2M)$ on $[-M,M]$, whose Fisher information is $\int(p_M')^2/p_M=\pi^2/M^2$, the van Trees inequality \citep{GillLevit1995} gives $b(p_M)\ge(1+\pi^2/M^2)^{-1}$; to pass to finitely supported priors, quantize $p_M$ within $[-M,M]$, clip actions to $[-M,M]$ (which does not increase squared error against targets in $[-M,M]$) and use the bounded continuous loss $\min(u^2,4M^2)$, which agrees with squared error for all clipped actions and allowed targets, so that \cref{lem:finiteprior} applies; then let $M\to\infty$. (ii) For finite $I_\perp$, $\sup_{\pi_\perp}\inf_{T_\perp}\text{Bayes}=\inf_{T_\perp}\max_{I_\perp}$ by \cref{lem:minimax} (clip actions to $\conv\gamma(I_\perp)$ so that risks are bounded).
\end{proof}

\begin{proof}[Proof of \cref{thm:lam-conv}]
Gaussian symmetry and unimodality show that $\ell_\kappa$ is bowl-shaped; Fatou's lemma gives lower semicontinuity. By \cref{lem:global-minimax}, both minimax values admit finite-subexperiment representations. Upper bound: for $T=\Delta_\kappa(\psik)+U(Z_\perp)$, $\E_h\ell(T-\tau)=\E_{h_\perp}\ell_\kappa(U-\gamma(h_\perp))$. If $s=\norm\psik=0$ then $\ell_\kappa=\ell$, $\tau(h)=\gamma(h_\perp)$, and the lower bound is immediate (restrict $I$ to $\{0\}\times I_\perp$); assume $s>0$.

Lower bound for bounded continuous $\ell$: fix $T$, finite $I_\perp$ and a prior $\pi_\perp$; restrict $h_\kappa=\vartheta e$, $e:=\psik/s$; $Y:=Z_\kappa(e)=\vartheta+\epsilon$ and $Z_\kappa^{\rm anc}:=Z_\kappa-Y\ip e\cdot$ (centred, $\vartheta$-free, independent of $Y$). Prior $\vartheta\sim N(0,\tau^2)$: $\vartheta\mid Y\sim N(\rho Y,\rho)$, $\rho=\tau^2/(1+\tau^2)$, and $\xi:=(\vartheta-\rho Y)/\sqrt\rho\sim N(0,1)$ is independent of $(Y,Z_\kappa^{\rm anc},h_\perp,Z_\perp)$. The Bayes risk of $T$ is $\E\ell(T''-\gamma(h_\perp)-\sqrt\rho s\xi)=\E\phi_\rho(T''-\gamma(h_\perp))$ with $T'':=T-\rho sY$ and $\phi_\rho(c):=\E\ell(c-\sqrt\rho s\xi)$; since $(Y,Z_\kappa^{\rm anc})$ is independent of $(h_\perp,Z_\perp)$, $T''$ is a randomized estimator of $\gamma(h_\perp)$ from $Z_\perp$, so
\[
\sup_{h\in\Hk\times I_\perp}\E_h\ell(T-\tau)\ \ge\ \inf_U\E_{\pi_\perp}\phi_\rho(U-\gamma)\qquad(\rho<1).
\]
Now $\sup_c\abs{\phi_\rho(c)-\ell_\kappa(c)}\le\norm\ell_\infty\,\norm{N(0,\rho s^2)-N(0,s^2)}_{L_1}=:\norm\ell_\infty\epsilon(\rho)$ with $\epsilon(\rho)\to0$ as $\rho\uparrow1$; hence $\inf_U\E_{\pi_\perp}\phi_\rho(U-\gamma)\ge\inf_U\E_{\pi_\perp}\ell_\kappa(U-\gamma)-\norm\ell_\infty\epsilon(\rho)$ uniformly over rules, and letting $\rho\uparrow1$, $\sup_h\E_h\ell(T-\tau)\ge\inf_U\E_{\pi_\perp}\ell_\kappa(U-\gamma)$. (No monotonicity in $\rho$ is used, and none holds: for $\ell(x)=1-e^{-x^2}$, $\phi_\rho(2)$ decreases in $\rho$.) \cref{lem:minimax} turns $\sup_{\pi_\perp}\inf_U$ into $\inf_U\max_{I_\perp}$; the supremum over finite $I_\perp\subseteq B$ gives $\sup_{h\in\Hk\times B}\E_h\ell(T-\tau)\ge\mathcal N_\ell(B)$. The finite-subset version in the $e$-direction is \cref{lem:finiteprior}.

For a general lower semicontinuous $\ell$, use the finite-subexperiment representations just established and choose bounded continuous bowl-shaped $\ell_j\uparrow\ell$. For each finite $I_\perp$ clip actions to the compact interval $J:=\conv\gamma(I_\perp)$ (this does not increase any bowl-shaped loss); for each observation the posterior objective $a\mapsto\E[\ell_{j,\kappa}(a-\gamma(h_\perp))\mid Z_\perp]$ is continuous on $J$ and increases in $j$ to the lower semicontinuous limit objective, so its minimum over $J$ converges to the minimum of the limit (if $a_j$ minimize, a subsequence converges to some $a^*$, and $\lim_j\min\ge\lim_jf_k(a_j)=f_k(a^*)$ for each $k$, hence $\ge f(a^*)\ge\min f$); integrating over the observation (monotone convergence) gives convergence of the Bayes risks, and \cref{lem:minimax} and the outer suprema commute with the monotone limit. Hence $\mathcal N_{\ell_j}(B)\uparrow\mathcal N_\ell(B)$. For the left side, $\mathcal M_\ell(B)\ge\mathcal M_{\ell_j}(B)$ for every $j$, so $\mathcal M_\ell(B)\ge\sup_j\mathcal M_{\ell_j}(B)=\sup_j\mathcal N_{\ell_j}(B)=\mathcal N_\ell(B)$, and the upper bound gives equality. (Risks may be $+\infty$; on a finite set, if no rule has finite risk at every parameter then a full-support prior already has infinite Bayes risk, and otherwise \cref{lem:minimax} applies to the finite-risk rules with boundary priors approximated by full-support priors.)
\end{proof}

\begin{proof}[Proof of \cref{prop:hedging}]
By homogeneity the range of $\gamma$ on $B_r$ has diameter $rD$; the constant estimator at its midpoint has risk $\le(rD/2)^2$. For $h_1,h_2\in B_r$ with $\gamma(h_1)-\gamma(h_2)=\Delta$ and densities $p_1,p_2$ of the two observation laws, the Bayes risk of the equal-weight two-point prior is $\frac{\Delta^2}{2}\int\frac{p_1p_2}{p_1+p_2}\ge\frac{\Delta^2}{4}\int\min(p_1,p_2)=\frac{\Delta^2}{4}(1-\TV)$, and for Gaussian shifts $\TV=2\Phi(\norm{h_1-h_2}/2)-1\le\norm{h_1-h_2}/\sqrt{2\pi}\le2r/\sqrt{2\pi}$; taking $h_1,h_2$ approaching the extremes of $\gamma$ on $B_r$ gives $\Delta\uparrow rD$ (no attainment of the extremes is needed).
\end{proof}

\begin{proof}[Proof of \cref{prop:hedging-policy}]
Here $\Hk^\perp\cap\T=F_P$, and $B_1$ is its closed unit ball. Put $u_a=\lambda_a-\psik=\Pi_{F_P}\lambda_a$ and $U=\conv\{u_a:a\in\mathcal A^*\}=\LamP-\psik$. Orthogonality gives $\norm{u_a}^2=\Sigma_{aa}-\Vk$. Hence
\[
\sup_{h\in B_1}\gamma(h)
=\sup_{h\in B_1}\max_a\ip{u_a}h
=\max_a\norm{u_a}.
\]
Let $u_0$ be the minimum-norm point of $U$ and $q_0=\norm{u_0}$. For every $h\in B_1$, $\gamma(h)\ge\ip{u_0}h\ge-q_0$. If $q_0>0$, the projection inequality $\ip{u-u_0}{u_0}\ge0$ for $u\in U$ gives $\gamma(-u_0/q_0)=-q_0$. If $q_0=0$, the ball point $h=0$ attains the same lower bound. Therefore $\inf_{B_1}\gamma=-\dist(0,U)$, which proves the formula for $D$. The distance vanishes exactly when $\psik\in\LamP$; the inverse criterion under nonsingularity is \cref{rem:simplex}.
\end{proof}

\subsection{Finite-experiment and minimax tools}
\begin{lemma}[finite-support priors in the $e$-direction]\label{lem:finiteprior}
For bounded uniformly continuous bowl-shaped $\ell$ (every bounded continuous bowl-shaped loss is uniformly continuous on $\R$), fixed $I_\perp,\pi_\perp$, and priors $\pi_m\Rightarrow\pi$ on $\vartheta$, the Bayes risks of the problem ``observe $Y\sim N(\vartheta,1)$ and $Z_\perp$; estimate $s\vartheta+\gamma(h_\perp)$'' converge: $r(\pi_m)\to r(\pi)$. In particular the Gaussian prior on $\vartheta$ in the proof of \cref{thm:lam-conv} may be replaced by finitely supported priors at an arbitrarily small cost, uniformly over decision rules.
\end{lemma}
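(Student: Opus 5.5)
The plan is to show the stronger statement that, for every fixed decision rule $T$, the Bayes risk $r(\pi,T)$ is a continuous function of the prior $\pi$ on $\vartheta$ under weak convergence, and continuous uniformly in $T$. Convergence of the infimum over rules then follows directly. First fix the setup. The observation is $(Y,Z_\perp)$, where $Y\sim N(\vartheta,1)$ is independent of $Z_\perp$, whose law depends only on $h_\perp$. The prior is $\pi\otimes\pi_\perp$ with $\pi_\perp$ supported on the finite set $I_\perp$. The risk of a randomized rule $T$ at the parameter $(\vartheta,h_\perp)$ is
\[
R_T(\vartheta,h_\perp)=\E_{\vartheta,h_\perp}\ell\bigl(T-s\vartheta-\gamma(h_\perp)\bigr).
\]
Because $I_\perp$ is finite, it suffices to handle each $h_\perp$ separately. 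The Bayes risk is $r(\pi,T)=\sum_{h_\perp}\pi_\perp(h_\perp)\int R_T(\vartheta,h_\perp)\,\pi(d\vartheta)$.

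The key step is to show that the family $\{\vartheta\mapsto R_T(\vartheta,h_\perp)\}$, indexed by rules $T$ and $h_\perp\in I_\perp$, is uniformly bounded and uniformly equicontinuous on $\R$. Boundedness by $\norm\ell_\infty$ is immediate. For equicontinuity, compare $\vartheta$ with $\vartheta'$ and split the change into two parts. The first is the change in the observation law, namely the change from $N(\vartheta,1)$ to $N(\vartheta',1)$, with the law of $Z_\perp$ and the randomization unchanged. This contributes at most $2\norm\ell_\infty\TV(N(\vartheta,1),N(\vartheta',1))\le2\norm\ell_\infty\abs{\vartheta-\vartheta'}/\sqrt{2\pi}$, whatever $T$ is. The second is the change in the target inside $\ell$. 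This contributes at most $\omega_\ell(s\abs{\vartheta-\vartheta'})$, where $\omega_\ell$ is the modulus of uniform continuity of $\ell$, again for every $T$. Adding the two gives a modulus that is free of $T$ and $h_\perp$. The parenthetical claim in the statement, that a bounded continuous bowl-shaped loss is uniformly continuous, also needs a short argument. Such an $\ell$ is monotone on $[0,\infty)$ and bounded, so it has a limit at infinity. Given $\epsilon>0$, it therefore varies by less than $\epsilon$ beyond some $R$, and it is uniformly continuous on $[-R-1,R+1]$. Combining these gives uniform continuity on $\R$.

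The main obstacle is turning weak convergence $\pi_m\Rightarrow\pi$ into convergence of integrals that is uniform over an infinite family of functions, since weak convergence alone does not give uniformity. The plan is as follows. Tightness of $\{\pi_m\}$ means that, for a given $\epsilon$, I can choose a compact interval $K$ with $\pi_m(K^c)<\epsilon$ for every $m$. By Arzel\`a--Ascoli, the equicontinuous and bounded family restricted to $K$ is totally bounded in the sup norm, so finitely many members approximate every other member within $\epsilon$ on $K$. The same approximation can be arranged on a slightly larger neighbourhood of $K$ as well; alternatively, one can use the bounded-Lipschitz-type metric for equicontinuous classes, in the spirit of the Ranga Rao theorem. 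For each of these finitely many functions, weak convergence gives convergence of its integral. Putting the pieces together shows that
\[
\sup_T\abs{r(\pi_m,T)-r(\pi,T)}\le C\epsilon
\]
for all large $m$. Hence $\inf_Tr(\pi_m,T)\to\inf_Tr(\pi,T)$, which is the claim that $r(\pi_m)\to r(\pi)$.

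For the ``in particular'', take $\pi$ to be the Gaussian prior $N(0,\tau^2)$ and let $\pi_m$ be quantizations of it supported on finite grids. These satisfy $\pi_m\Rightarrow\pi$. The uniform-in-$T$ bound just proved shows that replacing $\pi$ by $\pi_m$ changes every rule's Bayes risk by $o(1)$, uniformly over decision rules. This is exactly what the lower-bound argument in the proof of \cref{thm:lam-conv} needs in order to restrict to finite subsets of the $e$-direction.
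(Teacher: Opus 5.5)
Your proof is correct, and its central estimate is the same as the paper's. That estimate is the $T$-free modulus
$\abs{R_T(\vartheta,h_\perp)-R_T(\vartheta',h_\perp)}\le2\norm\ell_\infty\TV(N(\vartheta,1),N(\vartheta',1))+\omega_\ell(s\abs{\vartheta-\vartheta'})$.

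You differ from the paper in how you turn $\pi_m\Rightarrow\pi$ into convergence that is uniform in $T$. You treat this as the main obstacle and resolve it with tightness plus Arzel\`a--Ascoli, i.e.\ the principle that weak convergence is uniform over bounded equicontinuous classes. The paper instead couples $\vartheta_m\to\vartheta$ almost surely (Skorokhod). Then $\sup_T\abs{r(\pi_m,T)-r(\pi,T)}$ is bounded by the expectation of your modulus evaluated at $\abs{\vartheta_m-\vartheta}$, which tends to zero by dominated convergence. That bound never involves $T$, so uniformity comes for free and no compactness argument is needed.

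Your route stays on the original probability space and makes a general uniformity principle explicit. Its cost is the extra total-boundedness step. There, the ``slightly larger neighbourhood'' is unnecessary: the sup-norm approximation on $K$ suffices, together with $\pi_m(K^c)<\epsilon$ and, by the portmanteau theorem, $\pi(K^c)\le\epsilon$.

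You also prove the parenthetical claim that a bounded continuous bowl-shaped loss is uniformly continuous, which the paper asserts without proof.
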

\begin{proof}
Couple $\vartheta_m\to\vartheta$ almost surely (Skorokhod). For any rule $T$, the difference of its Bayes risks under $\pi_m$ and $\pi$ is at most $\norm\ell_\infty\E\norm{N(\vartheta_m,1)-N(\vartheta,1)}_{L_1}+\E\,\omega_\ell(s\abs{\vartheta_m-\vartheta})$, where $\omega_\ell$ is the modulus of continuity of $\ell$; both terms tend to $0$ by dominated convergence, uniformly in $T$. Taking infima over $T$ gives the claim.
\end{proof}

\begin{lemma}[clipping]\label{lem:clip}
On a finite parameter set $I$ with target $\tau$, $R:=\max_I\abs\tau$, and $M\ge(2R)^2$: $\inf_T\max_{h\in I}\E_h(T-\tau)^2=\inf_T\max_{h\in I}\E_h\min\{(T-\tau)^2,M\}$ (truncate $T$ to $[-R,R]$; this does not increase $\abs{T-\tau}$).
\end{lemma}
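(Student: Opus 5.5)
The plan is to prove the two inequalities separately, one for each direction of the equality. The parenthetical hint in the statement already indicates the key step.

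\emph{The direction $\ge$ between the two infima.} The left-hand infimum is over all rules $T$, and the right-hand one is over the same rules with the truncated loss $\min\{(T-\tau)^2,M\}$. Since $\min\{(T-\tau)^2,M\}\le(T-\tau)^2$ pointwise, each rule's maximal truncated risk is at most its maximal squared risk. Taking infima gives
\[
\inf_T\max_{h\in I}\E_h(T-\tau)^2\ \ge\ \inf_T\max_{h\in I}\E_h\min\{(T-\tau)^2,M\}.
\]

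\emph{The reverse direction.} Fix an arbitrary randomized rule $T$ and define its truncation $\bar T:=\max\{-R,\min\{T,R\}\}$. This is again a (randomized) rule in the same experiment, because it is a measurable function of $T$.

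Next I compare errors pointwise. For each $h\in I$ we have $\tau(h)\in[-R,R]$, and projection onto the interval $[-R,R]$ is $1$-Lipschitz and fixes $\tau(h)$. Therefore
\[
\abs{\bar T-\tau(h)}\le\abs{T-\tau(h)}.
\]
Both $\bar T$ and $\tau(h)$ lie in $[-R,R]$, so $\abs{\bar T-\tau(h)}\le2R$. Hence $(\bar T-\tau)^2\le(2R)^2\le M$. Combining these two facts gives, pointwise,
\[
(\bar T-\tau)^2=\min\{(\bar T-\tau)^2,M\}\le\min\{(T-\tau)^2,M\}.
\]

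Taking expectations under each $h$ and then the maximum over the finite set $I$ yields
\[
\max_{h\in I}\E_h(\bar T-\tau)^2\le\max_{h\in I}\E_h\min\{(T-\tau)^2,M\}.
\]
The left side bounds the squared-error minimax value from above. Taking the infimum over $T$ therefore gives the inequality $\le$.

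The two inequalities together give the stated equality. There is no real obstacle in this argument. The only point needing care is that the truncated rule is admissible in the same class of randomized rules, and that the inequalities hold even when some risks are infinite. Both are immediate, since every inequality above is pointwise and takes values in $[0,\infty]$.
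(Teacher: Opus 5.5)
Your proof is correct and follows the paper's own argument, which is just the parenthetical hint. Truncating to $[-R,R]$ does not increase the error against targets in $[-R,R]$ and caps the squared error at $(2R)^2\le M$, so the truncated loss equals the squared loss for clipped rules; the reverse inequality is the trivial bound $\min\{x,M\}\le x$.
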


\begin{lemma}[finite-experiment transfer for a nonlinear target]\label{lem:transfer}
Let $I\subseteq\T$ be finite and nonempty, $\ell:\R\to[0,+\infty]$ nonnegative, lower semicontinuous and bowl-shaped, and $T_n$ any estimator sequence. Then
\[
\liminf_{n}\ \max_{h\in I}\E_{\Pn h}\ell\big(\sqrt n\{T_n-\Psi(P_{n,h})\}\big)\ \ge\ \inf_T\max_{h\in I}\E_h\,\ell\big(T-\gamma(h)\big),
\]
the infimum over randomized statistics $T$ of the Gaussian-shift limit experiment. Consequently, \cref{thm:lam-sq,thm:lam-conv} give the sequence-level lower bound
\[
\sup_I\liminf_n\max_{h\in I}\E_{n,h}\ell(\cdot)
\ge\mathcal M_\ell(B)=\mathcal N_\ell(B)
\]
where the outer supremum is over finite nonempty $I\subseteq\Hk\times B$. Lower semicontinuity cannot be dropped (\cref{app:examples}(d)).
\end{lemma}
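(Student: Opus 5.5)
The plan is the standard finite-experiment route of asymptotic minimax theory (as in \citealp[Theorem 3.11.5 / 8.11]{vdV1998}), with the nonlinear target entering only through the deterministic centering constants. Fix finite $I=\{h_1,\dots,h_m\}\subseteq\T$ and, for each $j$, the given DQM paths. Put $S_n=\sqrt n\{T_n-\Psi(P)\}$ and $c_n(h)=\sqrt n\{\Psi(P_{n,h})-\Psi(P)\}$, so that $c_n(h)\to\gamma(h)$ by \cref{def:dpd}. The loss evaluated at the estimator is $\ell(S_n-c_n(h))$. Pass to a subsequence attaining the $\liminf$ of the left side; there is nothing to prove if it equals $+\infty$.

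First I would compactify. Map $S_n$ into $\overline\R=[-\infty,\infty]$ so that tightness is automatic. By Prohorov's theorem, extract a further subsequence along which $(S_n,\Delta_n)$ converges jointly under $P^n$, where $\Delta_n=n^{-1/2}\sum_i(h_1,\dots,h_m)(X_i)$ and the limit is $(S,\Delta)$ with $\Delta\sim N(0,G)$. By LAN \eqref{eq:LAN} and Le Cam's third lemma, $S_n\wto S^{(h)}$ under $\Pn h$, with $\Law(S^{(h)})(B)=\E[\ind_B(S)e^{\Delta(h)-\norm h^2/2}]$. The family $\{\Law(S^{(h)}):h\in I\}$ is then the law of a randomized statistic in the finite Gaussian-shift experiment indexed by $I$. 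This follows from the representation theorem, which applies because the experiment is dominated: take $S$ together with the conditional law of $S$ given $\Delta$, realized through an external randomization. Mass at $\pm\infty$ needs handling. I would assign it loss $\ell(\pm\infty)=\sup\ell$, or simply restrict to the case where the relevant mass vanishes. If positive mass sits at $\pm\infty$ and $\ell$ is bounded, the inequality below still holds after replacing an infinite action by a large finite one; by bowl shape this does not increase the loss against the bounded set of targets $\gamma(I)$. This is the one point needing care.

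Next comes the portmanteau step. Since $S_n-c_n(h)\wto S^{(h)}-\gamma(h)$ under $\Pn h$ and $\ell$ is lower semicontinuous and nonnegative, the portmanteau lemma for l.s.c.\ functions bounded below gives
\[
\liminf_n\E_{\Pn h}\ell(S_n-c_n(h))\ge\E\,\ell(S^{(h)}-\gamma(h))
\]
for each $h\in I$. Taking the maximum over the finite set $I$ and then the infimum over randomized statistics $T$ in the limit experiment yields the first display. Lower semicontinuity is used exactly here, which is consistent with the remark that it cannot be dropped.

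Finally, the consequence. For finite $I\subseteq\Hk\times B$ the limit experiment restricted to $I$ is a subexperiment of the product experiment in \cref{prop:split}, and its target is $\gamma(h)=\tau(h)$. Taking the supremum over finite $I$ and applying \cref{lem:global-minimax} identifies $\sup_I\inf_T\max_I$ with $\mathcal M_\ell(B)$. \cref{thm:lam-conv} then gives $\mathcal M_\ell(B)=\mathcal N_\ell(B)$. I expect the main obstacle to be the representation of the weak limits by a single randomized statistic when $S$ may escape to infinity along the subsequence. My first approach would be to clip: replace $T_n$ by its projection onto $\Psi(P)+n^{-1/2}[-R,R]$ for $R>\max_I|\gamma|+1$. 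Because $\ell$ is bowl-shaped and the targets $c_n(h)$ eventually lie inside $[-R,R]$, this clipping does not increase any risk, and it makes $S_n$ tight in $\R$.
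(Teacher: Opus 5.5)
Your proposal is correct and follows essentially the same route as the paper. The paper clips $S_n$ to $[-R,R]$ with $R=\max_{h\in I}\abs{\gamma(h)}+1$, which cannot increase a bowl-shaped loss once $c_n(h)\in[-R,R]$. It then extracts a joint limit with the log-likelihood ratios, applies Le Cam's third lemma and the lower-semicontinuous portmanteau inequality, and realizes the limit laws as a randomized rule by conditioning on $(\Delta(h))_{h\in I}$; the consequence follows from \cref{lem:global-minimax}. The one adjustment is to clip at the outset, as the paper does, which makes the compactification to $[-\infty,\infty]$ and the discussion of mass at $\pm\infty$ unnecessary.
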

\begin{proof}
Pass to a subsequence along which the $\liminf$ of the maximal risk is attained as a limit. Let $R:=\max_{h\in I}\abs{\gamma(h)}+1$; since $c_n(h)\to\gamma(h)$ for each of the finitely many $h\in I$, $c_n(h)\in[-R,R]$ eventually. Put $\bar S_n:=\max(-R,\min(\sqrt n\{T_n-\Psi(P)\},R))$; clipping to an interval containing $c_n(h)$ does not increase the bowl-shaped loss of $\bar S_n-c_n(h)$ relative to $S_n-c_n(h)$, so it suffices to bound the risks of $\bar S_n-c_n(h)$. Let $\Lambda_n:=(\Lambda_n(h))_{h\in I}$ be the vector of log-likelihood ratios of $\Pn h$ to $P^n$ (\cref{sec:setting}). By \eqref{eq:LAN} and tightness, along a further subsequence $(\bar S_{n_k},\Lambda_{n_k})\wto(\bar S,\Lambda)$ under $P^{n_k}$ with $\Lambda(h)=\Delta(h)-\tfrac12\norm h^2$ for the isonormal $\Delta$. Le Cam's third lemma gives $\bar S_{n_k}-c_{n_k}(h)\wto\bar S^{(h)}-\gamma(h)$ under $P^{n_k}_{n_k,h}$, where $\bar S^{(h)}$ has law $B\mapsto\E[\ind_B(\bar S)e^{\Lambda(h)}]$. Since $\ell$ is nonnegative and lower semicontinuous, the Portmanteau theorem, applied directly or first to $\ell\wedge m$ and then with $m\uparrow\infty$, yields
\[
\liminf_k\E_{n_k,h}\ell\big(\bar S_{n_k}-c_{n_k}(h)\big)\ \ge\ \E\big[\ell(\bar S-\gamma(h))e^{\Lambda(h)}\big]\qquad(h\in I)
\]
(for bounded continuous $\ell$ this is an equality; for merely lower semicontinuous $\ell$ it need not be). Conditioning $\bar S$ on the finitely many coordinates $(\Delta(h))_{h\in I}$ defines a randomized rule $T$ of the limit experiment whose risk at $h$ is $\E_h\ell(T-\gamma(h))=\E[\ell(\bar S-\gamma(h))e^{\Lambda(h)}]$. Hence the limit of the maximal risk along the subsequence is at least $\max_{h\in I}\E_h\ell(T-\gamma(h))\ge\inf_T\max_{h\in I}\E_h\ell(T-\gamma(h))$. Versions of this transfer are stated by \citet{HiranoPorter2012} and \citet{Song2014}.
\end{proof}

\begin{lemma}[finite minimax theorem]\label{lem:minimax}
For a finite parameter set $I$ and randomized rules whose risk vectors form a convex set $\mathcal R\subseteq[0,\infty)^I$: $\inf_{r\in\mathcal R}\max_hr_h=\sup_{\pi\in\Delta(I)}\inf_{r\in\mathcal R}\sum_h\pi_hr_h$.
\end{lemma}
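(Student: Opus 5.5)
We would prove the two inequalities separately; only the direction $\inf\max\le\sup\inf$ needs an argument. Throughout, assume $\mathcal R\neq\emptyset$ (otherwise both sides are $+\infty$ by convention), and write $v:=\inf_{r\in\mathcal R}\max_h r_h$. Since every $r\in\mathcal R$ has finite nonnegative coordinates, $0\le v<\infty$.

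The easy direction comes first. For any $\pi\in\Delta(I)$ and $r\in\mathcal R$ we have $\sum_h\pi_hr_h\le\max_hr_h$. Taking the infimum over $r$ and then the supremum over $\pi$ gives $\sup_\pi\inf_r\sum_h\pi_hr_h\le v$.

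For the reverse direction, the plan is a finite-dimensional separating-hyperplane argument, carried out at every level strictly below $v$ so that no closedness or compactness of $\mathcal R$ is needed. If $v=0$ there is nothing to prove. Otherwise fix $c<v$ and put $Q_c:=\{x\in\R^I:x_h<c\ \forall h\}$. This set is convex, open and nonempty. It is disjoint from $\mathcal R$, since any $r\in\mathcal R\cap Q_c$ would give $\max_hr_h<c<v$. The separation theorem for disjoint convex sets in $\R^I$ then yields a nonzero $\pi\in\R^I$ with
\[
\sum_h\pi_hq_h\le\sum_h\pi_hr_h\qquad\text{for all }q\in Q_c,\ r\in\mathcal R .
\]

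The step needing care is showing that $\pi$ can be normalized to a probability vector. Suppose some $\pi_{h_0}<0$. Then letting $q_{h_0}\to-\infty$ inside $Q_c$ drives the left side to $+\infty$, which is impossible because the right side is finite for any fixed $r$. Hence $\pi\ge0$, and since $\pi\ne0$ we may rescale so that $\sum_h\pi_h=1$. Letting $q\uparrow c\one$ within $Q_c$ then gives $\sum_h\pi_hr_h\ge c$ for every $r\in\mathcal R$. Consequently $\sup_\pi\inf_r\sum_h\pi_hr_h\ge c$, and letting $c\uparrow v$ completes the proof.

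We expect no real obstacle beyond the sign argument for $\pi$. The only places convexity of $\mathcal R$ enters are the separation step and, implicitly, the fact that $\mathcal R$ consists of risk vectors of randomized rules.
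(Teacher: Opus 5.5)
Your proposal is correct and follows essentially the same route as the paper. For each $c$ below the minimax value, you separate the convex set $\mathcal R$ from the open orthant $\{x:x_h<c\ \forall h\}$, use the orthant's unboundedness to force the normal $\pi\ge0$, normalize to $\Delta(I)$, and let $q\uparrow c\one$ and then $c$ increase — spelling out the steps the paper compresses into one line.
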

\begin{proof}
$\ge$ is trivial. If $c<\inf_r\max_hr_h$, the convex set $\mathcal R$ is disjoint from the open convex set $\{x:x_h<c\ \forall h\}$; a separating hyperplane with normal $\pi\ne0$ satisfies $\sum\pi_hr_h\ge\sum\pi_hx_h$ for all $r\in\mathcal R$ and all $x$ in the orthant, which forces $\pi\ge0$, and normalizing to $\Delta(I)$ gives $\inf_r\sum\pi_hr_h\ge c$ \citep[cf.][]{Ferguson1967}.
\end{proof}

\begin{lemma}[reduction to finite subexperiments]\label{lem:global-minimax}
Let $(Q_\theta)_{\theta\in\Theta}$ be an experiment dominated by a probability measure $Q_0$, where $\Theta$ is nonempty, and let $\tau:\Theta\to\R$. For any nonnegative, lower semicontinuous bowl-shaped loss $\ell:\R\to[0,+\infty]$,
\[
\inf_T\sup_{\theta\in\Theta}\E_\theta\ell(T-\tau(\theta))
=\sup_{\substack{I\subseteq\Theta\\0<|I|<\infty}}
  \inf_T\max_{\theta\in I}\E_\theta\ell(T-\tau(\theta)),
\]
where $T$ ranges over all randomized real-valued decision rules. The parameter set and sample space need not be separable.
\end{lemma}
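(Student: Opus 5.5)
The inequality ``$\le$'' is immediate: any rule's supremum risk over $\Theta$ dominates its maximum over each finite $I$. So the whole task is ``$\ge$''. Write $v$ for the right-hand side and assume $v<\infty$, since otherwise there is nothing to prove. I will construct a single randomized rule $T$ with $\sup_\theta\E_\theta\ell(T-\tau(\theta))\le v$. I plan to use a compactness argument in a space of generalized decision rules, in the spirit of Le Cam's weak-topology treatment of statistical experiments. Domination by $Q_0$ is what makes this work without separability.

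\textbf{Step 1: compact rule space.} I represent a randomized rule on a compactified action space $\bar\R=[-\infty,+\infty]$. The rule is a Markov kernel, or more generally a bilinear ``generalized decision'' $\beta$ on $L_1(Q_0)\times C(\bar\R)$ that is positive and normalized, i.e.\ $\beta(f,1)=\int f\,dQ_0$. The set of such $\beta$ is compact for the topology of pointwise convergence, by Tychonoff; this is Le Cam's compactness theorem. Because every $Q_\theta\ll Q_0$, each risk $\theta\mapsto\beta(dQ_\theta/dQ_0,\ell_\theta)$, with $\ell_\theta(a):=\ell(a-\tau(\theta))$ extended lower semicontinuously to $\bar\R$, makes sense. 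Each such risk is a lower semicontinuous function of $\beta$, since $\ell_\theta$ is a supremum of continuous functions $\ell_\theta\wedge m$ smoothed from below.

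\textbf{Step 2: finite intersection property.} For each $\theta$, let $K_\theta=\{\beta:\text{risk}_\theta(\beta)\le v\}$; these sets are closed. For each finite $I$ and each $\epsilon>0$ there is an ordinary rule with $\max_I$ risk at most $v+\epsilon$. To obtain exact $v$ I will use the sets $K_\theta^\epsilon$ with threshold $v+\epsilon$; they have the finite intersection property, so by compactness some $\beta$ lies in every $K_\theta^\epsilon$. Intersecting over $\epsilon=1/k$ preserves the finite intersection property, because the families are nested. This yields a generalized rule with risk at most $v$ at every $\theta$.

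\textbf{Step 3: return to genuine rules on $\R$.} This step is the main obstacle, for two reasons. First, a generalized decision need not be a Markov kernel when the sample space is not nice. Second, mass may sit at $\pm\infty$. For the mass at infinity: when $\ell$ is unbounded, $\ell_\theta(\pm\infty)=\sup\ell=\infty$, so finite risk forces zero mass there except on null sets. When $\ell$ is bounded, I will replace the infinite actions by clipping to a fixed real value. Clipping only lowers a bowl-shaped loss when $\sup\ell$ is attained in the limit, and if $\ell$ is constant near infinity, pushing an action inward does not increase the loss; Lemma~\ref{lem:clip} applies in this spirit. For kernel realizability, I intend to avoid disintegration entirely. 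Since the action space is the line, a randomized real rule can be encoded by its quantile function: $T=F^{-1}(U)$ with $U$ uniform and independent of the data, where $F(x)=\beta(\cdot,\mathbf 1_{(-\infty,x]})$ is defined via a monotone limit of continuous functions. For each rational $x$, $F(x)$ is a $Q_0$-a.e.\ defined $[0,1]$-valued function, and monotone right-continuous versions can be chosen because there are only countably many rationals. This gives a genuine Markov kernel into $\R$ whose risks equal those of $\beta$ for every $\theta$, since all $Q_\theta\ll Q_0$. No separability of $\Theta$ or of the sample space is needed; only countably many null sets in the action variable arise. Verifying that the risk identity holds for the lower semicontinuous $\ell$, using monotone approximation by continuous $\ell\wedge m$ and monotone convergence, completes the argument.
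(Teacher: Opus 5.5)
Your proposal is correct and follows essentially the paper's route: Le Cam-type compactness of positive normalized generalized decisions on the compactified action space $[-\infty,+\infty]$, lower semicontinuity of the risks, the finite intersection property, representation of the limit by a genuine kernel, and removal of mass at $\pm\infty$. The differences are only technical. The paper uses Banach--Alaoglu on $L_1(Q_0;C(K))^*$ and a countable dense subspace of $C(K)$ with pointwise Riesz representation, where you use Tychonoff on bilinear forms and distribution functions on the rationals. Your case split for the mass at infinity is also unnecessary: $\ell(a-\tau(\theta))\le\lim_{\abs r\to\infty}\ell(r)$ for every real $a$, so moving that mass to the action $0$ never increases any risk, which is what the paper does.
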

\begin{proof}
Let $K=[-\infty,+\infty]$, with the compact metric topology induced by $a\mapsto\arctan a$, and let $L_\infty=\lim_{r\to\infty}\ell(r)\in[0,+\infty]$. For each $\theta$, extend $a\mapsto\ell(a-\tau(\theta))$ to $K$ by giving both endpoints the value $L_\infty$; the extension $l_\theta$ is nonnegative and lower semicontinuous. The limit at either endpoint is $L_\infty$ because $\ell$ is symmetric and nondecreasing in magnitude.

Write $E=L_1(Q_0;C(K))$, the space of Bochner-integrable $C(K)$-valued functions, and identify a probability kernel $q$ from the sample space to $K$ with
\[
\Lambda_q(f)=\int\!\int_K f(x,a)q(x,da)Q_0(dx),\qquad f\in E.
\]
The following standard compactness argument for probability kernels (cf.\ \citealp[Section 10]{Balder1995}) shows that these functionals form a weak-* compact subset $\mathcal K$ of $E^*$. To verify this assertion, consider all positive linear functionals $\Lambda\in E^*$ satisfying
$\Lambda(g\mathbf1)=\int g\,dQ_0$ for every $g\in L_1(Q_0)$.
They have norm one, and positivity and these normalization equalities are weak-* closed conditions, so the Banach--Alaoglu theorem gives compactness. 

Each such functional is represented by a probability kernel: for a countable uniformly dense rational vector subspace $D\subset C(K)$ containing $\mathbf1$, the maps $g\mapsto\Lambda(g\varphi)$ have Radon--Nikodym derivatives $u_\varphi\in L_\infty(Q_0)$. Outside one measurable $Q_0$-null set, these derivatives are rational-linear in $\varphi$, satisfy $u_{\mathbf1}=1$, and satisfy $\min_K\varphi\le u_\varphi\le\max_K\varphi$ simultaneously for every $\varphi\in D$. Thus $\varphi\mapsto u_\varphi(x)$ extends to a positive, norm-one linear functional on $C(K)$. The Riesz representation theorem gives a probability measure $q(x,\cdot)$. Its integrals against continuous functions are measurable, first for $D$ and then by uniform approximation; hence $q$ is a probability kernel on the compact metric space $K$. Set $q(x,\cdot)=\delta_0$ on the exceptional set. Approximation by simple $C(K)$-valued functions gives $\Lambda=\Lambda_q$. This argument applies to an arbitrary underlying probability space.

Put $p_\theta=dQ_\theta/dQ_0$. Choose nonnegative continuous functions $l_{\theta,j}\uparrow l_\theta$ on $K$, each bounded. Then
\[
R_\theta(q)=\int p_\theta(x)\!\int_K l_\theta(a)q(x,da)Q_0(dx)
=\sup_j\Lambda_q(p_\theta l_{\theta,j})
\]
is lower semicontinuous on $\mathcal K$, since $p_\theta l_{\theta,j}\in E$. Let $c$ denote the right-hand side of the asserted equality. If $c=+\infty$, the result is immediate. Otherwise fix $r>c$. The closed sets $\{q\in\mathcal K:R_\theta(q)\le r\}$ have the finite intersection property: for any finite set of parameters, the definition of $c$ supplies a real-valued rule with all its risks below $r$. Compactness gives one kernel with $R_\theta(q)\le r$ for every $\theta$. Move any mass at $-\infty$ and $+\infty$ to action $0$. This cannot increase any risk, because $\ell(-\tau(\theta))\le L_\infty$, and yields a real-valued randomized rule. Thus the left-hand side is at most $r$; let $r\downarrow c$. The reverse inequality is immediate.
\end{proof}

In the Gaussian-shift experiment, on the sigma-field generated by the isonormal observations, $dQ_h/dQ_0=\exp\{\Delta(h)-\norm h^2/2\}$. This identity follows first on finite coordinate cylinders and then on their generated sigma-field; it does not require a separable tangent space. Thus \cref{lem:global-minimax} applies to the experiments in Section~\ref{sec:structure}.

The allowance of non-lower-semicontinuous bowl-shaped losses in \cref{thm:lam} follows by replacing $\ell$ with its lower semicontinuous envelope $\ell_*\le\ell$. A nondecreasing function of $\abs x$ differs from its envelope at at most countably many radii, so $\int\ell_*\,dN(0,V_V)=\int\ell\,dN(0,V_V)$ when $V_V>0$. When $V_V=0$, the asserted lower bound is $\ell(0)$, which is already a pointwise lower bound on every loss. Thus no additional moment or continuity assumption is needed for that theorem.

\subsection{Further details on the bounds}
\begin{example}[an additive, continuous cone without bounded extension]\label{ex:cone}
On $H=\R\oplus\ell_2$ let $C:=\{(t,x):x_n\ge0,\ \sum_nn^2x_n\le t\}$ and $q(t,x):=\sum_nnx_n$. Then $C$ is a closed convex cone, $q$ is additive, positively homogeneous and continuous on $C$ (the tail $\sum_{n>N}nx_n\le t/(N+1)$ is uniformly small on bounded sets, and $0\le q\le t\le\norm{(t,x)}$), and $q$ extends to a continuous positively homogeneous $\gamma$ on $H$ (Tietze on the unit sphere, then radially). Any linear extension $\ell$ of $q|_C$ to $\spn C$ satisfies $\ell(e_0)=q(e_0)=0$ and $\ell(e_0+n^{-2}e_n)=1/n$, hence $\ell(e_n)=n$: unbounded. By \cref{thm:main}(1) no estimator is regular along $C$ although $\gamma$ is additive there.
\end{example}

\begin{remark}[Gaussian factors and regularity sets]\label{rem:what}
Regularity at a nonlinear point can hold along subspaces or cones on which $\gamma$ has a bounded linear extension. The convolution theorem then uses the representer on that set and identifies an independent residual. The factor guaranteed by the regularity family is $N(0,v(T_n))$, which need not be the largest Gaussian divisor of $L$. For example, let $\Psi(\theta)=\theta_1+\abs{\theta_2}$ in a bivariate normal mean model and $T_n=\bar X_1+\bar Y$, where $Y$ is independent pure noise with variance one. The agreement set is $\{h_2=0\}$, $v(T_n)=1$, $L=N(0,2)$ and $M=N(0,1)$. The core bound applies to every estimator regular along a maximal linearity subspace. A cone's bound depends on its span and the linear functional agreeing with the derivative there (\cref{cor:onesided}). Subspaces outside $\Nodd(\gamma)$ cannot support regularity (\cref{cor:impossible}), whereas one-sided cones outside this set can: for $\gamma(h)=\abs h$, the sample mean is regular along $[0,\infty)$ even though $\Nodd(\gamma)=\{0\}$.
\end{remark}

\begin{remark}[relation to existing impossibility results]\label{rem:impossibility}
\citet{HiranoPorter2012} analyze both locally asymptotically unbiased estimation and regular estimation through the Gaussian shift experiment. Their equivariance proof uses characteristic functions without moment assumptions on the estimator. \Cref{lem:tilt} localizes the necessary linearity condition to subspaces and cones; \cref{thm:main} also establishes bounded linear extension and identifies the corresponding Gaussian convolution factor. Here locally asymptotically unbiased (LAU) follows their limiting-law convention: each centered limit distribution has mean zero. Under this convention, an integrable regular limit can be recentered to satisfy LAU. The theorem of \citet{vdV1991} is often paraphrased as ``regular estimability implies differentiability''; \citet{Pfanzagl2000} makes explicit that it assumes the existence of the directional limits, in which case its content is \cref{thm:main}(1) with $C=\T$, and gives a counterexample to the unqualified paraphrase; \cref{prop:rotating} (\cref{app:rotating}) is a further such example, without a priority claim. \citet{Kaji2021} proves impossibility of equivariant-in-law estimation for weakly regular parameters by a continuity argument from Le Cam's third lemma.
\end{remark}

\begin{remark}\label{rem:core-add}
The inclusion in \cref{thm:core}(4) can be strict: for $\gamma(h)=h_1^2/\norm h$ on $\R^2$, $\Nodd=\{h_1=0\}$ is itself a linearity subspace ($\gamma\equiv0$ there), so $\Hk=\{h_1=0\}$ while $\Afam(\gamma)=\{0\}$; a polyhedral example is given in \cref{prop:addpl}(4) (\cref{app:geometry}). The core is the object relevant to regularity (\cref{sec:convolution}); the additivity space is what makes the splitting of \cref{sec:structure} work (\assS); \cref{thm:clarke} identifies it, for Lipschitz $\gamma$, with the lineality space of the support function of Clarke's subdifferential, i.e.\ with $(\partial^C\gamma(0)-\partial^C\gamma(0))^\perp$.
\end{remark}

\begin{remark}\label{rem:kersplit}
The case split in (5) is by $\one\in\range(\Sigma)$, not by $\one\in\ker\Sigma$: for $\Sigma=\operatorname{diag}(1,0)$ one has $\one\notin\ker\Sigma$ and $\one\notin\range\Sigma$, the formula $1/\one^\top\Sigma^+\one$ returns $1$, but $w=(0,1)$ gives $0$ (one active component has a degenerate score, so the tied value is locally known).
\end{remark}

\begin{remark}[variance reduction from pooling]\label{rem:dividend}
For independent active components with $\Sigma=\operatorname{diag}(\sigma_a^2)$, all $\sigma_a>0$, $\Vk=(\sum_a\sigma_a^{-2})^{-1}<\min_a\sigma_a^2$ when $K\ge2$: the bound for the common value is strictly below the bound for estimating any single tied component.
\end{remark}

\begin{remark}[piecewise-linear reduced derivatives]\label{rem:piece}
If $\gamma=g\circ\ell$ with $g$ piecewise linear (\cref{prop:pl}) and $\phi=\ell^*b$ for a piece $b$ of $g$, then $\mathcal E(\phi)=\ell^{-1}(\mathcal E(b^\top\cdot))$: an estimator that tracks one linear piece is regular exactly along the directions and one-sided cones on which that piece remains active (``the estimator of arm $j$ is regular as long as arm $j$ stays the median''; ``$\hat\mu_a$ is regular as long as $a$ stays best'', \cref{cor:onesided}).
\end{remark}

\begin{remark}\label{rem:tradeoff}
For a directional derivative that is neither sublinear nor superlinear, different maximal linearity subspaces can have different bounds $\{V_V:V\in\Lfam_{\max}\}$ and admit different regular estimator classes. The core bound applies to every such class, while a larger subspace can require a larger variance. The median example in \cref{ex:median} includes enlargements with and without an increase in the bound; \cref{cor:onesided} gives the corresponding comparison for one-sided regularity.
\end{remark}

\begin{remark}[simplex criterion]\label{rem:simplex}
For a maximum of finitely many branches with $\Sigma$ nonsingular, $\psik\in\LamP$ iff $\Sigma^{-1}\one\ge0$ componentwise. For $K=2$: iff $\Sigma_{12}\le\min(\Sigma_{11},\Sigma_{22})$. For general $K$: $w_j^*\ge0$ iff $\beta_j^\top\one\le1$, where $\beta_j:=\Sigma_{-j,-j}^{-1}\Sigma_{-j,j}$ are the population regression coefficients of $\lambda_j$ on the other active scores (block inversion: $(\Sigma^{-1}\one)_j=(1-\beta_j^\top\one)/(\Sigma_{jj}-\Sigma_{j,-j}\beta_j)$ with positive denominator): component $j$ receives nonnegative efficient weight iff its score is not more than a unit-sum combination of the others. Sufficient: $\Sigma$ an M-matrix. The pairwise reading ``$\Sigma_{ij}\le\min(\Sigma_{ii},\Sigma_{jj})$ for all pairs'' is not sufficient for $K\ge3$: $\Sigma=\left(\begin{smallmatrix}1&.9&.9\\.9&1&.7\\.9&.7&1\end{smallmatrix}\right)$ is positive definite with $w^*=(-1,1,1)$. By \cref{prop:blind}, $\Sigma^{-1}\one\ge0$ is also the condition for nonpositive drift of the core-efficient estimator under every tie-breaking alternative.
\end{remark}

\subsection{Confidence intervals and the two-programme risk comparison}\label{app:ci-proofs}

\begin{proof}[Proof of \cref{cor:ci-local}]
By \cref{thm:efficiency}, $T_n$ is asymptotically linear with influence function $\tilde\psi_V$. The local limit in \cref{thm:al} is therefore $N(\delta(h),s^2)$. Contiguity carries $\hat s_n\to s$ from $P^n$ to $\Pn h$ for every fixed path. Slutsky's theorem gives the studentized limit $N(\delta(h)/s,1)$. Its distribution is continuous, so the interval and lower-bound probabilities converge to the stated expressions. For $z=z_{1-\alpha/2}>0$, the function $a\mapsto\Phi(z-a)-\Phi(-z-a)$ is even and strictly decreasing in $\abs a>0$, establishing the two-sided comparison. The one-sided inequality follows from monotonicity of $\Phi$. On $V$, the representer identity gives $\delta(h)=0$.
\end{proof}

\begin{proof}[Verification of \eqref{eq:two-programme-minimax}]
For $T(U,V)=U+\abs V$, independence and $\E G=0$ give risk $1/2+\E(\abs{d+H}-\abs d)^2\le1/2+\E H^2=1$. For the lower bound restrict to $c=d=t>0$. Then $U,V$ are independent $N(t,1/2)$ observations, the target is $2t$, and the Fisher information for $t$ is $4$. Choose a smooth prior density $\pi_R(t)=R^{-1}\pi(t/R)$ supported on $(R,2R)$, where $\pi$ vanishes at the endpoints of $(1,2)$ and has finite prior information $J=\int(\pi')^2/\pi$. Integration by parts under this prior gives $\E[(T-2t)\,\partial_t\log\{p_t(U,V)\pi_R(t)\}]=2$. The score has second moment $4+J/R^2$, so Cauchy--Schwarz bounds the Bayes squared risk below by $4/(4+J/R^2)$. The identity is immediate for bounded $T$ and extends by truncation whenever its Bayes squared risk is finite; infinite risk already satisfies the bound. Letting $R\to\infty$ proves the minimax lower bound $1$ for arbitrary estimators, including randomized ones. This is a calculation within the Gaussian experiment, not a uniform attainment theorem for the original model.
\end{proof}

\section{Counterexamples referred to in the text}\label{app:examples}
\begin{enumerate}[label=(\alph*)]
\item \emph{\cref{cor:onesided}.} With a one-dimensional tangent space and $\gamma(h)=\max(h,2h,3h)$, component $2$ is not a vertex and $C_2=\{0\}$. With $\lambda_1=\lambda_2=1$, $\lambda_3=-1$ the strict cone of component $1$ would be empty without deduplication. With $\Sigma=\left(\begin{smallmatrix}1&1\\1&2\end{smallmatrix}\right)$, $\Sigma^{-1}\one=(1,0)$, so $\psik=\lambda_1$ and the core-efficient estimator is $\hat\mu_1$ itself, regular along $C_1$.
\item \emph{\cref{prop:fixed}(a).} For $g(z)=z_1+\abs{z_2-z_3}$ with $\Sigma=I_3$ (translation-equivariant, \assS holds, $F^\perp=\R\one\subsetneq\Hk=\{h_2=h_3\}$) the uniform average has drift $-2/3$ at $h=e_1$ although the $\Hk^\perp$-component of $h$ vanishes; the decomposed formula is not valid there, the direct formula is.
\item \emph{\cref{prop:fixed}(b).} With $\Sigma=\left(\begin{smallmatrix}1&1\\1&2\end{smallmatrix}\right)$, $W=(0,Z_2-Z_1)$ and deterministic selection of arm $1$ gives $M=\delta_0$ although $W\not\equiv0$.
\item \emph{\cref{lem:transfer}.} For $P_\theta=N_2(\theta,I_2)$, $\Psi(P_\theta)=\theta_1+\abs{\theta_2}-\theta_1^3$, $I=\{\pm e_1\}$, $T_n\equiv0$ and the upper semicontinuous loss $\ell(u)=\ind\{\abs u\ge1\}$, the finite-sample risks are $0$ (the local targets are $\pm(1-1/n)$) while the limit-experiment minimax risk is $\Phi(-1)>0$: the zero-loss action sets $(0,2)$ and $(-2,0)$ are disjoint, so every rule induces a test of $N(1,1)$ against $N(-1,1)$. The lower semicontinuous coverage loss $\ind\{\abs u>t\}$ is covered by the lemma.
\end{enumerate}

\section{Supporting geometry: differences of sublinear functions and the Clarke identity}\label{app:geometry}

Every continuous piecewise-linear positively homogeneous function on $\R^K$ is a difference of two sublinear functions \citep{BartelsKuntzScholtes1995}. The next proposition expresses the linearity condition in terms of their supporting sets and gives a subspace contained in the core.

\begin{proposition}[differences of sublinear derivatives]\label{prop:dc}
Let $\gamma=\sigma_{\Lambda_1}-\sigma_{\Lambda_2}$ with $\Lambda_i\subseteq\overline\T$ nonempty, closed, bounded and convex, $\sigma_\Lambda(h):=\max_{\lambda\in\Lambda}\ip\lambda h$, $\wid_\Lambda(h):=\sigma_\Lambda(h)+\sigma_\Lambda(-h)$, and $F_i:=\clspan(\Lambda_i-\Lambda_i)$. Then (1) $\Nodd(\gamma)=\{h:\wid_{\Lambda_1}(h)=\wid_{\Lambda_2}(h)\}$; (2) a linear subspace $V$ is a linearity subspace iff $\Pi_{\overline V}\Lambda_1=\Pi_{\overline V}\Lambda_2+v$ for some $v\in\overline V$, and then $\tilde\psi_V=v$, $V_V=\norm v^2$; (3) $(F_1+F_2)^\perp\cap\T\subseteq\Afam(\gamma)\subseteq\Hk(P)$, both inclusions possibly strict.
\end{proposition}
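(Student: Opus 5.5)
Each of the three parts reduces to support-function calculus, so I would work directly with $\sigma_{\Lambda_i}$ and never need a polyhedral structure.

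For (1), the starting point is $\gamma(-h)=\sigma_{\Lambda_1}(-h)-\sigma_{\Lambda_2}(-h)$. Adding this to $\gamma(h)$ gives
\[
\gamma(h)+\gamma(-h)=\wid_{\Lambda_1}(h)-\wid_{\Lambda_2}(h),
\]
and (1) follows immediately.

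For (2), I would first handle sufficiency. Assume $\Pi_{\overline V}\Lambda_1=\Pi_{\overline V}\Lambda_2+v$. For $h\in V$ and $\lambda\in\overline\T$ we have $\ip\lambda h=\ip{\Pi_{\overline V}\lambda}h$, so $\sigma_{\Lambda_i}(h)=\sigma_{\Pi_{\overline V}\Lambda_i}(h)$. Hence
\[
\gamma(h)=\sigma_{\Pi\Lambda_2}(h)+\ip vh-\sigma_{\Pi\Lambda_2}(h)=\ip vh,
\]
which is linear on $V$ and has representer $v\in\overline V$.

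For necessity, suppose $\gamma|_V$ is linear. By continuity (\cref{lem:autocont}, \cref{lem:odd}(3)) it extends to $\overline V$, where it is represented by some $v\in\overline V$. Then
\[
\sigma_{\Pi\Lambda_1}(h)=\sigma_{\Pi\Lambda_2}(h)+\ip vh=\sigma_{\Pi\Lambda_2+v}(h)\quad\text{for all }h\in V,
\]
and this equality persists on $\overline V$ by continuity of support functions of bounded sets. The sets $\Pi_{\overline V}\Lambda_i$ are bounded and convex, but they need not be closed in infinite dimensions. So I would take closures: two closed bounded convex subsets of the Hilbert space $\overline V$ with equal support functions coincide, by Hahn--Banach separation. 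This gives $\overline{\Pi\Lambda_1}=\overline{\Pi\Lambda_2}+v$.

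\textbf{Main obstacle.} Passing from equality of closures to the stated equality of the projections themselves is the subtle step. The cleanest fix is to note that $\Lambda_i$ is weakly compact (closed, bounded, convex in a Hilbert space) and $\Pi$ is weakly continuous. Therefore $\Pi\Lambda_i$ is weakly compact, hence norm closed, and no closures are needed.

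The remaining claims in (2) then follow. Uniqueness of the representer gives $\tilde\psi_V=v$, and $V_V=\norm v^2$ holds by definition.

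For (3), take $w\in(F_1+F_2)^\perp\cap\T$. Then $\ip\lambda w$ is constant over $\lambda\in\Lambda_i$; call this constant $c_i$. For any $h\in\T$ and $\lambda\in\Lambda_i$,
\[
\ip\lambda{h+w}=\ip\lambda h+c_i,
\]
so $\sigma_{\Lambda_i}(h+w)=\sigma_{\Lambda_i}(h)+c_i$. Therefore $\gamma(h+w)=\gamma(h)+c_1-c_2$. Setting $h=0$ shows $c_1-c_2=\gamma(w)$, so $w\in\Afam(\gamma)$. The second inclusion $\Afam(\gamma)\subseteq\Hk(P)$ is \cref{thm:core}(4).

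For strictness I would give two one-dimensional or two-dimensional examples:
\begin{enumerate}[label=(\roman*)]
\item $\Lambda_1=\Lambda_2=[-e_1,e_1]$ gives $\gamma\equiv0$, so $\Afam=\T$ while $(F_1+F_2)^\perp=e_1^\perp$. The first inclusion is strict.
\item The example $\gamma(h)=h_1^2/\norm h$ of \cref{rem:core-add} shows the second inclusion can be strict. That $\gamma$ is not a DC of sublinear functions, so I would instead cite the polyhedral example of \cref{prop:addpl}(4), which is a difference of sublinear functions by \citet{BartelsKuntzScholtes1995}.
\end{enumerate}
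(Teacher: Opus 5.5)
Your argument for (1)--(3) is the same as the paper's, including the key step that $\Pi_{\overline V}\Lambda_i$ is weakly compact and hence norm closed, so equal support functions on $\overline V$ give equal projected sets. The strictness examples you add, which the paper's proof does not spell out, are valid.

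One side remark is wrong, though harmless: $h_1^2/\norm h$ \emph{is} a difference of sublinear functions. Its angular profile $g(\theta)=\cos^2\theta$ satisfies $g''+g=2-3\cos^2\theta\ge-1$, so $h\mapsto h_1^2/\norm h+\norm h$ is sublinear and $\gamma=(\gamma+\norm\cdot)-\norm\cdot$. Either that example or the polyhedral one from \cref{prop:addpl}(4) shows the second inclusion can be strict.
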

\begin{proof}
(1) $\gamma(h)+\gamma(-h)=\wid_{\Lambda_1}(h)-\wid_{\Lambda_2}(h)$. (2) For $h\in V$, $\sigma_{\Lambda_i}(h)=\sigma_{\Pi_{\overline V}\Lambda_i}(h)$, and $\Pi_{\overline V}\Lambda_i$ is weakly compact, hence closed, convex and bounded in $\overline V$; support functions on $\overline V$ determine such sets, and $\sigma_A-\sigma_B=\ip v\cdot$ iff $A=B+v$. (3) For $w\in(F_1+F_2)^\perp$, $\sigma_{\Lambda_i}(h+w)=\sigma_{\Lambda_i}(h)+c_i(w)$ with $c_i(w)$ the common value of $\ip\lambda w$ on $\Lambda_i$.
\end{proof}

\begin{proposition}[piecewise-linear $g$: agreement cones]\label{prop:pl}
Let $g:\R^K\to\R$ be continuous, positively homogeneous and piecewise linear with linear pieces $\ell_1,\dots,\ell_m$ on a finite fan of closed cones covering $\R^K$, and let $\mathcal E(\ell_i):=\{z:g(z)=\ell_i(z)\}$. Then $\Lfam(g)=\{U\text{ linear}:U\subseteq \mathcal E(\ell_i)\text{ for some }i\}$.
\end{proposition}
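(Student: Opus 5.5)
We prove the two inclusions separately. The direction ``$\supseteq$'' is immediate. If $U\subseteq\mathcal E(\ell_i)$ for some $i$, then $g=\ell_i$ on $U$, and $\ell_i$ is linear. So $g|_U$ is linear and $U\in\Lfam(g)$.

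For ``$\subseteq$'', fix $U\in\Lfam(g)$. The plan is to show that $g|_U$ coincides with a single piece $\ell_i$ on all of $U$. The cones $D_i\cap U$ of the fan cover $U$ and are finitely many closed convex cones. By a Baire-category argument in the finite-dimensional space $U$ (a finite union of closed sets with empty interior has empty interior), at least one of them, say $D_i\cap U$, has nonempty relative interior in $U$. On that set $g=\ell_i$. Moreover $g|_U$ is linear by assumption, so $g|_U$ and $\ell_i|_U$ are two linear functionals on $U$ that agree on a set with nonempty interior. Such a set spans $U$, hence $g|_U=\ell_i|_U$ everywhere on $U$, that is, $U\subseteq\mathcal E(\ell_i)$.

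Two points need care. First, the fan covers $\R^K$, so the sets $D_i\cap U$ cover $U$ and the category step is legitimate. Second, the case $U=\{0\}$ is trivial, since $0\in\mathcal E(\ell_i)$ for every $i$. If the fan's cones are not assumed convex, the argument is unchanged: only closedness and a covering by finitely many pieces are used, and the nonempty-interior set still spans $U$.

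The main obstacle is making sure that a piece with nonempty relative interior in $U$ exists. This is where finiteness of the fan and closedness of its cones enter; with infinitely many pieces the conclusion could fail. I expect no other difficulty, since the remaining step is uniqueness of a linear functional determined on an open subset of $U$.
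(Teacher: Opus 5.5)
Your proof is correct and follows the same route as the paper. You establish the trivial inclusion, then use the finite closed cover of $U$ to obtain some $U\cap C_i$ with nonempty relative interior, on which $g=\ell_i$, and conclude by uniqueness of a linear functional determined on a relatively open subset of $U$ (Baire is more than you need, since a finite union of closed nowhere-dense sets is nowhere dense by elementary induction).
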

\begin{proof}
If $U\subseteq \mathcal E(\ell_i)$ then $g=\ell_i$ on $U$. Conversely, if $g$ is linear on $U$, some $U\cap C_i$ has nonempty interior relative to $U$ (finite closed cover); there $g=\ell_i$, and two linear functions on $U$ agreeing on a relatively open set agree on $U$.
\end{proof}

\begin{proposition}[additivity space of a piecewise-linear derivative; \assS]\label{prop:addpl}
Let $g$ be as in \cref{prop:pl} and let $\ell_1,\dots,\ell_m$ now denote the distinct pieces whose agreement sets have nonempty interior (full-dimensional pieces). Then:
\begin{enumerate}[label=(\arabic*)]
\item $\Afam(g)=\bigcap_{i,j}\ker(\ell_i-\ell_j)$;
\item $\partial^Cg(0)=\conv\{\ell_1,\dots,\ell_m\}$ \citep[Theorem 2.5.1]{Clarke1983}, hence $\Afam(g)=(\partial^Cg(0)-\partial^Cg(0))^\perp$;
\item \assS of \cref{sec:structure} ($\Hk(g)=\Afam(g)$) holds iff $\Hk(g)\subseteq\bigcap_{i,j}\ker(\ell_i-\ell_j)$;
\item \assS can fail for polyhedral derivatives: $g(z)=z_2+\abs{z_1}+\max\{0,\min(\abs{z_1},-z_2)\}$ on $\R^2$ has full-dimensional pieces $z_2\pm z_1$, $z_2\pm2z_1$, $\pm z_1$, odd cone $\Nodd(g)=\{z_1=0\}$ (a linearity subspace, so $\Hk(g)=\{z_1=0\}$), but $\Afam(g)=\{0\}$.
\end{enumerate}
\end{proposition}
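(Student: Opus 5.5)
The plan is to prove the four parts in order. Part (1) carries the real content; (2)--(4) then follow from it, from \cref{thm:core}, and from a direct computation.

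\emph{Part (1).} First the inclusion $\Afam(g)\subseteq\bigcap_{i,j}\ker(\ell_i-\ell_j)$. Take $w\in\Afam(g)$ and a full-dimensional piece $\ell_i$. Pick $z$ in the interior of its agreement set. For small $t>0$ both $z$ and $z+tw$ lie in that interior, so
\[
t\,g(w)=g(tw)=g(z+tw)-g(z)=\ell_i(z+tw)-\ell_i(z)=t\,\ell_i(w).
\]
Thus $\ell_i(w)=g(w)$ for every $i$, and all differences $\ell_i-\ell_j$ vanish at $w$.

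Conversely, suppose $\ell_i(w)=c$ for all full-dimensional pieces. The lower-dimensional cones of the fan form a Lebesgue-null set. So $g$ is differentiable almost everywhere, with gradient equal to one of $\ell_1,\dots,\ell_m$. Since $g$ is Lipschitz, Fubini applied along lines parallel to $w$ shows that for almost every $z$ the map $t\mapsto g(z+tw)$ has derivative $c$ for almost every $t$. Integrating gives $g(z+w)=g(z)+c$ for almost every $z$. Continuity extends this to all $z$. Taking $z=0$ gives $g(w)=c$, and hence $w\in\Afam(g)$. I expect this almost-everywhere step to be the main technical point. The alternative, a segment-crossing argument through finitely many cones, needs care wherever the segment runs inside a lower-dimensional face, which is why I would use the a.e.\ route.

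\emph{Part (2).} Take Clarke's formula $\partial^Cg(0)=\conv\{\ell_1,\dots,\ell_m\}$ from the cited Theorem~2.5.1. The span of differences of a convex hull equals the span of differences of its generators. Hence $(\partial^Cg(0)-\partial^Cg(0))^\perp=\bigcap_{i,j}\ker(\ell_i-\ell_j)$, and part (1) identifies this with $\Afam(g)$.

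\emph{Part (3).} By \cref{thm:core}(4), $\Afam(g)\subseteq\Hk(g)$ always holds. So $\Hk(g)=\Afam(g)$ is equivalent to the reverse inclusion, which by (1) is $\Hk(g)\subseteq\bigcap\ker(\ell_i-\ell_j)$.

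\emph{Part (4).} This is a direct computation.
\begin{enumerate}[label=(\roman*)]
\item On $z_2\ge0$ the inner term is $\max\{0,\min(\abs{z_1},-z_2)\}=0$, so $g=z_2+\abs{z_1}$.
\item On $z_2<0$ with $-z_2\le\abs{z_1}$ the inner term equals $-z_2$, so $g=\abs{z_1}$.
\item On $z_2<0$ with $-z_2>\abs{z_1}$ the inner term equals $\abs{z_1}$, so $g=z_2+2\abs{z_1}$.
\end{enumerate}
Each region has interior on both signs of $z_1$. The full-dimensional pieces are therefore $z_2\pm z_1$, $z_2\pm2z_1$ and $\pm z_1$.

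Next the odd cone. On $z_1=0$ the inner term is $\max\{0,\min(0,-z_2)\}=0$, so $g=z_2$ there. If $z_1\ne0$, then $g(z)+g(-z)\ge2\abs{z_1}>0$ because the inner terms are nonnegative. Hence $\Nodd(g)=\{z_1=0\}$, and $g$ is linear on this line.

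Every linearity subspace lies in $\Nodd$ by \cref{lem:odd}(2). So $\{z_1=0\}$ is the unique maximal linearity subspace, and $\Hk(g)=\{z_1=0\}$ by \cref{thm:core}(3). Finally, the differences $(z_2+z_1)-(z_2-z_1)=2z_1$ and $(z_2+z_1)-z_1=z_2$ span $(\R^2)^*$. Part (1) then gives $\Afam(g)=\{0\}\neq\Hk(g)$.
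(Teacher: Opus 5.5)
Your proof is correct and follows the paper's argument part by part. This includes the same interior-point computation for $\Afam(g)\subseteq\bigcap\ker(\ell_i-\ell_j)$, the same use of Clarke's formula and \cref{thm:core}(4) for (2) and (3), and the same case analysis in (4).

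The one variation is the converse in (1). The paper restricts $g$ to the line $c\mapsto z+cw$ and notes it is continuous and piecewise linear with slopes in $\{\ell_i(w)\}$. The lower-dimensional-face problem you were avoiding does not arise there. The closed full-dimensional cones already cover $\R^K$, so each line is a finite union of closed intervals on which $g$ is affine with slope some $\ell_i(w)$. Your a.e.-differentiability and Fubini argument is equally valid.
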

\begin{proof}
(1) The full-dimensional pieces determine $g$ (their cones cover $\R^K$ up to a closed nowhere-dense set; $g$ is continuous), so along any line $c\mapsto z+cw$ the function $g(z+cw)$ is continuous and piecewise linear in $c$ with slopes in $\{\ell_i(w)\}$. If all $\ell_i(w)$ equal $\theta$, then $g(z+cw)=g(z)+c\theta$ and $g(w)=\theta$, so $w\in\Afam(g)$. Conversely, for $w\in\Afam(g)$ and $z$ interior to the cone of piece $i$, $g(z+cw)=\ell_i(z)+c\ell_i(w)$ for small $c$, while additivity gives $g(z)+cg(w)$; hence $\ell_i(w)=g(w)$ for every $i$. (2) Clarke's subdifferential of a piecewise-linear function at $0$ is the convex hull of the gradients of the pieces active on open sets accumulating at $0$; by homogeneity every full-dimensional cone accumulates at $0$; the lineality space of $\sigma_{\conv\{\ell_i\}}$ is $\{w:\ell_i(w)\text{ constant in }i\}$ (\cref{thm:kink}(2)). (3) From $\Afam\subseteq\Hk$. (4) On $z_2\ge0$: $g=z_2+\abs{z_1}$; on $z_2<0$, $\abs{z_1}\le\abs{z_2}$: $g=z_2+2\abs{z_1}$; on $z_2<0$, $\abs{z_1}>\abs{z_2}$: $g=\abs{z_1}$; $g(z)+g(-z)=2\abs{z_1}+\min(\abs{z_1},\abs{z_2})\ind\{z_2\ne0\}$, so $\Nodd=\{z_1=0\}$, on which $g(0,t)=t$; the common kernel of the piece differences is $\{0\}$.
\end{proof}

\begin{theorem}[the Clarke identity]\label{thm:clarke}
Let $\gamma$ be positively homogeneous and Lipschitz on a Hilbert space $H$, let $\gamma^\circ(0;w):=\limsup_{x\to0,\,t\downarrow0}t^{-1}\{\gamma(x+tw)-\gamma(x)\}$ be Clarke's generalized directional derivative at $0$, and $\partial^C\gamma(0):=\{\lambda:\ip\lambda w\le\gamma^\circ(0;w)\ \forall w\}$. Then
\begin{enumerate}[label=(\arabic*)]
\item $\gamma^\circ(0;w)=\sup_{y\in H}\{\gamma(y+w)-\gamma(y)\}$;
\item $\Afam(\gamma)=\{w:\gamma^\circ(0;w)+\gamma^\circ(0;-w)=0\}=(\partial^C\gamma(0)-\partial^C\gamma(0))^\perp$;
\item \assS holds iff $\Hk(\gamma)\subseteq(\partial^C\gamma(0)-\partial^C\gamma(0))^\perp$.
\end{enumerate}
\end{theorem}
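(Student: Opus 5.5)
I would prove the three parts in order, taking (1) as the key identity from which (2) and (3) follow.

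For (1), write $S(w):=\sup_{y\in H}\{\gamma(y+w)-\gamma(y)\}$. This supremum is finite: it is at most $\Lip(\gamma)\norm w$. The inequality $\gamma^\circ(0;w)\le S(w)$ is immediate. By positive homogeneity,
\[
t^{-1}\{\gamma(x+tw)-\gamma(x)\}=\gamma(x/t+w)-\gamma(x/t),
\]
so every difference quotient in the $\limsup$ is bounded by $S(w)$. For the reverse inequality, fix $y$ and set $x_t:=ty\to0$. Homogeneity gives
\[
t^{-1}\{\gamma(ty+tw)-\gamma(ty)\}=\gamma(y+w)-\gamma(y)
\]
for every $t>0$, so $\gamma^\circ(0;w)\ge\gamma(y+w)-\gamma(y)$. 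Taking the supremum over $y$ proves (1). Homogeneity therefore collapses Clarke's limsup into a global supremum.

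For (2), set $S(w)$ as above and note that, by (1), $S(w)=\gamma^\circ(0;w)$. Suppose first that $w\in\Afam(\gamma)$. Then $\gamma(y+w)-\gamma(y)=\gamma(w)$ for every $y$, so $S(w)=\gamma(w)$. Applying the additivity identity at $y-w$ gives $\gamma(y-w)-\gamma(y)=-\gamma(w)$, so $S(-w)=-\gamma(w)$, and hence $S(w)+S(-w)=0$. Conversely, suppose $S(w)+S(-w)=0$. For every $y$ we have $\gamma(y+w)-\gamma(y)\le S(w)$. Replacing $y$ by $y+w$ in the definition of $S(-w)$ gives $\gamma(y)-\gamma(y+w)\le S(-w)=-S(w)$. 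Together these force $\gamma(y+w)-\gamma(y)=S(w)$ for all $y$. Taking $y=0$ identifies $S(w)=\gamma(w)$, so $w\in\Afam(\gamma)$.

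For the second equality in (2), use the standard facts that $\gamma^\circ(0;\cdot)$ is sublinear and finite (bounded by $\Lip(\gamma)\norm\cdot$), hence continuous. By \cref{thm:kink}(1) applied on $H$, it is the support function of the nonempty closed bounded convex set $\partial^C\gamma(0)$. Sublinearity is also visible directly from (1):
\[
S(w_1+w_2)\le S(w_1)+S(w_2),
\]
obtained by inserting the intermediate point $y+w_1$. \cref{thm:kink}(2) then identifies the lineality space $\{w:\sigma(w)+\sigma(-w)=0\}$ of this support function with $(\partial^C\gamma(0)-\partial^C\gamma(0))^\perp$. That argument is purely about support functions and uses no tangent-space assumption, so it can be applied with $\T=H$.

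Part (3) then follows from \cref{thm:core}(4), which gives $\Afam(\gamma)\subseteq\Hk(\gamma)$. Consequently $\Hk=\Afam$ holds iff $\Hk\subseteq\Afam$, which by (2) is exactly the stated inclusion.

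The main obstacle is the justification in (2) that $\gamma^\circ(0;\cdot)$ is the support function of $\partial^C\gamma(0)$ in a possibly infinite-dimensional $H$. I would handle it through the Hahn--Banach argument already given in the proof of \cref{thm:kink}(1). That argument needs only sublinearity, which (1) supplies, and boundedness by $\Lip(\gamma)\norm\cdot$. It also yields that every maximizing functional is represented in $H$ by the Riesz representation theorem.
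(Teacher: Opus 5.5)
Your proposal is correct and follows the paper's proof essentially step for step. It uses the rescaling $x=ty$ for (1), the sup/inf comparison of $\gamma(y+w)-\gamma(y)$ for the first equality in (2), sublinearity together with \cref{thm:kink}(1)--(2) on $H$ for the second equality, and \cref{thm:core}(4) for (3); the one point you defer to ``standard facts'' is positive homogeneity of $w\mapsto\sup_y\{\gamma(y+w)-\gamma(y)\}$, which the paper checks explicitly and which follows by rescaling $y$.
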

\begin{proof}
(1) By homogeneity, $t^{-1}\{\gamma(x+tw)-\gamma(x)\}=\gamma(x/t+w)-\gamma(x/t)=:D_w(x/t)$; as $(x,t)\to(0,0^+)$ the point $x/t$ ranges over all of $H$ (take $x=ty$), so the $\limsup$ equals $\sup_yD_w(y)\le\Lip(\gamma)\norm w$. (2) By (1) and the substitution $y'=y+w$, $\gamma^\circ(0;w)+\gamma^\circ(0;-w)=\sup_yD_w(y)-\inf_yD_w(y)$, which vanishes iff $D_w$ is constant, i.e.\ iff $\gamma(y+w)-\gamma(y)=\gamma(w)$ for all $y$, i.e.\ iff $w\in\Afam(\gamma)$. The function $p(w):=\sup_yD_w(y)$ is positively homogeneous (scale $y$), subadditive ($D_{w+w'}(y)=D_w(y+w')+D_{w'}(y)$) and bounded by $\Lip(\gamma)\norm w$, hence by \cref{thm:kink}(1) the support function of a closed bounded convex set, which is $\partial^C\gamma(0)$ by definition; the lineality space of a support function is $(\Lambda-\Lambda)^\perp$ (\cref{thm:kink}(2)). (3) From (2) and \cref{thm:core}(4).
\end{proof}
For piecewise-linear $g$ this is \cref{prop:addpl}(1)--(2); for sublinear $\gamma$, $\partial^C\gamma(0)=\LamP$ and $\Afam=\Hk$. The core itself is not the lineality space of a support function in general (\cref{prop:addpl}(4)).

\section{Order statistics at ties}\label{app:order}
\begin{theorem}[order statistics]\label{thm:order}
Let $g_k(z):=z_{(k)}$ be the $k$-th largest coordinate on $\R^K$, $K\ge2$, $1\le k\le K$, $m:=\min(k-1,K-k)$, and for a subspace $U$ and index $j$ write $d^{(j)}_i:=(z_i-z_j)|_U$, $i\ne j$.
\begin{enumerate}[label=(\arabic*)]
\item $\Nodd(g_k)=\{z:z_{(k)}=z_{(K+1-k)}\}$; $g_k$ is odd iff $K=2k-1$.
\item $\Afam(g_k)=\Hk(g_k)=\R\one$; \assS holds for every $k$; in the reduced model the core is $F^\perp$ with core bound $1/\one^\top\Sigma^{-1}\one$, the same for all $k$.
\item Every linearity subspace has dimension $\le1+m$, and $1+m$ is attained: for any $j$, any $m$ disjoint pairs $\{i_s,i'_s\}$ of other indices, any $\beta_s\in[0,1]$, and $R$ the remaining indices, $U:=\{z:z_j=\beta_sz_{i_s}+(1-\beta_s)z_{i'_s}\ (s\le m),\ z_r=z_j\ (r\in R)\}$ is a maximal linearity subspace of dimension $1+m$.
\item (Gordan criterion.) A subspace $U\ni\one$ is a linearity subspace iff for some $j$ no $m+1$ of the contrasts $d^{(j)}_i$ are simultaneously positive on $U$, iff for some $j$ every $(m+1)$-subset $S\subseteq\{i\ne j\}$ satisfies $0\in\conv\{d^{(j)}_i:i\in S\}$ in $U^*$.
\end{enumerate}
\end{theorem}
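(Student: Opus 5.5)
The plan rests on two elementary identities for $g_k$: translation equivariance $g_k(z+c\one)=g_k(z)+c$, and the reflection identity $g_k(-z)=-g_{K+1-k}(z)$. Its full-dimensional linear pieces are the coordinate functionals $e_j^\top z$, with agreement sets $\mathcal E(e_j)=\{z: z_j\text{ is a }k\text{-th largest coordinate}\}$. In terms of the contrasts $d^{(j)}_i=z_i-z_j$, that is
\[
\#\{i:d^{(j)}_i(z)>0\}\le k-1\quad\text{and}\quad\#\{i:d^{(j)}_i(z)<0\}\le K-k .
\]
Everything is then reduced, via \cref{prop:pl} and \cref{prop:addpl}, to counting signs of these contrasts.

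\emph{Parts (1) and (2).} By reflection, $g_k(z)+g_k(-z)=z_{(k)}-z_{(K+1-k)}$, which gives $\Nodd(g_k)$. Oddness means $z_{(k)}=z_{(K+1-k)}$ for all $z$, which forces $k=K+1-k$. For the additivity space, apply \cref{prop:addpl}(1): every $e_j$ is a full-dimensional piece (take a point with distinct coordinates at which $z_j$ is $k$-th), so $\Afam(g_k)=\bigcap_{i,j}\ker(e_i-e_j)=\R\one$.

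Since $\Afam\subseteq\Hk$ by \cref{thm:core}(4), it remains to show $\Hk\subseteq\R\one$. I will do this with \cref{thm:core}(3), intersecting the maximal subspaces produced in (3). If $m=0$, every linearity subspace has dimension at most $1$ and contains $\one$, so it equals $\R\one$. If $m\ge1$, fix indices $j$ and $i$ and choose a pair $\{i,i'\}$ with $\beta=1$; this gives a maximal subspace on which $z_j=z_i$. Ranging over $i$ forces all coordinates equal on the intersection, so $\Hk(g_k)=\R\one$ and \assS holds. The statements for the reduced model then follow from \cref{lem:reduction}, which gives $\Hk=\ell^{-1}(\R\one)=F^\perp$, and from \cref{prop:plugin} with $\theta=1$, which gives the bound.

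\emph{Part (3), the dimension bound.} This is the step I expect to need the most care. By \cref{prop:pl}, a linearity subspace $U$ satisfies $U\subseteq\mathcal E(e_j)$ for some $j$. Since $U=-U$, applying the two sign-count bounds to both $z$ and $-z$ gives, for every $z\in U$,
\[
\#\{i:d^{(j)}_i(z)>0\}\le m .
\]
Let $r$ be the dimension of $\spn\{d^{(j)}_i|_U\}$ in $U^*$, and pick $r$ linearly independent restricted contrasts. Surjectivity of the corresponding map $U\to\R^r$ gives a $z\in U$ making all of them positive, so $r\le m$. The common kernel of all the contrasts within $U$ lies in $\R\one$, so $\dim U\le r+1\le 1+m$. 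The delicate points are ties, where the agreement set is not a single closed cone, and the use of $-U=U$; I will check both explicitly.

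\emph{Part (3), attainment.} The displayed $U$ is cut out by $m+|R|$ independent equations in $K$ unknowns. Since $K=1+2m+|R|$, this leaves $1+m$ free parameters: $z_j$ and one coordinate from each pair. On $U$, each pair has $z_j$ between its two entries, contributing at most one coordinate above $z_j$ and at most one below, while the coordinates in $R$ are tied with $z_j$. Hence at most $m\le\min(k-1,K-k)$ coordinates lie strictly above and at most $m$ strictly below, so $U\subseteq\mathcal E(e_j)$ and $g_k$ is linear on $U$. Maximality follows from the dimension bound.

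\emph{Part (4), the Gordan criterion.} By \cref{prop:pl}, $U$ is a linearity subspace iff $U\subseteq\mathcal E(e_j)$ for some $j$. By the symmetric sign-count argument above (using $-U=U$; the hypothesis $\one\in U$ only keeps the contrast description clean), this holds iff no $m+1$ contrasts $d^{(j)}_i$ are simultaneously positive at a point of $U$. For a fixed $(m+1)$-subset $S$, Gordan's theorem of the alternative in the finite-dimensional space $U$ says that the system $d^{(j)}_i(z)>0$ for all $i\in S$ is infeasible iff $0\in\conv\{d^{(j)}_i|_U:i\in S\}$. Quantifying over $S$ and then over $j$ gives the stated equivalence.
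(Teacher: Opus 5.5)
Your proposal is correct and follows essentially the paper's route. Part (1) uses the reflection identity, and the dimension bound uses \cref{prop:pl} with the $U=-U$ sign count and a surjectivity argument. Attainment uses the pairing construction, the core is obtained by intersecting constructed maximal subspaces, and part (4) is Gordan's alternative.

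Your deviations are all valid: computing $\Afam(g_k)$ directly from \cref{prop:addpl}(1), using the $\beta_1=1$ planes $\{z_j=z_i\}$ instead of varying $\beta$, and bounding $\dim U$ through $U\cap\R\one$ rather than first reducing to $U\ni\one$. One correction: in the case $m=0$, the claim ``contains $\one$'' should be made for $\Hk$ itself, which is a linearity subspace by \cref{thm:core}(2) and contains $\Afam=\R\one$. It is not true of every linearity subspace, since $\{0\}$ is one.
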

\begin{proof}
(1) $g_k(-z)=-z_{(K+1-k)}$. (3), upper bound: $g_k(z+c\one)=g_k(z)+c$ gives $\R\one\subseteq\Afam\subseteq\Hk$, so $U+\R\one\in\Lfam$ (\cref{thm:core}(3)) and it suffices to treat $U\ni\one$. By \cref{prop:pl}, $U\subseteq \mathcal E(z_j)$ for some $j$: $\#\{i:z_i>z_j\}\le k-1$ and $\#\{i:z_i<z_j\}\le K-k$ on $U$; applying the first to $z$ and the second to $-z\in U$ gives $\#\{i:d^{(j)}_i(z)>0\}\le m$ for all $z\in U$. The functionals $d^{(j)}_i|_U$ have common kernel $\R\one$, so they span a space of dimension $r=\dim U-1$; $r$ linearly independent ones define an onto map $U\to\R^r$, so some $z$ makes all $r$ positive; hence $r\le m$. (3), construction: on $U$, $z_j$ lies between the members of each pair, so at most $m\le k-1$ coordinates exceed it and at most $m\le K-k$ are below it; thus $z_j$ has rank $k$ and $g_k=z_j$ on $U$; $\dim U=1+m$; maximality from the bound. (2) $\R\one\subseteq\Hk$ as above. If $m=0$, $\Nodd(g_k)=\R\one$ and $\Hk\subseteq\Nodd$. If $m\ge1$ (so $K\ge3$), let $w\notin\R\one$, pick $p\ne q$ with $w_p\ne w_q$ and $j\notin\{p,q\}$, and use the construction with $(i_1,i'_1)=(p,q)$, $\beta_1=\beta$, the remaining $m-1$ pairs from the other $K-3$ indices ($2(m-1)\le K-3$). $w\in U_\beta$ forces $w_j=\beta w_p+(1-\beta)w_q$, true for at most one $\beta$; another $\beta$ gives a maximal linearity subspace not containing $w$, so $w\notin\Hk$ (\cref{thm:core}(3)). The reduced-model statements follow from \cref{lem:reduction} and \cref{thm:kink}(3). (4) The rank-$k$ condition on $U=-U$ is equivalent to $\#\{i:d^{(j)}_i(z)>0\}\le m$ for all $z\in U$, i.e.\ for every $(m+1)$-subset $S$ the system $\{d^{(j)}_i>0:i\in S\}$ is infeasible on $U$, which by Gordan's alternative holds iff $\sum_{i\in S}\mu_id^{(j)}_i=0$ on $U$ for some $\mu\ge0$, $\mu\ne0$.
\end{proof}

\begin{proposition}[classification for $m=1$; a non-pairing family for $m\ge2$]\label{prop:m1}
\begin{enumerate}[label=(\arabic*)]
\item If $m=1$ ($k\in\{2,K-1\}$, $K\ge3$), $\Lfam_{\max}(g_k)$ consists exactly of the pairing planes $U_{j;p,q;\beta}=\{z_j=\beta z_p+(1-\beta)z_q,\ z_r=z_j\ (r\notin\{j,p,q\})\}$, $\beta\in[0,1]$, $j\notin\{p,q\}$.
\item If $m\ge2$ there are maximal linearity subspaces not of pairing form; e.g.\ for the median of five, $\{z_5=z_3,\ z_3=\tfrac13(z_1+z_2+z_4)\}$ is a $3$-dimensional linearity subspace in which $z_3$ is the mean of three others.
\end{enumerate}
\end{proposition}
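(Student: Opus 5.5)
For part (1), the plan is to reduce to two-dimensional subspaces containing $\one$ and then read the Gordan criterion of \cref{thm:order}(4) as a sign condition on a single real vector. For part (2), I will check the exhibited subspace directly and compare supports of defining equations.

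\emph{Part (1): reduction to planes through $\one$.} Assume $m=1$. By \cref{thm:order}(2), $\Hk(g_k)=\R\one$. By \cref{thm:core}(3), every maximal linearity subspace $U$ therefore contains $\one$. By \cref{thm:order}(3), $\dim U\le 2$. The line $\R\one$ is itself not maximal, because it lies in any pairing plane, and pairing planes are linearity subspaces by \cref{thm:order}(3). Hence every maximal $U$ has the form $U=\spn(\one,u)$ with $u\notin\R\one$.

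\emph{Part (1): applying Gordan.} \Cref{thm:order}(4) supplies an index $j$ such that no two of the contrasts $d^{(j)}_i$ are simultaneously positive on $U$. Each contrast vanishes on $\one$, so on $U$ it is determined by the number $a_i:=u_i-u_j$. The elements of $U$ are $c\one\pm su$ with $s\ge0$. The condition therefore says that at most one $a_i$ is positive and at most one is negative; all others are zero, and not all vanish since $u\notin\R\one$.
\begin{itemize}[label=--]
\item If $a_p>0>a_q$, then $u_j=\beta u_p+(1-\beta)u_q$ with $\beta=-a_q/(a_p-a_q)\in(0,1)$, and $u_r=u_j$ for every other $r$.
\item If only $a_p\ne0$, choose any third index $q$ (possible since $K\ge3$) and take $\beta=0$, so that $u_j=u_q$.
\end{itemize}
In either case $\one$ and $u$ satisfy the $K-2$ independent equations defining $U_{j;p,q;\beta}$. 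That plane has dimension $2$, so $U=U_{j;p,q;\beta}$. Conversely, every pairing plane is maximal by \cref{thm:order}(3), which completes the classification.

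\emph{Part (2): the exhibited subspace is a maximal linearity subspace.} Take the median of five, so $k=3$, $K=5$, $m=2$, and set $U=\{z_5=z_3,\ 3z_3=z_1+z_2+z_4\}$. Since $z_3$ is the mean of $z_1,z_2,z_4$, at most two of these three lie strictly above $z_3$ and at most two strictly below. The remaining coordinate satisfies $z_5=z_3$. Hence at most $2$ coordinates exceed $z_3$ and at most $2$ lie below it, so $g_3=z_3$ on $U$ and $U$ is a linearity subspace. Its dimension is $3=1+m$, so it is maximal by the dimension bound in \cref{thm:order}(3).

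\emph{Part (2): $U$ is not of pairing form.} A pairing subspace is the common zero set of two independent linear equations. Each equation has the form $e_j-\beta e_p-(1-\beta)e_q$ and has support of size at most $3$. The annihilator of $U$ is $\{a(e_5-e_3)+b(3e_3-e_1-e_2-e_4)\}$.
\begin{itemize}[label=--]
\item If $b\ne0$, the support contains $\{1,2,4\}$. It also contains $3$ or $5$, since the coefficients $3b-a$ and $a$ cannot both vanish. So the support has size at least $4$.
\item If $b=0$, the vector is a multiple of $e_5-e_3$.
\end{itemize}
Thus every vector in the annihilator with support of size at most $3$ lies on the line $\R(e_5-e_3)$. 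This line cannot contain two independent pairing equations, so $U$ is not a pairing subspace.

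The main obstacle is part (1): one must translate the Gordan criterion correctly, using both $\pm u$, so that it limits the numbers of positive and of negative $a_i$ to one each. One must also handle the degenerate one-sign case, which needs $\beta\in\{0,1\}$ and a third index.
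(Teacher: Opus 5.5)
Your proof is correct and follows the paper's argument. In (1) you reduce to planes $\spn(\one,u)$ and read the Gordan condition as a sign constraint on the scalars $u_i-u_j$, which is exactly the paper's quotient-scalar argument. In (2) you check that $z_3$ has rank $3$ and invoke the dimension bound; your support-counting argument for the non-pairing claim is a more explicit version of the paper's remark that the three nonzero contrasts sum to zero without forming antipodal pairs.
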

\begin{proof}
(1) A maximal $U$ is $2$-dimensional, contains $\one$, and on the one-dimensional quotient $U/\R\one$ each $d^{(j)}_i$ is a scalar $c_i$ times a fixed functional; pairwise non-positivity of products means at most one $c_i>0$ and at most one $c_i<0$, the rest zero; this is the pairing form (with $\beta\in\{0,1\}$ when only one is nonzero); conversely every $U_{j;p,q;\beta}$ is a linearity subspace of dimension $2=1+m$. (2) On the displayed subspace $z_3$ is the mean of $z_1,z_2,z_4$ and $z_5=z_3$, so $z_3$ has rank $3$; the subspace is $3$-dimensional, hence maximal; the three nonzero contrasts sum to zero without being antipodal pairs.
\end{proof}
For each candidate subspace $U$, the Gordan criterion in \cref{thm:order}(4) requires checking each $(m+1)$-subset of the $K-1$ coordinate differences. The resulting maximal subspaces can vary continuously, as \cref{prop:m1} shows.

The examples above also show why the core and the additivity space must be distinguished. For $\gamma(h)=h_1^2/\norm h$, there is a unique maximal linearity subspace, but the two spaces differ (\cref{rem:core-add}); for $\gamma(h)=h_1^3/\norm h^2$, every line is a maximal linearity subspace, the core is $\{0\}$, and $\Vstar=1$ is attained only on the $e_1$-axis. Proposition~\ref{prop:addpl}(4) shows that \assS can fail even for a piecewise-linear derivative.

\subsection{Information bounds and estimators for the three-programme median}\label{app:median-details}
Retain the setting of \cref{ex:median}: $\mu(P)=\theta\one$, $\gamma=\med\circ\ell$, $\ell(h)=(\ip{\lambda_j}h)_{j=1}^3$, and the Gram matrix $\Sigma$ is positive definite. Here $\one=(1,1,1)^\top$, and $e_j$ denotes the $j$-th coordinate vector. For $0\ne a\perp\one$, let $U_a=\{z:a^\top z=0\}$ and $V_a=\ell^{-1}(U_a)$. An index $j$ is admissible for $a$ if $a_j\ne0$ and $a_ja_i\le0$ for every $i\ne j$. Such an index always exists. If all entries of $a$ are nonzero it is the unique index carrying one sign; if exactly two entries are nonzero, either of those indices is admissible. For any admissible $j$, the equation defining $U_a$ expresses $z_j$ as a convex combination of the other two coordinates, so $\med(z)=z_j$ on $U_a$. In the two-index case the two admissible coordinates agree on the plane. By \cref{thm:order,prop:m1}, these planes are exactly the maximal linearity spaces of the median.

For an admissible $j$, \cref{lem:reduction} gives
\[
\tilde\psi_{V_a}
=\lambda_j-\frac{(\Sigma a)_j}{a^\top\Sigma a}
 \sum_{i=1}^3a_i\lambda_i,
\qquad
V_{V_a}=\Sigma_{jj}-\frac{(\Sigma a)_j^2}{a^\top\Sigma a}
\ge\Vk.
\]
The formula does not depend on the admissible index chosen. When $a\propto e_j-e_k$, it reduces to the variance of the generalized least-squares pool of programmes $j,k$,
\[
V_{V_a}=\frac{\Sigma_{jj}\Sigma_{kk}-\Sigma_{jk}^2}
 {\Sigma_{jj}+\Sigma_{kk}-2\Sigma_{jk}}.
\]
For $\Sigma=I_3$, write the two entries other than an admissible $a_j$ as $-b,-c$ after a common change of sign, with $b,c\ge0$ and $b+c>0$. Then
\[
V_{V_a}=1-\frac{(b+c)^2}{(b+c)^2+b^2+c^2}\in[1/3,1/2].
\]
The lower endpoint occurs at $b=c$, giving a midpoint plane; the upper endpoint occurs at $bc=0$, giving a pair-tie plane. This proves the range and endpoint comparisons in \cref{ex:median}.

The largest subspace bound can also be evaluated directly from the order cones. For distinct $i,j,k$, put
\[
C_{ijk}=\{z:z_i\le z_j\le z_k\},\qquad
\mathcal C_{ijk}=\Sigma^{-1/2}C_{ijk},\qquad
Q_j=\max_{\{i,k\}=\{1,2,3\}\setminus\{j\}}
 \norm{\Pi_{\mathcal C_{ijk}}(\Sigma^{1/2}e_j)}^2,
\]
where the maximum includes both orderings of $i,k$ and the projection is Euclidean. Since the median equals $z_j$ on $C_{ijk}$, \cref{prop:vstar} implies
\[
\Vstar=\max_{j\le3}Q_j,\qquad
Q_j\le\Sigma_{jj}.
\]
Equality $Q_j=\Sigma_{jj}$ holds exactly when $\Sigma e_j$ lies in one of the two order cones with middle index $j$, or equivalently when $\Sigma_{jj}$ is the median of the $j$-th column of $\Sigma$. For $\Sigma=\operatorname{diag}(2,1,2)$, the three contributions are $(Q_1,Q_2,Q_3)=(1,2/3,1)$. Thus $\Vstar=1$ is supplied by coordinates $1$ and $3$, even though it is below their individual variances $\Sigma_{11}=\Sigma_{33}=2$.

For the estimator comparisons, suppose $\hat\mu$ is jointly asymptotically linear with component influence functions $\lambda_j$, and let $\hat\Sigma$ be a consistent covariance estimate. Define
\[
w^*=\frac{\Sigma^{-1}\one}{\one^\top\Sigma^{-1}\one},\qquad
\psik=\sum_{j=1}^3w_j^*\lambda_j,\qquad
\Vk=\norm\psik^2=\frac{1}{\one^\top\Sigma^{-1}\one}.
\]
The generalized least-squares pool $\hat T_\kappa=\sum_jw_j^*\hat\mu_j$ is core-efficient. The same is true with $w^*$ replaced by its plug-in estimate from $\hat\Sigma$: the weights sum to one exactly and all three means are equal at $P$. For a maximal space $V_a$ and any admissible $j$, \cref{thm:al} gives
\[
\hat T_\kappa\text{ is regular along }V_a
\quad\Longleftrightarrow\quad
w^*-e_j\in\spn\{a\}.
\]
Whenever this holds, $V_{V_a}=\Vk$, because the core-efficient influence function also represents the target on $V_a$.

If $w^*$ is in the open simplex, meaning $w_j^*>0$ for every $j$ and $\one^\top w^*=1$, the displayed condition selects exactly three distinct planes, with normals $a\propto w^*-e_j$, $j=1,2,3$. On these planes the additional regularity requirement has no variance cost. At the boundary $w^*=e_j$, the pool is programme $j$ itself and is regular along every $V_a$ for which $j$ is admissible, all with bound $\Vk$. For example,
\[
\Sigma=\begin{pmatrix}1&1&1\\1&2&1\\1&1&2\end{pmatrix}
\]
is positive definite and satisfies $\Sigma e_1=\one$, so $w^*=e_1$ and $\Vk=1$. In both the interior and boundary cases, the pool is not regular along any of the remaining maximal spaces. Wherever $V_{V_a}>\Vk$, \cref{prop:tradeoff} gives the strict information cost of the larger requirement.

An estimator attaining the bound for any specified maximal space is
\[
\hat T_a
=\hat\mu_j-\frac{(\hat\Sigma a)_j}{a^\top\hat\Sigma a}
 a^\top\hat\mu,
\]
with $j$ admissible for $a$. By \cref{prop:suff}, its influence function is $\tilde\psi_{V_a}$, so it is $V_a$-efficient. The estimated covariance affects no first-order term because $a^\top\mu(P)=0$. The covariance-based estimates may be set to the three-programme average on the event that $\hat\Sigma$ is not positive definite, whose probability tends to zero. The single-programme estimator $\hat\mu_j$ is regular along every $V_a$ for which $j$ is admissible, by its agreement with the median on that space (\cref{thm:al}). It is not regular along the sum of any two distinct such maximal spaces: their reduced planes sum to $\R^3$, on which the median is nonlinear.

Finally, put $W=(W_1,W_2,W_3)^\top$, with $W_j=\Delta(\lambda_j-\psik)$. The plug-in estimator $\med_j\hat\mu_j$ is regular along the core, with
\[
L=N(0,\Vk)*M,\qquad M=\Law(\med(W)),
\]
by \cref{prop:plugin} at the full tie. The residual has mean zero because $W$ is symmetric and the median is odd, so selection at the tie introduces no mean shift. Among linear score subspaces, the plug-in is regular only on subspaces contained in the core. Its residual therefore distinguishes its response from that of a linear pool even though both share the same core Gaussian bound.

\section{Regularity without directional differentiability}\label{app:rotating}
\begin{proposition}[regularity does not imply directional differentiability]\label{prop:rotating}
Let $X_1,\dots,X_n$ be i.i.d.\ $N(\theta,I_2)$, $\theta\in\R^2$. Put $\vartheta(r):=\sin\sqrt{\log^+(1/r)}$ for $r>0$, $u(\vartheta):=(\cos\vartheta,\sin\vartheta)$, $f(\theta):=\ip{u(\vartheta(\norm\theta))}\theta$ for $\theta\ne0$, $f(0):=0$, and $T_n:=f(\bar X_n)$ (the plug-in). Then:
\begin{enumerate}[label=(\arabic*)]
\item $f$ is continuous with $\abs{f(\theta)}\le\norm\theta$, real-analytic on $\{0<\norm\theta<1\}$;
\item at $\theta=0$, $f$ has no directional derivative in any direction $h\ne0$:
\[
t^{-1}f(th)=\ip{u(\vartheta(t\norm h))}h
\]
oscillates as $t\downarrow0$;
\item for every $h\in\R^2$ and every $h_n\to h$,
\[
\sqrt n\{T_n-f(h_n/\sqrt n)\}\wto N(0,1)
\quad\text{under }N(h_n/\sqrt n,I_2)^{\otimes n}:
\] $T_n$ is regular at $0$ along all of $\R^2$, uniformly in local sequences, with a Gaussian limit;
\item along any subsequence with $\vartheta(1/\sqrt{n_k})\to\vartheta^*$, $\sqrt{n_k}f(h/\sqrt{n_k})\to\ip{u(\vartheta^*)}h$, linear in $h$ (the subsequence form of \cref{lem:tilt}), and different subsequences give different linear functionals.
\end{enumerate}
\end{proposition}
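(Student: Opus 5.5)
The plan is to treat $f$ as a slowly rotating linear functional and to prove that it is ``asymptotically linear at every scale.'' All four parts then follow from one derivative estimate.

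\emph{Parts (1) and (2).} For $\theta\ne0$ we have $\abs{f(\theta)}\le\norm{u}\,\norm\theta=\norm\theta$. So $f$ is continuous at $0$. Away from $0$ it is a composition of continuous maps, and on $0<\norm\theta<1$ the functions $\log(1/r)>0$, $\sqrt\cdot$, $\sin$, $\cos$ and $\norm\cdot$ are real-analytic, which gives the analyticity claim. For (2), homogeneity of the inner product gives $t^{-1}f(th)=\ip{u(\vartheta(t\norm h))}h$. As $t\downarrow0$, $\sqrt{\log(1/(t\norm h))}\to\infty$ continuously, so $\vartheta$ sweeps $[-1,1]$ infinitely often. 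The map $\vartheta\mapsto\ip{u(\vartheta)}h$ is nonconstant on $[-1,1]$ for $h\ne0$, because $u$ traces an arc of angle $2$ radians, so the quotient has no limit.

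\emph{The key estimate, and the main step.} I would show that for $\norm\theta<1$,
\[
\nabla f(\theta)=u(\vartheta(\norm\theta))+O\bigl(\log(1/\norm\theta)^{-1/2}\bigr).
\]
To see this, differentiate $f=\ip{u(\vartheta(r))}\theta$ with $r=\norm\theta$. This gives $\nabla f=u+\ip{u'(\vartheta)}\theta\,\vartheta'(r)\,\theta/r$. Since $\abs{\vartheta'(r)}=\abs{\cos(\cdot)}\big/\bigl(2r\sqrt{\log(1/r)}\bigr)$, the second term is bounded by $1/(2\sqrt{\log(1/r)})\to0$.

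I would also use the slow variation of the angle: for $r,r'$ with $r'/r\in[K^{-1},K]$, we have $\abs{\vartheta(r)-\vartheta(r')}\le\abs{\sqrt{\log(1/r)}-\sqrt{\log(1/r')}}=O(\log K/\sqrt{\log(1/r)})$.

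Now fix $h$, $h_n\to h$, and write $\bar X_n=h_n/\sqrt n+Z/\sqrt n$ with $Z\sim N(0,I_2)$ exactly under the local law. Then
\[
\sqrt n\{f(\bar X_n)-f(h_n/\sqrt n)\}=\sqrt n\{f((h_n+Z)/\sqrt n)-f(h_n/\sqrt n)\}.
\]
By positive homogeneity in the form $f(x/\sqrt n)=\sqrt n^{-1}\ip{u(\vartheta(\norm x/\sqrt n))}x$, this equals
\[
\ip{u_n(h_n+Z)}{h_n+Z}-\ip{u_n(h_n)}{h_n},\qquad u_n(x):=u(\vartheta(\norm x/\sqrt n)).
\]
On the event $\{K^{-1}\le\norm{h_n+Z},\ \norm{h_n}\le K\}$, slow variation gives $u_n(h_n+Z)-u(\vartheta(1/\sqrt n))\to0$ uniformly, and likewise for $u_n(h_n)$. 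The difference is therefore $\ip{u(\vartheta(1/\sqrt n))}{Z}+o_P(1)$.

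The terms with small norm are handled by the bound $\abs{f(x)}\le\norm x$. A separate case is $h=0$ with $h_n$ small. There the whole expression is $\ip{u_n(Z)}{Z}+O(\norm{h_n})$, and on $\norm Z\ge K^{-1}$ one has $u_n(Z)\approx u(\vartheta(1/\sqrt n))$. Letting $K\to\infty$ after $n\to\infty$ completes the approximation.

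Since $\ip{u(\vartheta(1/\sqrt n))}{Z}\sim N(0,1)$ exactly for every $n$, whatever the angle, this proves (3). The main obstacle is making this approximation uniform over the random scale $\norm{h_n+Z}$. The slow-variation bound handles it once the two-sided truncation at $K$ is in place.

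\emph{Part (4).} By homogeneity, $\sqrt{n_k}f(h/\sqrt{n_k})=\ip{u(\vartheta(\norm h/\sqrt{n_k}))}h$. Slow variation with $K=\max(\norm h,1/\norm h)$ gives $\vartheta(\norm h/\sqrt{n_k})-\vartheta(1/\sqrt{n_k})\to0$, so the limit is $\ip{u(\vartheta^*)}h$. Because $\vartheta(1/\sqrt n)$ has every point of $[-1,1]$ as a limit point, distinct $\vartheta^*$ give distinct unit vectors $u(\vartheta^*)$, and hence distinct linear functionals.
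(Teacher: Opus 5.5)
Your proof is correct and follows essentially the same route as the paper. Both use the exact representation $\sqrt n\bar X_n=h_n+Z$, homogeneity, slow variation of $\vartheta$ at scale $1/\sqrt n$ over compact ratio sets, a two-sided truncation controlled by tightness and small-ball probabilities, and the exact $N(0,1)$ law of $\ip{u(\vartheta(1/\sqrt n))}{Z}$. The gradient estimate for $\nabla f$ is not needed in the paper's version, but it harmlessly justifies your $O(\norm{h_n})$ replacement in the case $h=0$.
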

\begin{proof}
(1)--(2) are direct ($\vartheta$ takes every value in $[-1,1]$ on every $(0,\epsilon)$; $\vartheta\mapsto\ip{u(\vartheta)}h$ is non-constant on $[-1,1]$ for $h\ne0$). (3) Under $\theta_n=h_n/\sqrt n$, $\sqrt n\bar X_n=h_n+Z$ exactly, $Z\sim N(0,I_2)$. With $\vartheta_n:=\vartheta(1/\sqrt n)$: $\sqrt nT_n=\ip{u(\vartheta(\norm{h_n+Z}/\sqrt n))}{h_n+Z}$ and $\sqrt nf(\theta_n)=\ip{u(\vartheta(\norm{h_n}/\sqrt n))}{h_n}$. For $c$ in a compact subset of $(0,\infty)$,
\[
\abs{\sqrt{\log(\sqrt n/c)}-\sqrt{\log\sqrt n}}
=\frac{\abs{\log c}}{\sqrt{\log(\sqrt n/c)}+\sqrt{\log\sqrt n}}
\to0
\]
uniformly, and $\sin$ is $1$-Lipschitz; since $\norm{h_n+Z}$ is tight with $P(\norm{h_n+Z}<\epsilon)\to0$ as $\epsilon\to0$ uniformly in $n$, $\vartheta(\norm{h_n+Z}/\sqrt n)-\vartheta_n\to0$ in probability, and $\vartheta(\norm{h_n}/\sqrt n)-\vartheta_n\to0$ when $h\ne0$ (when $h=0$ both $\sqrt nf(\theta_n)$ and $\ip{u(\cdot)}{h_n}$ tend to $0$). As $u$ is Lipschitz and $h_n+Z$ tight, $\sqrt n\{T_n-f(\theta_n)\}=\ip{u(\vartheta_n)}Z+o_P(1)$, and $\ip{u(\vartheta_n)}Z\sim N(0,1)$ exactly. (4) is immediate.
\end{proof}
Proposition~\ref{prop:rotating} illustrates why \cref{def:dpd} explicitly requires directional limits: even a regular estimator with a Gaussian limit need not supply them. \citet{Pfanzagl2000} establishes the same distinction with a counterexample. The construction here is a self-contained illustration of this distinction.

\section{Path conditions for the programme-value example}
\label{app:policy-model}
Consider the bounded-outcome model described in \cref{ex:policy}.
Let $Q$ be another law in this model, write $m_{a,Q}$ and $\pi_{a,Q}$
for its outcome regression and propensity, and let $H(P,Q)$ denote
Hellinger distance without the factor $1/\sqrt2$. Direct conditioning
gives
\[
\mu_a(Q)-\mu_a(P)-\E_Q\lambda_a
=\E_{Q_X}\left[
 \left(1-\frac{\pi_{a,Q}}{\pi_a}\right)(m_{a,Q}-\mu_a)
\right].
\]
Choose versions of the conditional laws on marginal null sets with
the same outcome and overlap bounds. If $v(x)$ is their conditional
total-variation distance, then
$|\pi_{a,Q}-\pi_a|\le v$ and
$|m_{a,Q}-\mu_a|\le 3B_0v/\epsilon$.
Moreover,
$\int v^2\,dQ_X\le4H^2(P,Q)$: total variation is bounded by
conditional Hellinger distance, and the triangle inequality between
$\sqrt{q}$, $\sqrt{p}$, and $\sqrt{q_X}\sqrt{p(\cdot\mid X)}$
bounds the latter distance in $L_2(Q_X)$ by $2H(P,Q)$.
The displayed remainder is therefore bounded in absolute value by
$12B_0 H^2(P,Q)/\epsilon^2$.
Along any DQM path, $H(P_{t,h},P)=O(t)$ and boundedness of
$\lambda_a$ gives
$\E_{P_{t,h}}\lambda_a=t\ip{\lambda_a}h+o(t)$.
This proves \eqref{eq:policy-pd} along every such path.

To verify \assT, let $h\in L_2^0(P)$ and set
\[
h_t=\max(-t^{-1/2},\min(h,t^{-1/2}))
-\E\max(-t^{-1/2},\min(h,t^{-1/2})).
\]
Then $h_t\to h$ in $L_2(P)$ and
$\|t h_t\|_\infty\le2\sqrt t\to0$.
The densities $dP_t/dP=1+t h_t$ are consequently positive for small
$t$ and define a DQM path with score $h$. They preserve the outcome
bound, and their propensity ratios are bounded below by
$(1-2\sqrt t)/(1+2\sqrt t)$.
Since $\pi_a\ge\epsilon+\eta$, they also preserve the model's
overlap bound for all sufficiently small $t$.
Thus every score in the closed space $L_2^0(P)$ is realized.
\section{Optimal individualized treatment value: geometry and attainment}\label{app:otr}
We prove \cref{cor:otr-geometry,cor:otr-attain} in four steps. First we verify the all-path derivative and identify the active gradients. We then compute their affine-hull projection, establish the feasible estimator's expansion, and compare the one-sided cones with the core.

\begin{proof}[Proof of \cref{cor:otr-geometry,cor:otr-attain}]
\emph{Step 1: paths and gradients.}
The density tilts in \cref{app:policy-model} preserve the finite support of $X$ and realize every $h\in L_2^0(P)$ while respecting the common outcome and overlap bounds. Along every DQM path, expectations of bounded functions admit their usual linear expansions. In particular, $p_x$, $P(X=x,A=a)$, and $\E\{Y\ind\{X=x,A=a\}\}$ do so. Their positive denominators at $P$ allow differentiation of ratios, giving
\[
\begin{split}
\left.\frac{d}{dt}p_{x,t}\right|_{0+}&=p_x\E(h\mid X=x),\\
\left.\frac{d}{dt}\mu_{a,t}(x)\right|_{0+}
&=\frac{\E[\ind\{X=x,A=a\}\{Y-\mu_a(x)\}h]}{p_x\pi_a(x)}
=\dot\mu_{a,h}(x).
\end{split}
\]
There are finitely many cells and rules, so differentiation of the maximum proves \eqref{eq:otr-derivative}. Write $R_d(O)=R_{d(X)}(O)$ and
\[
\lambda_d(O)=q(X)-\Psi(P)+R_d(O),\qquad
\mathcal D^*=\{d:d(x)\in\mathcal A^*(x)\ \forall x\}.
\]
Then $\gamma(h)=\max_{d\in\mathcal D^*}\ip{\lambda_d}h$ and $\LamP=\conv\{\lambda_d:d\in\mathcal D^*\}$. Two active rules can be chosen to differ in just one stratum and in any pair of its active treatments. Consequently
\[
\spn(\LamP-\LamP)
=\spn\{\ind\{X=x\}(R_a-R_b):x\le J,\ a,b\in\mathcal A^*(x)\}.
\]
Orthogonality to these differences is precisely \eqref{eq:otr-core}. The sublinear geometry theorem therefore identifies $\Hk$ as the unique maximal linearity subspace.

\emph{Step 2: the efficient projection.}
The affine hull of the active gradients consists of
\begin{equation}\label{eq:otr-affine-proof}
q(X)-\Psi(P)+\sum_aw_a(X)R_a,
\qquad \sum_{a\in\mathcal A^*(x)}w_a(x)=1,
\quad w_a(x)=0\ \text{outside }\mathcal A^*(x).
\end{equation}
To verify this representation, affine combinations of rules give weights of this form. Conversely, fix one active rule. The differences formed by changing it in one stratum generate every array of weights whose sum within each stratum is zero, proving the reverse inclusion. The conditional residuals satisfy
\[
\E(R_a\mid X)=0,\qquad
\E(R_aR_b\mid X)=0\ (a\ne b),\qquad
\E(R_a^2\mid X=x)=c_a(x)>0.
\]
Thus the squared norm of \eqref{eq:otr-affine-proof} is
\[
\Var\{q(X)\}+\sum_xp_x\sum_{a\in\mathcal A^*(x)}c_a(x)w_a(x)^2.
\]
The minimization separates by stratum. Cauchy--Schwarz gives
\[
1=\left(\sum_aw_a(x)\right)^2
\le\left(\sum_ac_a(x)w_a(x)^2\right)
\left(\sum_ac_a(x)^{-1}\right),
\]
where both sums run over $\mathcal A^*(x)$, with equality exactly at \eqref{eq:otr-weights}. This proves \eqref{eq:otr-if-bound}. All these weights are positive. The minimizing gradient also lies in the convex hull: take the convex combination of active rules with coefficients $\prod_xw_{d(x)}^*(x)$. Its conditional slope satisfies
\[
\dot\mu_{a,\psik}(x)=w_a^*(x)c_a(x)
=\left\{\sum_{b\in\mathcal A^*(x)}c_b(x)^{-1}\right\}^{-1}
\quad(a\in\mathcal A^*(x)),
\]
so $\psik\in\Hk$, consistently with the projection interpretation. Applying \cref{thm:main} proves the core convolution bound.

\emph{Step 3: feasible estimation and local regularity.}
Every empirical cell mean is root-$n$ consistent and the empirical cell probabilities and variances are consistent. Positivity of the true cell probabilities and variances implies that the exceptional event in the definition of \eqref{eq:otr-estimator} has probability tending to zero. Since there are finitely many treatments and strata,
\[
\max_{x,a}\abs{\hat\mu_a(x)-\mu_a(x)}=O_{P^n}(n^{-1/2}).
\]
Active treatments differ in estimated mean by $O_{P^n}(n^{-1/2})=o_{P^n}(\tau_n)$, while every inactive treatment has a fixed strictly positive gap from the stratum's optimal mean. Hence
\[
P^n\{\widehat{\mathcal A}(x)=\mathcal A^*(x)\text{ for every }x\}\longrightarrow1,
\qquad \max_{x,a}\abs{\hat w_a(x)-w_a^*(x)}=o_{P^n}(1).
\]
On the event of correct active sets, exact normalization and equality of the active means give
\[
\sum_a\hat w_a(x)\hat\mu_a(x)-q(x)
=\sum_aw_a^*(x)\{\hat\mu_a(x)-\mu_a(x)\}
+o_{P^n}(n^{-1/2})
\]
simultaneously over $x$. Indeed, the remainder is a finite sum of products of a consistent weight error and a root-$n$ mean error; the term multiplying the common active mean is exactly zero. Differentiating the empirical ratios gives
\[
\hat\mu_a(x)-\mu_a(x)
=\frac1n\sum_{i=1}^n
\frac{\ind\{X_i=x\}}{p_x}R_a(O_i)+o_{P^n}(n^{-1/2}).
\]
Combining this expansion with $\hat p_x-p_x=(\mathbb P_n-P)\ind\{X=x\}$ proves \eqref{eq:otr-al}. Consistency of \eqref{eq:otr-variance} follows from the same active-set event, consistency of the cell moments, and positivity of the limiting denominators.

For any DQM path with score $h$, contiguity transfers the $o_{P^n}(1)$ remainder in \eqref{eq:otr-al} and the high-probability active-set event to $\Pn h$. The joint central limit theorem for $(\psik,h)$ and Le Cam's third lemma yield
\[
\sqrt n\{\widehat\Psi_{\mathrm{eff}}-\Psi(P)\}
\wto N(\ip{\psik}h,\Vk)
\quad\text{under }\Pn h.
\]
Subtracting the all-path expansion of the target gives \eqref{eq:otr-drift}. In each stratum a positive weighted average is at most its maximum, with equality exactly when every active slope is equal. Since all $p_x>0$, $\delta(h)=0$ is equivalent to $h\in\Hk$. In particular, the centered limit is $N(0,\Vk)$ for every path whose score lies in $\Hk$, proving core regularity and attainment. Identification of the active sets here refers to those at the base law $P$; under a local tie-breaking path the estimator continues to pool them, producing precisely the displayed drift.

\emph{Step 4: one-sided cones.}
Fix $d\in\mathcal D^*$ and put $h_d=R_d/c_{d(X)}(X)$. This is a bounded mean-zero score, since $X$ has finite support and all $c_a(x)>0$. Conditional orthogonality gives
\[
\dot\mu_{d(x),h_d}(x)=1,\qquad
\dot\mu_{a,h_d}(x)=0\quad(a\ne d(x)).
\]
Thus all nonvacuous inequalities defining $C_d$ hold strictly at $h_d$. There are finitely many continuous linear inequalities, so $C_d$ contains a nonempty open set and spans $\T$. If there are no tied strata then $C_d=\T$ directly. On $C_d$, $\gamma(h)=\ip{\lambda_d}h$, and the general cone convolution theorem gives the Gaussian variance $\norm{\lambda_d}^2=V_d$ in \eqref{eq:otr-cone-price}. Equivalently, the preceding strict inequalities expose $\lambda_d$ as a vertex of the active-gradient hull. This argument does not require nonsingularity of the Gram matrix of the active rules.

The empirical fixed-rule value has influence function $\lambda_d$ by the cell-mean expansion in Step~3 (with the same harmless fallback on empty cells), so it is regular and attains $V_d$ along $C_d$. Subtracting \eqref{eq:otr-if-bound} yields the stated price. If $\mathcal A^*(x)=\{d(x)\}$, its stratum contribution is zero. If at least two treatments are active there, positivity of each $c_a(x)^{-1}$ gives
\[
\left\{\sum_{a\in\mathcal A^*(x)}c_a(x)^{-1}\right\}^{-1}<c_{d(x)}(x).
\]
Since $p_x>0$, the total price is then strictly positive.
\end{proof}

\section{Conditional maxima: path expansion and efficiency}\label{app:cm}
We first verify a sufficient observed-data condition for the expansion used in \cref{prop:cm}. We then identify the linearity core and construct the conditional projection, including the measurable and singular cases.

\begin{lemma}[bounded conditional moment ratios]\label{lem:cm-paths}
Let $U_\ell,V_\ell$, $\ell\le L$, be fixed bounded observed variables. Suppose $\E_Q[V_\ell\mid X]\ge\epsilon>0$ under every law $Q$ in the model. For fixed bounded measurable coefficient functions $a_{j\ell}$, define
\[
\eta_\ell(Q,X)=\frac{\E_Q[U_\ell\mid X]}{\E_Q[V_\ell\mid X]},\qquad
b_j(Q,X)=a_{j0}(X)+\sum_{\ell=1}^L a_{j\ell}(X)\eta_\ell(Q,X).
\]
Then the branches have uniformly bounded versions and satisfy \eqref{eq:cm-path} along every DQM path, with
\begin{equation}\label{eq:cm-ratio-residual}
r_j=\sum_{\ell=1}^L a_{j\ell}(X)
\frac{U_\ell-\eta_\ell(P,X)V_\ell}{\E_P[V_\ell\mid X]}.
\end{equation}
\end{lemma}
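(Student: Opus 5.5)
The plan is to reduce \eqref{eq:cm-path} to an $L_1$ statement about conditional first moments. That statement follows directly from DQM, and the ratio structure is then handled with the uniform lower bound $\epsilon$. Since $b_j$ is a fixed bounded affine combination of the $\eta_\ell$ and $L$ is finite, it suffices to treat a single ratio $\eta=\eta_\ell$ with $\abs U\le B$, $\abs V\le B$. Uniformly bounded versions are immediate: any version of $\E_Q[U\mid X]/\E_Q[V\mid X]$ may be truncated to $[-B/\epsilon,B/\epsilon]$, and on $Q_X$-null sets we may redefine it by a fixed value in that interval. Then $\abs{b_j(Q,\cdot)}$ is bounded uniformly over the model.

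\emph{Step 1 (moment measures).} Fix a DQM path $P_t$ with score $h$, a dominating measure $\mu$, and densities $p_t,p$. The standard consequence of DQM that I will use is
\[
\norm{(p_t-p)/t-hp}_{L_1(\mu)}\to0,
\]
which follows by writing $p_t-p=(\sqrt{p_t}-\sqrt p)(\sqrt{p_t}+\sqrt p)$ and applying Cauchy--Schwarz. For the signed measures $\alpha^U_t(A)=\E_{P_t}[U\ind\{X\in A\}]$, let $n_t$ be the $P_X$-density of the absolutely continuous part, and define $d_t$ analogously with $V$ in place of $U$. Pushing forward to $X$ does not increase total variation, and dropping the singular part does not either. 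Hence
\[
\frac{n_t-n}{t}\to\E[Uh\mid X],\qquad \frac{d_t-d}{t}\to\E[Vh\mid X]\qquad\text{in }L_1(P_X),
\]
with $n=\E[U\mid X]$ and $d=\E[V\mid X]\ge\epsilon$. On the $P_X$-a.e.\ set where $d_t>0$, the chosen version satisfies $\eta_t=n_t/d_t$, because both moment measures and $P_{t,X}$ share the same absolutely continuous part structure.

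\emph{Step 2 (ratio, the main obstacle).} Write
\[
\eta_t-\eta=\frac{(n_t-n)-\eta(d_t-d)}{d_t}.
\]
The difficulty is that $d_t$ may be small on a set of $P_X$-measure $O(t)$ where $P_t$ puts little mass, and a crude bound there only gives $O(1)$ after dividing by $t$. I would split the integral according to whether $d_t\ge d/2$ or $d_t<d/2$.

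On $\{d_t\ge d/2\}$ the denominator is at least $\epsilon/2$, and the expansion is routine. On $\{d_t<d/2\}$ we have $\abs{d_t-d}>\epsilon/2$, so the bounded difference satisfies
\[
\abs{\eta_t-\eta}\le \frac{2B}{\epsilon}\le\frac{4B}{\epsilon^2}\,\abs{d_t-d}
\]
pointwise. The measure of this set is $O(t)\to0$, and $\abs{d_t-d}/t$ converges in $L_1$. Uniform integrability of $\abs{d_t-d}/t$ then makes the contribution of this set $o(t)$ in $L_1(P_X)$. This is the step I expect to require the most care, including the bookkeeping of versions on $P_X$-null but $P_{t,X}$-charged sets. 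The truncated versions fixed above make those sets harmless.

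\emph{Step 3 (identification).} On the good set, I would expand
\[
\frac{\eta_t-\eta}{t}-\frac{\E[Uh\mid X]-\eta\E[Vh\mid X]}{d}
\]
into three terms. The first two are the $L_1$-null remainders from Step 1 multiplied by the bounded factor $1/d_t$. The third is the sequence $\{(n_t-n)-\eta(d_t-d)\}/t$, which is uniformly integrable, multiplied by
\[
\frac1{d_t}-\frac1d,
\]
which is bounded and tends to $0$ in $P_X$-probability. All three terms vanish in $L_1(P_X)$. Since $\E[\,\cdot\,h\mid X]$ is linear and $\eta,d$ are functions of $X$, the limit is $\E[r\,h\mid X]$ with
\[
r=\frac{U-\eta V}{d},
\]
which satisfies $\E[r\mid X]=0$ and $r\in L_2(P)$ because $d\ge\epsilon$. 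Summing over $\ell$ with the bounded coefficients $a_{j\ell}$ gives \eqref{eq:cm-path} with $r_j$ as in \eqref{eq:cm-ratio-residual}.
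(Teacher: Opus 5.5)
Your proof is correct, and it uses a different decomposition from the paper's.

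The paper proceeds in two stages. First it proves a standalone expansion for a single bounded conditional mean $m_t=\E_{P_t}[Z\mid X]$. It expands both $k_t$ and $k_tm_t$, where $k_t$ is the $P_X$-density of the absolutely continuous part of $P_{t,X}$, and divides by $k_t$ on $\{k_t\ge1/2\}$. It controls $\{k_t<1/2\}$ by a Hellinger-contraction bound $P_X(k_t<1/2)=O(t^2)$, so the crude bounded-difference estimate there is already $O(t)$ after dividing by $t$. Second, it handles the ratio through the identity $(u_t/v_t-u/v)/t=v_t^{-1}\{(u_t-u)/t-(u/v)(v_t-v)/t\}$, using the bounded multiplier $1/v_t\le1/\epsilon$ supplied by the model.

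You skip the intermediate conditional-mean step. You observe that $n_t=k_tu_t$ and $d_t=k_tv_t$, so the marginal density cancels in $\eta_t=n_t/d_t$. You then need only the $L_1$ consequence of DQM for the two unnormalized moment densities. A single split on $\{d_t\ge d/2\}$ then handles the moving marginal and the ratio at once. On the bad set Markov gives only measure $O(t)$. The pointwise bound $\abs{\eta_t-\eta}\le(4B/\epsilon^2)\abs{d_t-d}$, combined with $L_1$ convergence of $(d_t-d)/t$, compensates, so no Hellinger estimate is needed. Your route also makes it transparent that arbitrary bounded versions on $\{k_t=0\}$ are harmless, since that set lies inside $\{d_t<d/2\}$.

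The paper's route buys a separately stated conditional-mean expansion, which it cites again in the calibration and sequential-treatment proofs. Your argument recovers that case by taking $V\equiv1$, where your bad set becomes exactly the paper's set $\{k_t<1/2\}$.
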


\begin{proof}
\emph{Step 1: a bounded conditional mean.}
Let $Z$ be bounded, $m_t(X)=\E_{P_t}[Z\mid X]$, and $m=m_0$. Write $k_t$ for the density of the absolutely continuous part of $P_{t,X}$ with respect to $P_X$. DQM implies the signed-measure expansion $P_t=P+thP+o_{\mathrm{TV}}(t)$. Applying it to bounded functions of $X$ and to their products with $Z$ gives
\[
k_t=1+t\E[h\mid X]+o_{L_1(P_X)}(t),\qquad
k_tm_t=m+t\E[Zh\mid X]+o_{L_1(P_X)}(t).
\]
The marginal singular mass is $o(t^2)$ by DQM and does not affect these density identities. Subtraction yields
\begin{equation}\label{eq:cm-ratio-step}
k_t(m_t-m)=t\E[(Z-m)h\mid X]+e_t,\qquad \|e_t\|_1=o(t).
\end{equation}
Let $B_t=\{k_t<1/2\}$. Hellinger contraction gives
\[
P_X(B_t)\le (1-2^{-1/2})^{-2}
\int(\sqrt{k_t}-1)^2\,dP_X=O(t^2).
\]
Choose bounded versions of $m_t$ on $\{k_t=0\}$. On $B_t^c$, divide \eqref{eq:cm-ratio-step} by $k_t$; the remainder remains $o_{L_1}(t)$, while $1/k_t\to1$ in probability and $|1/k_t-1|\le1$. Dominated convergence against the integrable variable $\E[(Z-m)h\mid X]$ handles this factor. On $B_t$, boundedness gives $t^{-1}\E[|m_t-m|\ind_{B_t}]=O(t)$, and the integral of the derivative over $B_t$ tends to zero. Therefore
\[
\frac{m_t-m}{t}\longrightarrow\E[(Z-m)h\mid X]
\quad\text{in }L_1(P_X).
\]
This argument permits the marginal support to change with $t$.

\emph{Step 2: ratios and affine branches.}
Apply Step~1 to $u_t=\E_{P_t}[U_\ell\mid X]$ and $v_t=\E_{P_t}[V_\ell\mid X]$. Their bounded versions can be chosen with $v_t\ge\epsilon$ also off the support of $P_{t,X}$. The identity
\[
\frac{u_t/v_t-u/v}{t}
=\frac1{v_t}\left\{\frac{u_t-u}{t}-\frac uv\frac{v_t-v}{t}\right\}
\]
and the bounded, probability-convergent multipliers $1/v_t$ show convergence in $L_1(P_X)$ to
\[
\frac{\E[(U_\ell-u)h\mid X]-(u/v)\E[(V_\ell-v)h\mid X]}v
=\E\!\left[\frac{U_\ell-\eta_\ell(P,X)V_\ell}{v}h\,\middle|\,X\right].
\]
Finite sums with bounded $X$-measurable coefficients preserve the expansion. The resulting residual is \eqref{eq:cm-ratio-residual}; it is bounded and has conditional mean zero.
\end{proof}

\begin{proof}[Proof of \cref{prop:cm}]
\emph{Step 1: the target derivative, including the moving marginal.}
Fix a DQM path with score $h$ and abbreviate $b_{j,t}=b_j(P_t,\cdot)$ and $q_t=\max_jb_{j,t}$. The Lipschitz property of the finite maximum and \eqref{eq:cm-path} give
\[
\left\|q_t-\max_j\{b_j+t\dot b_{j,h}\}\right\|_1=o(t).
\]
For each $x$, the right derivative of the finite maximum is $\max_{j\in\mathcal J^*(x)}\dot b_{j,h}(x)$. Its difference quotients are bounded by $\max_j|\dot b_{j,h}(x)|$, an integrable function. Dominated convergence therefore proves
\begin{equation}\label{eq:cm-max-L1}
\frac{q_t-q}{t}\longrightarrow
\max_{j\in\mathcal J^*(X)}\dot b_{j,h}(X)
\quad\text{in }L_1(P_X).
\end{equation}
Set $f_t=q_t-q$. The branch bound and \eqref{eq:cm-max-L1} imply $\E_P[f_t^2]\to0$. Total-variation convergence and boundedness give $\E_{P_t}[f_t^2]\to0$ as well. Cauchy--Schwarz applied to the difference of the square roots of the marginal densities now gives
\[
\left|\int f_t\,d(P_{t,X}-P_X)\right|
\le H(P_{t,X},P_X)
\{2\E_P[f_t^2]+2\E_{P_t}[f_t^2]\}^{1/2}=o(t).
\]
All functions here have bounded extensions to marginal null sets; the bound also controls the possible singular parts. Since $q$ is bounded, DQM gives $\E_{P_t}q-\E_Pq=t\E[qh]+o(t)$. Combining this identity with \eqref{eq:cm-max-L1} proves \eqref{eq:cm-derivative} for every path.

\emph{Step 2: the core in the actual tangent space.}
The derivative in \eqref{eq:cm-derivative} is continuous and sublinear, because $|\dot b_{j,h}|\le\E[|r_jh|\mid X]$ and $r_j\in L_2(P)$. Moreover,
\[
\gamma(h)+\gamma(-h)
=\E\!\left[\max_{j\in\mathcal J^*(X)}\dot b_{j,h}
-\min_{j\in\mathcal J^*(X)}\dot b_{j,h}\right].
\]
The integrand is nonnegative; its expectation is zero exactly when all active derivatives agree almost surely. This equality set is a closed linear subspace of $\T$, and on it $\gamma(h)=\ip{\lambda_0}h$. Every linearity subspace is contained in it, proving the unique maximality and the projection in \eqref{eq:cm-core}. The convolution statement follows from \cref{thm:main}.

For the remaining steps suppose $\T=L_2^0(P)$. A measurable active selector $d(X)\in\mathcal J^*(X)$ has gradient $\lambda_d=q-\Psi(P)+r_{d(X)}$. The derivative is its support function:
\[
\gamma(h)=\sup_d\ip{\lambda_d}h.
\]
Indeed, the smallest index maximizing $\dot b_{j,h}(X)$ among active indices is a measurable selector attaining the supremum. These gradients form a bounded subset of $L_2^0(P)$. Let
\[
F=\clspan\{\lambda_d-\lambda_e:d,e\text{ measurable active selectors}\}.
\]
The same conditional equality argument identifies $\Hk=F^\perp$. In particular, for bounded measurable $a(X)$ and an index $j$, the localized difference $a(X)\ind\{j\in\mathcal J^*(X)\}(r_j-r_{j_0(X)})$ belongs to $F$: for indicator $a$ it is a difference of two selector gradients, and bounded simple approximation gives the general case in $L_2(P)$.

\emph{Step 3: measurable conditional projection and its closure.}
Put $r_0=r_{j_0(X)}$ and
\[
d_j=\ind\{j\in\mathcal J^*(X)\}(r_j-r_0),\quad
B(X)=\E[dd^\top\mid X],\quad c(X)=\E[dr_0\mid X].
\]
For almost every $X$, $c(X)$ is in the range of $B(X)$. To see this, a null vector $z$ of the conditional Gram matrix satisfies $z^\top d=0$ in the corresponding conditional $L_2$ space and hence $z^\top c=0$. Thus $\beta=B^\dagger c$ solves $B\beta=c$. The Moore--Penrose inverse is a Borel measurable function of the entries of a finite matrix, so $\beta(X)$ is measurable. Define
\[
r_*=r_0-\beta(X)^\top d.
\]
Conditional least squares gives
\begin{equation}\label{eq:cm-normal}
\E[dr_*\mid X]=0,\qquad
\E[r_*^2\mid X]=\E[r_0^2\mid X]-c^\top B^\dagger c\ge0.
\end{equation}
Both $r_*$ and $\beta^\top d$ are square-integrable: their conditional squared norms are bounded by $\E[r_0^2\mid X]$. The coefficients themselves need not have an integrable norm.

The expression for $r_*$ is an active affine combination of the $r_j$, and hence gives a solution $w^*$ of \eqref{eq:cm-qp}. More explicitly, set $w_j^*=-\beta_j$ for $j\ne j_0(X)$ and $w_{j_0(X)}^*=1+\sum_{j\ne j_0(X)}\beta_j$. Inactive rows of $B$ are zero, and their coefficients in $B^\dagger c$ are zero. Consequently these weights are active, measurable, and sum to one. The conditional normal equations prove minimality; the minimizing residual is unique even if its weights are not.

To justify the closed affine-hull calculation, truncate on $\{\|\beta(X)\|\le m\}$. Step~2 shows that $\ind\{\|\beta\|\le m\}\beta^\top d\in F$, and square-integrability implies convergence in $L_2(P)$ to $\beta^\top d$. Thus $r_0-r_*\in F$. On the other hand, \eqref{eq:cm-normal} and $\E[d\mid X]=0$ give $q-\Psi(P)+r_*\in F^\perp$. It is therefore the orthogonal projection of $\lambda_0$ onto $\Hk$. Conditional centering makes the $q-\Psi(P)$ and $r_*$ components orthogonal, establishing \eqref{eq:cm-efficient}.

\emph{Step 4: estimator comparison and the residual.}
For any square-integrable $\phi_w$ with active affine weights, $\phi_w-\lambda_0=\sum_jw_jd_j$. Truncating on bounded-weight events, as in Step~3, proves $\phi_w-\lambda_0\in F$. Hence $\Pi_{\Hk}\phi_w=\psik$. The asymptotically linear estimator theorem gives core regularity and a Gaussian residual with variance $\|\phi_w-\psik\|^2$. Conditional expectation of its square is exactly the quadratic form in \eqref{eq:cm-gap}. It vanishes precisely when the two influence functions agree in $L_2(P)$. Applying the local limit in \cref{thm:al} gives \eqref{eq:cm-drift}. For possibly unbounded weights, the expectation there is of the combined sum: localization gives
\[
\sum_j w_j^*(X)\dot b_{j,h}(X)
=\E\!\left[\Big\{\sum_jw_j^*(X)r_j\Big\}h\,\middle|\,X\right]\in L_1(P_X),
\]
by Cauchy--Schwarz, even if the individual weighted derivatives are not separately integrable.

Finally, uniform active weights minimize the conditional quadratic form precisely when its gradient is constant over the active coordinates. The active block of $G$ is positive semidefinite, so this first-order condition is necessary and sufficient even if it is singular. For $u=\one/|\mathcal J^*|$, the condition is equality of its row sums. This proves the stated equality criterion for the uniform-weight estimator.
\end{proof}

\section{Instrumental-variable bounds and calibration error}\label{app:bp-cal}
The common conditional-max result gives the geometry in these applications. We verify its model conditions and then establish feasible attainment, paying particular attention to singular branch covariance matrices.

\begin{proof}[Proof of \cref{cor:bp}]
\emph{Step 1: the path derivative and conditional covariance.}
Each $p_{yd\mid v}(x)$ is the ratio
\[
\frac{\E[\ind\{Y=y,D=d,V=v\}\mid X=x]}
 {\E[\ind\{V=v\}\mid X=x]}.
\]
The numerator is bounded and the denominator has the common positive lower bound. Thus \cref{lem:cm-paths} gives, along every DQM path with score $h$, the $L_1(P_X)$ derivative
\[
\dot p_{yd\mid v,h}(X)
=\E\!\left[
\frac{\ind\{V=v\}}{e_v(X)}
\{\ind\{Y=y,D=d\}-p_{yd\mid v}(X)\}h\,
\middle|\,X\right].
\]
Finite affine combinations give $\dot b_{j,h}=\E[r_jh\mid X]$. The derivative of the lower endpoint is therefore
\[
\gamma_L(h)=\E[(q(X)-\Psi_L)h]
 +\E\max_{j\in\mathcal J^*(X)}\E[r_jh\mid X].
\]
Bounded mean-zero likelihood tilts preserve instrument overlap for all sufficiently small parameter values because $P$ has uniform slack. For an arbitrary $h\in L_2^0(P)$, the truncated-score construction in \cref{app:policy-model} realizes a DQM path with score $h$ and still preserves overlap. Thus \assT holds with $\T=L_2^0(P)$. The model here imposes no instrumental inequalities on observed-law perturbations.

Conditionally on $(X,V=v)$, the four outcome--treatment cell indicators have covariance $\operatorname{diag}(p_v)-p_vp_v^\top$. Indicators of different instrument values have zero product, and each residual block has conditional mean zero. Expanding the products in \eqref{eq:bp-residual} yields \eqref{eq:bp-gram}. \Cref{prop:cm} now gives the core, efficient influence function, and convolution bound. In particular, the first-order condition for the affine minimizer is
\begin{equation}\label{eq:bp-kkt}
G_*w_*^*=v_*\one,\qquad
v_*=w_*^{*\top}G_*w_*^*.
\end{equation}
Here $w_*^*$ denotes the active subvector of $w^*$. This condition holds also for singular $G_*$: differentiation along each vector orthogonal to $\one$ shows that $G_*w_*^*$ is constant across its active coordinates, and multiplication by $w_*^{*\top}$ identifies that constant. It implies that the minimizing residual has the same conditional covariance with every active $r_j$, so its influence function lies in $\Hk$.

\emph{Step 2: stability of the affine optimizer.}
Fix a nonempty active index set $I$ and a stratum. Let $A_{I,v}$ be the matrix whose rows are $a_{j,v}^\top$, $j\in I$. For every vector $z\in\R^{|I|}$,
\[
z^\top G_I z
=\sum_v\frac{1}{e_v}
\Var_{p_v}\{(A_{I,v}^\top z)_{Y,D}\}.
\]
If all four coordinates of each $p_v$ are positive, this is zero exactly when $A_{I,v}^\top z$ is constant across the four cells for both $v$. Hence
\begin{equation}\label{eq:bp-fixed-kernel}
K_I:=\ker G_I
=\{z:A_{I,v}^\top z\in\spn\{\one_4\},\ v=0,1\}
\end{equation}
depends only on the known coefficients and $I$. Write $\Pi_{K_I}$ for its Euclidean orthogonal projection. The minimum-Euclidean-norm affine optimizer is
\begin{equation}\label{eq:bp-optimizer}
w_I(G_I)=
\begin{cases}
\displaystyle\frac{\Pi_{K_I}\one}{\one^\top\Pi_{K_I}\one},
 &\Pi_{K_I}\one\ne0,\\[7pt]
\displaystyle\frac{G_I^+\one}{\one^\top G_I^+\one},
 &\Pi_{K_I}\one=0.
\end{cases}
\end{equation}
In the first case the minimum is zero: every optimizer is in $K_I$ with sum one, and orthogonal projection gives the least-norm such vector. In the second case $\one\in K_I^\perp=\range(G_I)$, and \eqref{eq:bp-kkt} gives the displayed solution plus an arbitrary null vector; the least-norm solution has no null component. Both denominators are positive in their respective cases. Since the kernel is fixed, the positive eigenvalues remain bounded away from zero in a sufficiently small neighbourhood of any positive cell-probability vector. The pseudoinverse and the selected optimizer are therefore continuous there. This proves the required stability without a nonsingularity assumption.

\emph{Step 3: the empirical expansion and variance estimator.}
There are finitely many strata and cells. Their empirical proportions are jointly root-$n$ consistent, every cell is nonempty with probability tending to one, and the empirical conditional cell probabilities and branches have joint root-$n$ expansions. All true inactive gaps are positive at the fixed law. The threshold conditions imply
\[
P\{\widehat{\mathcal J}(x)=\mathcal J^*(x)\text{ for every }x\}\longrightarrow1.
\]
On this event, Step 2 gives $\hat w(x)=w^*(x)+o_p(1)$ for the selected true minimizer. The error in the weights multiplies a common tied branch value, and therefore cancels to first order:
\[
\begin{split}
\hat q_w(x)-q(x)
&=\sum_{j\in\mathcal J^*(x)}w_j^*(x)\{\hat b_j(x)-b_j(x)\}\\
&\quad+\sum_{j\in\mathcal J^*(x)}\{\hat w_j(x)-w_j^*(x)\}
 \{\hat b_j(x)-b_j(x)\}\\
&=\sum_{j\in\mathcal J^*(x)}w_j^*(x)\{\hat b_j(x)-b_j(x)\}
 +o_p(n^{-1/2}).
\end{split}
\]
The omitted constant term equals $q(x)\sum_j(\hat w_j-w_j^*)=0$. For each conditional cell probability the ratio expansion is
\[
\hat p_{yd\mid v}(x)-p_{yd\mid v}(x)
=\frac1n\sum_{i=1}^n
\frac{\ind\{X_i=x,V_i=v\}}{p_xe_v(x)}
\{\ind\{Y_i=y,D_i=d\}-p_{yd\mid v}(x)\}
+o_p(n^{-1/2}).
\]
Combining these expansions with the empirical stratum proportions yields
\[
\sqrt n(\hat\Psi_L-\Psi_L)
=\frac1{\sqrt n}\sum_{i=1}^n
\left[q(X_i)-\Psi_L+\sum_jw_j^*(X_i)r_j(Z_i)\right]+o_p(1).
\]
The zero fallback alters this only on an event of probability tending to zero. Continuity, finite support and consistency of all estimated probabilities show directly that \eqref{eq:bp-estimator}'s variance estimate converges to \eqref{eq:bp-bound}. Contiguity transfers the expansion and variance consistency along every DQM local path. Le Cam's third lemma shifts the limiting mean by $\ip{\psik}h$, which equals $\gamma_L(h)$ on $\Hk$; hence the estimator is core-regular and attains the convolution bound.

\emph{Step 4: the uniform-weight comparison.}
The influence function of the uniform active rule is $q-\Psi_L+\sum_ju_jr_j$. Conditional mean-zero residuals are orthogonal to $q-\Psi_L$. In addition, \eqref{eq:bp-kkt} and the zero inactive coordinates give $(u-w^*)^\top Gw^*=v_*\one^\top(u-w^*)=0$. Expanding the quadratic form proves \eqref{eq:bp-uniform-gap}. Equality holds precisely when the two residuals agree in conditional $L_2$ almost surely. Replacing the outcome by $1-Y$ reverses the sign of every causal treatment effect, which proves the stated upper-endpoint construction.
\end{proof}

\begin{proof}[Proof of \cref{cor:calibration}]
\emph{Step 1: the derivative and projection.}
Put $\Delta(S)=m(S)-S$ and $T=\{s:\Delta(s)=0\}$. The bounded conditional-mean case of \cref{lem:cm-paths} gives $\dot m_h(S)=\E[(Y-m(S))h\mid S]$ along every DQM path. Consequently
\[
\begin{split}
\gamma_{\mathrm{cal}}(h)
&=\E[(|\Delta(S)|-\Psi_{\mathrm{cal}})h]\\
&\quad+\E[\ind\{S\notin T\}\operatorname{sgn}\{\Delta(S)\}\dot m_h(S)]
 +\E[\ind\{S\in T\}|\dot m_h(S)|].
\end{split}
\]
The truncated-score construction of \cref{app:policy-model}, now without an overlap constraint, realizes every $h\in L_2^0(P)$ while preserving the outcome and predictor bounds. Thus \assT holds with full tangent space. On $T$ the two branch residuals are $Y-m(S)$ and its negative. Their affine hull contains zero, so its minimum-norm point is zero. Off $T$ the single active residual is $\operatorname{sgn}(\Delta)(Y-m)$. \Cref{prop:cm} gives the core and influence function in \eqref{eq:cal-core-if}. Alternatively, its membership in the core follows directly from
\[
\ind\{S\in T\}\E[(Y-m(S))\psik\mid S]=0.
\]
The residual has conditional mean zero given $S$, proving the displayed variance decomposition. The convolution conclusion follows from \cref{thm:main}.

\emph{Step 2: feasible attainment.}
Suppose $S$ has finite support with every stratum probability positive. Boundedness gives joint root-$n$ consistency of the empirical stratum means. Since the nonzero values of $|\Delta(s)|$ have a positive minimum, the thresholded empirical sign
\[
\hat\xi(s)=\operatorname{sgn}\{\hat m(s)-s\}
\ind\{|\hat m(s)-s|>\tau_n\}
\]
equals $\xi(s)=\operatorname{sgn}\{m(s)-s\}$ in every stratum with probability tending to one. Empty strata contribute zero to the estimator and disappear with probability tending to one. On the simultaneous sign-recovery event, \eqref{eq:cal-estimator} is exactly
\[
\frac1n\sum_{i=1}^n\xi(S_i)(Y_i-S_i).
\]
Its expectation is $\Psi_{\mathrm{cal}}$, and
\[
\xi(S)(Y-S)-\Psi_{\mathrm{cal}}
=|\Delta(S)|-\Psi_{\mathrm{cal}}+\xi(S)(Y-m(S))=\psik.
\]
This proves asymptotic linearity. The empirical variance of the estimated summands is consistent because their signs agree with the true signs on an event of probability tending to one and the summands are bounded. Contiguity and the path derivative establish core regularity exactly as in the preceding proof.

\emph{Step 3: the existing smoothed estimator and the zero-variance case.}
The limiting influence function in \citet[Corollary 4.8]{Whitehouse2025} is $\xi(S)(Y-S)-\Psi_{\mathrm{cal}}$, which equals \eqref{eq:cal-core-if}. Their cross-fitted expansion in Corollary C.2 averages this influence function over all observations. Whenever their stated smoothing, gap and nuisance-estimation conditions give that expansion, the preceding derivative calculation and third-lemma argument prove attainment of the same core bound. If $P(S\in T)=1$, then $\Psi_{\mathrm{cal}}=0$ and $\psik=0$. The finite-support estimator is zero with probability tending to one under $P$ and, by contiguity, under each fixed DQM local alternative. On the alternative with score $h$, its scaled target-centred error therefore converges to $-\E|\dot m_h(S)|$. It is zero along the core but can be nonzero elsewhere, despite the vanishing Gaussian variance.
\end{proof}

\section{Sequential treatment: paths, recursion, and attainment}\label{app:dynamic}
We prove \cref{cor:dynamic} and the feasible assertion following it. The proof first controls the intervention laws uniformly over policies, then differentiates the Bellman recursion and identifies its active gradients. The final step verifies the empirical construction, including singular Gram matrices.

\begin{proof}
\emph{Step 1: path control and fixed-policy gradients.}
The observed law factors into the initial history law, treatment kernels, and the kernels of the next observation given $(H_t,A_t)$, with the terminal kernel including $Y$. Removing the treatment kernels defines $P^d$. Its history distribution before decision $t$ has density at most $\epsilon^{-(t-1)}$ relative to the observational history distribution. A telescoping product of transition kernels consequently gives a constant $C=C(T,\epsilon)$ such that, for any two laws $P,Q$ in the model,
\begin{equation}\label{eq:dynamic-tv}
\sup_d\TV(P^d,Q^d)\le C\TV(P,Q),\qquad
\sup_d\abs{\Psi_d(P)-\Psi_d(Q)}\le 2B_0 C\TV(P,Q).
\end{equation}
Here total variation for probability measures is $\sup_B\abs{P(B)-Q(B)}$. To see the first inequality in detail, telescope one transition kernel at a time using the $P^d$ prefix and the $Q^d$ suffix. The initial-history term is at most $\TV(P,Q)$. For the transition after decision $t$, the preceding density bound and $\pi_{t,a}\ge\epsilon$ bound its contribution by
\[
\epsilon^{-t}\E_P\big[\TV\{P(\cdot\mid H_t,A_t),Q(\cdot\mid H_t,A_t)\}\big]
\le 2\epsilon^{-t}\TV(P,Q).
\]
The last inequality follows by adding and subtracting the product of the $P$ marginal of $(H_t,A_t)$ and the $Q$ conditional kernel; the marginal total variation is at most the joint total variation. This remains valid for arbitrary versions of the $Q$ kernel on its null histories. Thus $C=1+2\sum_{t=1}^T\epsilon^{-t}$ suffices. Taking suprema shows that the second inequality in \eqref{eq:dynamic-tv} also holds for the optimal value $\Psi$.

For a fixed policy, write $V_t^d$ and $Q_t^d$ for its usual continuation values, with $V_{T+1}^d=Y$, $Q_t^d=\E[V_{t+1}^d\mid H_t,A_t]$, and $V_t^d=Q_t^d(H_t,d_t(H_t))$. Under a bounded mean-zero score $b$, use the path $dP_s=(1+sb)dP$. Every conditional transition then has density ratio
\[
\frac{1+s\E[b\mid H_{t+1}]}{1+s\E[b\mid H_t,A_t]}
\]
relative to its baseline kernel, with the terminal history interpreted to include $Y$. Expanding these finitely many ratios gives an $O(s^2)$ remainder uniformly over policies, because $b$ and the outcome are bounded. Multiplying and integrating identifies the fixed-policy derivative as the inner product with
\begin{equation}\label{eq:dynamic-fixed-gradient}
\lambda_d^{\mathrm{fix}}
=V_1^d-\Psi_d(P)+\sum_{t=1}^TW_t^d(V_{t+1}^d-Q_t^d).
\end{equation}
For example, each transition contribution equals $\E[W_t^d(V_{t+1}^d-Q_t^d)b]$ by conditional centering; the initial-history contribution is $\E[(V_1^d-\Psi_d(P))b]$. This also verifies that the gradient is mean zero. Its absolute value is bounded by $2B_0(1+\sum_t\epsilon^{-t})$, independently of the policy.

These derivatives hold along every DQM path, not only bounded tilts. Indeed, let $P_s$ have score $h$ and approximate $h$ in $L_2(P)$ by bounded mean-zero $b_m$. The path $P_{s,b_m}=(1+sb_m)P$ stays in the model for small $s$, by the overlap slack. DQM implies that its Hellinger distance from $P_s$, divided by $\abs s$, converges to $\norm{h-b_m}/2$ when Hellinger distance has no factor $1/\sqrt2$. Since total variation is at most Hellinger distance, \eqref{eq:dynamic-tv} bounds the difference of their value increments, divided by $\abs s$, by $C'\norm{h-b_m}$ in the limit. The same bound holds for the difference of the proposed derivatives, uniformly over policies, by the uniform gradient bound. First taking $s\to0$ and then $m\to\infty$ proves the expansion for every fixed policy, uniformly over $d$. To verify that every score is attainable, use $dP_s/dP=1+s h_s$, where $h_s$ is $h$ truncated at $s^{-1/2}$ and then centered. As in \cref{app:policy-model}, $h_s\to h$ in $L_2(P)$ and $\norm{s h_s}_\infty\le2\sqrt s$. Every treatment propensity ratio is at least $(1-2\sqrt s)/(1+2\sqrt s)$, so the uniform overlap slack preserves all the constraints. This realizes every score in the closed space $L_2^0(P)$ and proves \assT.

\emph{Step 2: the recursive directional derivative.}
For a bounded score $b$, backward induction shows that $(V_{t,P_s}-V_t)/s$, $s\downarrow0$, is uniformly bounded and converges in $L_1(P_{H_t})$ to $u_{t,b}$ in \eqref{eq:dynamic-recursion}. The assertion starts with $V_{T+1}=Y$. At stage $t$, conditional differentiation of the bounded fixed function $V_{t+1}$ contributes $k_{t,b}$. Conditional expectation is an $L_1$ contraction, and positivity bounds each action-specific conditional-expectation operator by $\epsilon^{-1}$ in the corresponding marginal $L_1$ norm. Therefore the changing continuation value contributes $\E[u_{t+1,b}\mid H_t,A_t]$ in $L_1$; replacing its tilted transition kernel by the baseline kernel costs $O(s)$ because the transition density ratio is uniformly $1+O(s)$ and the difference quotients are bounded. Finally, a finite maximum is Lipschitz in its arguments. If their difference quotients converge in $L_1$ to $z_a$, one may first replace those quotients by $z_a$ with $o(1)$ error; for these fixed $z_a$, pointwise directional differentiation and domination by $\max_a\abs{z_a}$ give the limit $\max_{a\in\mathcal A_t^*}z_a$ in $L_1$. This proves the backward recursion. Differentiating the initial-history expectation yields the last line of \eqref{eq:dynamic-recursion}.

The same recursion is well-defined for every $h\in L_2^0(P)$. Conditional Jensen's inequality, treatment positivity, and the finiteness of $T$ bound the integrated changes in its successive maxima by a constant times $\norm{h-b_m}$ when $h$ is replaced by $b_m$. More explicitly, use $\abs{\max_a x_a-\max_a y_a}\le\max_a\abs{x_a-y_a}$ and, for any nonnegative array $z_a(H_t)$,
\[
\E\max_a z_a(H_t)\le\epsilon^{-1}\E z_{A_t}(H_t).
\]
Together with $\abs{V_{t+1}-Q_t}\le2B_0$, repeated conditional expectations give $\abs{\gamma(h)-\gamma(b_m)}\le C''\norm{h-b_m}$. The optimal-value part of \eqref{eq:dynamic-tv} now extends the bounded-tilt calculation to every DQM path exactly as in Step 1. No margin condition on the Bellman gaps is required.

\emph{Step 3: complete optimal policies and the core.}
For any policy $d$, successive conditioning gives the value-gap identity
\begin{equation}\label{eq:dynamic-gap}
\Psi(P)-\Psi_d(P)
=\sum_{t=1}^T\E_{P^d}[V_t(H_t)-Q_t(H_t,d_t(H_t))].
\end{equation}
Each summand is nonnegative. Thus $d$ is globally optimal precisely when it selects an active treatment at every stage, almost surely under its own reached-history law. Its choices at histories not reached under $P^d$ are immaterial. Moreover its continuation value equals the optimal continuation value at reached histories: otherwise its conditional future loss would be positive on a set with positive reached-history probability, contradicting \eqref{eq:dynamic-gap}. Replacing $V_t^d,Q_t^d$ in \eqref{eq:dynamic-fixed-gradient} with $V_t,Q_t$ on the prefixes for which $W_t^d>0$ is therefore valid almost surely, yielding \eqref{eq:dynamic-gradient}.

Every $d\in\mathcal D^*(P)$ satisfies $\Psi(P_s)\ge\Psi_d(P_s)$ with equality at $s=0$, so $\gamma(h)\ge\ip{\lambda_d}h$. Conversely, perform the recursion \eqref{eq:dynamic-recursion} while choosing, at each history, an active treatment attaining its displayed maximum. Finite action sets permit a measurable selector, for example the smallest maximizing index. This produces a policy $d_h$ that selects baseline-optimal treatments everywhere and is consequently in $\mathcal D^*(P)$. Applying the linear fixed-policy derivative recursion to this chosen policy gives $\ip{\lambda_{d_h}}h=\gamma(h)$. Hence the supremum over active policies is attained for each score. In particular $\gamma$ is continuous and sublinear, and its subdifferential is the closed convex hull of their uniformly bounded gradients. The geometry and convolution statements follow from \cref{thm:kink,thm:main}; the cone statement follows by the representer on $\clspan C_d$, with the finite-hull vertex assertion supplied by \cref{cor:onesided}.

This argument also explains reachability. Two complete policies that differ only on histories they do not reach induce the same law and the same gradient in \eqref{eq:dynamic-gradient}. Requiring derivative equality at every locally tied history, independently of reachability under an optimal policy, would therefore generally be stronger than the actual core condition.

\emph{Step 4: feasible attainment on a finite history tree.}
Fix a finite, prespecified history tree, with positive observational probability for every history and treatment cell in that tree. The outcome may have any bounded conditional distribution in its terminal cells; positive conditional variances are not needed. The set $\mathcal D$ of deterministic policies is finite. On samples with a nonempty observation in every required cell, use the empirical initial probabilities, transition probabilities, treatment probabilities, and terminal means to compute $\widehat\Psi_d$, and plug these estimates into the fixed-policy formula \eqref{eq:dynamic-fixed-gradient} to obtain $\hat\lambda_d$. Center the latter empirically when forming
\[
\widehat\Sigma_{de}
=\mathbb P_n[(\hat\lambda_d-\mathbb P_n\hat\lambda_d)
                 (\hat\lambda_e-\mathbb P_n\hat\lambda_e)].
\]
This is positive semidefinite. Define the retained set by
\[
\widehat{\mathcal D}
=\{d:\max_{e\in\mathcal D}\widehat\Psi_e-\widehat\Psi_d\le\tau_n\},
\]
and use \eqref{eq:dynamic-pooling} on that set. If a required empirical cell is empty, define the value estimate to be the overall sample mean and its variance estimate to be the overall empirical outcome variance. This exceptional event has probability tending to zero, and the construction is defined for every sample.

The finitely many probabilities, terminal means, and terminal second moments are smooth ratios of empirical averages of bounded variables, with denominators bounded away from zero at $P$. The multivariate delta method, or direct expansion of the backward recursions, gives simultaneously for all $d\in\mathcal D$
\begin{equation}\label{eq:dynamic-finite-al}
\sqrt n(\widehat\Psi_d-\Psi_d(P))
=n^{-1/2}\sum_{i=1}^n\lambda_d^{\mathrm{fix}}(O_i)+o_{P^n}(1),
\qquad \widehat\Sigma-\Sigma=O_P(n^{-1/2}),
\end{equation}
where $\Sigma$ is the Gram matrix of the corresponding fixed-policy gradients. The first identity uses exactly the derivative already derived in Step 1: varying the empirical conditional probabilities and means differentiates the same finite g-formula. The covariance claim follows because each empirical covariance is a smooth function of the same cell frequencies, first moments, and second moments. The tolerance conditions imply
$P^n(\widehat{\mathcal D}=\mathcal D^*(P))\to1$: active values have $O_P(n^{-1/2})$ differences and every inactive policy has a fixed positive value gap. Work on this event from now on and restrict $\Sigma$ to active policies.

Let $w^*$ be the smallest Euclidean-norm minimizer of $w^\top\Sigma w$ subject to $\one^\top w=1$, and let $w_\rho$ minimize $w^\top\Sigma w+\rho\norm w^2$ on the same hyperplane. The unregularized minimum is attained because it is the squared distance to a finite-dimensional affine hull. Comparing the two objectives gives $\norm{w_\rho}\le\norm{w^*}$ and $w_\rho^\top\Sigma w_\rho\to\Vk$. Every limit point therefore minimizes the unregularized objective and has norm no greater than $\norm{w^*}$, so uniqueness of the smallest-norm minimizer yields $w_\rho\to w^*$. Let $e=\hat w-w_{\rho_n}$. Subtracting the two first-order conditions and using $\one^\top e=0$ gives
\[
e^\top(\widehat\Sigma+\rho_n I)e
=-e^\top(\widehat\Sigma-\Sigma)w_{\rho_n},\qquad
\norm e\le\rho_n^{-1}\norm{\widehat\Sigma-\Sigma}_{\mathrm{op}}\norm{w_{\rho_n}}.
\]
Thus $\hat w\to_P w^*$ under the stated condition $\sqrt n\rho_n\to\infty$. This proof does not require constant empirical rank or an invertible Gram matrix.

All active policy values equal $\Psi(P)$ and the weights sum exactly to one. Consequently their estimated weights introduce no first-order term from the common value, and \eqref{eq:dynamic-finite-al} implies
\[
\sqrt n(\widehat\Psi_{\mathrm{dyn}}-\Psi(P))
=\sum_{d\in\mathcal D^*(P)}w_d^*
 n^{-1/2}\sum_{i=1}^n\lambda_d(O_i)+o_{P^n}(1).
\]
The weighted gradient equals $\psik$. Covariance consistency also gives $\hat w^\top\widehat\Sigma\hat w\to_P\Vk$. Finally, contiguity transfers the expansion and the event recovering the baseline active set to every DQM local alternative. The joint central limit theorem and Le Cam's third lemma give the local mean $\ip\psik h-\gamma(h)$. It vanishes on $\Hk$, proving regularity and attainment there. Outside the core the shift need not have a fixed sign, because the efficient affine weights need not be nonnegative.
\end{proof}

\bibliographystyle{plainnat}
\bibliography{aprime_references}

\end{document}